\title{
Sinha's spectral sequence for long knots in codimension one and  non-formality of the  little 2-disks operad}
\author{Syunji Moriya\\
}
\thanks{This work is partially supported by JSPS KAKENHI Grant Number JP17K14192.}
\address{Osaka Central Advanced Mathematical Institute, Osaka Metropolitan University, 3-3-138 Sugimoto, Sumiyoshi-ku Osaka 558-8585 Japan}
\email{moriyasy@gmail.com}
\theoremstyle{definition}
\newtheorem{defi}{Definition}[section]
\newtheorem{exa}[defi]{Example}
\newtheorem{rem}[defi]{Remark}
\theoremstyle{plain}
\newtheorem{prop}[defi]{Proposition}
\newtheorem{lem}[defi]{Lemma}
\newtheorem{thm}[defi]{Theorem}
\newtheorem{cor}[defi]{Corollary}
\newcommand{\eps}{\epsilon}
\newcommand{\CECHC}{\check{\mathrm{C}}}
\newcommand{\TT}{\mathcal{T}} 
\newcommand{\PK}{\mathcal{PK}} 
\newcommand{\Conf}{\mathit{Conf}} 
\newcommand{\EEE}{\mathcal{E}}
\newcommand{\TPhi}{\widetilde{\Phi}}
\newcommand{\CEE}{\check{\mathbb{E}}}
\newcommand{\CH}{\mathcal{CH}}
\newcommand{\TTH}{\mathbb{T}}
\newcommand{\fat}{\mathrm{fat}}
\newcommand{\Map}{\mathrm{Map}}
\newcommand{\oper}{\mathcal{O}}
\newcommand{\ASS}{\mathcal{A}}
\newcommand{\KK}{\mathcal{K}}
\newcommand{\DD}{\mathcal{D}}
\newcommand{\aoper}{\mathcal{P}}
\newcommand{\SP}{\mathcal{SP}}
\newcommand{\Hoch}{\mathrm{CH}}
\newcommand{\CG}{\mathrm{Top}}
\newcommand{\tot}{\operatorname{Tot}}
\newcommand{\GG}{\mathsf{G}}
\newcommand{\TGG}{\tilde{\mathsf{G}}}
\newcommand{\Embbar}{\overline{\mathrm{Emb}}}
\newcommand{\Emb}{\mathrm{Emb}}
\newcommand{\RR}{\mathbb{R}}
\newcommand{\Sphere}{\mathbb{S}}
\newcommand{\EE}{\mathbb{E}}
\newcommand{\hocolim}{\mathrm{hocolim}}
\newcommand{\CECHF}{\check{\mathsf{C}}}
\newcommand{\kk}{\mathsf{k}}
\newcommand{\PP}{\mathrm{P}} 
\newcommand{\Dar}{\stackrel{D}{\longmapsto}}
\begin{document}

\begin{abstract}

We compute some differentials of  Sinha's spectral sequence for cohomology of the space of long knots modulo immersions in codimension one, mainly over a field of  characteristic $2$ or $3$. This spectral sequence is closely related to Vassiliev's spectral sequence for the space of  long knots in codimension $\geq 2$. We prove that the $d_2$-differential of an element is non-zero in characteristic $2$, which has already essentially been proved by Salvatore, and the $d_3$-differential of another element is non-zero in characteristic $3$. While the geometric meaning of the sequence is unclear in condimension one, these results have some applications to non-formality of operads. The result in characteristic $3$ implies planar non-formality of the standard map $C_*(E_1)\to C_*(E_2)$ in characteristic $3$, where $C_*(E_k)$ denotes the chain little $k$-disks operad. We also reprove the result of Salvatore which states that $C_*(E_2)$ is not formal as a planar operad in characteristic $2$ using the result in characteristic $2$. \\
For the computation, we transfer the structure on configuration spaces  behind the spectral sequence onto Thom spaces over fat diagonals through a duality between configuration spaces and fat diagonals.  This procedure enables us to describe the differentials by relatively simple maps to Thom spaces. We also show that the $d_2$-differential of the generator of bidegree $(-4,2)$ is zero in characteristic $\not= 2$. This computation  illustrates how one can manage the 3-term relation using the description. Although the computations in this paper are concentrated to codimension one, our method also works for codimension $\geq 2$ and we prepare most of basic notions and lemmas for general codimension.
\end{abstract}
\maketitle
\tableofcontents
\section{Introduction}
A long knot is a smooth embedding $\RR\to \RR^d$ which coincides with a fixed linear embedding  outside of a compact set. Vassiliev's spectral sequence for the cohomology of the space of long knots \cite{vassiliev} has  drawn much attention for about 30 years especially in the case of $d=3$ where Vassiliev's sequence is related to finite type invariants of knots. Independently, Goodwillie-Weiss' fundamental work on embedding spaces called embedding calculus \cite{GK,weiss} led to the introduction of another spectral sequence for the space of long knots by Sinha \cite{sinha, sinha1}.  Sinha's  sequence is closely related to Vassiliev's sequence  and also related to the little disks operads. Lambrechts-Turchin-Voli\'c \cite{LTV} proved the rational collapse of Vassiliev's sequence for $d\geq 4$, conjectured by Vassiliev, using these relations. (See also \cite{AT} for an alternative and generalized proof. Kontsevich \cite{kontsevich1} had proved the collapse for $d=3$ in the diagonal part earlier.) Vassiliev actually conjectured more strongly the stable splitting of a filtration of a simplicial resolution of his discriminant space. If this conjecture is true, the spectral sequence also collapses in any positive characteristic. Unlike the case of characteristic zero, not so much is known about higher differentials of Vassiliev's or Sinha's spectral sequence in this case.
As a related result,  Boavida de Brito-Horel \cite{BH}  proved  infinitely many higher differentials of Sinha's sequence vanish over the $p$-adic integers. This is obtained by using an action  of Grothendieck-Teichm\"uller group on the spectral sequence. In this paper, we propose a different geometric approach to the calculation of differentials of Sinha's sequence. While the computational examples in this paper are concentrated in the case $d=2$, our method also works for $d\geq 3$ to some extent. \\
 \indent Precisely speaking, we deal with spectral sequence for the space $\Embbar_c(\RR,\RR^d)$ of long knots modulo immersions, the  homotopy fiber of the inclusion of the  space $\Emb_c(\RR,\RR^d)$ of long knots to the space of immersions $\RR\to \RR^d$ with a similar endpoint condition. The relation between $\Emb_c(\RR,\RR^d)$ and $\Embbar_c(\RR,\RR^d)$ is well-understood, including their spectral sequences, see \cite{sinha1, turchin}.  $\Embbar_c(\RR,\RR^d)$ is related to the operad more directly, and was used in the proof of rational collapse.  
In the case $d=2$, the case of { codimension one}, it is not known whether Sinha's  sequence converges to $H^*(\Embbar_c(\RR,\RR^2))$ (or some other geometric object) and it is not likely.  However,  it is relatively easy to find a new computational example since the rational collapse is not known and fewer differentials vanish by degree reason in this case.  For $d=2$, the sequence is also related to  the little 2-disks operad which is one of the most ubiquitous operads. We compute   higher differentials of three elements as follows.
\begin{thm}[Theorems \ref{Tdifferential}, \ref{Tch3} and Proposition \ref{Pd2_ch3}]\label{Tmain}
Let $\EE_r^{p,q}$ be Sinha's spectral sequence for  cohomology of the space of long knots modulo immersions in codimension one. More precisely, $\EE^{p,q}_r$ is the Bousfield-Kan type cohomology spectral sequence induced by the cosimplicial space $\KK^\bullet_2$ defined in \cite{sinha1} which consists of compactified configuration spaces of ordered points in $\RR^2$  (see Definitions \ref{DKontsevich}, \ref{Dspecseq}).

\begin{enumerate}
\item (\cite{salvatore}) In characteristic $2$, there exists an element in $\EE_2^{-4,2}$ which is not annihilated by the $d_2$-differential.
\item In characteristic  $\not=2$, the generator of $\EE_2^{-4,2}$ is annihilated by the $d_2$-differential. 
\item In characteristic $3$, there exists an element in $\EE_3^{-5,3}$ which is not annihilated by the $d_3$-differential. 
\end{enumerate}
\end{thm}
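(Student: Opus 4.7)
The plan rests on the duality, mentioned in the abstract and presumably developed in the body of the paper, between the Kontsevich compactifications $\KK^n_2$ and fat diagonals in an ambient product of $S^2$'s. Via Poincaré–Lefschetz duality, $H^*(\KK^n_2)$ is identified with the homology of an explicit complex of chains on the fat diagonal, and the coface maps in the cosimplicial object $\KK^\bullet_2$ dualize to elementary geometric operations (splitting or collision of points). This turns the $E_1$-page and its low differentials into a finite, combinatorial object on which the cochain-level computations can be carried out by hand.

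For part (1), I would begin by selecting an explicit representative $x\in\EE_1^{-4,2}=H^2(\KK^4_2)$, presented as a chain on the fat diagonal of four points in $S^2$, whose alternating sum of coface pullbacks vanishes in $H^2(\KK^3_2)$, so that $x$ descends to $\EE_2^{-4,2}$. I would then compute $d_2(x)\in\EE_2^{-2,1}$ by the standard zigzag: lift the $d_1$-vanishing through a cochain on $\KK^3_2$ and read off the resulting class in $H^1(\KK^2_2)$. The non-vanishing modulo $2$ should reduce to a parity check on an integer coefficient attached to a single generator of $H^1(\KK^2_2)$, recovering Salvatore's result by a direct route.

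For part (2), the same duality is used, but now on an element $y\in\EE_1^{-5,3}=H^3(\KK^5_2)$ whose survival to $\EE_3^{-5,3}$ must itself be established. Concretely, I would propose a candidate $y$, verify it is a $d_1$-cocycle, exhibit a cochain lift $y_1$ on $\KK^4_2$ so that the next term computes $d_2(y)$, verify this term is a $d_1$-coboundary (so $y$ descends to $\EE_3$), and then obtain $d_3(y)$ from a further cochain lift $y_2$. The aim is to identify $d_3(y)$ as a class in $\EE_3^{-2,1}$ that is non-trivial in characteristic $3$ while vanishing in characteristic zero, the sensitivity coming from an integer coefficient divisible by $3$ but not by $9$ that appears during the second lift.

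The main obstacle is the $d_3$ computation. One must carry through two successive cochain-level lifts while tracking all signs, orientations, and combinatorial multiplicities in the chain model of the fat diagonal, and one must verify both that the intermediate lifts exist (so $y$ is a genuine element of $\EE_3^{-5,3}$) and that the output is not a $d_1$- or $d_2$-coboundary in $\EE_*^{-2,1}$. The duality with fat diagonals is what keeps the relevant chains small and explicit, but correctly guessing the element $y$ and verifying that the tertiary operation lands nontrivially in characteristic $3$ is the combinatorial heart of the argument.
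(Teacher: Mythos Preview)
Your outline matches the paper's approach: the duality with fat diagonals is set up in Sections~\ref{SPK}--\ref{SThom} via a Thom-space functor $\TT$ and a triple complex $\TTH$, and the zigzag computations you describe are carried out in Sections~\ref{Sch2}--\ref{Sch3} using explicit graph-labeled chains $c(G_i)$, a framework of ``condensed maps'' and contraction homotopies to build the successive bounding chains, and a verification (Lemmas~\ref{Lfundcyc}, \ref{Lfundcyc_ch3}) that the final class is a fundamental cycle of a Thom space $\TT_{G_0}$. The paper's content is precisely the execution of what you flag as the obstacle---the specific elements, the lifts, and (for part~2) the auxiliary check that the target in $\EE^{-2,1}$ survives to $E_3$, done as a separate $d_2$ computation in Proposition~\ref{Pd2_ch3}.
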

The homological version of  part 1 of this theorem has already been proved implicity by Salvatore \cite{salvatore}. The computation of an obstruction given there essentially includes the computation of the dual differential. Turchin \cite{turchin} introduced a notion of divided product which produces elements of Sinha's (or Vassiliev's) sequence in terms of graphs. The  products  of certain graphs  form an interesting subcomplex of a page  of Sinha's sequence. In our cohomological computation, the elements in  parts 1 and 3 of Theorem \ref{Tmain} correspond to the products $\langle Z_1,Z_1\rangle$ and $\langle Z_1, Z_2\rangle$ in the notation of \cite{turchin}, respectively while we use a version reversing the order of vertices (see Remark \ref{Rdiscussion}). The product $\langle Z_1, Z_{p-1}\rangle$ also represents non-trivial cycle in other positive characteristic $p$, and it might be interesting to compute its differentials. Since the  elements involved in  part 2  come from non-torsion elements,  this  might follow from a computation using techniques specific to characteristic 0 such as the configuration space integral, but this is the simplest example which  illustrates how  the 3-term relation is managed in our method, and the result is used in the proof of  part 3.\\
\indent Theorem \ref{Tmain} can be related to non-formality of the little 2-disks operad. Let $f:\mathcal{O}\to \mathcal{P}$ and $f':\mathcal{O}'\to \mathcal{P}'$ be maps of chain operads. A {\em quasi-isomorphism from $f$ to $f'$} is a commutative square
\[
\xymatrix{\mathcal{O} \ar[r]\ar[d] ^f& \mathcal{O'} \ar[d]^{f'} \\
\mathcal{P}\ar[r]& \mathcal{P'}}
\]
 of maps of operads where the horizontal maps induce quasi-isomorphisms of complexes at each arity. A map $f:\mathcal{O}\to\mathcal{P}$ is said to be { formal} if it is connected with the induced map $H_*(f)$  on  homology (with zero differential) by a zigzag of quasi-isomorphisms. A chain operad $\mathcal{O}$ is formal if the map from the initial object to $\mathcal{O}$ is formal.  
Formality of the little disks operads over reals proved in \cite{tamarkin, kontsevich, LV} was essentially used in the proof of rational collapse of Vassiliev's sequence. Part 3 of Theorem \ref{Tmain} 
has the following  corollary about relative planar non-formality.
\begin{cor}\label{Cnon-formality_ch3}
Let $E_k$ be the little $k$-disks operad and $E_1\to E_2$  the map induced by the inclusion $\RR\to \RR^2$ to the first coordinate. The induced map $C_*(E_1)\to C_*(E_2)$  between singular chains is not formal as a map of planar (or non-symmetric) operads in characteristic $3$.
\end{cor}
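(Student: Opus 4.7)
The plan is to combine Theorem \ref{Tmain}(2) with the formality-versus-collapse correspondence developed in \cite{moriya}. The key structural observation is that Sinha's cosimplicial space $\KK^\bullet_2$ is built functorially from the map of topological planar operads $E_1\to E_2$: cofaces arise from operadic composition in $E_2$ together with unit insertion (equivalently, from the map $E_1\to E_2$), and codegeneracies from projections. Applying singular chains therefore produces a cosimplicial chain complex that depends functorially on $C_*(E_1)\to C_*(E_2)$ viewed as a planar operad map, and whose Bousfield--Kan cohomology spectral sequence is $\EE_r^{p,q}$.

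I would then argue by contradiction. Assume $C_*(E_1)\to C_*(E_2)$ is planarly formal in characteristic $3$, i.e.\ connected to $H_*(E_1)\to H_*(E_2)$ by a zigzag of quasi-isomorphisms of planar operad maps. Applying the functor above produces a zigzag of quasi-isomorphisms between the cosimplicial chain complex computing $\EE_r^{p,q}$ and the cosimplicial chain complex built from $H_*(E_1)\to H_*(E_2)$. The latter has vanishing internal differential at every cosimplicial level, so its Bousfield--Kan cohomology spectral sequence collapses at the $E_2$-page. Standard properties of the Bousfield--Kan spectral sequence (isomorphism from $E_2$ onwards under a quasi-isomorphism of cosimplicial chain complexes) then force $\EE_r^{p,q}$ to collapse at $E_2$ as well; in particular $d_r=0$ for all $r\geq 2$. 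This directly contradicts Theorem \ref{Tmain}(2), which exhibits a class in $\EE_3^{-5,3}$ not annihilated by $d_3$, so the assumption of planar formality fails.

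The main obstacle is confirming that the functoriality and the $E_2$-page isomorphism hold in the planar (non-symmetric) setting rather than just the symmetric one — it is precisely this point that distinguishes the corollary from a statement about symmetric operads and that justifies invoking \cite{moriya} as a black box. Once that framework is in hand, the argument is a clean reduction and no further computation beyond Theorem \ref{Tmain}(2) is required.
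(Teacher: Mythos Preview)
Your proposal is correct and follows essentially the same route as the paper: the paper simply cites \cite{moriya} for the implication ``planar formality of $C_*(E_1)\to C_*(E_2)$ $\Rightarrow$ collapse of $\EE_r$ at $E_2$'' and then invokes Theorem~\ref{Tmain}(2) to obtain a contradiction, which is exactly your argument by contradiction via the degeneration of the spectral sequence for the homology-level map. Your identification of the one point needing care---that the functoriality and $E_2$-isomorphism must be checked in the planar (non-symmetric) setting, which is precisely the content of \cite{moriya}---matches the paper's citation.
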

See subsection \ref{SS3rd} for the proof. 
Using  part 1 of Theorem \ref{Tmain}, we also reprove the (absolute) planar non-formality in characteristic $2$ first proved in \cite{salvatore}.
\begin{cor}[\cite{salvatore}, see section \ref{Sabsolute}]\label{Cnon-formality_ch2}
The chain little $2$-disks operad $C_*(E_2)$ is not formal as a planar  operad  in characteristic $2$.
\end{cor}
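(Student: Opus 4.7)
The plan is to argue by contradiction: assume $C_*(E_2;\FF_2)$ is formal as a planar operad and derive the vanishing of $d_2$ on the class $\langle Z_1, Z_1\rangle \in \EE_2^{-4,2}$ produced by Theorem \ref{Tmain}(1), contradicting that statement.

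The cosimplicial space $\KK^\bullet_2$ underlying Sinha's spectral sequence has two flavors of structure maps: the ``inner'' cofaces $d^i$ for $1 \leq i \leq n$ coming from planar operadic composition in $E_2$, and the two ``endpoint'' cofaces $d^0, d^{n+1}$ encoding the linear behavior of long knots at infinity via the map $E_1 \to E_2$. The spectral sequence differentials decompose accordingly. The technical core of the proof is to show that both $\langle Z_1, Z_1\rangle$ and the value of $d_2$ on it can be represented using only the inner cofaces, i.e.\ using only the planar $E_2$-structure. In Turchin's graphical notation, $\langle Z_1, Z_1\rangle$ is a divided square, and in characteristic $2$ such a self-product naturally admits a cochain representative on the $\Conf_\bullet(\RR^2)$-compactifications that is supported away from the endpoint boundary strata; a direct check should then show that $d_2$ applied to it only invokes inner compositions.

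Granted this reduction, the remainder is formal. A planar zigzag of quasi-isomorphisms $C_*(E_2;\FF_2) \simeq H_*(E_2;\FF_2)$ transports the inner part of $\KK^\bullet_2$ to the analogous cosimplicial model built from the mod-$2$ Gerstenhaber operad regarded as a planar operad. The resulting ``formal'' spectral sequence has vanishing higher differentials on inner-supported classes by a standard Koszul/cobar duality argument for the (planar) Gerstenhaber cooperad, so in particular $d_2$ of the image of $\langle Z_1, Z_1\rangle$ is zero in the formal model. Combined with Theorem \ref{Tmain}(1), this yields the contradiction.

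The main obstacle is the first step: carefully isolating $\langle Z_1, Z_1\rangle$ and its $d_2$ from the endpoint cofaces in characteristic $2$. This is precisely the feature that distinguishes the absolute non-formality result here from the merely relative non-formality of Corollary \ref{Cnon-formality_ch3} --- the element $\langle Z_1, Z_2\rangle$ relevant in characteristic $3$ does genuinely involve the $E_1 \to E_2$ structure, so only non-formality of the map can be extracted in that case, whereas the symmetry of $\langle Z_1, Z_1\rangle$ in characteristic $2$ allows a purely $E_2$-internal representative.
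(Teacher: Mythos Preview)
Your overall strategy matches the paper's: assume formality, transport to a formal model, derive $d_2=0$ there, and contradict Theorem~\ref{Tdifferential}. But the mechanism you propose for the middle step has a genuine gap.

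The dichotomy you draw---inner cofaces $d^1,\dots,d^n$ coming from the planar $E_2$-structure, endpoint cofaces $d^0,d^{n+1}$ coming from $E_1\to E_2$---is not correct. In the McClure--Smith construction \emph{all} cofaces are built from the same two ingredients: planar composition in $\KK_2$ and the element $\mu_2\in C_0(\KK_2(2))$. The inner cofaces are $x\mapsto x\circ_i\mu_2$ and the outer ones are $x\mapsto \mu_2\circ_1 x$ and $x\mapsto\mu_2\circ_2 x$; neither kind is more intrinsic to the planar operad than the other, and without the multiplication you cannot form \emph{any} coface. There is thus no natural sub-spectral-sequence using only inner cofaces to which an ``inner-supported'' representative could belong, and a planar formality zigzag does not transport such a thing. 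Your appeal to ``a standard Koszul/cobar duality argument'' for vanishing of higher differentials on inner-supported classes is accordingly not grounded in an actual construction.

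The paper's argument runs differently and does not single out the element at all. It packages Sinha's sequence as the spectral sequence $E_r(\oper)$ of a Hochschild-type complex attached to any map $\ASS_\infty\to\oper$ of planar operads (Definition~\ref{Dhochschild}, Lemma~\ref{Lspec_natural}). If $C_*(\KK_2)$ were formal, a cofibrant replacement under $\ASS_\infty$ would yield \emph{some} map $f:\ASS_\infty\to H_*(\KK_2)$ together with an isomorphism $E_r(H_*(\KK_2))\cong\EE_r$. The point is then to show $d_2\equiv 0$ on $E_2(H_*(\KK_2))$ for \emph{any} such $f$. Since $H_*(\KK_2)$ has zero internal differential, Lemma~\ref{Lspec_diff_description} gives $d_2[x]=[\mu'_3*x]$ where $\mu'_3=f(\mu_3)$. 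The relation $d\mu_4=0$ in $\ASS_\infty$ forces $\mu'_3\in H_1(\KK_2(3))$ to be a cycle for the standard cosimplicial differential on $H_*(\KK_2(\bullet))$, and a short direct check shows this differential is injective on $H_1(\KK_2(3))$. Hence $\mu'_3=0$, so $d_2\equiv 0$, contradicting Theorem~\ref{Tdifferential}.

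In particular, what separates characteristic $2$ from characteristic $3$ here is not a geometric symmetry of $\langle Z_1,Z_1\rangle$ allowing an inner-supported representative, but the algebraic fact that any transferred $\mu'_3$ is forced to vanish, whereas the analogous argument for $\mu'_4$ (which would be needed to kill $d_3$) does not go through.
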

In positive characteristic, planar non-formality is much more difficult to prove than non-formality as a symmetric operad which follows from non-formality of  $C_*(E_k)$  as a complex with an action of a symmetric group (see e.g. \cite{CH}).
As related works, Turchin-Willwacher \cite{TW} proved symmetric non-formality of the map $C_*(E_k)\to C_*(E_{k+1})$ over reals for $k\geq 1$. They especially proved that a $d_3$-differential on  Sinha's spectral sequence of codimension one is non-trivial over characteristic $0$. In \cite{livernet}, Livernet proved  symmetric non-formality of the Swiss-cheese operads. In \cite{moriya2}, planar non-formality of the framed little disks operads of odd dimension $\geq 5$ was proved.\\
 \indent After this paper was written up, the author learned that  planar non-formality of the map $C_*(E_1)\to C_*(E_2)$ over any positive characteristic follows from a theorem of Goodwillie\cite{goodwillie}. This observation is due to Salvatore, see the thesis of Andrea Marino \cite{marino}. Neverthless, the computation of differentials in Theorem \ref{Tmain}(2),(3) is still new and having a concrete obstruction will be useful in a study of absolute non-formality. \\
\indent Our method of computation is different from \cite{salvatore} which makes use of  obstruction theory and  a combinatorial model of the operad. We use a duality between configuration spaces and fat diagonals, which is similar to the one given in our previous paper \cite{moriya1}, to transfer necessary structure to fat diagonals and resolve the structure by $\check{\text{C}}$ech complex. Sinha's cosimplicial space (or any other equivalent structure behind the spectral sequence) is a diagram consisting  of the ordered configuration spaces $Conf_p(D^d)$ of $p$ points in the $d$-dimensional unit disk with various $p$. Our duality reduces to the Poincar\'e-Lefschetz duality between $H^*(Conf_p(D^d))$ and  $H_*((D^d)^p,\Delta_{fat}\cup \partial (D^d)^p)$, where $\Delta_{fat}$ denotes the fat diagonal of $(D^d)^{p}$ i.e.  the union of all diagonals. \\
\indent We need to transfer the cosimplicial (or cubical) structure onto the fat diagonal on the point-set level in order to deal with higher differentials. It is difficult to do so using the standard  duality map which uses the cap product with fundamental cycle because of the presence of various chain homotopies. Instead, we use a form of $S$-duality called Atiyah duality, or its refinement by Cohen \cite{cohen}, which states an equivalence between the Spanier-Whitehead dual of a manifold and Thom spectrum of its normal bundle (see the Introduction of \cite{moriya1} for more detailed explanation).   We construct a diagram of Thom spaces of fat diagonals encoding the spectral sequence, then we resolve it by $\CECHC$ech complexes associated to the covering by diagonals. This replacement of a diagram brings  benefits for keeping track of the higher differentials. \\
\indent To compute differentials, we use the well-known description of differential by a zigzag of the horizontal and vertical differentials so we need to construct a bounding chain of a given cycle, a chain whose boundary is the cycle.  If we use the singular cochain of Sinha's cosimplicial model itself, it contains simple cocycles representing generators of $H^*(Conf_p(D^d))$, but the commutativity homotopies between their products make it difficult to construct succesive bounding (co)chains. The more serious problem is that a manageable bounding cochain of the 3-term relation $g_{ij}g_{jk}+g_{jk}g_{ki}+g_{ki}g_{ij}=0$ is not known except for the case of characteristic zero where the configuration space integral is available. If we consider the  homology, it is also diffcult to find a nice bounding chain of the relations defining Poisson operad.  
In the $\CECHC$ech complex, we can realize monomials of the generators by very simple maps and commutativity is managed by Eilenberg-Zilber shuffle product which is strictly symmetric in the normalized chain (while  roles of the two products are not exactly the same) and we can take a very simple bounding chain of the 3-term relation without going into the details of the definition of a chain level operation  (see section \ref{Snon-triviality}). \\
\indent In comparison with the method in \cite{salvatore}, the proof in that paper is concise and  comprehensible, while the present author feels that it might be difficult to find a systematic way to construct  a bounding chain in the surjection operad such as `$\gamma$'  in the proof of Lemma 5.1 of \cite{salvatore}. In our method, we can construct  bounding chains and confirm non-triviality of cycles with graphical and topological intuition. \\
\indent Our method can be also used to describe higher differential for the case of $d\geq 3$ but elements with possibly non-trivial differentials   consist of graphs with five edges or more in these dimensions, so we feel it would be better to begin with writing down the computation of relatively simple elements in the case $d=2$, see Remark \ref{Rdiscussion} for more detailed explanation.\\
\indent The outline of the paper is as follows. In section \ref{SPK}, we introduce two functors which are closely related to Sinha's sequence. These functors come from the embedding calculus and not essentially new, but we make minor adjustment to their definition so as to suit the later constructions. We also prove an equivalence between Sinha's cosimplicial model and our functors. In section \ref{SThom}, we introduce a functor consisting of Thom spaces of  fat diagonals. To make the structure maps of the functor preserve the diagonals on the point-set level, we take care about parameters such as the radius  of a tubular neighborhood. We prove stable and chain level equivalences between the functors in section \ref{SPK} and the functor of Thom spaces. The contents of this section are similar to those of section 3 of \cite{moriya1}. The proofs are minor variations of those given there, but we give the full proofs for the reader's convenience. In section \ref{Sspec_seq}, we define spectral sequences using the functor defined in the previous section and explain the relation between these sequences and Sinha's one. In section \ref{Scondensed} we introduce a class of maps used in the computation and prove their properties. In sections \ref{Sch2}, \ref{Snon-triviality} and \ref{Sch3}, we prove parts 1,2 and 3 of Theorem \ref{Tmain}, respectively, using results in sections \ref{SThom}-\ref{Scondensed}. We construct bounding chains and prove vanishing of some  terms of differentials of the chains and then, prove cancellation or non-triviality of the remaining terms.  In section \ref{Sabsolute}, we prove Corollary \ref{Cnon-formality_ch2}.  The $(i,\pm)$-contractions defined in section \ref{Scondensed} are only used in section \ref{Sch3} so one can read sections \ref{Sch2}, \ref{Snon-triviality} and \ref{Sabsolute} skipping the contents of section \ref{Scondensed} after Lemma \ref{Lcondensed_collapse} in order to understand the outline of the computation quickly. 
\subsection{Correction to the published version}
 The author found errors after a version of this paper was  published  (Quarterly Journal of Mathematics {\bf 75} (2024) no. 3, 1073–1121). In the present version, the errors are corrected as follows. (The main results are still valid.)
\begin{enumerate}
\item We have modified the definition of the number $c_{\alpha\beta}$ given in Definition \ref{Dpunctured}. Under the former definition, the proof of Lemma \ref{Ldiagonal_incl} included an error on the evaluation $|\bar x_{\alpha'}-\bar x_{\beta'}|$. This correction influences the claim of Lemma \ref{Lcondensed_collapse0} and the proofs of Lemmas \ref{Lcondensed_collapse} and \ref{Li-cont2}, which we modified. The modifications are minor changes. 
\item The other corrections are made on Definitions \ref{DPK}, \ref{Dpunctured}(5),(6), \ref{Dspectrafunctor} (6),(7) and the proof of Theorem \ref{TAtiyahdual}. In the published version, the space $\mathcal{C}(P)$ given in Definition \ref{Dpunctured} may not have the expected homotopy type since it includes the points in the boundary of a disc. These (minor) corrections are made to fix this.
\end{enumerate} 
\subsection{Notation and Terminology}\label{SSNT}

\begin{enumerate}
\item $\CG$ denotes the category of unpointed topological spaces and continuous maps and $\CG_*$ the corresponding pointed category.

\item For an unpointed (resp. pointed) topological space $X$, $C_*(X)$ (resp. $\bar C_*(X)$) denotes the  normalized singular chain complex (resp. the reduced normalized singular chain complex) with coefficients in a fixed field $\kk$. For chains $a\in \bar C_*(X)$, $b\in \bar C_*(Y)$, $a\wedge b\in \bar C_*(X\wedge Y)$ denotes the Eilenberg-Zilber shuffle product. For the transposition map $T:X\wedge Y\to Y\wedge X$, we have the following identity at chain level:
\[
T_*(a\wedge b)=(-1)^{|a||b|}b\wedge a, 
\]
where $|a|$ denotes the degree of $a$. This product satisfies the usual Leibniz rule for the singular differential: $d(a\wedge b)=da\wedge b+(-1)^{|a|}a\wedge db$. Throughout the paper,  for a pointed map $f:X\to Y$, the subscript $*$ for the pushforward on homology is omitted if no confusion occurs.  So $f_*(a)$ is denoted by $f(a)$. For two maps $f,g:X\to Y$, we denote $f(a)\pm g(a)$ by $(f\pm g)(a)$. When $1/2\in \kk$, for two maps $f^+, f^-:X\to Y$ with the superscripts $\pm$ on the same symbol, $f^{\pm}(a)$ denotes the average $(f^+(a)+f^-(a))/2$. We will use  combinations of these abbreviations. For example, $(f^\pm+g^\pm)(a)$ denotes $(f^+(a)+f^-(a)+g^+(a)+g^-(a))/2$.

\item Let $X$ be an unpointed space. We denote  $X$ with disjoint basepoint by $X_+$ and the one-point compactification of $X$, which is also regarded as a pointed space,  by $X^*$. We set  $S^k=(\RR^k)^*$ and $[0,\infty]=[0,\infty)^*$.  We denote the interval $[0,1]$ (regarded as an unpointed space) by $I$. We fix a fundamental cycle $w_{S^2}\in \bar C_2(S^2)$, chains $w_{\infty}\in \bar C_1([0,\infty])$ and $w_I\in \bar C_1(I_+)$ such that $dw_\infty=\{0\}$ and $dw_I=\{1\}-\{0\}$, where $\{0\}, \{1\}$ are cycles represented by $0,1\in [0,1]\subset [0,\infty]$. For  $k\geq 0,\  l>0$, we set 
\[
\begin{split}
w_k&=(w_{S^2})^{\wedge 2}\wedge (w_\infty)^{\wedge k} \qquad \in \bar C_{4+k}(S^4\wedge [0,\infty]^{\wedge k}),\\
w_{k l}&=w_k\wedge (w_I)^{\wedge l}\qquad\qquad\quad \in \bar C_{4+k+l}(S^4\wedge [0,\infty]^{\wedge k}\wedge (I_+)^{\wedge l}).
\end{split}
\]
\item $|-|$ denotes the standard Euclidean norm. We define elements $u,v\in \RR^d$ by
\[
u=(1,0,\dots, 0), \text{ \ and \ } v=(0,1,0,\dots, 0).
\]
\item We denote by $=_1$, $<_1$,$\leq_1, \dots$ etc., the relations between the first coordinates of elements of $\RR^d$. For example,  for two elements $x=(x_1,\dots, x_d), y=(y_1,\dots, y_d)\in \RR^d$ and a number $t\in \RR$, $x<_1y$ and $x=_1y$ mean $x_1<y_1$ and $x_1=y_1$, respectively, and  $x<_1 t$ means $x_1< t$.

\end{enumerate}
\section{Punctured knot model $\PK$ and  configuration space model $\mathcal{C}$ }\label{SPK}
For technical reasons, we mainly deal with a version of the punctured knot model defined below instead of the cosimplicial model.  Throughout the paper, $n$ denotes a positive integer.
\begin{defi}\label{Dpartition0}
A {\em partition $P$ of } $[n+1]=\{0,1,\dots, n+1\}$ is a set of subsets of $[n+1]$ satisfying the following conditions.
\begin{enumerate}
\item $\cup_{\alpha \in P}\alpha =[n+1]$.
\item Each element of  $P$ is non-empty.
\item If $\alpha, \beta \in P$, either of $\alpha=\beta$ or $\alpha\cap \beta=\emptyset$ holds.
\item For each element  $\alpha \in P$, if numbers $i,j,k$ satisfy $i<j<k$ and $i,k\in \alpha$, $j$ also belongs to $\alpha$. 
\item $\# P\geq 2$, in other words the set consisting of the single element $[n+1]$ is not a partition. 
\end{enumerate} 
We call an element of  $P$ a {\em piece of} $P$. We regard a partition as a totally ordered set via the order induced by $[n+1]$. A partition $Q$ is said to be a {\em subdivision of} $P$ if $Q\not=P$ and each piece of $Q$ is contained in some piece of $P$. We let $\PP_n$ denote the category (or poset) of partitions of $[n+1]$. Its objects are the partitions of $[n+1]$. 
A unique non-identity morphism $P\to Q$ exists if and only if $Q$ is a subdivision of $P$. By abuse of notation, we let  $[n+1]$ represent the partition $\{\{0\},\{1\},\dots, \{n+1\}\}$ consisting  of singletons.  
\end{defi}
\begin{exa}\label{Epartition}
The following sets are examples of objects of $\PP_4$:
\[
P=\{\{0\},\{12\},\{345\}\}, \qquad Q=\{\{0\},\{12\}, \{3\}, \{45\}\}.
\]
We omit commas in pieces, so the piece $\{12\}$ denotes $\{1,2\}$.  We have $\{0\}<\{12\}<\{345\}$ for the pieces of $P$. We see that $\#P=3$, $\#Q=4$, and  $Q$ is a subdivision of $P$.  
\end{exa}

\begin{defi}\label{DPK}
\begin{enumerate}
\item Throughout the paper, we fix positive numbers $\rho, \eps$ and $c_0,\dots, c_{n+1}$ satisfying 
\[
c_0+\cdots +c_{n+1}=1, \quad \rho<1,\quad 100\eps/\rho<c_0, \quad 100\bigl(\eps/\rho+\sum_{j<i}c_j\bigr)<c_i \quad (1\leq i\leq n+1).
\] (The last two inequalities are not used until section \ref{Scondensed}.)
\item We define a functor $\PK:\PP_n\to \CG$ as follows.
Set $b_i=c_0+\cdots +c_{i-1}$ for $1\leq i\leq n+1$. For a partition $P=\{\alpha_0<\cdots <\alpha_{p+1}\}$,  $S_P\subset \{1,\dots, n+1\}$ denotes the set of minimum elements in each of $\alpha_1,\dots, \alpha_{p+1}$ (so $\# S_P=p+1$).  Let $D^d\subset \RR^d$ be the $d$-dimensional unit closed disk. The space $\PK(P)$ is the space of embeddings 
\[
f:[0,1]-\bigcup_{i\in S_P} \Bigl(b_i-\frac{c_{i-1}}{4}, b_i+\frac{c_i}{4}\Bigr)\to D^d
\]   such that
\begin{enumerate}
\item $f(0)=-u$ and $f(1)=u$, 
\item $f(t)\in Int(D^d)$ for $t\in (0,1)$, where $Int(D^d)$ is the interior of $D^d$, and 
\item in each connected component of the domain, $f(t)=x+at u$ for some constant elements $x\in D^d$ and $a>0$. 
\end{enumerate}
For a subdivision $Q$ of $P$, the map $\PK(P)\to \PK(Q)$ is the  restriction induced by the inclusion $S_P\subset S_Q$.
\end{enumerate}
\end{defi}
Let $\Delta_n$ be the category whose objects are $[k]=\{0,1,\dots, k\}$ ($0\leq k\leq n$) and whose morphisms are the weakly order preserving maps. 
\begin{defi}\label{DKontsevich}
Let $\KK_d$ denote the $d$-dimensional Kontsevich operad defined in \cite{sinha1}. Its $p$-th term $\KK_d(p)$ is a version of Fulton-Macpherson compactification of the ordered configuration space $Conf_p(\RR^d)$ of $p$ points in $\RR^d$. In $\KK_d(p)$, some of the points in a configuration are allowed to collide in a manner that the direction of collision is recorded in the topology. The operad $\KK_d$ is equipped with a map $\ASS\to \KK_d$ from the associative operad as in \cite{sinha1}. This map induces a cosimplicial space $\KK_d^\bullet$, which we call {\em Sinha's cosimplicial space}, via the framework of McClure-Smith. A coface map of $\KK_d^\bullet$ is given by replacing a point in a configuration with the two points colliding to each other at the point along the vector $u$. We denote the restriction of $\KK^\bullet_d$  to $\Delta_n$ by $\KK^{\leq n}_d$. (Since we will not use  the structure of $\KK_d^\bullet$ in this paper except for the proof of the following lemma which is substantially included in \cite{sinha, sinha1}, we omit details of the definition. ) 
\end{defi}
The homotopy limit of $\KK_d^{\leq n}$ is weakly homotopy equvalent to the $n$-th stage of Taylor tower associated to the space $\Embbar_c(\RR,\RR^d)$. We define a functor $\mathcal{F}:\PP_n\to \Delta_n$ by $P\mapsto [\# S_P-1]$ and $(P\to Q)\mapsto ([\# S_P-1]\cong S_P\subset S_Q\cong [\# S_Q-1])$, where $\cong$ denotes the order preserving bijection.
Let $C$ be a category. We say a natural transfomation  $(F\rightarrow G):C \to \CG$ between two functors  is a {\em termwise homotopy equivalence} if it induces a homotopy equivalence $F(c)\to G(c)$ for each object $c\in C$. Two functors are termwise homotopy equivalent if they are connected by a zigzag of termwise homotopy equivalences.
\begin{lem}\label{LPK}
The two functors $\PK$ and $\mathcal{F}^*\KK_d^\bullet :\PP_n\to \CG$ are termwise homotopy equivalent. 

\end{lem}
\begin{proof}
This is almost implicit in \cite{sinha, sinha1} but we shall write down some details of the proof. 
Let  $\Emb_c(I, D^d)$ be the space of  embeddings $I\to D^d$  with fixed endpoints in $\partial D^d$ and fixed tangent vectors on them with $C^\infty$-topology.  The punctured knot model, substantially described in \cite{weiss}, is a functor inducing a stage of Taylor tower of the knot space $\Emb_c(I,D^d)$ on the homotopy limit. It is defined as follows. Let $\{J_i\}_{1\leq i\leq n+1}$ be a set of pairwise disjoint closed subintervals (or one-point sets) of $I$ whose order of  labels $i$ is consistent with the usual order of $I$. Let $P'_n$ be the poset of non-empty subsets of $\{1,\dots, n+1\}$. $\PP_n$ is isomorphic to $P'_n$ as a poset by $P\mapsto S_P$. We regard $\PK$ as a functor $:P'_n\to \CG$ via this isomorphism. The punctured knot model is the functor $P'_n\to \CG$ which sends $S\in P'_n$ to the space of embeddings $I-(\cup_{i\in S}J_i)\to D^d$ with the same endpoint condition as $\Emb_c(I,D^d)$. (Its homotopy limit is the $n$-th stage of the Taylor tower.) 
Let $\tilde \PK$ be the punctured knot model for $J_i=\{b_i\}$. In \cite{sinha}, the termwise homotopy equivalences
\begin{equation}
\DD_n\langle[D^d]\rangle\to \EEE_n\langle[D^d]\rangle\leftarrow \EEE_n(D^d) \label{EQPK}
\end{equation}
between functors  $P'_n\to \CG$ are constructed. Here, $\EEE_n(D^d)$ is the punctured knot model for some choice of $J_i$, and $\DD_n\langle[D^d]\rangle$ is a functor which sends $S\in P'_n$ to a compactified ordered configuration space of $(\#S+1)$ points in $D^d$ with a unit tangent vector attached to each point, whose first and last points and their vectors are fixed at the boundary, and an inclusion $S\subset T$ to a map replacing a point in a configuration with a set of points which are infinitesimally close and arranged in the direction of the vector  on the point, and $\EEE_n\langle[D^d]\rangle$ is a functor which sends $S$ to the union of the spaces of $\DD_n\langle[D^d]\rangle$ and $\EEE_n(D^d)$ associated to $S$, which is topologized so that points in the former space are limits of shrinking punctured knots in the latter space until they become tangent vectors. (See subsections 5.3 and 5.4 of \cite{sinha} for the precise definitions. Precisely speaking, in \cite{sinha}, the labeling of $\{J_i\}$ is reverse to ours and only constant speed embeddings are considered but these differences do not cause any serious problem.) Let $G_n(D^d)$ be the  cubical model of the space of immersions $\mathrm{Imm}_c(I,D^d)$ satisfying the same endpoint condition as $\Emb_c(I,D^d)$  defined by sending $S$  to $(S^{d-1})^{\#S-1}$ and an inclusion $S\subset T$ to a diagonal map modeled by cutting off components of a  punctured knot  (see Definition 5.14 of \cite{sinha}, where the functor is denoted by $\mathcal{G}^m_k$). A termwise homotopy equivalence $\EEE_n(D^d)\to \tilde \PK$ is given by shrinking sub-intervals. This equivalence and those in the diagram (\ref{EQPK})  fit into the following commutative diagram
\[
\xymatrix{\tilde \PK\ar[d] &\EEE_n(D^d)\ar[r]\ar[l]\ar[d]&\EEE_n\langle[D^d]\rangle\ar[d] &\EEE_n(D^d)\ar[l]\ar[d] \\
G_n(D^d) &\ar[r]_{=}\ar[l]^{=}G_n(D^d) &G_n(D^d) &G_n(D^d)\ar[l]^{=},}
\]
where the leftmost vertical map sends an embedding to the collection of unit tangent vectors at the middle points of components other than the two components including endpoints, and the rightmost vertical map is the projection to the attached tangent vectors other than those on the first and last points, and the other vertical maps are defined similarly. Since all the vertical ones are fibrations, their fibers taken by the termwise manner are also termwisely homotopy equivalent. The fiber of the leftmost one admits a natural inclusion from $\PK$, which is a termwise homotopy equivalence. It is proved in \cite{sinha1} that the fiber of the rightmost map is termwisely homotopy equivalent to $\mathcal{F}^*\KK_d^{\leq n}$ ($\rho^m_k$ defined in the paragraph before Proposition 5.16 of  \cite{sinha1} is the same as the rightmost map). Thus, we have proved the claim.
\end{proof}

\begin{defi}\label{Dpunctured}
\begin{enumerate}
\item For $P\in \PP_n$, we define a positive number $\eps_P$ by
\[
\eps_P=\frac{\eps}{8^{n-p}},\qquad \text{where}\ \ p=|P|-2.
\]
Here, $\eps$ is the number fixed in Definition \ref{DPK}.

\item 
For $P\in \PP_n$ and $\alpha, \beta \in P$ with $\alpha<\beta$, we set 
\[
\begin{split}
c_\alpha:=& \ \sum_{i\in \alpha}c_i, \\
c_{\leq \alpha}:= & \ c_\alpha/2+\sum_{\gamma\in P, \gamma<\alpha}c_\gamma, \\
c_{\geq \alpha}:= & \ c_\alpha/2+\sum_{\gamma\in P, \gamma>\alpha}c_\gamma, \\
c_{\alpha \beta}:= & \ (c_\alpha+c_\beta)/2, \\
c_{\alpha\to \beta}:= & \ c_{\alpha\beta}+ \sum_{\gamma\in P, \alpha <\gamma< \beta}c_\gamma. 
\end{split}
\]
In fact, this definition does not depend on the pieces of $P$ other than $\alpha$ (and $\beta$). We abbreviate $c_{\leq \{i\}}$ (resp. $c_{\geq \{i\}}$, $c_{\{i\} \{j\} }$, $c_{\{i\}\to\{j\}}$) as $c_{\leq i}$ (resp. $c_{\geq i}$, $c_{ij}$, $c_{i\to j}$).
\item Let $Q$ be a subdivision of $P$ and write $P=\{\alpha_0<\cdots <\alpha_{p+1}\}$ and $Q=\{\beta_0<\cdots <\beta_{q+1}\}$. We define an affine monomorphism $e_{P,Q}:\RR^{dp}\to \RR^{dq}$ as follows. Let $(x_i)_{1\leq i\leq p}\in (\RR^d)^p$ be an element. For convenience, we set 
\[
x_0=(-1+\rho c_{\alpha_0}/2)u,\quad x_{p+1}=(1-\rho c_{\alpha_{p+1}}/2)u.
\] For $0\leq i\leq p+1$, suppose that $\alpha_i$ includes exactly $k$-pieces of $Q$, say $\beta_l,\dots, \beta_{l+k-1}$. We create the line segment which is centered at $x_i$, parallel to $u$, and of length $\rho c_{\alpha_i}$, and divide this segment into the $k$ little segments of length $\rho c_{\beta_l},\dots ,\rho c_{\beta_{l+k-1}}$ arranged from left to right ($-u$ to $u$). Let $y_{l+j-1}$ be the center of the $j$-th little segment. We set $e_{P,Q}((x_i)_{i})=(y_{m})_{1\leq m\leq q}$. For $Q=[n+1]$  we write $e_{P,Q}=e_P$ (see Figure \ref{Flinear_emb}). It is clear that $e_{Q,R}\circ e_{P,Q}=e_{P,R}$ for a subdivision $R$ of $Q$. 
\item For $P\in \PP_n$, let $P^\circ\subset P$ be the subset of pieces which are neither the minimum nor the maximum. Put $p=\# P-2$. We often label the $i$-th component of an element of $\RR^{dp}=(\RR^d)^p$ with the $i$-th piece of $P^\circ$, so an element of $\RR^{dp}$ is expressed  like $(x_\alpha)_{\alpha\in P^\circ}$.
\item We define a functor $\mathcal{C}:\PP_n\to \CG$ as follows.
\begin{enumerate}
\item  For $P\in \PP_n$, $\mathcal{C}(P)$ is the subspace of $\RR^{dp}$ consisting of elements $(x_\alpha)_{\alpha\in P^\circ}$ satisfying the following inequalities for each $\alpha$ :
\[
\begin{split}
|x_\alpha|& < 1-\rho c_\alpha/2,\  \text{and} \\
-1+\rho c_{\leq \alpha}-\frac{\eps_P}{8}\  &<_1 x_\alpha  <_1\  1-\rho c_{\geq \alpha}+\frac{\eps_P}{8},\ \text{and} \\
|x_\alpha&-x_\beta|  > \rho c_{\alpha\beta}-\frac{\eps_P}{8}.
\end{split}
\]
\item If $Q$ is a subdivision of $P$,  the corresponding map $ \mathcal{C}(P)\to \mathcal{C}(Q)$ is given by the map $e_{P,Q}$ defined above. This is clearly well-defined.

\end{enumerate} 
\item We can define an inclusion $\mathcal{C}(P)\to \PK (P)$ by creating the line segment centered at $x_\alpha$ of length $\rho (c_{\alpha}-(c_{i_0}+c_{i_1})/4)$ for $(x_\alpha)_{\alpha\in P^\circ}$, where $i_0=\min \alpha$ and $i_1=\max\alpha$. These inclusions form a natural transformation $\mathcal{C}\to \PK$. 
\end{enumerate}

\end{defi}

\begin{figure}
\begin{center}
\input{LinearEmb.TEX}
\end{center}
\caption{the map $e_{P}$ for $P=\{\{01\},\{23\},\{4\},\{5\}\}$} \label{Flinear_emb}
\end{figure}
\begin{lem}\label{Lnathomologyiso}
For each $P\in \PP_n$, the inclusion $\mathcal{C}(P)\to \PK(P)$ is a homology isomorphism.
\end{lem}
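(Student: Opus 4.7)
The plan is to prove the stronger statement that $\mathcal{E}(P)\hookrightarrow \PK(P)$ is a strong deformation retract, which a fortiori yields the claimed homology isomorphism.

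I would begin by reparametrizing $\PK(P)$ geometrically. For $f\in \PK(P)$ and each piece $\alpha\in P$, the restriction of $f$ to the $\alpha$-component of its domain has image a line segment in $D^d$ parallel to $u$; let $y_\alpha(f)\in D^d$ and $L_\alpha(f)>0$ denote its center and length. This gives a homeomorphism between $\PK(P)$ and the space of tuples $(y_\alpha,L_\alpha)_{\alpha\in P}$ satisfying (a) the boundary relations $y_{\alpha_0}=-u+\tfrac{L_{\alpha_0}}{2}u$ and $y_{\alpha_{p+1}}=u-\tfrac{L_{\alpha_{p+1}}}{2}u$, (b) each segment is contained in $D^d$, and (c) the segments are pairwise disjoint. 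Under this identification, $\mathcal{E}(P)$ corresponds to the subspace where $L_\alpha=\rho c_\alpha$ for every $\alpha$ and the $y_\alpha$ satisfy the explicit inequalities of Definition~\ref{Dpunctured}.

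I would construct the retraction $H:\PK(P)\times I\to \PK(P)$ in two stages. Stage one interpolates $L_\alpha(s)=(1-s)L_\alpha+s\rho c_\alpha$ and simultaneously slides the first coordinate of each center so that $(y_\alpha(s))_1$ approaches the interval $[-1+\rho c_{\leq\alpha},\,1-\rho c_{\geq\alpha}]$ as $s\to 1$. Stage two, if needed, radially adjusts the perpendicular components of $y_\alpha$ to enforce $|y_\alpha|\leq 1-\rho c_\alpha/2$ and $|y_\alpha-y_\beta|\geq \rho c_{\alpha\beta}$. The composite is the identity on $\mathcal{E}(P)$ and ends inside $\mathcal{E}(P)$ by construction.

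The main obstacle is preserving condition (c) throughout stage one: two parallel segments in $\PK(P)$ with nearly equal first coordinates may be disjoint only because of a perpendicular offset, and enlarging their lengths could cause collision. To handle this, I would build the first-coordinate sliding as a piecewise-linear rearrangement of the $u$-axis, depending continuously on $f$, that keeps the first-coordinate separation between pairs of centers growing at least as fast as the sum of the corresponding half-lengths; such a rearrangement can be constructed inductively along the total order on $P$ using the boundary conditions. Verifying continuity, pairwise disjointness, and $D^d$ containment at each intermediate time is then a direct but somewhat tedious geometric check.
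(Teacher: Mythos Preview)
Your strategy is quite different from the paper's, and the decisive step is left as a sketch. The paper does not attempt a deformation retraction: it first composes with the evident homotopy equivalence $\PK(P)\to\Conf_p(D^d)$ given by taking centers of the interior segments, and then argues that the resulting codimension-$0$ embedding $\mathcal{E}(P)\hookrightarrow\Conf_p(D^d)$ is a homology isomorphism via Poincar\'e--Lefschetz duality. Duality converts the question into comparing $H^*(D^{dp},\Delta'_{\mathrm{fat}}\cup\partial'D^{dp})$ with $H^*(D^{dp},\Delta_{\mathrm{fat}}\cup\partial D^{dp})$, where $\Delta'_{\mathrm{fat}}$ is the thickened fat diagonal cut out by the third inequality defining $\mathcal{E}(P)$ and $\partial'D^{dp}$ is a collar of the boundary. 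That comparison is handled by the \v{C}ech spectral sequences for the covers $\{\Delta'_{\alpha\beta}\}$ and $\{\Delta_{\alpha\beta}\}$: each inclusion $\Delta_{\alpha\beta}\hookrightarrow\Delta'_{\alpha\beta}$ (and the analogous inclusions of finite intersections, relative to the boundary) is a homotopy equivalence by an elementary orthogonal-projection-plus-radial-retraction, so the map of $E_1$-pages is an isomorphism.

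The gap in your outline is Stage~2. The separation condition $|y_\alpha-y_\beta|\geq\rho c_{\alpha\beta}$ is exactly what distinguishes $\mathcal{E}(P)$ from an ordinary configuration space, and ``radially adjust the perpendicular components'' does not name a map that enforces it: contracting perpendicular components toward $0$ shrinks pairwise distances for centers sharing a first coordinate, while dilating them pushes centers out of the disk. What is actually needed is a multi-body repulsion flow on $\Conf_p(D^d)$ that raises every pairwise distance to its threshold, stays inside $D^d$, is continuous in $f$, and fixes $\mathcal{E}(P)$ pointwise; constructing this is essentially the whole problem, and nothing in your sketch addresses it. Stage~1 has an analogous difficulty: a single ``rearrangement of the $u$-axis'' cannot separate two centers that start with identical first coordinate, and treating pairs differently according to whether their perpendicular offset vanishes is not a continuous choice in $f$. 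The paper's duality-and-\v{C}ech argument is designed precisely to avoid moving points; what it buys is that one only ever compares a single thickened pairwise diagonal (or a finite intersection of them) with its thin counterpart, and that comparison is elementary.
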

\begin{proof}
Write $P=\{\alpha_0<\cdots <\alpha_{p+1}\}$. Let $\Conf_p(D^d)$ be the ordered configuration space of $p$ points in $D^d$. Since the map $\PK(P)\to \Conf_p(D^d)$ which takes a collection of line segments $(l_{\alpha_i})$ to the configuration of centers of $l_{\alpha_i}$ ($i\not=0, p+1$), is a homotopy equivalence, we only have to prove the composition $\mathcal{C}(P)\to \PK(P)\to \Conf_p(D^d)$ is a homology isomorphism. This is a codimension $0$ embedding and fits into the following commutative diagram.
\[
\xymatrix{H_*(\mathcal{C}(P))\ar[r]\ar[d] & H_*(\Conf_p(D^d)) \ar[d] \\
H^*(D^{dp}, \Delta'_{\fat}\cup \partial'D^{dp})\ar[r] & H^*(D^{dp}, \Delta_{\fat}\cup \partial D^{dp})}
\]
Here, $D^{dp}=(D^d)^p$, and $\partial' D^{dp}\subset D^{dp}$ is the subspace of elements which do not satisfy at least one of the first two inequalities in (4),(a) of Definition  \ref{Dpunctured} for some $\alpha \in P$ ( so this is a collar of $\partial D^{dp}$). $\Delta'_{\fat}$ is the subspace of elements which do not satisfy the third inequality of the same definition for some pair $\alpha,\beta \in P$. $\Delta_{\fat}$ is the fat diagonal of $(D^d)^p$.   The vertical arrows are Poincar\'e-Lefschetz duality isomorophisms, and the bottom horizontal arrow is induced by the identity. We consider the $\CECHC$ech spectral sequences of pairs $(\Delta'_{\fat}, \Delta'_{\fat}\cap \partial'D^{dp})$ and $(\Delta_{\fat}, \Delta_{\fat}\cap \partial D^{dp})$ with respect to the coverings $\{\Delta'_{\alpha\beta}\}_{\alpha,\beta}$ and $\{\Delta_{\alpha \beta}\}_{\alpha,\beta}$, where $\Delta'_{\alpha\beta}$ is the subspace of elements which do not satisfy the third inequality in (4)(a) of the definition for $\alpha,\beta$ and $\Delta_{\alpha\beta}$ is the subspace of elements whose $\alpha$- and $\beta$- components are the same. The inclusion $\Delta_{\alpha\beta}\to \Delta'_{\alpha\beta}$ is clearly a homotopy equivalence. The inclusion $\Delta_{\alpha\beta}\cap \partial D^{dp}\to \Delta'_{\alpha\beta}\cap \partial' D^{dp}$ is also a homotopy equivalence since its homotopy inverse is given by the orthogonal projection to $\Delta_{\alpha\beta}$ followed by the projection to $\partial(D^{dp})$ from the light source $0$. So, the inclusion induces an isomorphism between the relative homology of the pairs. We also see that the inclusion  induces an isomorphism between the relative homology of intersections similarly, so by the  spectral sequence, we see that the bottom arrow of the square  is an isomorphism.
\end{proof}

\section{Thom space model $\TT$}\label{SThom}
In this section, we define a functor $\TT:\PP_n^{op}\to \CG_*$ consisting of Thom spaces  of fat diagonals. We prove that the chain of this functor is  equivalent to the cochain of  the punctured knot model $\PK$. 

\begin{defi}\label{Dpartition} Let $P\in \PP_n$ be a partition and set $p=\# P-2$.
\begin{enumerate}

\item Let $\nu_{P}$ be the $\eps_P$-neighborhood of $e_P(\RR^{dp})$ i.e.
\[
\nu_P=\{y\in \RR^{dn}\mid |y-e_P(x)|<\eps_P \text{ for some } x\in \RR^{dp}\}.
\]  For  $\alpha, \beta\in P^\circ$,  we define a   subspace $D_{\alpha\beta}$ of $\RR^{dp}=(\RR^d)^p$ by
\[
D_{\alpha\beta}=D_{\alpha\beta}(P)=\{(x_\gamma)_{\gamma \in P^\circ} \mid |x_\alpha-x_\beta|\leq d_{\alpha\beta}(P)\}, 
\]
where 
\[
d_{\alpha\beta}(P)=\rho c_{\alpha \beta}-\eps_P.
\]
We denote by $E_{\alpha}=E_\alpha(P)$  the subspace of  elements $(x_\gamma)_\gamma$ satisfying the following condition:
\[
\begin{split}
|x_\alpha|& \geq 1-\rho c_\alpha/2+\eps_P,\  \text{or} \\
  x_\alpha&\leq_1\, -1+\rho c_{\leq \alpha}-\eps_P,\  \text{or} \\  x_\alpha&\geq_1\,  1-\rho c_{\geq \alpha}+\eps_P.
\end{split}
\] 
Set 
\[
E_P=\bigcup_{\alpha\in P^\circ}E_\alpha.
\] 
Let $\pi_P:\RR^{dn}\to e_P(\RR^{dp})=\RR^{dp}$ be the orthogonal projection. 
We set
\[
\TT_{\emptyset_P}=\RR^{dn}/\{(\RR^{dn}-\nu_P)\cup \pi_P^{-1}(E_P)\}.
\]
 For two pieces $\alpha, \beta\in P^\circ$ with $\alpha<\beta$, let $\TT_{\alpha\beta}$ denote the subspace of $\TT_{\emptyset_P}$ consisting of the basepoint and the elements represented by  $x\in \nu_P$ satisfying $\pi_P(x)\in D_{\alpha\beta}$.

\end{enumerate}
\end{defi}

\begin{lem}\label{Ldiagonal_bound}
Let $Q$ be a subdivision of a partition $P\in \PP_n$. We have
\[
(\RR^{dn}-\nu_Q)\cup \pi_Q^{-1}(E_Q)\subset (\RR^{dn}-\nu_P)\cup \pi_P^{-1}(E_P).
\]
In particular, the identity on $\RR^{dn}$ induces the collapsing map 
$\delta'_{P,Q}:\TT_{\emptyset_Q}\to \TT_{\emptyset_P}.
$
\end{lem}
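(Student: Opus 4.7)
My plan is to verify the set inclusion directly. First I observe that since $Q$ strictly subdivides $P$ we have $q\ge p+1$, so the formula $\eps_P=\eps/8^{n-p}$ gives $\eps_Q/\eps_P=8^{q-p}\ge 8$; in particular $2\eps_P<\eps_Q$. Because $e_P=e_Q\circ e_{P,Q}$, the image $e_P(\RR^{dp})$ is contained in $e_Q(\RR^{dq})$, so any point within distance $\eps_P$ of the former is within $\eps_P<\eps_Q$ of the latter. Hence $\nu_P\subset\nu_Q$, which yields $\RR^{dn}-\nu_Q\subset\RR^{dn}-\nu_P$ and disposes of the first half of the claimed inclusion.

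For the remainder, I would argue the contrapositive: suppose $y\in\nu_P$ with $\pi_P(y)\notin E_P$, and show $\pi_Q(y)\notin E_Q$. Write $y=e_P(x)+\delta$ with $x=\pi_P(y)$ and $\delta$ orthogonal to the affine subspace $e_P(\RR^{dp})$, so $|\delta|<\eps_P$. A direct minimization of $|y-e_Q(x')|^2$ over $x'\in\RR^{dq}$, using that each coordinate $e_Q(x')_m$ depends only on the single input coordinate $x'_j$ for which $m\in\beta_j$, shows that for each interior piece $\beta_j$ of $Q$ one has
\[
\pi_Q(y)_j \,=\, e_{P,Q}(x)_j+\bar\delta_j,\qquad \bar\delta_j:=\frac{1}{|\beta_j|}\sum_{m\in\beta_j}\delta_m,
\]
and the Cauchy--Schwarz inequality gives $|\bar\delta_j|\le|\delta|<\eps_P$.

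Now fix such a $\beta_j$ and let $\alpha_i\in P$ be the piece containing $\beta_j$. If $\alpha_i$ is an interior piece of $P$, then by the construction of $e_{P,Q}$ we have $e_{P,Q}(x)_j=x_i+s_j u$ with $s_j:=\rho(c_{\le\beta_j}-c_{\le\alpha_i})$ and $|s_j|\le\rho(c_{\alpha_i}-c_{\beta_j})/2$. The three inequalities encoded by $x_i\notin E_{\alpha_i}$ (with slack $\eps_P$), combined with the bound $|\bar\delta_j|<\eps_P$ and the elementary identity $c_{\le\beta_j}-c_{\le\alpha_i}=c_{\ge\alpha_i}-c_{\ge\beta_j}$ (immediate from $c_{\le\gamma}+c_{\ge\gamma}=1$), translate via triangle inequalities into the three conditions defining $\pi_Q(y)_j\notin E_{\beta_j}$, each holding with slack $2\eps_P<\eps_Q$. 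If instead $\alpha_i$ is a boundary piece of $P$, then $x_i$ is the fixed boundary point and a direct computation using $c_{\le\alpha_0}=c_{\alpha_0}/2$ (respectively $c_{\ge\alpha_{p+1}}=c_{\alpha_{p+1}}/2$) gives $(x_i+s_j u)_1=-1+\rho c_{\le\beta_j}$ or $1-\rho c_{\ge\beta_j}$ depending on which side; the perturbation $\bar\delta_j$, with $|\bar\delta_j|<\eps_P<\eps_Q$, is too small to cross any of the $E_{\beta_j}$ thresholds.

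The step I expect to require the most care is the explicit identification of $\pi_Q(y)_j$ via the averaging formula above, which involves decomposing the normal space of $e_P(\RR^{dp})$ relative to the larger affine subspace $e_Q(\RR^{dq})$ and checking that the optimal $x'$ in $\RR^{dq}$ is $e_{P,Q}(x)+(\bar\delta_j)_j$; once this formula and the bound on $\bar\delta_j$ are in hand, the rest is bookkeeping with the constants $\rho$, $c_\gamma$, $\eps_P$, and $\eps_Q$.
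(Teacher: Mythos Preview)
Your proof is correct and follows essentially the same route as the paper: both arguments establish that $\pi_Q(y)$ lies within a small distance of $e_{P,Q}(\pi_P(y))$ and then transfer the three $E$-inequalities between $P$ and $Q$ using $\eps_Q\ge 8\eps_P$. The only differences are cosmetic --- you run the contrapositive and compute $\pi_Q(y)_j=e_{P,Q}(x)_j+\bar\delta_j$ exactly via averaging (yielding the sharper bound $|\bar\delta_j|<\eps_P$), whereas the paper argues the direct implication and bounds $|\pi_Q(y)-e_{P,Q}(\pi_P(y))|$ by $2\eps_P$ through a triangle inequality without identifying $\pi_Q(y)$ explicitly.
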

\begin{proof}
 Let $y\in \RR^{dn}$.
Write
\[
\begin{split}
\pi_{Q}(y)&=(x_\gamma)_{\gamma\in Q^\circ}, \\
\pi_{P}(y)&=(\bar x_{\gamma'})_{\gamma'\in P^\circ},\\
e_{P,Q}(\pi_P(y))&=(x'_\gamma)_{\gamma\in Q^\circ} .
\end{split}
\] Suppose $y\in \nu_P$.  Since the image of $e_P$ is contained in the image of $e_Q$ and the map $\pi_{Q}$ sends $y$ to its closest point in the image of $e_Q$, we have
\begin{equation}
|y-e_Q(x_\gamma)|\leq |y-e_Q(x'_\gamma)|=|y-e_P(\pi_P(y))|< \eps_P\ (<\eps_Q). \label{Eemb}
\end{equation}
So we see $y\in \nu_Q$. This means $\RR^{dn}-\nu_Q\subset \RR^{dn}-\nu_P$. We shall show $\nu_Q\cap \pi_Q^{-1}(E_Q)\subset (\RR^{dn}-\nu_P)\cup \pi_P^{-1}(E_P)$. Let $\alpha\in Q^\circ$ be a piece and $\alpha'$  the piece of $P$ including $\alpha$. If $y\not\in \nu_P$, we have nothing to prove, so suppose  $y\in \nu_P\cap\pi_Q^{-1}(E_Q)$ and we shall prove $y\in \pi_P^{-1}(E_P)$. Intuitively speaking, $x_\alpha$ and $\bar x_{\alpha'}$ are different in general but the difference is sufficiently small since we have chosen the radius of the tubular neighborhood $\nu_P$ sufficiently small. Since $\eps_P<\eps_Q$ and $\alpha\subset \alpha'$, the range given by the inequalities defining $E_{\alpha'}(P)$ is wider than the one given by the inequalities of $E_\alpha(Q)$ and the margin covers the difference between $x_\alpha$ and $\bar x_{\alpha'}$ so we have $y\in \pi_P^{-1}(E_P)$. We shall give a rigorous proof.  Suppose further $|x_\alpha|\geq 1-\rho c_\alpha/2+\eps_Q$. 
By the inequality (\ref{Eemb}), we have
\[
|e_Q(x'_\gamma)-e_Q(x_\gamma)|\leq |e_Q(x'_\gamma)-y|+|y-e_Q(x_\gamma)|\leq 2\eps_P.
\]
As $e_Q$ is a composition of a diagonal map with a parallel transport, we have
\begin{equation}
|(x'_\gamma)_\gamma-(x_\gamma)_\gamma |\leq 2\eps_P \quad (\text{so} \ |x'_\gamma-x_\gamma|\leq 2\eps_P\ \text{for each}\ \gamma\in Q^\circ). \label{Edifference}
\end{equation}
Let $\beta\subset \alpha'$ be the  set of elements smaller than the minimum of $\alpha$. We easily see 
\begin{equation}
\bar x_{\alpha'}-x'_\alpha=\frac{\rho}{2}(c_{\alpha'}-2c_\beta-c_\alpha)u. \label{Earrange}
\end{equation}
Putting these (in)equalities together, we see
\[
\begin{split}
|\bar x_{\alpha'}|& \geq |x_\alpha|-|x_\alpha-x'_\alpha|-|x'_\alpha-\bar x_{\alpha'}| \\
& \geq 1-\rho c_\alpha/2+\eps_Q-2\eps_P-\rho(c_{\alpha'}-c_\alpha)/2 \\
& \geq 1-\rho c_{\alpha'}/2+\eps_P
\end{split}
\]
since we have 
\[
\eps_Q-2\eps_P=\frac{1-2\cdot 8^{p-q}}{8^{n-q}}\geq \frac{1-2/8}{8^{n-q}}>\eps_P.
\]
We have shown that the first inequality in Definition \ref{Dpartition}(2) for $Q$ and the condition $y\in \nu_P$ imply the corresponding inequality for $P$. 
Similarly, suppose $x_\alpha\leq_1 -1+\rho c_{\leq \alpha}-\eps_Q$ (and $y\in \nu_P$). Let $\alpha'$ be  as above.
We see 
\[
\begin{split}
\bar x_{\alpha'} & =\bar x_{\alpha'}-x'_\alpha+x'_\alpha-x_\alpha+x_\alpha \\
& \leq_1 \frac{\rho}{2}(c_{\alpha'}-2c_\beta-c_\alpha)+2\eps_P+(-1+\rho c_{\leq \alpha}-\eps_Q) \\
&=-1+\rho c_{\leq \alpha'}-(\eps_Q-2\eps_P)  \quad (\because \ c_{\leq \alpha}+(c_{\alpha'}-2c_\beta-c_\alpha)/2=c_{\leq \alpha'}) \\
&<-1+\rho c_{\leq \alpha'}-\eps_P.
\end{split}
\]
This is  the second inequality in Definition \ref{Dpartition}(2) for $P$. 
Similarly, we see $\bar x_{\alpha'}\geq_1 1-\rho c_{\geq \alpha'}+\eps_P$ if $ x_{\alpha}\geq_1 1-\rho c_{\geq \alpha}+\eps_P$ and $y\in \nu_P$. Thus, we have shown the claimed inclusion.
\end{proof}
\begin{lem}\label{Ldiagonal_incl}
Let $Q$ be a subdivision of $P$ and $\alpha, \beta \in Q^\circ $ pieces with $\alpha<\beta$.  Let $\alpha', \beta' \in P$ be the pieces which include $\alpha$, $\beta$ respectively. Let $\delta'_{P,Q}:T_{\emptyset_Q}\to T_{\emptyset_P}$ denote the map given in Lemma \ref{Ldiagonal_bound}.
We have 
\[
\delta'_{P,Q}(\TT_{\alpha \beta})\ \subset \
\left\{
\begin{array}{cc}
\{*\} & (\text{if $\alpha'=\beta'$, or $\alpha'$ is the minimum of $P$, or $\beta'$ is the maximum of $P$}), \\
\TT_{\alpha' \beta'} & (\text{ otherwise }).
\end{array}
\right.
\] 

\end{lem}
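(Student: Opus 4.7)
I plan to verify the claim on non-basepoint representatives. A non-basepoint element of $\TT_{\alpha \beta}$ is represented by $y \in \nu_Q$ with $\pi_Q(y) \notin E_Q$ and $|x_\alpha - x_\beta| \leq \rho c_{\alpha \beta}(Q) - \eps_Q$, where $(x_\gamma) = \pi_Q(y)$. Since $\delta'_{P, Q}(y) = *$ already whenever $y \notin \nu_P$ or $\pi_P(y) \in E_P$, I may restrict throughout to $y \in \nu_P$ with $\pi_P(y) \notin E_P$, and then either derive a numerical contradiction (which forces $\delta'_{P,Q}(y) = *$ after all) or verify the defining inequality of $D_{\alpha' \beta'}$. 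Throughout I combine the hypothesis with (\ref{Edifference}), which gives $|x_\gamma - x'_\gamma| \leq 2\eps_P$ for every middle $\gamma \in Q$, where $(x'_\gamma) = e_{P, Q}(\pi_P(y))$, to obtain the master inequality
\[
|x'_\alpha - x'_\beta| \leq \rho c_{\alpha \beta}(Q) - \eps_Q + 4\eps_P.
\]
The essential numerical input is $\eps_Q = 8^{q - p} \eps_P \geq 8 \eps_P$, which produces a slack $\eps_Q - 4\eps_P \geq 4 \eps_P$ absorbing the various corrections below. Writing $x'_\alpha = \bar x_{\alpha'} + s u$ and $x'_\beta = \bar x_{\beta'} + tu$ as dictated by (\ref{Earrange}), with $(\bar x_{\gamma'}) = \pi_P(y)$, the remaining work is combinatorial.

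When $\alpha' = \beta'$, a direct computation of $s - t$ using the formula for $c_{\alpha \beta}(Q)$ gives $|x'_\alpha - x'_\beta| = |s - t| = \rho c_{\alpha \beta}(Q)$. Substitution into the master inequality then forces $\eps_Q \leq 4\eps_P$, which is absurd, so no such $y$ lies in $\nu_P$. When $\alpha' = \gamma_m^P$, the component $\bar x_{\alpha'} = x_0$ is fixed, giving $x'_{\alpha, 1} = -1 + \rho c_{\leq \alpha}(Q)$. Projecting the master inequality onto the first coordinate and applying the telescoping identity $c_{\leq \alpha}(Q) + c_{\alpha \beta}(Q) - t/\rho = c_{\leq \beta'}(P)$---obtained by decomposing the pieces of $Q$ preceding $\beta$ into those within $\alpha'$, those within $P$-pieces strictly between $\alpha'$ and $\beta'$, and those within $\beta'$ preceding $\beta$---yields $\bar x_{\beta', 1} \leq -1 + \rho c_{\leq \beta'}(P) - \eps_P$. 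If $\beta'$ is middle, this exhibits $\bar x_{\beta'} \in E_{\beta'} \subset E_P$, contradicting the assumption; if $\beta' = \gamma_M^P$ then $\bar x_{\beta', 1}$ is fixed at $1 - \rho c_{\beta'}/2$ and the inequality collapses to $2 \leq \rho \leq 1$, again absurd. The case $\beta' = \gamma_M^P$ with $\alpha' \neq \gamma_m^P$ is symmetric, using the companion identity $c_{\geq \beta}(Q) + c_{\alpha \beta}(Q) + s/\rho = c_{\geq \alpha'}(P)$.

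The remaining case, in which $\alpha' \neq \beta'$ are both middle pieces, is the genuinely new one: the goal is to establish $|\bar x_{\alpha'} - \bar x_{\beta'}| \leq \rho c_{\alpha' \beta'}(P) - \eps_P$, so that $\pi_P(y) \in D_{\alpha', \beta'}$ and hence $\delta'_{P, Q}(y) \in \TT_{\alpha', \beta'}$. Starting from $\bar x_{\alpha'} - \bar x_{\beta'} = (x'_\alpha - x'_\beta) - (s - t) u$ and invoking the analogous telescoping identity that expresses $c_{\alpha \beta}(Q)$ together with the scalar offsets $s, t$ in terms of $c_{\alpha' \beta'}(P)$, the master inequality combines with the $\eps_Q - 4\eps_P$ slack to produce the required bound. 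The main obstacle is precisely this last case: absorbing the contribution of the $(s - t) u$ correction to the Euclidean norm into the slack demands a careful combinatorial expansion of $s$, $t$, and $c_{\alpha \beta}(Q)$ in terms of the prefix and suffix weights inside $\alpha', \beta'$, whereas the earlier cases reduce cleanly once the relevant telescoping identity is in hand.
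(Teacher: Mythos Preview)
Your treatment of the three collapsing cases ($\alpha'=\beta'$; $\alpha'$ the minimum; $\beta'$ the maximum) is correct and essentially identical to the paper's: the master inequality together with the affine offset formula~(\ref{Earrange}) and the appropriate telescoping identity produces the required contradiction in each case, exactly as in the paper.

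The genuine gap is in the final ``otherwise'' case, and you have correctly located it. Writing $\bar x_{\alpha'}-\bar x_{\beta'}=(x'_\alpha-x'_\beta)-(s-t)u$ and invoking only the master inequality does not close: the triangle inequality gives
\[
|\bar x_{\alpha'}-\bar x_{\beta'}|\ \le\ |s|+|x'_\alpha-x'_\beta|+|t|\ \le\ \rho c_{\alpha\beta}(Q)-\eps_Q+4\eps_P+\tfrac{\rho}{2}(c_{\alpha'}-c_\alpha)+\tfrac{\rho}{2}(c_{\beta'}-c_\beta),
\]
and the combinatorics show the right-hand side equals $\rho c_{\alpha'\beta'}(P)+\rho\bigl(c_{\mathrm{suff}(\alpha)}+c_{\mathrm{pref}(\beta)}\bigr)-\eps_Q+4\eps_P$, where $c_{\mathrm{suff}(\alpha)}$ is the total weight of the $Q$-pieces inside $\alpha'$ lying to the right of $\alpha$ (and $c_{\mathrm{pref}(\beta)}$ the analogous left-of-$\beta$ weight inside $\beta'$). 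The slack $\eps_Q-5\eps_P$ cannot absorb $\rho\bigl(c_{\mathrm{suff}(\alpha)}+c_{\mathrm{pref}(\beta)}\bigr)$ in general, so your sketched ``telescoping identity'' for this case cannot by itself control the Euclidean norm.

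The paper's argument does not attempt a sharper direct estimate here; instead it inserts a reduction step you are missing. One factors the subdivision $P\to Q$ through intermediate partitions and applies the lemma to each factor, thereby reducing to the situation in which $\alpha$ contains an extreme element of $\alpha'$ (and, by the parallel ``similar equality for $\beta$'', $\beta$ contains an extreme of $\beta'$). In the configuration where $\alpha$ is rightmost in $\alpha'$ and $\beta$ is leftmost in $\beta'$ one has $c_{\mathrm{suff}(\alpha)}=c_{\mathrm{pref}(\beta)}=0$, the offsets $|s|,|t|$ attain $\rho(c_{\alpha'}-c_\alpha)/2$, $\rho(c_{\beta'}-c_\beta)/2$ exactly, and the triangle inequality above collapses to $\rho c_{\alpha'\beta'}(P)-\eps_Q+4\eps_P<d_{\alpha'\beta'}(P)$. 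You should supply this factorization-and-reduction step explicitly; without it the argument in the last case does not go through.
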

\begin{proof}
Let $y\in \RR^{dn}$ be an element. We use the same notations $ x_\gamma, x'_\gamma$, and $\bar x_{\gamma'}$ as in the proof of Lemma  \ref{Ldiagonal_bound}.
 We shall show the claim in the case $\alpha'=\beta'$. We assume $\delta'_{P,Q}(y)\not =*$. So $y\in \nu_P$. By the definition of the map $e_{P,Q}$, the distance between $x'_\alpha$ and $x'_\beta$ is exactly $\rho c_{\alpha\to\beta}$. Intuitively speaking, since we have chosen the radius of $\nu_P$ sufficiently small, the distances between $x'_\alpha$ and $x_\alpha$, $x'_\beta$ and $x_\beta$ are sufficiently small. The difference between  $\rho c_{\alpha\to\beta}$ and the diameter $d_{\alpha\beta}(Q)$ of the tubular neighborhood $D_{\alpha\beta}$ of the diagonal is too large to be covered by the distances between the points so we see $\pi_Q(y)\not \in D_{\alpha\beta}$. We shall give a rigorous proof.
By the inequality (\ref{Edifference}) in the proof of Lemma  \ref{Ldiagonal_bound} and the definition of the map $e_{P,Q}$, we have the following inequality.
\[
\begin{split}
|x_\alpha-x_\beta| & \geq |x'_\alpha-x'_\beta|-|x_\alpha-x'_\alpha|-|x_\beta-x'_\beta| \\
                          & \geq \rho c_{\alpha\beta}-4\eps_P>d_{\alpha\beta}(Q).
\end{split}
\]
This inequality implies $\pi_{Q}(y)\not\in D_{\alpha\beta}$. \\
\indent We shall show the claim in the case that $\alpha'$ is the minimum. It is enough to show the case of $\beta=\beta'$ since general subdivisions factor through this case. In this case, by definition of $e_{P,Q}$, $x'_\alpha=(-1+\rho c_{\leq \alpha})u$. Suppose $y\in \nu_P\cap \pi_Q^{-1}(D_{\alpha\beta})$. We have 
\[
\begin{split}
x_\beta&=  x'_\alpha+(x_\beta-x_\alpha)-(x'_\alpha-x_\alpha) \\
 &\leq _1-1+\rho c_{\leq \alpha} +|x_\beta-x_\alpha|+|x'_\alpha-x_\alpha| \\
&\leq -1+\rho c_{\leq \alpha}+\rho c_{\alpha\beta}-\eps_Q+2\eps_P \\
&< -1 +\rho c_{\leq \beta}-\eps_P.
\end{split}
\]
So we have the claim. The case that $\beta'$ is the minimum is completely similar.\\
 We shall show the remaining part of the claim. 
 By definition, we have $e_{P,Q}(\bar x_{\gamma'})=(x'_\gamma)$. Suppose $y\in \nu_P\cap \pi_Q^{-1}(D_{\alpha\beta})$ again. By the inequality (\ref{Edifference}) in the proof of Lemma  \ref{Ldiagonal_bound}, we have
\[
\begin{split}
|x'_\alpha-x'_\beta|\leq  &  |x'_\alpha-x_\alpha|+|x_\alpha-x_\beta|+|x_\beta-x'_\beta| \\
\leq & 4\eps_P+d_{\alpha\beta}(Q)
=\rho c_{\alpha\beta}-\eps_Q+4\eps_P.
\end{split}
\]
 By the equality (\ref{Earrange}) in the proof, we have 
\[
|\bar x_{\alpha'}-x'_\alpha|\leq \rho (c_{\alpha'}-c_{\alpha})/2.
\]
By similar equality for $\beta$, we see
\[
\begin{split}
|\bar x_{\alpha'}-\bar x_{\beta'}|\leq  & |\bar x_{\alpha'}-x'_\alpha|+|x'_\alpha-x'_\beta|+ |\bar x_{\beta'}-x'_\beta| \\
\leq &\rho (c_{\alpha'}-c_{\alpha})/2+(\rho c_{\alpha\beta}-\eps_Q+4\eps_P)+\rho (c_{\beta'}-c_{\beta})/2\\
<&   d_{\alpha'\beta'}(P).
\end{split}
\]
Thus, we have shown the claim.
\end{proof}

\begin{defi}\label{Dspectrafunctor}
\begin{enumerate}
\item For $P\in \PP_n$, we define a space $\TT(P)\in \CG_*$ by 
\[
\TT(P)=\TT_{\emptyset_P} / \TT_{fat},\quad \text{where} \quad \TT_{fat}=\bigcup_{\alpha,\beta \in P^\circ, \alpha<\beta}\TT_{\alpha\beta}.
\]
 If $Q$ is a subdivision of $P$, the map $\delta'_{P,Q}:\TT_{\emptyset_Q}\to \TT_{\emptyset_P}$ in Lemma \ref{Ldiagonal_bound} induces the map $\TT(Q)\to \TT(P)$ by Lemma \ref{Ldiagonal_incl}.  These spaces and maps form a functor $\TT:(\PP_n)^{op}\to \CG_*$. 
\item A {\em spectrum} $X$ is a sequence of pointed spaces $X_0, X_1, \dots$ with a  structure map $S^1\wedge X_k\to X_{k+1}$ for each $k\geq 0$. A {\em morphism (or map)} $f:X\to Y$ of spectra is a sequence of pointed maps $f_0:X_0\to Y_0, f_1:X_1\to Y_1,\dots$ compatible with the structure maps. Let $\SP$ denote the category of spectra and their maps. For a spectrum $X$,  $\pi_k(X)$ denotes the colimit of the sequence $\pi_k(X_0)\to \pi_{k+1}(X_1)\to \cdots $ defined by the structure maps. A map $f:X\to Y$ is called a {\em stable homotopy equivalence} if it induces an isomorphism $\pi_k(X)\to \pi_k(Y)$ for any integer $k$.
\item For a spectrum $X$ and unpointed space $U$, We define a spectrum $\Map(U,X)$ as follows. We define  $\Map(U,X)_k$ as the space  of (unpointed) continuous maps $U\to X_k$ with the compact-open topology. The basepoint is the constant map to the basepoint of $X_k$. The structure map is the one obviously induced by that of $X$.
\item We define a functor $\TT^S: \PP_n^{op}\to \SP$ as follows. Set $\TT^S(P)_k=S^{k-dn}\wedge \TT(P)$ if $k\geq dn$,  and $\TT^S(P)_k=*$ otherwise. These spaces form a spectrum with the obvious structure map. The map corresponding to a map $P\to Q$ is also obviously induced from that of $\TT$. 
\item For a positive number $\delta$, we define a spectrum $\Sphere_\delta$ as follows. We set $\Sphere_{\delta, k}=\{ y\in \RR^k\}/\{y \mid |y|\geq \delta\}$. The structure map $S^1\wedge \Sphere_{\delta, k}\to \Sphere_{\delta, k+1}$ is the obvious collapsing map.
\item We define a functor $\mathcal{C}^\dagger:\PP_n^{op}\to \SP$ as follows.
Set $\mathcal{C}^{\dagger}(P)=\Map(\mathcal{C}(P),\Sphere_{\delta})$ where  $\delta =\eps_P/8$ (see Definition \ref{Dpartition} (1)). For a map $P\to Q$, the corresponding map is the pullback by the induced map $\mathcal{C}(P)\to \mathcal{C}(Q)$ followed by the pushforward by the collapsing map $\Sphere_{\eps_Q/8}\to \Sphere_{\eps_P/8}$.

\item We define a functor $\mathcal{C}^\vee:\PP_n^{op}\to \SP$ as follows. Let $\Sphere$ denote the sphere spectrum given by $\Sphere_k=S^k$.
Set $\mathcal{C}^{\vee}(P)=\Map(\mathcal{C}(P),\Sphere)$.   For a map $P\to Q$, the corresponding map is the pullback by the induced map.
\item We define a map $\TPhi= \TPhi_{P,k}:\RR^k\to \mathcal{C}^\dagger (P)_k$ by
\[
\RR^k\ni y\longmapsto \{(x_\gamma) \mapsto (y-(0,e_P(x_\gamma))\}\in\mathcal{C}^\dagger(P)_k
\]
$\TT^S_k$ is naturally identified with Thom space associated to the  tubular neighborhood  $\RR^{k-dn}\times \nu_P$ of the embedding $0\times e_P:\RR^{dp}\to \RR^k$ (with some extra collapsed points). $\TPhi_{P,k}$ factors through $\TT^S(P)_k$ as in Theorem  \ref{TAtiyahdual}, and  these maps form a natural transformation $\Phi : \TT^S\to \mathcal{C}^\dagger$. We see that this is well-defined below.
\item A natural transformation $p_* :\mathcal{C}^\vee\to \mathcal{C}^\dagger$ is defined by the pushforward by the obvious collapsing map $p :\Sphere\to \Sphere_{\delta}$. 
\end{enumerate}
\end{defi}
The following equivalence is a variation of the one given in \cite{moriya1} which is based on the construction in \cite{cohen}. If it is projected to the stable homotopy category, it is a special case of Atiyah duality which states an equivalence between the Spanier-Whitehead dual of a manifold and Thom spectrum of its normal bundle. We need point-set level compatibility so we have been taking care about parameters. 
\begin{thm}\label{TAtiyahdual}
Under the notations of Definition \ref{Dspectrafunctor},
the map $\Phi$ is well-defined, and the two natural transformations $\Phi$ and $p_*$ are termwise stable homotopy equivalences (i.e. they induce a stable homotopy equivalence at each object).
\end{thm}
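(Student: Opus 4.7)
The plan is to verify the three assertions of the theorem in order: well-definedness of $\Phi$, stable equivalence of $p_*$, and stable equivalence of $\Phi$.

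For well-definedness, I want to show that $\TPhi_{P,k}(y)$ is the basepoint of $\EEE^\dagger(P)_k$ whenever the last $dn$ coordinates of $y$, say $y^\flat\in \RR^{dn}$, lie in $(\RR^{dn}-\nu_P)\cup \pi_P^{-1}(D_P)$, so that $\TPhi_{P,k}$ factors through the quotient $\TT^S(P)_k$. If $y^\flat \notin \nu_P$, then for every $(x_\gamma)\in \EEE(P)$ we have $|y^\flat - e_P(x_\gamma)|\geq \eps_P > \delta = \eps_P/4$, and hence $|y-(0,e_P(x_\gamma))|>\delta$. If $y^\flat = e_P(x^0)+\xi^\perp$ with $x^0 \in D_P$ and $\xi^\perp$ perpendicular to the image of $e_P$ of norm less than $\eps_P$, I compare the inequalities defining $\EEE(P)$ and $D_P$: the relation $|x_\alpha-x_\beta|\geq \rho c_{\alpha\beta}$ versus $|x^0_\alpha-x^0_\beta|\leq \rho c_{\alpha\beta}-\eps_P$ in $D_{\alpha\beta}$ yields $|x^0-x_\gamma|\geq \eps_P/2$ by the triangle inequality, while $x^0 \in E_\alpha$ gives $|x^0_\alpha-x_\alpha|\geq \eps_P$ directly. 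Since the linear part of $e_P$ has operator norm at least $1$ (each entry of $x$ is repeated), $|e_P(x^0)-e_P(x_\gamma)|\geq \eps_P/2$, and Pythagoras applied to the orthogonal decomposition gives $|y-(0,e_P(x_\gamma))|\geq \eps_P/2>\delta$.

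For $p_*$, the map $p_k: (\RR^k)^*\to \RR^k/\{|y|\geq\delta\}$ is a pointed homotopy equivalence for each $k$ (via a radial rescaling of the target to the whole sphere), and because $\EEE(P)$ has the homotopy type of a finite CW complex (being compact semi-algebraic), the induced map $\Map(\EEE(P)_+,p_k)$ is likewise a weak equivalence at each level, so $p_*$ is a stable equivalence of spectra.

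For $\Phi$, the argument is a variant of Atiyah/Spanier--Whitehead duality as in \cite{moriya1}. I would first observe that $\EEE(P)\subset \RR^{dp}-D_P$ with a margin of $\eps_P$ in every defining inequality, and that $\RR^{dp}-D_P$ deformation retracts onto $\EEE(P)$ via an elementary radial push. Lifting through $\pi_P$, the open set $\nu_P - \pi_P^{-1}(D_P)$ is, up to deformation retraction, a tubular neighborhood of $e_P(\EEE(P))$ in $\RR^{dn}$, so $\TT(P)$ is homotopy equivalent to the Thom space of the (trivial) rank-$d(n-p)$ normal bundle of $e_P(\EEE(P))$. Thus $\TT^S(P)=\Sigma^{-dn}\TT(P)$ is stably the Atiyah dual of $\EEE(P)_+$, and the adjoint of $\Phi$, namely the pairing $\TT^S(P)\wedge \EEE(P)_+ \to \Sphere_\delta$ sending $([y],(x_\gamma))$ to $y-(0,e_P(x_\gamma))$, is precisely the standard duality pairing. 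Hence $\Phi$ is a stable equivalence.

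The main obstacle is the identification in the third step: $\TT(P)$ is not by fiat the Thom space of the normal bundle of $e_P(\EEE(P))$, but rather uses the tubular neighborhood of the larger $e_P(\RR^{dp})$ together with a quotient by $\pi_P^{-1}(D_P)$. Showing rigorously that this complicated quotient has the claimed homotopy type requires the careful margin bookkeeping of Definitions \ref{DPK} and \ref{Dpartition}; this is the price paid for the natural compatibility of the construction with subdivisions of $P$, which a direct invocation of abstract Atiyah duality would not provide.
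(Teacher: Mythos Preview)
Your proposal is correct and follows essentially the same route as the paper. The well-definedness argument is the same $\eps_P$-margin comparison between the defining inequalities of $\EEE(P)$ and $D_P$, only phrased in the direct rather than contrapositive form (the paper assumes $\TPhi(y)\neq *$ and deduces $\pi_P(y)\notin D_P$); for the stable equivalences the paper simply invokes classical Atiyah duality with a reference to \cite{browder}, so your sketch of the Thom-space identification and duality pairing already supplies more detail than the paper does, and your acknowledged ``main obstacle'' is exactly what the paper leaves to the cited reference.
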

\begin{proof}
We shall show the map $\TPhi$ factors through $\TT^S(P)_k$. For notational simplicity, we consider the case of $k=dn$. The other cases will follow completely similarly. It is clear that $\TPhi(\RR^{k}-\nu_P)=\{*\}$. Let $y\in \RR^k$ be an element with $\TPhi(y)\not= *$.  There exists an element $(x_\gamma)\in \mathcal{C}(P)$ such that  $| y- e_P(x_\gamma)|<\eps_P/8$ holds. So we have $|y- e_P(\pi_{P}y)|<\eps_P/8$ and
\[
|\pi_P y-(x_\gamma)|\leq|e_P(\pi_P y)-e_P(x_\gamma)|\leq | e_P(\pi_P y)-y|+|y-e_P(x_\gamma)|<\eps_P/4.
\]
 If we write  $\pi_P(y)=(\bar x_\gamma)$, it follows that $|\bar x_\alpha-x_\alpha|<\eps_P/4$ for each  $\alpha\in P^\circ$. 
We see
\[
\begin{split}
|\bar x_\alpha| & \leq |x_\alpha|+|\bar x_\alpha-x_\alpha| \\
& \leq 1-\rho c_\alpha/2+\eps_P/4<1-\rho c_\alpha/2+\eps_P, \\
\bar x_\alpha  & =x_\alpha+(\bar x_\alpha-x_\alpha) \\
&\geq_1-1+\rho c_{\leq \alpha}-|\bar x_\alpha-x_\alpha|  \\
& \geq -1+\rho c_{\leq \alpha}-\eps_P/4>-1+\rho c_{\leq \alpha}-\eps_P, \\
\bar x_\alpha & = x_\alpha+(\bar x_\alpha-x_\alpha)  \\
 & \leq_1 1-\rho c_{\geq \alpha} +|\bar x_\alpha-x_\alpha|  \\
& \leq 1-\rho c_{\geq \alpha}+\eps_P/4<1-\rho c_{\geq \alpha}+\eps_P. \\
\end{split}
\]
These inequalities imply $\pi_P(y)\not\in E_\alpha$ and so $\Phi(\pi_P^{-1}(E_\alpha))=*$ in the notations of Definition  \ref{Dpartition}. We also see
\[
\begin{split}
|\bar x_\alpha -\bar x_\beta| & \geq  |x_\alpha-x_\beta|-|x_\alpha-\bar x_\alpha |-|x_\beta-\bar x_\beta| \\
 & > \rho c_{\alpha\beta}-\eps_P/8-\eps_P/2  
  > d_{\alpha\beta}(P).
\end{split}
\]
This implies $\TPhi(\pi_P^{-1}(D_{\alpha\beta}))=*$. Thus, $\TPhi$ factors through $\TT^S$. Now the claim of the theorem follows from the classical Atiyah duality  (see \cite{browder} for example).

\end{proof}
\begin{rem}\label{RMalin}
In \cite{malin}, Malin  proved homotopy invariance of the homogeneous layers of stable embedding calculus tower using  a duality similar to Theorem \ref{TAtiyahdual} or Theorem 1.1 of \cite{moriya1}.
\end{rem}
\begin{defi}\label{Dchain_spectra}
\begin{enumerate}
\item Let $\CH_{\kk}$ be the category of chain complexes and chain maps over  $\kk$. We mainly use cohomological grading denoted by a superscript.  Homological grading, which is denoted by a subscript, is regarded as  cohomological grading by negation.
\item For a chain complex $C_*$, $C_*[k]$ is the chain complex given by $C_l[k]=C_{k+l}$ with the same differential as $C_*$ (without extra sign).
\item The functors $C^*(\PK),\ \bar C_*(\TT):\PP_n^{op}\to \CH_{\kk}$ are given by taking cochains and reduced chains of $\PK$ and $\TT$ in the termwise manner respectively. $\bar C_*(\TT)[-dn]:\PP_n^{op}\to \CH_{\kk}$ is given by taking the shift of $\bar C_*(\TT)$ in the termwise manner.  
\end{enumerate}
\end{defi}
\begin{prop}\label{Phom_dual}
The functors $C^*(\PK)$ and $\bar C_*(\TT)[-dn]$ are connected by  a zigzag of termwise quasi-isomorphisms (i.e. natural transformations which induce a quasi-isomorphism at each ojbect).  
\end{prop}
\begin{proof}
In \cite{moriya1}, a chain functor $C_*$ for symmetric spectra is defined. The same definition works for our category of spectra as it is, so we adopt this functor in this proof.  It preserves stable equivalence between semistable spectra, and the spectra involved here are semistable.  By Lemma 5.3 of \cite{moriya1}, Lemma \ref{Lnathomologyiso}, and Theorem \ref{TAtiyahdual}, we have the following chain of termwise quasi-isomorphisms
\[
C^*(\PK)\simeq C^*(\mathcal{C})\simeq C_*(\mathcal{C}^\vee)\simeq C_*(\TT^S)\simeq \bar C_*(\TT)[-dn],
\]
where the last morphism is the canonical one in view of definition of the chain functor.
\end{proof}

\section{Spectral sequences}\label{Sspec_seq}
\begin{defi}\label{Dgraph}
\begin{enumerate}
\item  For a partition $P\in \PP_n$, a {\em graph $G$ on   $P$} consists of the set of vertices $V(G)=P$, a finite set of edges $E(G)$ and a map $\phi_G:E(G)\to P_{1,2}(P)$ called the {\em incidence map}, where $P_{1,2}(P)=\{S\subset P\mid \# S=1 \text{ or } 2\}$. So, the vertices of $G$ are the pieces of $P$. We say that an element of $\phi_G(e)$ is {\em incident} with  $e$, or $e$ is incident with elements of $\phi_G(e)$.  An edge $e$ is called a {\em loop} if $\#\phi_G(e)=1$. Two edges $e,e'$ are called {\em double edges} if $\phi_G(e)=\phi_G(e')$ and $\#\phi_G(e)=2$. (Other edges may have the same set of incident  vertices  as double edges.) A graph is called a {\em forest} if each connected component of its geometric realization (in the usual sence) is contractible. Let $\TGG(P)$ denote the set of all graphs on  $P$.  
$\GG(P)\subset \TGG(P)$ denotes the subset of graphs with an edge set $E(G)\subset \{(\alpha,\beta)\mid \alpha, \beta \in P^\circ,\ \alpha<\beta\}$ and the natural incidence map $(\alpha,\beta)\mapsto \{\alpha,\beta\}$. For $G\in \GG(P)$, $E(G)$  is regarded as a totally ordered set by the lexicographical order. Let $\emptyset_P\in \GG(P)$ denote the graph with  the empty edge set. We  sometimes denote a graph in $\GG(P)$ by a formal product of edges (see Example \ref{Egraph} below).   Let $e$ be the $i$-th edge of $G\in \GG(P)$. $\partial_eG$ (or $\partial_iG$) $\in \GG(P)$ denotes the subgraph of $G$ with $E(\partial_eG)=E(G)-\{e\}$. Similarly, if $e'$ is the $j$-th edge, $\partial_{ee'}G$ (or $\partial_{ij}G$) denotes the subgraph made by removing $e$ and $e'$. For two vertices $\alpha, \beta$ of a graph $G$, we write $\alpha\sim_G\beta$ when $\alpha$ and $\beta$ belong to the same connected component of $G$. By abusing notations, for $i\in [n+1]$, we write $i\sim_G\beta$ if there is a (unique) piece $\alpha$ satisfying $i\in \alpha$ and $\alpha\sim_G\beta$. Similarly, if $i$ belongs to a piece $\alpha$ which belongs to a connected component $S$ of $G$, we write $i\in S$ instead of $i\in \alpha\in S$. For $i,j \in [n+1]$, $i\sim_Gj$ is similarly understood.

\item For a map $P\to Q$ of partitions,  $\delta_{P,Q}:Q\to P$ denotes the map of sets sending  $\alpha\in Q$ to the piece of $P$ containing $\alpha$. This map induces a map $\delta_{P,Q}:\TGG(Q)\to \TGG(P)$. For a graph $G\in \TGG(Q)$, the  graph $\delta_{P,Q}(G)$ has the same edge set as $G$  and the incidence map  given by the composition
\[
E(G)\stackrel{\phi_G}{\to}P_{1,2}(Q)\stackrel{(\delta_{P,Q})_*}{\to}P_{1,2}(P).
\] While $E(\delta_{P,Q}(G))=E(G)$ by definition, we sometimes refer to this identification  the {\em standard bijection} for clarity. If $\delta_{P,Q}(G)$ has neither loops nor double edges and the minimum and maximum pieces of $P$ are discrete i.e. not incident with any edge of $\delta_{P,Q}(G)$, we always identify $\delta_{P,Q}(G)$ with a unique graph in $\GG(P)$ which has the same image of the incidence map, and we write $\delta_{P,Q}(G)\in \GG(P)$ so we use this expression even if $E(G)\not \subset \{(\alpha,\beta)\mid \alpha, \beta \in P^\circ ,\ \alpha<\beta\}$. Under this identification, we also identify an element of $E(G)$ which is  incident with $\alpha<\beta$, with the edge $(\alpha,\beta)$. The composition of this identification with the standard bijection is also called the standard bijection. Let $H\in \TGG(Q)$ be another graph with $\delta_{P,Q}(H)\in \GG(P)$. The equation $\delta_{P,Q}(G)=\delta_{P,Q}(H)$ means the equality of the corresponding graphs in $\GG(P)$, so the equation may hold even if $E(G)\not=E(H)$ (see Example \ref{Egraph} below). If $P$ is made by unifying the $i+1$-th and $i+2$-th pieces of $Q$, we denote $P$ and the map $\delta_{P,Q}$ between the partitions or the sets of graphs  by $\delta_iQ$ and $\delta_i$, respectively. For numbers $i<j$, we set $\delta_{ij}A=\delta_i\delta_jA$ for a partition or graph $A$.  Similarly, we set  $\delta_{ijk}=\delta_i\delta_j\delta_k$ for $i<j<k$.

\end{enumerate}
\end{defi}
\begin{exa}\label{Egraph}
Let $G_1, G_2 \in \GG([5])$ be the graphs   given by
\[
G_1=(1,4)(2,3),\quad G_2=(1,3)(2,4).
\]
These graphs are drawn  in Figure \ref{Fgraphs_ch2}. We have $E(\partial_1G_1)=\{(2,3)\}$ and $E(\partial_2G_1)=\{(1,4)\}$. $G_1$ has the four connected components $\{\{0\}\}$, $\{\{1\},\{4\}\}$, $\{\{2\},\{3\}\}$, and $\{\{5\}\}$. By definition, we have
\[
\delta_1[5]=\{\{0\},\{12\},\{3\},\{4\},\{5\}\}, 
\]
and $\delta_1G_1=\delta_1G_2$ since $E(\delta_1G_1)=E(\delta_1G_2)=\{(\{12\},\{3\}),(\{12\},\{4\})\}$. Similarly, we have $\delta_3G_1=\delta_3G_2$. The graph $\delta_1G_1$ has the three connected components $\{\{0\}\}$, $\{\{12\},\{3\}, \{4\}\}$, and $\{\{5\}\}$. 
\end{exa}
\begin{figure}
\begin{center}
{\unitlength 0.1in%
\begin{picture}(22.4000,4.0700)(0.4000,-4.3700)%
%
\special{pn 8}%
\special{pa 75 190}%
\special{pa 675 190}%
\special{fp}%
%
\special{pn 8}%
\special{pa 120 195}%
\special{pa 174 143}%
\special{pa 201 119}%
\special{pa 228 96}%
\special{pa 254 76}%
\special{pa 281 59}%
\special{pa 308 45}%
\special{pa 335 35}%
\special{pa 362 30}%
\special{pa 389 31}%
\special{pa 415 37}%
\special{pa 442 47}%
\special{pa 469 61}%
\special{pa 496 79}%
\special{pa 523 100}%
\special{pa 549 123}%
\special{pa 603 173}%
\special{pa 620 190}%
\special{fp}%
%
\special{pn 8}%
\special{pa 275 195}%
\special{pa 301 167}%
\special{pa 326 142}%
\special{pa 352 125}%
\special{pa 377 120}%
\special{pa 403 128}%
\special{pa 428 147}%
\special{pa 452 173}%
\special{pa 475 200}%
\special{fp}%
%
\special{pn 8}%
\special{pa 865 190}%
\special{pa 1470 190}%
\special{fp}%
%
\special{pn 8}%
\special{pa 930 190}%
\special{pa 956 163}%
\special{pa 981 136}%
\special{pa 1007 113}%
\special{pa 1033 93}%
\special{pa 1059 79}%
\special{pa 1085 71}%
\special{pa 1111 71}%
\special{pa 1138 78}%
\special{pa 1164 92}%
\special{pa 1191 111}%
\special{pa 1218 134}%
\special{pa 1245 159}%
\special{pa 1271 186}%
\special{pa 1275 190}%
\special{fp}%
%
\special{pn 8}%
\special{pa 1075 190}%
\special{pa 1101 163}%
\special{pa 1126 136}%
\special{pa 1152 113}%
\special{pa 1178 93}%
\special{pa 1204 79}%
\special{pa 1230 71}%
\special{pa 1256 71}%
\special{pa 1283 78}%
\special{pa 1309 92}%
\special{pa 1336 111}%
\special{pa 1363 134}%
\special{pa 1390 159}%
\special{pa 1416 186}%
\special{pa 1420 190}%
\special{fp}%
%
\special{pn 8}%
\special{pa 1675 190}%
\special{pa 2280 190}%
\special{fp}%
%
\special{pn 8}%
\special{pa 1730 190}%
\special{pa 1754 158}%
\special{pa 1778 130}%
\special{pa 1802 109}%
\special{pa 1825 100}%
\special{pa 1848 106}%
\special{pa 1871 124}%
\special{pa 1894 152}%
\special{pa 1916 184}%
\special{pa 1920 190}%
\special{fp}%
%
\special{pn 8}%
\special{pa 2025 190}%
\special{pa 2047 157}%
\special{pa 2070 129}%
\special{pa 2093 108}%
\special{pa 2116 100}%
\special{pa 2139 107}%
\special{pa 2163 125}%
\special{pa 2187 152}%
\special{pa 2211 184}%
\special{pa 2215 190}%
\special{fp}%
\put(3.7500,-2.9500){\makebox(0,0){$G_1$}}%
\put(11.7500,-2.9000){\makebox(0,0){$G_2$}}%
\put(19.7500,-2.8500){\makebox(0,0){$G_3$}}%
\put(1.3000,-4.9000){\makebox(0,0){$x$}}%
\put(2.9500,-4.9000){\makebox(0,0){$y$}}%
\put(4.5500,-4.9000){\makebox(0,0){$y$}}%
\put(6.1000,-4.9000){\makebox(0,0){$x$}}%
\put(9.3500,-4.9200){\makebox(0,0){$x$}}%
\put(11.0000,-4.9200){\makebox(0,0){$y$}}%
\put(12.6000,-4.9200){\makebox(0,0){$x$}}%
\put(14.1500,-4.9200){\makebox(0,0){$y$}}%
\put(17.5000,-5.0200){\makebox(0,0){$x$}}%
\put(19.1500,-5.0200){\makebox(0,0){$x$}}%
\put(20.7500,-5.0200){\makebox(0,0){$y$}}%
\put(22.3000,-5.0200){\makebox(0,0){$y$}}%
\end{picture}}%
\end{center}
\caption{graphs in sections \ref{Sspec_seq}, \ref{Scondensed} and \ref{Sch2}  and corresponding maps $f_G$ (see section \ref{Scondensed}) :  The straight line is not a part of data, and  the chords denote the edges. The vertices are the intersections of the line and chords. They are labeled  by $\{1\},\dots, \{4\}$ in the order from left to right. The minimum  and maximum vertices $\{0\}, \{5\}$ are usually omitted.  We use similar notations throughout the paper.} \label{Fgraphs_ch2}
\end{figure}

\begin{defi}\label{Dtriple}
 For a graph $G\in \GG(P)$, set
$\TT_G=\bigcap_{(\alpha,\beta)\in E(G)} \TT_{\alpha\beta}\subset T_{\emptyset_P}$. \\
\indent We define a triple complex $\TTH_{\bullet * \star}$ as follows. 
As a module, we set 
\[
\TTH_{p q s}=\bigoplus_{P, G}\bar C_q(\TT_G)
\]
where $P$ runs through the partitions of $\PP_{n}$ such that $\# P=p+2$ and $G$ runs through
the graphs of $\GG(P)$ such that $\#E(G)=s$. $\TTH$ has three differentials $\delta, d, \partial$ of degree $(1,0,0)$, $(0,1,0)$, $(0,0,1)$, respectively. $d$ denotes the differential on singular chains, $\partial$ is the $\CECHC$ech differential given by
\[
\partial=\sum_{1\leq k\leq s}(-1)^{k-1}\partial_k,
\]
where $\partial_k:\bar C_q(\TT_G)\to \bar C_q(\TT_{\partial_kG})$ is the pushforward by the inclusion. To define $\delta$, we shall define a map $\delta_i$ of degree $(1,0,0)$. 
Put $P=\{\alpha_0<\cdots <\alpha_{p+1}\}$.  If $\delta_iG\not\in \GG(\delta_iP)$,   we set $\delta_i=0$ on $\bar C_*(\TT_G)$. Suppose otherwise.
 The set of edges of $G$ whose smaller incident vertex  is $\alpha_i$ or $\alpha_{i+1}$ and the set of edges of $\delta_i G$ whose smaller incident vertex is $\alpha_i\cup \alpha_{i+1}$ are one-to-one correspondence via the standard bijection. This correspondence induces the permutation $\sigma_{G,i}$ of the lexicographical order of the edges. We set 
\[
\delta_i=sgn(\sigma_{G,i})(\delta'_{i})_*. 
\]Here, $\delta'_i:\TT_G \to \TT_{\delta_iG}$ is the collapsing map for subdivision $P$ of $\delta_iP$ given in Lemma \ref{Ldiagonal_incl}. (By definition of $\TT_G$ and the same lemma, the image of $\delta'_i$ is contained in $\TT_{\delta_iG}$.) We set 
$\delta=\sum_{i=0}^{p}(-1)^i\delta_i $.
\end{defi}
\begin{lem}\label{Lwelldeftriple}
The triple complex $\TTH$ is well defined i.e. the three differentials are commutative without signs.
\end{lem}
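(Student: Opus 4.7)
The plan is to verify pairwise commutativity of the three differentials. Commutativity of $d$ with $\delta$ and $\partial$ is immediate: by definition each $\delta_i$ and each $\partial_k$ is a signed pushforward of a continuous map---a restriction of the collapse $\delta'_{\delta_i P, P}$ and an inclusion $\TT_G \hookrightarrow \TT_{\partial_k G}$, respectively---and such pushforwards commute with the singular boundary $d$ on the nose.

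The heart of the argument is $\delta\partial = \partial\delta$. Fix $G \in \GG(P)$ and compare the $(i,k)$-contributions on each side. In the \emph{non-degenerate case}, when $\delta_i G \in \GG(\delta_i P)$, the incidence map induces a bijection $E(G) \cong E(\delta_i G)$ and hence a permutation $\sigma := \sigma_{G, i}$ of lex orders. Setting $k' := \sigma(k)$, one verifies $\delta_i \partial_k G = \partial_{k'} \delta_i G$ as graphs, and the underlying continuous maps $\TT_G \to \TT_{\delta_i \partial_k G}$ coincide since both are the same restriction of $\delta'_{\delta_i P, P}$ combined with inclusion. The sign identity
\[
(-1)^{k-1}\,\mathrm{sgn}(\sigma_{\partial_k G, i}) = (-1)^{k'-1}\,\mathrm{sgn}(\sigma_{G, i}),
\]
reflecting how permutation signs behave under simultaneous deletion of source position $k$ and target position $k'$, then pairs the $(i,k)$-summand on the left with the $(i,k')$-summand on the right.

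In the \emph{degenerate cases} $\delta_i G \notin \GG(\delta_i P)$, the $(i,\cdot)$-contribution to $\partial\delta$ vanishes by convention, and one must show the same for $\delta\partial$. If $\delta_i G$ has a loop from an edge $e_j \in E(G)$ with $\phi_G(e_j) = \{\alpha_i, \alpha_{i+1}\}$, or if $i \in \{0, p\}$ and some edge $e_j \in E(G)$ incident to $\alpha_1$ (resp.\ $\alpha_p$) becomes incident to the new min (resp.\ max) under $\delta_i$, then Lemma \ref{Ldiagonal_incl} yields $\delta'_{\delta_i P, P}(\TT_{e_j}) \subset \{*\}$. Since $e_j \in E(G)$ forces $\TT_G \subset \TT_{e_j}$, the composite $\TT_G \hookrightarrow \TT_{\partial_k G} \to \TT_{\delta_i \partial_k G}$ factors through the basepoint for every $k$, so $\delta_i\partial_k$ is zero on $\bar C_*(\TT_G)$ term by term.

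For the double-edge subcase---two edges $e, e' \in E(G)$ with $\delta_i(\phi_G(e)) = \delta_i(\phi_G(e'))$---the $(i,m)$-terms with $e_m \notin \{e, e'\}$ vanish because $\partial_m G$ retains both $e$ and $e'$, leaving the double edge in $\delta_i\partial_m G$. The only potentially nonzero terms occur for $m = k$ (position of $e$) and $m = k'$ (position of $e'$); these give the same graph $\delta_i\partial_k G = \delta_i\partial_{k'} G$ and the same underlying chain pushforward, while a direct computation of the permutations yields
\[
\mathrm{sgn}(\sigma_{\partial_k G, i})\,\mathrm{sgn}(\sigma_{\partial_{k'} G, i}) = (-1)^{k' - k + 1},
\]
so $(-1)^{k-1}\mathrm{sgn}(\sigma_{\partial_k G, i}) + (-1)^{k'-1}\mathrm{sgn}(\sigma_{\partial_{k'} G, i}) = 0$, and the two rogue contributions cancel. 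The main obstacle is this double-edge sign bookkeeping: one must carefully track how the lex positions of the intermediate edges shift under removal of $e$ versus $e'$ and how the corresponding transposition at the surviving representative of the double edge alters the signature of each $\sigma_{\partial_m G, i}$.
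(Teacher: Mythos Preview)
Your proof is correct and follows the same core approach as the paper's: reduce to the commutation of $\partial$ with $\delta$ and verify the sign identity $(-1)^{k-1}\,\mathrm{sgn}(\sigma_{\partial_k G,i}) = (-1)^{k'-1}\,\mathrm{sgn}(\sigma_{G,i})$ matching the $(i,k)$-term of $\delta\partial$ to the $(i,k')$-term of $\partial\delta$. Your treatment is in fact more complete than the paper's five-line sketch, which handles only the non-degenerate case; you additionally dispose of the loop, min/max-incidence, and double-edge subcases explicitly (via Lemma~\ref{Ldiagonal_incl} for the first two and a direct cancellation for the third), points the paper leaves entirely implicit.
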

\begin{proof}
The only non-trivial commutativity is that of $\partial$ with $\delta$. Let $e$ be the $k$-th edge among those whose smaller incident vertex is $\alpha_i$. Suppose exactly $m$ edges pass through $e$ by $\sigma_{G,i}$. On $G$, the sign in removing $e$ is $(-1)^k$ and on $\delta_iG$, the sign in removing the corresponding edge is $(-1)^{k+m}$. On the other hand, we have $sgn(\sigma_{G,i})=(-1)^msgn(\sigma_{\partial_eG, i})$. So we have the commutativity.
\end{proof}

\begin{defi}\label{Dspecseq}
\begin{enumerate}
\item Let $\tot(\TTH)$ be the total complex of $\TTH$. Its homogeneous part of total cohomological degree $-k$ is  given by 
\[
\tot_k(\TTH)=\bigoplus_{p+q+s=k}\TTH_{pqs}
\]
and the total differential is given by $\tilde D=\delta+(-1)^pd+(-1)^{p+q}\partial$ on $\TTH_{pqs}$. (As in Definition \ref{Dchain_spectra}, cohomological degree is given by negation of the subscript.) Let $\{F_l\}_l$ be the filtration of $\tot(\TTH)$ given by $F_l=\oplus_{p\leq l}\TTH_{pqs}$. This filtration induces a spectral sequence $\{\CEE_r\}_r$. We give the grading on $\CEE_r$ such that $\CEE_r^{-p,q}$ is the part of the original induced cohomological degree $(-p,q-dn)$. 
 We also consider a truncated version $tr_m\tot(\TTH)$ which is the direct sum of  $\TTH_{pq s}$'s with  $s\geq m$. The differential is given by the same formula as $\tilde D$ but the $\CECHC$ech differentials on the graph with exactly $m$ edges are understood as zero. Considering the filtration on $p$, we form a spectral sequence $\{tr_m\CEE_r\}_r$. A  map $\tot(\TTH)\to tr_m\tot(\TTH)$ is given by the identity on $\TTH_{pqs}$ with $s\geq m$ and zero on the other summands.  This map commutes with differentials and filtrations so induces the map of spectral sequences 
$\CEE_r\to tr_m\CEE_r$.
\item For a functor $X: \Delta_n^{op}\to \CH_\kk$, the normalization $NX$ is the double complex given by
\[
N_pX=\left\{
\begin{array}{cc}
X_p/(\sum_{0\leq i\leq p-1}s_i(X_{p-1}))&\text{ if } p\leq n, \vspace{2mm}\\
0 & \text{ if } p>n,
\end{array}\right.
\]
where $s_i$ denotes the degeneracy map, with one of the differentials given by the signed sum of the face maps and the other by the original differential of $X$. Let $C^*(\KK^{\leq n}_d)$ be the functor  defined by taking cochain of the functor $\KK^{\leq n}_d$ in the termwise manner (see Definition \ref{DKontsevich}). We give the total complex of $NC^*(\KK^{\leq n})$ the filtration by cosimplicial degree $p$ and denoted by $\{\bar\EE_r\}_r$ the resulting spectral sequence. This is the version of  Sinha's  spectral sequence restricted to the part of cosimplicial degree $\leq n$.  The (full) Sinha's spectral sequence $\{\EE_r\}_r$ is defined similarly using the usual normalization of  the simplicial complex $C^*(\KK^\bullet_d)$.  
\end{enumerate}
Note that $\CEE_r$, $tr_m\CEE_r$ and $\bar\EE_r$ depend on the number $n$ in $\PP_n$ and $\Delta_n$. We always let the letter $n$ denote the subscript of the two categories used in   the spectral sequences.
\end{defi}

\begin{lem}\label{Lspecseq}
\begin{enumerate}

\item The spectral sequence $\bar \EE_r$ is isomorphic to $\CEE_r$ after $E_1$-page (not including $E_1$). 

\item The  inclusion  $NC^*(\KK_d^{\leq n})\to NC^*(\KK_d^\bullet)$ induces a map of spectral sequence $\bar \EE^{-p,q}_r\to \EE^{-p,q}_r$ which is bijective for $p\leq n-1, r=2$  and surjective for $p=n, r=2$.

\end{enumerate}
\end{lem}
\begin{proof}
We shall prove part 1. Let $P'_n$ be the poset defined in the proof of Lemma \ref{LPK}. As we did there, we identify $P'_n$ with $\PP_n$. Let $\Delta=\cup_{l\geq 0}\Delta_l$ be the  category of simplexes. For two categories $C, D$, let $Fun(C, D)$ be the category of functors $C\to D$ and natural transformations between them.  Let $\CECHF: Fun({P'_n}^{op},\CH_\kk)\to Fun(\Delta_n^{op}, \CH_\kk)$ be the left Kan extension along $\mathcal{F}:{P'_n}^{op}\to \Delta_n^{op}$ i.e. the left adjoint of the pullback by $\mathcal{F}$.  Concretely speaking,  it associates to a functor $Y:{P'_n}^{op}\to \CH_\kk$ a functor $\CECHF(Y):\Delta_n^{op}\to \CH_\kk$ which sends $[k]$ to $\bigoplus_{f}Y(f([k]))$ where $f:[k]\to \{1,\dots, n+1\}$ runs through the weakly order preserving maps.  Clearly, the normalization  $N\CECHF(Y)$ consists of the summands labeled by monomorphisms $f$. We have the following maps of double complexes
\begin{equation}
NC^*(\KK^{\leq n}_d)\stackrel{\varphi_1}{\longleftarrow} N\CECHF(C^*(\mathcal{F}^*\KK^{\leq n}_d))\simeq N(\CECHF(\bar C_*(\TT)[-dn])\leftarrow \TTH_{\bullet * \star}[-dn]. \label{EQzigzag}
\end{equation}
Here, $\TTH_{\bullet * \star}$ is regarded as the double complex with differentials $(\delta, d+(-1)^*\partial)$ and the right map is defined by sending $\bar C_*(\TT_{\emptyset_P})$ to the summand $\bar C_*(\TT(P))$ labeled by the unique monomorphism $f:[\# P-1]\cong P\subset  \{1,\dots, n+1\}$, using the collapsing map $\TT_{\emptyset_P}\to \TT(P)$,  and $\bar C_*(\TT_G)$ to $0$ for each  $G\not=\emptyset_P$. The  map $\varphi_1$
is defined by the forgetting labels $f$. The symbol $\simeq$ in the middle denotes the zigzag  induced by Lemma \ref{LPK} and Proposition \ref{Phom_dual}. Clearly, the right map and middle zigzag induce isomorphisms of spectral sequences. We shall prove the map $\varphi_1$ induces a quasi-isomorphism on $(E_1,d_1)$.  It is easy to see that the total complex of the normalization of a functor $X:\Delta_n^{op}\to \CH_\kk$ is naturally isomorphic to the realization of $X$ restricted to $\leq n$, defined using the usual cellular chain complexes of the standard simplexes, which is in turn, naturally quasi-isomorphic to the homotopy colimit of $X$ (see subsection 18.6 of \cite{hirschhorn}).
By Theorem 6.7 of \cite{sinha}, the functor $\mathcal{F}$  induces a quasi-isomorphism between the homotopy limit of a functor from $\Delta_n$ and that of its pullback by $\mathcal{F}$. By considering the opposite model category $\CH_\kk^{op}$, we see that the obvious map $ \varphi_2:\underset{{P'_n}^{op}}{\hocolim}\,\mathcal{F}^*X\to \underset{\Delta_n^{op}}{\hocolim}\, X$ is a quasi-isomorphism for a contravariant functor $X$ from $\Delta_n$, where $\hocolim$ denotes the homotopy colimit. 
The map $\varphi_2$ is decomposed as follows.
\[
\underset{{P_n'}^{op}}{\hocolim}\,\mathcal{F}^*X\stackrel{\varphi_3}{\longrightarrow} \underset{{P_n'}^{op}}{\hocolim}\,\mathcal{F}^*\CECHF\mathcal{F}^*X\stackrel{\varphi_4}{\longrightarrow} \underset{{\Delta_n}^{op}}{\hocolim}\,\CECHF\mathcal{F}^*X\stackrel{\varphi_5}{\longrightarrow} \underset{{\Delta_n}^{op}}{\hocolim}\,X,
\]
where $\varphi_3$ and $\varphi_5$ are induced by the units of the adjoint pair $(\CECHF, \mathcal{F}^*)$ and $\varphi_4$ is the map $\varphi_2$ applied to $\CECHF\mathcal{F}^*X$ instead of $X$. The composition $\varphi_4\circ\varphi_3$ is a quasi-isomorphism  by Lemma 2.2 of \cite{moriya1}. By two out of three, $\varphi_5$ is a quasi-isomorphism. Applying $\varphi_5$ to $X=H^*(\KK_d^{\leq n})$, the functor taking cohomology regarded as a complex with zero differential termwisely, with the above quasi-isomorphism between the homotopy colimit and normalization,  we see that the map $\varphi_1$ induces a quasi-isomorphism between the $E_1$-pages, and we have proved part 1. Part 2 is obvious.
\end{proof}
\indent The following lemma is well-known.
\begin{lem}[See e.g. \cite{BT}]\label{Lbasic_spec}
Let $(K,D_1,D_2)$ be a first or  second quadrant double complex and $E^{p,q}_r$ the associated spectral sequence. Let $\omega_0$ be a cycle in $(E_0,d_0=D_1)$.  There exists a sequence $\omega_1,\dots, \omega_r$ of  elements of $K$ such that $D_2\omega_i=D_1\omega_{i+1}$ for $0\leq i\leq r-1$ if and only if $\omega_0$ persists to the $E_r$-page. For any such  sequence, we have $d_{r+1}([\omega_0])=\pm [\omega_r]$. If $\omega_0$ persists to infinity, the sum $\omega_0\pm \omega_1\pm\cdots$ with signs depending on the degree of $\omega_0$ is a cycle in the total complex of $K$, which projects to the class of $\omega_0$ in the associated graded. \hfill \qedsymbol
\end{lem}
Let $E_r(p)$ be the spectral sequence associated to the double complex $(\TT_{p * \star},d,\partial)$ (for a fixed $p$). $E_r(p)$ converges  to 
\[
\CEE^{-p,*}_1=\bigoplus_{P\in\PP_n, \ \#P=p+2}\bar H_*(\TT(P)),
\]
and its $E_1$ page is the direct sum of the groups $\bar H_*(\TT_G)$ with $G\in \GG(P)$ for $\#P=p+2$. 
We easily observe that the space $\TT_G$ is homotopy equivalent to the sphere of dimension ${d(n+2-\#\pi_0(G))}$, see the proof of Lemma \ref{Lfundcyc} (2). Also, the homology $\bar H_*(\TT(P))$ is isomorphic to the cohomology $H^*(Conf_p(\RR^d))$ of the configuration space of ordered $p$ points in $\RR^d$, up to some degree shift.
With these observations, we can show that $E_r(p)$ degenerates at $E_2$-page as in the proof of a description of Bendersky-Gitler spectral sequence given in \cite{FT}.  (The 3-term relation is given by the $d_1$- (or $\CECHC$ech) differential  $(i,j)(j,k)(k,i)\mapsto \pm(j,k)(k,i)\pm (i,j)(k,i)\pm (i,j)(j,k)$.) So for a forest $G\in \GG(P)$,  the elements of $\bar H_*(\TT_G)\subset E_1(p)$ persist to infinity. We call the submodule of $\CEE_1$ consisting of the elements obtained  from  elements of $\bar H_*(\TT_G)$ by the procedure of Lemma \ref{Lbasic_spec} the {\em submodule represented by  $G$}. (The induced elements are unique since the non-zero elements of $E_2(p)$ with a single total degree  are concentrated to a singule horizontal degree (i.e. number of edges).) By the standard presentation of $H^*(Conf_p(\RR^d))$ (see the paragraph after Lemma \ref{Lspec_seq2} below), 
the module $\bar H_*(\TT(P))\subset \CEE_1$ is spanned by  the submodules represented by   forests $G\in \GG(P)$. We have a similar description for $tr_m\CEE_1$, but for the graphs with exactly $m$ edges,  not only forests, any graphs give (possibly non-trivial) elements of the $E_1$-page. We do not give a complete description of $tr_m\CEE_1$.  The following lemma, which is obvious by a spectral sequence argument, is sufficient for our purpose. 
\begin{lem}\label{Lspec_seq2}
Under the above terminology, the map $\CEE_1^{pq}\to tr_m\CEE_1^{pq}$  in Definition \ref{Dspecseq} (1) induces an isomorphism between the sum of  submodules represented by  forests $G$ with $m+1$  edges or more and a monomorphism between the sum of  submodules represented by graphs with exactly $m$ edges.\hfill \qedsymbol
\end{lem}
We have obtained the zigzag of maps of spectral sequences
\[
\EE_r\leftarrow \bar \EE_r\leftarrow \CEE_r\rightarrow tr_m\CEE_r.
\]
By Lemmas \ref{Lspecseq} and \ref{Lspec_seq2}, to compute a differential of an element in $\EE_r^{-n,q}$ is equivalent to computation of the  differential of an element in $\bar\EE_r^{-n,q}$ which is sent to the element. This is in turn, equivalent to computation of the corresponding differential in $tr_{m}\CEE_r$ for  $r\geq 2$ and some $m$. For $d=2$, $d_r$ decreases the number of edges by $r-1$ so we can take $m$ as the number of edges of the  element minus $r-1$. \\ 
\indent We shall recall the well-known description of Sinha's sequence. The group $\EE^{-p,*}_1$ is isomorphic to the cohomology $H^*(Conf_p(\RR^d))$, which is generated by elements $g_{ij}$ of degree $d-1$ with $1\leq i, j\leq p$, $i\not=j$ as an algebra modulo the relations $(g_{ij})^2=0$, $g_{ji}=(-1)^dg_{ij}$, and the 3-term relation $g_{ij}g_{jk}+g_{jk}g_{ki}+g_{ki}g_{ij}=0$. The $d_1$-differential $:\EE^{-p,q}_1\to \EE^{-p+1,q}_1$ is given by $\sum_{0\leq i\leq p}(-1)^i\delta_i$ where $\delta_i$  changes the labels of the generators by the order preserving surjection $[p+1]\to [p],\ i, i+1\mapsto i$. ($g_{0i}$, $g_{ip}$ and  $g_{ii}$ are regarded as zero in the target.) For example, if we consider $g_{13}g_{24}\in \EE^{-4,2d-2}_1$, we have
\[
\begin{split}
d_1(g_{13}g_{24}) & =g_{02}g_{13}-g_{12}g_{13}+g_{12}g_{23}-g_{13}g_{23}+g_{13}g_{24} \\
                      & =0+g_{31}g_{12}+g_{12}g_{23}+g_{23}g_{31}+0=0.
\end{split}
\]  
The $d_1$-differential of $\bar \EE^{-p,q}_1$ is given by the same formula if $p\leq n$. \\
\indent We now turn to a description of $\CEE_1$. 
We re-label the generators of $H^*(Conf_p(\RR^d))$ with  elements of $P^\circ$ instead of $1,\dots, p$. An isomorphism between $H^*(Conf_p(\RR^d))$ and $\bar H_*(\TT(P))$ are given by sending a monomial $g_{\alpha_1,\beta_1}\cdots g_{\alpha_r,\beta_r}$ to a generator of $\bar H_{n(d-r)}(\TT_G)$ with $G=(\alpha_1,\beta_1)\cdots (\alpha_r,\beta_r)$. Under this identification,  the $d_1$-differential of $\CEE$ is similar to $\EE$ but the change of labels is induced by the natural surjection $\delta_i:P\to \delta_iP$. For example, if $p=n=4$, we have
\[
d_1(g_{13}g_{24})  =-g_{\{12\}3}g_{\{12\}4}+g_{1\{23\}}g_{\{23\}4}-g_{1\{34\}}g_{2\{34\}}\ \in\ \bigoplus_{1\leq i\leq 3}\bar H_*(\TT(\delta_i[4])).
\]   

\section{Condensed maps}\label{Scondensed}
In this section, we define a class of maps used to define chains in the  complex $\TTH$ and prove their properties. 
\begin{defi}\label{Dcondensed}

\begin{enumerate}
\item Let $X$ be an unpointed topological space, $P\in \PP_{n+1}$ a partition,  $G\in \GG(P)$  a graph, and  $f=(f_1,\dots, f_n):X\to \RR^{dn}$ a map. For a connected component $S$ of $G$ contained in $P^\circ$,   a vertex $\alpha\in S$ is called {\em a base of $S$ (for $f$, or $(G,f)$)}, which satisfies  the following conditions.
\begin{enumerate}
\item For each $\beta\in S$, there exists an element $y_i(x)$ of the convex hull of $\{f_j(x)\mid j\in \alpha\}$ satisfying $f_i(x)=_1y_i(x)$   for any $i\in \beta$ and $x\in X$, and there is at least one $i\in \beta$ such that for any $x$, $f_i(x)$ belongs to the convex hull.
\item there is no edge in $G$ which both of the vertices incident with  are in $S$ and strictly smaller than $\alpha$.
\end{enumerate}
(A discrete vertex in $P^\circ$ is regarded as a base.)
We say $f$ is {\em $G$-condensed} if the following conditions hold.
\begin{enumerate}
\item $f$ is proper. 
\item Each connected component $S\subset P^\circ$ of $G$ has at least one base for $f$.
\end{enumerate}
\item Let $G\in \GG([n+1])$ be a graph with exactly $2$ connected components contained in $\{1,\dots, n\}$. Define a map  $f_G=(f_1,\dots f_n):\RR^{2d}\to \RR^{dn}$ by
\[
f_i(x,y)=\left\{\begin{array}{cc}
x & (i\sim_G1)\\
y & (\text{otherwise}).
\end{array}\right.
\]
Clearly, $f_G$ is $G$-condensed, and a vertex is a base if it satisfies the condition (b) on a base. At least, the minimum and the second to minimum vertices of each connected component $\subset P^\circ$ are  bases.
\item For a graph $G\in \GG(P)$, we define subsets $U_G$ and $U^1_G$ of $\RR^{dn}$ by
\[
\begin{split}
U_G=&(\RR^{dn}-\nu_{P})\cup \pi_P^{-1}(\cap_{(\alpha,\beta)\in E(G)}D_{\alpha\beta}), \\
U_G^1=&(\RR^{dn}-p_1^{-1}p_1(\nu_P))\cup p_1^{-1} p_1\pi_P^{-1}(\cap_{(\alpha,\beta)\in E(G)}D_{\alpha\beta}).
\end{split}
\]
Here, $p_1:\RR^{dn}=(\RR^d)^n\to \RR^n$ is  the product of the projections to the first coordinate $\RR^d\to \RR$.
\end{enumerate}
\end{defi}

\begin{lem}\label{Lcondensed}
Let $f:X\to \RR^{dn}$ be a $G$-condensed map for a graph $G\in \GG(P)$. We have $f(X) \subset U_G\cap U^1_G$. In particular, $f$ induces a map $f:X^*\to \TT_G$.
\end{lem}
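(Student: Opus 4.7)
The plan is a direct pointwise geometric verification of $f(X)\subset U_G\cap U_G^1$. Once this inclusion is established, the induced pointed map $f\colon X^*\to\TT_G$ is automatic: points with image outside $\nu_P$ (including neighborhoods of $\infty$, by properness of $f$) are sent to the basepoint, while points with image in $\nu_P$ have $\pi_P$-images in $\bigcap_{(\alpha,\beta)\in E(G)}D_{\alpha\beta}$ by definition of $U_G$ and hence give genuine elements of $\TT_G$.

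Fix $x\in X$ and assume $y:=f(x)\in\nu_P$ (the case $y\notin\nu_P$ is trivial). Write $(x^0_\zeta):=\pi_P(y)$ for $\zeta\in P-\{\gamma_m,\gamma_M\}$. For each edge $(\alpha,\beta)\in E(G)$ with $\alpha<\beta$, let $S$ be the connected component of $G$ containing $\alpha,\beta$ and $\gamma\in S$ the chosen base; condition (ii) of the base definition forces $\gamma\leq\beta$. Because $y\in\nu_P$, for $\zeta\in\{\alpha,\beta,\gamma\}$ and $j\in\zeta$ we may write $y_j=x^0_\zeta+\delta_j^\zeta u+\eta_j$ with $\delta_j^\zeta$ a prescribed $u$-offset satisfying $|\delta_j^\zeta|\leq\rho c_\zeta/2$ and $|\eta_j|<\eps_P$. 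The convex-hull condition (i) through $\gamma$ places every $y_i$ ($i\in\alpha\cup\beta$) into the hull of the $\gamma$-cluster, which is tightly aligned along $u$: its perpendicular width is at most $2\eps_P$, its $u$-width is $\rho c_\gamma+O(\eps_P)$. This yields the separate estimates
\[
|(x^0_\alpha-x^0_\beta)_\perp|\leq 4\eps_P,\qquad |(x^0_\alpha-x^0_\beta)_1|\leq\rho c_\gamma-\rho(c_\alpha+c_\beta)/2+O(\eps_P),
\]
the second by varying $i,i'$ so that $\delta_i^\alpha-\delta_{i'}^\beta$ sweeps its full range $\approx[-\rho(c_\alpha+c_\beta)/2,\rho(c_\alpha+c_\beta)/2]$.

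A case analysis on the position of $\gamma$, using the rapid-growth hypothesis $100(\eps/\rho+\sum_{j<i}c_j)<c_i$ from Definition~\ref{DPK}, then verifies $|x^0_\alpha-x^0_\beta|\leq\rho c_{\alpha\beta}-\eps_P$ in each case. When $\gamma<\alpha$, the parallel-component bound above has empty feasible interval, so $y\notin\nu_P$ and the desired inclusion holds vacuously. When $\gamma\in\{\alpha,\beta\}$ or $\alpha<\gamma<\beta$, the weight $c_{\alpha\beta}=\sum_{\alpha<\delta<\beta}c_\delta+(c_\alpha+c_\beta)/2$ contains $(c_\alpha+c_\beta)/2$ plus every intermediate $c_\delta$, which by rapid growth dominates both $\rho c_\gamma-\rho(c_\alpha+c_\beta)/2$ and all the $O(\eps_P)$-error terms. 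The verification of $f(X)\subset U_G^1$ is the same estimate projected to the first coordinate via $p_1$, since all non-trivial cluster structure lies along $u$.

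The main obstacle is the careful separation of the perpendicular-to-$u$ estimate (of small order $\eps_P$) from the coarser $u$-direction estimate (of order $\rho c_\gamma$); a naive componentwise bound $|x^0_\alpha-x^0_\beta|\leq\rho c_\gamma+O(\eps_P)$ is too weak in the subcase $\gamma\in\{\alpha,\beta\}$ (where $\rho c_\gamma$ can exceed $\rho c_{\alpha\beta}$ when $\alpha,\beta$ are adjacent in $P$), whereas the direction-aware estimate buys exactly the margin needed to absorb the $O(\eps_P)$-terms and conclude $\leq d_{\alpha\beta}(P)$.
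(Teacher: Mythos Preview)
Your parallel-component bound
\[
|(x^0_\alpha-x^0_\beta)_1|\leq\rho c_\gamma-\rho(c_\alpha+c_\beta)/2+O(\eps_P)
\]
is not correctly derived, and this breaks the argument. The offsets $\delta_i^\zeta$ are \emph{fixed} numbers coming from $e_P$, not variables ranging over $[-\rho c_\zeta/2,\rho c_\zeta/2]$: for a piece $\zeta=\{l,\dots,m\}$ the extreme offsets are $-\rho(c_\zeta-c_l)/2$ and $\rho(c_\zeta-c_m)/2$, so for a singleton $\zeta$ the only offset is $0$. Thus neither your ``sweep range'' of $\delta_i^\alpha-\delta_{i'}^\beta$ nor your ``$u$-width $\rho c_\gamma$'' of the base cluster is correct. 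Concretely, take $P=[n+1]$ (so $\nu_P=\RR^{dn}$), the edge $(\{1\},\{4\})$ of $G_1=(1,4)(2,3)$, and base $\gamma=\alpha=\{1\}$. Your displayed bound becomes $\rho(c_1-c_4)/2+O(\eps_P)<0$, which you would then read as ``$y\notin\nu_P$''; but every point is in $\nu_{[n+1]}$ and $f_{G_1}$ is a perfectly good $G_1$-condensed map. So both the inequality and the ``empty feasible interval'' mechanism fail here, precisely in the case (singleton pieces) that dominates the applications.

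The paper's argument avoids the direction split entirely and is shorter. Assuming first that $\alpha$ itself is the base, pick any $i\in\beta$. From $y\in\nu_P$ one has $|f_i(x)-y_\beta|\leq\rho(c_\beta-\min_\beta)/2+\eps_P$, where $\min_\beta$ is the smaller of the two endpoint weights of $\beta$; from the convex-hull condition one has $|f_i(x)-y_\alpha|\leq\rho(c_\alpha-\min_\alpha)/2+\eps_P$. The triangle inequality then gives $|y_\alpha-y_\beta|<\rho(c_\alpha+c_\beta)/2-\eps_P\leq d_{\alpha\beta}(P)$, using only $100\eps/\rho<c_0$. If neither $\alpha$ nor $\beta$ is the base, route through $y_\gamma$ by applying the same estimate twice; only in this secondary case is the rapid-growth hypothesis $100(\eps/\rho+\sum_{j<i}c_j)<c_i$ actually needed, to absorb the extra $\rho c_\gamma$ into $c_{\alpha\beta}$. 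The inclusion into $U^1_G$ follows by the same estimates read in the first coordinate.
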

\begin{proof}
Let $S$ be a connected component of $G$,  $\alpha$ a base of $S$, and  $\beta$  a vertex in $S$.  For $x\in X$, suppose  $f(x)\in \nu_P$ and write $\pi_P(f(x))=(y_\gamma)$.   For a vertex $\gamma$, let $j_\gamma$ denote the minimum number in $\gamma$. By definitions of $e_P$ and $\nu_P$, for $i\in \beta$, we have
\[
|f_i(x)-y_\beta|\leq \rho (c_\beta-c_{j_\beta})/2+\eps_P.
\]
Since $f_i(x)$ belongs to the convex hull of $\{f_r(x)\}_{r\in \alpha}$ for some $i\in \beta$, we have 
$
|f_i(x)-y_\alpha|\leq \rho (c_\alpha-c_{j_\alpha})/2+\eps_P.
$
So, we have $|y_\alpha-y_\beta|<\rho (c_\alpha+c_\beta-c_{j_\alpha}-c_{j_\beta})/2+2\eps_P$. Let $\bar \beta$ be a vertex of $G$ such that there is an edge incident with $\beta$ and $\bar\beta$. By definition of the base, $\alpha$ is not the maximum of $\alpha, \beta$ and $\bar \beta$. By the above argument, 
we see 
\[
\begin{split}
|y_{\bar \beta}-y_\beta|&<\frac{\rho}{2}(c_\beta+c_{\bar \beta}+2c_\alpha-c_{j_\beta}-c_{j_{\bar \beta}}-2c_{j_\alpha})+4\eps_P\\
&<d_{\bar\beta\beta}(P).
\end{split}
\] The second inequality follows from  the condition on $c_r$ imposed in Definition \ref{DPK} (1). We have shown $f(X)\subset U_G$. Since $f$ is proper, $f$ induces the map from the one-point compactification. The proof of the inclusion to $U^1_G$ is completely similar. 
\end{proof}

\begin{defi}\label{Dcondensed2}
Let $P\in \PP_n$ and $G\in\GG(P)$.
\begin{enumerate}
\item An edge $e$ of  $G$ is called a {\em bridge} if $\#\pi_0(\partial_eG)>\#\pi_0(G)$.
\item Let $f:X\to \RR^{dn}$ be a $G$-condensed map. For a bridge $e$ of $G$, there is a unique connected component of $G$ which splits into two components by removing $e$.  We say   $e$  is {\em admissible} for $f$ if each of the two  new components has a base  for $(\partial_eG, f)$. (This condition is satisfied for example,  if for each  of the two components $S$, $S$  contains a base of a component of $G$ for $(G,f)$, or is discrete, or for any two numbers  $i,j \in S$,  $f_i(x)=f_j(x)$ holds.)  Similarly, for  a pair $(e,e')$ of distinct bridges of $G$, there are exactly two components of $G$ each of which splits into two components, or there is a unique component of $G$ which splits into three components by removing $e$ and $e'$.   
We say  $(e,e')$  is {\em admissible} for $f$ if each of
the new components has a base for $(\partial_{ee'}G, f)$. 
(This condition is satisfied for example, if for each of the new components $S$ of $\partial_{ee'}G$, $S$ contains a base for $(G,f)$ or is discrete, or  for any two numbers  $i,j \in S$,  $f_i(x)=f_j(x)$.) 
\item Let  $f:X\to \RR^{dn}$ be a $G$-condensed map. Let $k$ be a  number  with $1\leq k\leq \# P-3$, and $\alpha_k$ and  $\alpha_{k+1}$ the $k+1$-th and $k+2$-th pieces of $P$, respectively. We say $k$  is {\em admissible} for $(G,f)$ if $\delta_k(G)\in \GG(\delta_kP)$ and either of the following conditions holds.
\begin{enumerate}
\item Both of $\alpha_k$ and $\alpha_{k+1}$ are bases for $(G,f)$.
\item  For exactly one  of $l=k,k+1$,  $\alpha_{l}$ is  discrete in $G$, and if  $m$ denotes the other number, for some base $\beta\sim_G\alpha_{m}$,  there is a point $y_i(x)$ in the convex hull of $\{f_j(x)\mid j\in \beta\}$ satisfying $f_i(x)=_1y_i(x)$ for each $i\in\alpha_{l}$ and $x\in X$. 
\end{enumerate}
\item Let $f:X\to \RR^{dn}$ be a  map, and $e=(\alpha,\beta)$  a bridge of  $G$. For $1\leq i\leq n$ and $s\in [0,\infty)$, we set 
\[
A_{e}^i(s)=\left\{\begin{array}{cl}
s & \text{if $i\sim _{\partial_eG} \alpha$} \\
-s & \text{if $i\sim _{\partial_eG} \beta$} \\
0 & \text{otherwise}
\end{array}
\right.
\]
and define $A_{e}:[0,\infty)\to \RR^{dn}$
\[
A_{e}(s)=(A_{e}^i(s)v)_{1\leq i\leq n},
\]
where $v=(0,1,0,\dots, 0)$ as in subsection \ref{SSNT}. The {\em contraction $F:X\times [0,\infty)\to \RR^{dn}$ of $f$  in removing $e$ from $G$} (in short, {\em $e$-contraction of $f$ for $G$}) is   defined by
\[
F(x,s)=f(x)+A_e(s).
\]
Let $e'$ be another bridge of $G$. The {\em $(e,e')$-contraction  $F':X\times [0,\infty)^2\to \RR^{dn}$ of $f$ for $G$}  is defined by
\[
F'(x,s_1,s_2)=f(x)+A_e(s_1)+A_{e'}(s_2).
\]

\end{enumerate}
\end{defi}

\begin{lem}\label{Lcontraction}
Let $Q\in \PP_n$, $G\in \GG(Q)$, and  $f:X\to \RR^{dn}$  a $G$-condensed map.
\begin{enumerate}
\item  If $e\in E(G)$ is an admissible bridge for $f$, the $e$-contraction $F$ of $f$ for $G$ is $\partial_eG$-condensed. 
\item  If $(e, e')$ is an admissible pair of   bridges of $G$ for $f$, the $(e,e')$-contraction $F'$ of $f$ for $G$ is $\partial_{ee'}G$-condensed.
\item  If $k$ is admissible for $(G,f)$, $f$ is $\delta_kG$-condensed.
\end{enumerate}
\end{lem}
\begin{proof}
For part 1, we only have to prove that $F$ is proper. The other conditions are clear by definition. Let $\beta$ and $\gamma$ be the vertices incident with $e$, $\alpha$  a base of the connected component $S$ of $G$ including $e$.  Say $\alpha\not\sim_{\partial_eG}\beta$ and $\gamma<\beta$. Suppose $F(x)\in D^{dn}_R=(D^d_R)^n$, the product of disks of radius $R$, centered at $0$. For $i\in \beta$ and $j\in \alpha$, we have
\[
2R\geq |F_i(x)-F_j(x)|=|f_i(x)- sv-(f_j(x)+ sv)|\geq ||f_i(x)-f_j(x)|-2s|.
\]  
This implies $s\leq R+|f_i(x)-f_j(x)|/2$. Since the convex hulls of  $\{f_j(x)+sv\}_{j\in \alpha}$ and $\{f_j(x)\}_{j\in \alpha}$ are congruent and the diameter of former one is  $2R$ or less, we have $|f_i(x)-f_j(x)|\leq 2R$. By these inequalities,  we  have $s\leq 2R$, which implies $|f_k(x)|\leq 3R$ for any number $k\in S$. Thus, we have 
$F^{-1}(D^{dn}_R)\subset f^{-1}(D^{dn}_{3R})$ and conclude that $F$ is proper.  Part 2 is similar and  part 3 is obvious. 
\end{proof}
\begin{exa}\label{Ef_G}
Let $G_1$ be the graph in Example \ref{Egraph}.
The map $f=f_{G_1}$ in Definition \ref{Dcondensed} is given by $(x,y)\mapsto (x,y,y,x)$. The edges $e=(1,4)$ and $e'=(2,3)$ are adimissible bridges and also admissble as a pair. The $e$-contraction $F$ of $f$ (for $G_1$) is given by $(x,y,s)\mapsto (x+sv,y,y, x-sv)$ and the $(e,e')$-contraction $F'$ of $f$ for $G_1$ is given by $(x,y,s_1,s_2)\mapsto (x+s_1v, y+s_2v, y-s_2v, x-s_1v)$. By Lemma \ref{Lcontraction}, $F$  is  $\partial_eG_1$-condensed and $F'$ is $\partial_{ee'}G_1$-condensed.  
Since $3\in [5]$ is an admissible number for $(G_1,f)$, $(\partial_eG_1,F)$ and $(\partial_{ee'}G_1,F')$, the maps $f$, $F$ and $F'$ are also $\delta_3G_1$-, $\delta_3\partial_eG_1$- and  $\delta_3\partial_{ee'}G_1$-condensed respectively.  Similar claim holds for the $e'$-contraction. The number $1$ is admissible for the pairs $(G_1,f), (\partial_eG_1,F), (\partial_{ee'}G_1, F'), (\delta_3\partial_eG_1,F),$ and $(\delta_3\partial_{ee'}G_1,F')$,  and not admissible for $(\delta_3G_1,f)$, for example ($ (\delta_3\partial_eG_1,F)$ is a case where the condition (b) of Definition \ref{Dcondensed2} (3) holds). Also, the number 2 is not admissible for $(G_1,f)$ and $(\partial_eG_1,F)$.  The vertex $\{1,2\}$ is a base of a connected component of $(\delta_1G_1,f)$. The other vertices  are not bases.

\end{exa}
For the maps in later sections, necessary admissibility of bridges or numbers is also seen obviously and we will not mention it in each case.  
\begin{defi}\label{Dstraight}
 For two  maps $f,g:X\to \RR^{dn}$, the 
{\em straight homotopy $h:X\times I\to \RR^{dn}$ from $f$ to $g$} is defined by
$h(x,t)=(1-t)f(x)+tg(x)$.
\end{defi}
\begin{lem}\label{Lstraight_homotopy} Let $G\in\GG(P)$ be a graph.
\begin{enumerate} 
\item Let $f,g:X\to \RR^{dn}$ be  two $G$-condensed maps such that for each  component $S\subset P^\circ$, there is a common base $\alpha$ of $S$ for $f$ and $g$   satisfying the following two conditions: For each $i\in \alpha$  $f_i=g_i$, and for each $j\in S$ and $x\in X$, $f_j(x)$ and $g_j(x)$ belong to the convex hull of $\{f_i(x)\}_{i\in \alpha}$.
Then, the straight homotopy $h$ from $f$ to $g$ is $G$-condensed, so it induces a map
$h:X^*\wedge (I_+)\to \RR^{dn}$. 
\item Let $f:X\to \RR^{dn}$ be a $G$-condensed map, $k$ an admissible number for $(G,f)$ satisfying the condition (a) in Definition \ref{Dcondensed2} (3). Let $e$ be an admissible bridge of $G$ for $f$ which is also a bridge of $\delta_kG$ through the standard bijection. Suppose   $e$ belongs to the connected component of $\delta_kG$ which includes the $k+1$-th vertex of $\delta_kG$ (i.e. the unified vertex). Let $F$ (resp. $F'$) be the $e$-contraction of  $f$ for $G$ (resp. $\delta_kG$). The straight homotopy $H$ from $F$ to $F'$ is $\delta_k\partial_eG$-condensed. 
\item Let $f:X\to \RR^{dn}$ be a $G$-condensed map, $k$ an admissible number for $(G,f)$ satisfying the condition (a) in Definition \ref{Dcondensed2} (3). Let $(e,e') $ be an admissible pair of bridges of $G$ for $f$ such that $e$ and $e'$ are also bridges of $\delta_kG$. Suppose $e$ and $e'$ belong to the  connected component of $\delta_kG$ which includes $k+1$-th vertex of $\delta_kG$.  Let $F$ (resp. $F'$) be the $(e,e')$-contraction of  $f$ for $G$ (resp. $\delta_kG$). The straight homotopy from $F$ to $F'$ is $\delta_k\partial_{ee'}G$-condensed.

\end{enumerate}
\end{lem}
\begin{proof}
Part 1 is clear. We shall prove part 2. Let $\alpha_k$, $\alpha_{k+1}$ be the $k+1$-th and $k+2$-th pieces of $P$, respectively. Put $e=(\beta, \gamma)$ and $H=(H_1,\dots, H_n)$. We consider the case of $\beta\sim_{\partial_eG}\alpha_k\not\sim_G\alpha_{k+1}$.  By the assumption, 
\[
H_i(x,s,t)=
\left\{
\begin{array}{cc}
f_i(x)+sv & \text{ if }i\sim_{\partial_eG}\beta,\\
f_i(x)-sv & \text{ if }i\sim_{\partial_eG}\gamma,\\
f_i(x)+stv & \text{ if }i\sim_G\alpha_{k+1},\\
f_i(x) & \text{ otherwise. }
\end{array}
\right.
\]
By this formula, we easily see $H$ is $\delta_k\partial_eG$-condensed as in Lemma \ref{Lcontraction}. The other cases and part 3 are similar.
\end{proof}
\begin{exa}\label{Estraight}
Let $G_1, G_2$ be the graphs in Example \ref{Egraph}. Put $f=f_{G_1}$, $f'=f_{G_2}$, and $e_1=(1,4)\in E(G)$. Let $T:(\RR^2)^2\to (\RR^2)^2$ be the transposition $(x,y)\mapsto (y,x)$. We see that $\delta_3G_1=\delta_3G_2$ and the straight homotopy $\psi$ from $f$ to $f'\circ T$ is given by $(x,y,t)\mapsto ((1-t)x+ty,tx+(1-t)y,y,x)$ and $\psi$ is  $\delta_3G_1$-condensed by Lemma \ref{Lstraight_homotopy}. The unique base  is $\{1,2\}$. The straight homotopy $\lambda_1$ from the $e_1$-contraction of $f$ for $G$ to the $e_1$-contraction of $f$ for $\delta_3G$ is given by $(x,y,s,t)\mapsto (x+sv,y-stv, y-stv,x-sv)$ and $\delta_3\partial_1G$-condensed (see Figure \ref{Fpsi}). Note that the straight homotopy from $f$ to $f'$ is not $\delta_3G_1$-condensed.
\end{exa}
\begin{figure}
\begin{center}
{\unitlength 0.1in%
\begin{picture}(51.3000,4.9800)(-0.5300,-5.0800)%
%
\special{pn 8}%
\special{pa 2990 237}%
\special{pa 3887 237}%
\special{fp}%
%
\special{pn 8}%
\special{pn 8}%
\special{pa 3078 237}%
\special{pa 3084 231}%
\special{fp}%
\special{pa 3111 206}%
\special{pa 3117 200}%
\special{fp}%
\special{pa 3144 175}%
\special{pa 3150 169}%
\special{fp}%
\special{pa 3178 145}%
\special{pa 3184 140}%
\special{fp}%
\special{pa 3213 115}%
\special{pa 3219 110}%
\special{fp}%
\special{pa 3248 88}%
\special{pa 3255 83}%
\special{fp}%
\special{pa 3286 62}%
\special{pa 3293 58}%
\special{fp}%
\special{pa 3326 40}%
\special{pa 3333 37}%
\special{fp}%
\special{pa 3367 23}%
\special{pa 3375 20}%
\special{fp}%
\special{pa 3411 12}%
\special{pa 3419 11}%
\special{fp}%
\special{pa 3457 11}%
\special{pa 3465 12}%
\special{fp}%
\special{pa 3501 20}%
\special{pa 3509 22}%
\special{fp}%
\special{pa 3543 37}%
\special{pa 3550 40}%
\special{fp}%
\special{pa 3583 59}%
\special{pa 3590 63}%
\special{fp}%
\special{pa 3621 83}%
\special{pa 3627 88}%
\special{fp}%
\special{pa 3657 111}%
\special{pa 3663 116}%
\special{fp}%
\special{pa 3692 140}%
\special{pa 3698 145}%
\special{fp}%
\special{pa 3726 169}%
\special{pa 3732 175}%
\special{fp}%
\special{pa 3759 200}%
\special{pa 3765 206}%
\special{fp}%
\special{pa 3792 231}%
\special{pa 3798 237}%
\special{fp}%
%
\special{pn 8}%
\special{pa 4181 237}%
\special{pa 5077 237}%
\special{fp}%
%
\special{pn 8}%
\special{pn 8}%
\special{pa 4269 237}%
\special{pa 4275 231}%
\special{fp}%
\special{pa 4302 206}%
\special{pa 4308 200}%
\special{fp}%
\special{pa 4335 175}%
\special{pa 4341 169}%
\special{fp}%
\special{pa 4369 145}%
\special{pa 4375 140}%
\special{fp}%
\special{pa 4404 115}%
\special{pa 4410 110}%
\special{fp}%
\special{pa 4439 88}%
\special{pa 4446 83}%
\special{fp}%
\special{pa 4477 62}%
\special{pa 4484 58}%
\special{fp}%
\special{pa 4517 40}%
\special{pa 4524 37}%
\special{fp}%
\special{pa 4558 23}%
\special{pa 4566 20}%
\special{fp}%
\special{pa 4602 12}%
\special{pa 4610 11}%
\special{fp}%
\special{pa 4648 11}%
\special{pa 4656 12}%
\special{fp}%
\special{pa 4692 20}%
\special{pa 4700 22}%
\special{fp}%
\special{pa 4734 37}%
\special{pa 4741 40}%
\special{fp}%
\special{pa 4774 59}%
\special{pa 4781 63}%
\special{fp}%
\special{pa 4812 83}%
\special{pa 4818 88}%
\special{fp}%
\special{pa 4848 111}%
\special{pa 4854 116}%
\special{fp}%
\special{pa 4883 140}%
\special{pa 4889 145}%
\special{fp}%
\special{pa 4917 169}%
\special{pa 4923 175}%
\special{fp}%
\special{pa 4950 200}%
\special{pa 4956 206}%
\special{fp}%
\special{pa 4983 231}%
\special{pa 4989 237}%
\special{fp}%
\put(30.7700,-3.7700){\makebox(0,0){$+$}}%
\put(37.9500,-3.8100){\makebox(0,0){$-$}}%
\put(42.6400,-3.8000){\makebox(0,0){$+$}}%
\put(50.0200,-3.8200){\makebox(0,0){$-$}}%
\put(48.4800,-3.8200){\makebox(0,0){$-$}}%
\put(44.3200,-3.7700){\makebox(0,0){$-$}}%
\put(40.2600,-1.7200){\makebox(0,0){$\simeq$}}%
\put(33.2500,-5.7300){\makebox(0,0){$e_1$-cont. of $f$ for $G_1$}}%
\put(48.7500,-5.6300){\makebox(0,0){$e_1$-cont. of $f$ for $\delta_3G_1$}}%
%
\special{pn 8}%
\special{pa 3238 237}%
\special{pa 3265 212}%
\special{pa 3292 188}%
\special{pa 3319 166}%
\special{pa 3346 147}%
\special{pa 3374 132}%
\special{pa 3401 121}%
\special{pa 3428 115}%
\special{pa 3456 116}%
\special{pa 3483 123}%
\special{pa 3511 135}%
\special{pa 3538 152}%
\special{pa 3566 171}%
\special{pa 3622 217}%
\special{pa 3644 237}%
\special{fp}%
%
\special{pn 8}%
\special{pa 4428 237}%
\special{pa 4455 212}%
\special{pa 4482 188}%
\special{pa 4509 166}%
\special{pa 4536 147}%
\special{pa 4564 132}%
\special{pa 4591 121}%
\special{pa 4618 115}%
\special{pa 4646 116}%
\special{pa 4673 123}%
\special{pa 4701 135}%
\special{pa 4728 152}%
\special{pa 4756 171}%
\special{pa 4812 217}%
\special{pa 4834 237}%
\special{fp}%
%
\special{pn 8}%
\special{pa 3606 188}%
\special{pa 3833 188}%
\special{pa 3833 290}%
\special{pa 3606 290}%
\special{pa 3606 188}%
\special{pa 3833 188}%
\special{fp}%
%
\special{pn 8}%
\special{pa 4792 188}%
\special{pa 5020 188}%
\special{pa 5020 290}%
\special{pa 4792 290}%
\special{pa 4792 188}%
\special{pa 5020 188}%
\special{fp}%
%
\special{pn 8}%
\special{pa 261 231}%
\special{pa 1093 231}%
\special{fp}%
%
\special{pn 8}%
\special{pa 323 238}%
\special{pa 377 186}%
\special{pa 431 136}%
\special{pa 458 113}%
\special{pa 485 92}%
\special{pa 511 72}%
\special{pa 538 55}%
\special{pa 565 40}%
\special{pa 592 27}%
\special{pa 619 18}%
\special{pa 646 12}%
\special{pa 673 10}%
\special{pa 700 12}%
\special{pa 726 17}%
\special{pa 753 26}%
\special{pa 780 38}%
\special{pa 807 53}%
\special{pa 834 71}%
\special{pa 861 90}%
\special{pa 887 112}%
\special{pa 914 135}%
\special{pa 941 159}%
\special{pa 968 184}%
\special{pa 994 210}%
\special{pa 1016 231}%
\special{fp}%
%
\special{pn 8}%
\special{pa 538 238}%
\special{pa 564 210}%
\special{pa 590 183}%
\special{pa 615 161}%
\special{pa 641 144}%
\special{pa 667 136}%
\special{pa 692 137}%
\special{pa 717 148}%
\special{pa 742 166}%
\special{pa 767 190}%
\special{pa 792 218}%
\special{pa 815 245}%
\special{fp}%
%
\special{pn 8}%
\special{pa 1563 227}%
\special{pa 2402 227}%
\special{fp}%
%
\special{pn 8}%
\special{pa 1654 227}%
\special{pa 1680 199}%
\special{pa 1705 172}%
\special{pa 1731 147}%
\special{pa 1757 123}%
\special{pa 1782 102}%
\special{pa 1808 85}%
\special{pa 1834 71}%
\special{pa 1860 63}%
\special{pa 1886 60}%
\special{pa 1913 63}%
\special{pa 1939 71}%
\special{pa 1966 84}%
\special{pa 1992 100}%
\special{pa 2019 120}%
\special{pa 2045 143}%
\special{pa 2072 168}%
\special{pa 2099 194}%
\special{pa 2126 222}%
\special{pa 2131 227}%
\special{fp}%
%
\special{pn 8}%
\special{pa 1854 227}%
\special{pa 1880 199}%
\special{pa 1905 172}%
\special{pa 1931 147}%
\special{pa 1957 123}%
\special{pa 1983 103}%
\special{pa 2009 85}%
\special{pa 2035 72}%
\special{pa 2061 63}%
\special{pa 2087 60}%
\special{pa 2113 63}%
\special{pa 2140 70}%
\special{pa 2166 83}%
\special{pa 2193 100}%
\special{pa 2219 119}%
\special{pa 2246 142}%
\special{pa 2273 167}%
\special{pa 2300 193}%
\special{pa 2326 220}%
\special{pa 2333 227}%
\special{fp}%
\put(3.5300,-4.1500){\makebox(0,0){$\underline{x}$}}%
\put(5.8200,-4.1500){\makebox(0,0){$\underline{y}$}}%
\put(8.0300,-4.1500){\makebox(0,0){$y$}}%
\put(10.1800,-4.1500){\makebox(0,0){$x$}}%
\put(16.7600,-4.1200){\makebox(0,0){$\underline{y}$}}%
\put(19.0600,-4.1200){\makebox(0,0){$\underline{x}$}}%
\put(21.2700,-4.1200){\makebox(0,0){$y$}}%
\put(23.4100,-4.1200){\makebox(0,0){$x$}}%
\put(13.3200,-1.5800){\makebox(0,0){$\simeq$}}%
%
\special{pn 8}%
\special{pa 773 178}%
\special{pa 1043 178}%
\special{pa 1043 286}%
\special{pa 773 286}%
\special{pa 773 178}%
\special{pa 1043 178}%
\special{fp}%
%
\special{pn 8}%
\special{pa 2100 178}%
\special{pa 2369 178}%
\special{pa 2369 286}%
\special{pa 2100 286}%
\special{pa 2100 178}%
\special{pa 2369 178}%
\special{fp}%
\put(1.2200,-1.6400){\makebox(0,0){$\psi:$}}%
\put(28.5000,-1.7100){\makebox(0,0){$\lambda_1:$}}%
\end{picture}}%
\end{center}
\caption{$\psi$ and $\lambda_1$ 
: The underlines indicate the components moved by the straight homotopy,  the rectangles do the unified vertices, the dotted chords do removed edges, and the sign $\pm$ under each vertex does that of $\pm sv$ added to the corresponding component. We use similar notations throughout the paper.}\label{Fpsi}
\end{figure}
\noindent {\bf Notation and terminology : }\ As written in Lemmas  \ref{Lcondensed}, \ref{Lcontraction}, and \ref{Lstraight_homotopy}, we use the same symbol for the induced map between the pointed spaces as the original unpointed map. We also use the terms such as  `contraction' and `straight homotopy' for the induced map.\\

The proof of the following lemma is similar to Lemma \ref{Ldiagonal_incl}.

\begin{lem}\label{Lcondensed_collapse0}
Let $f=(f_1,\dots, f_n):X\to \RR^{dn}$ be a proper map, $x\in X$, and $P\in \PP_n$. Put $f_0(x)=(-1+\rho c_0/2)u$ and $f_{n+1}(x)=(1-\rho c_{n+1}/2)u$. For the induced map $f:X^*\to \TT_{\emptyset_P}$, if $f(x)\not=*$,  the following inequalities hold for any $\alpha\in P$,  numbers $1\leq k\leq n$, and $i, j\in \alpha$ with $i<j$:
\[
\begin{split}
-1+\rho c_{\leq k}-2\eps_P&<_1f_k(x)<_11-\rho c_{\geq k}+2\eps_P, \\
\rho c_{i\to j}-2\eps_P&<_1f_j(x)-f_i(x)<_1\rho c_{i\to j}+2\eps_P.
\end{split}
\]

In particular if $f_{j}(x)\leq_1 f_{i}(x)$ for some $i<j \in \alpha$, $f(x)=*$.\hfill\qedsymbol
\end{lem}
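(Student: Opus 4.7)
The plan is to unpack the hypothesis $f(x)\neq *$ through the definition of $\TT_{\emptyset_P}$ and then propagate the resulting distance estimates through the explicit formula for $e_P$ given in Definition~\ref{Dpunctured}(2). Since $\TT_{\emptyset_P}=\RR^{dn}/((\RR^{dn}-\nu_P)\cup \pi_P^{-1}(E_P))$, the assumption $f(x)\neq *$ is equivalent to $f(x)\in \nu_P$ together with $\pi_P(f(x))\notin E_P$. Writing $(y_\gamma)_\gamma=\pi_P(f(x))$ and using that $\pi_P$ is the orthogonal projection onto the affine image $e_P(\RR^{dp})$, the first condition gives $|f(x)-e_P((y_\gamma))|<\eps_P$, while the second says that for every non-extremal piece $\alpha$ all three defining inequalities of $E_\alpha$ fail, namely
\[
|y_\alpha|<1-\rho c_\alpha/2+\eps_P,\qquad -1+\rho c_{\leq \alpha}-\eps_P \ <_1\ y_\alpha \ <_1\ 1-\rho c_{\geq \alpha}+\eps_P.
\]

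Next I would work out the component formula. Writing $P=\{\alpha_0<\cdots<\alpha_{p+1}\}$, a direct computation from Definition~\ref{Dpunctured}(2) shows that for $k\in \alpha_i$,
\[
(e_P((y_\gamma)))_k=y_{\alpha_i}+\rho(c_{\leq k}-c_{\leq \alpha_i})u,
\]
with the conventions $y_{\alpha_0}=(-1+\rho c_{\leq \alpha_0})u$ and $y_{\alpha_{p+1}}=(1-\rho c_{\geq \alpha_{p+1}})u$ prescribed there for the extremal pieces. Combining this with the step-one bounds via the triangle inequality on the first coordinate, and using $c_{\leq \gamma}+c_{\geq \gamma}=1$ so that $-c_{\geq \alpha_i}+c_{\leq k}-c_{\leq \alpha_i}=-c_{\geq k}$, yields the first displayed inequality of the lemma. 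The extremal cases $k\in \alpha_0$ or $\alpha_{p+1}$ are easier: $(e_P)_k$ is then a fixed point whose first coordinate equals $-1+\rho c_{\leq k}$ (resp.\ $1-\rho c_{\geq k}$), so one side of the target follows directly from $|f_k(x)-(e_P)_k|<\eps_P$ and the other is slack since $\rho<1$.

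For the second displayed inequality, the same formula immediately yields $(e_P((y_\gamma)))_j-(e_P((y_\gamma)))_i=\rho(c_{\leq j}-c_{\leq i})u=\rho c_{ij}u$ whenever $i,j\in \alpha$, independently of $(y_\gamma)$; two applications of $|f_r(x)-(e_P)_r|<\eps_P$ for $r=i,j$ then give $|f_j(x)-f_i(x)-\rho c_{ij}u|<2\eps_P$, whose first-coordinate part is precisely the claim. Finally, the normalization conditions of Definition~\ref{DPK}(1) imply $c_{ij}\geq (c_i+c_j)/2>100\eps/\rho\geq 2\eps_P/\rho$, so $\rho c_{ij}-2\eps_P>0$, and hence $f_j(x)\leq_1 f_i(x)$ is incompatible with the strict lower bound just derived on $(f_j(x)-f_i(x))_1$, proving the last sentence. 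The entire argument is bookkeeping; the only places meriting slight care are the algebraic simplification using $c_{\leq \gamma}+c_{\geq \gamma}=1$ and matching the formula for $e_P$ to the extremal pieces.
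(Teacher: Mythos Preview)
Your proof is correct. The paper offers no argument for this lemma beyond the remark ``the following lemma is clear''; your write-up is precisely the direct unpacking of Definitions~\ref{Dpunctured} and~\ref{Dpartition} that the author regards as routine, and each step (the explicit component formula for $e_P$, the use of $c_{\leq\gamma}+c_{\geq\gamma}=1$, and the positivity of $\rho c_{ij}-2\eps_P$ from the constraints in Definition~\ref{DPK}(1)) checks out.
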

\begin{lem}\label{Lcondensed_collapse1}
Put $Q=[n+1]\in \PP_n$. Let $G\in\GG(Q)$ be a forest having exactly two components contained in $\{1,\dots,n\}$ and $P\in \PP_n$  a partition such that $\delta_{P,Q}G$ is not a forest. Put $f=f_G=(f_1,\dots, f_n)$, see Definition \ref{Dcondensed}. Let $g=(g_1,\dots, g_n):X\to \RR^{dn}$ be a proper map and $x\in X$ a point. If there is a point $x'\in \RR^{2d}$ such that for each $1\leq l\leq n$ the equation $g_l(x)=_1f_l(x')$ holds, we have $g(x)=*$ for the induced map $g:X^*\to \TT_{\emptyset_P}$.
\end{lem}
\begin{proof}
Under the assumption, at least one of the following claims holds.
\begin{itemize}
\item There exist a piece $\alpha\in P$ and numbers $i,j\in \alpha $ such that $i\not=j$ and  $i\sim_Gj$.
\item There exist   $\alpha,\beta\in P$, $i_1,j_1\in \alpha$, $i_2,j_2\in \beta$ such that $\alpha\not=\beta$, $i_1\not=j_1$, $i_1\sim_Gi_2$, and $j_1\sim_Gj_2$.
\end{itemize}
This observation, the assumptions on $g$ and on $c_r$ in Defintion \ref{DPK} (1), and Lemma \ref{Lcondensed_collapse0} easily imply the claim.
\end{proof}
\begin{lem}\label{Lcondensed_collapse}

 Let $P\to Q\in \PP_n$ be a subdivision, $G\in\GG(Q)$ a graph.

\begin{enumerate}
\item 

Let  $f:X\to \RR^{dn}$ be a proper map whose image is contained in $U^1_G$ and $e=(\alpha,\beta)$ an edge of $G$.  Suppose  either of the following two conditions holds.
\begin{enumerate}
 \item $\alpha\cup \beta$ is included in a  piece of $P$.
\item At least one of $\alpha$, $\beta$ is included in either of the minimum or maximum of $P$.
\end{enumerate} Then, the induced map $f :X^*\to \TT_{\emptyset_P}$ is $*$ (the constant map to the basepoint). In particular, if $f$ is the $e$-contraction of some $G$-condensed map, or more generally, $f_l=_1g_l$  ($1\leq l\leq n$)  for some $G$-condensed map $g$, the induced map to $\TT_{\emptyset_P}$ is $*$.
\item Let $g:X\to \RR^{dn}$ be a $G$-condensed map. Let $\alpha<\beta<\gamma$ be three  pieces of  $Q$ , Suppose that $\alpha$ is a base for $g$, $\alpha\sim_G\beta\sim_G\gamma$,  and $\beta\cup\gamma$ is included in a piece of $P$. Let $f:X\to \RR^{dn}$ be a proper map satisfying $f_l=_1g_l$ ($1\leq l\leq n$). Then, the map $X^*\to \TT_{\emptyset_P}$ induced by $f$  is  $*$.
\end{enumerate}
\end{lem}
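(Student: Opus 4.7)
The plan is to derive a contradiction in each part by confronting the approximation $f_k^{(1)}(x)\approx y_{\gamma(k)}^{(1)}+r_k$ (valid within $\eps_P$) coming from $f(x)\in\nu_P$ with another constraint: the $D_{\alpha\beta}$-bound arising from $U^1_G$ in Part~1, and the convex-hull condition built into $G$-condensed maps in Part~2. Here $y=\pi_P(f(x))$ and $r_k$ is the $u$-shift of the $k$-th little segment in $e_P$.

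For Part~1, I would assume $f(x)\neq *$ in $\TT_{\emptyset_P}$, so $f(x)\in\nu_P$ and $\pi_P(f(x))\notin E_P$. Since $Q$ subdivides $P$, the image of $e_P$ sits inside that of $e_Q$ and $\eps_P\leq\eps_Q$; hence $\nu_P\subset\nu_Q$ and in particular $p_1(f(x))\in p_1(\nu_Q)$. Combined with $f(X)\subset U^1_G$, this produces some $z\in\RR^{dn}$ with $p_1(z)=p_1(f(x))$ and $\pi_Q(z)\in D_{\alpha\beta}$, giving the upper bound $|(\pi_Q z)_\beta-(\pi_Q z)_\alpha|\leq\rho c_{\alpha\beta}-\eps_Q$ (where $c_{\alpha\beta}$ is computed in $Q$). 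Expanding $(\pi_Q z)_\gamma^{(1)}=\tfrac{1}{\#\gamma}\sum_{k\in\gamma}(f_k^{(1)}(x)-t_k)$ (with $t_k$ the $u$-shift in $e_Q$), substituting the $\nu_P$-approximation of $f_k^{(1)}(x)$, and observing that $r_k-t_k$ depends only on how the $Q$-piece sits inside its $P$-piece (not on $k\in\gamma$), the result telescopes. In Case~(a), where $\alpha\cup\beta$ lies in a single $P$-piece, the $y$-contributions cancel outright and give $(\pi_Q z)_\beta^{(1)}-(\pi_Q z)_\alpha^{(1)}\geq\rho c_{\alpha\beta}-2\eps_P$; in Case~(b), where (say) $\alpha$ sits in the minimum piece of $P$, the $y$-term for $\alpha$ is replaced by the fixed endpoint $x_0$, and the missing slack is supplied by the inequality $y_{\alpha'_\beta}^{(1)}>-1+\rho c_{\leq\alpha'_\beta}-\eps_P$ (or its dual) obtained from $\pi_P(f(x))\notin E_P$, yielding $\geq\rho c_{\alpha\beta}-3\eps_P$. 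Either bound, combined with $\rho c_{\alpha\beta}-\eps_Q$, forces $\eps_Q<3\eps_P$, contradicting $\eps_Q\geq 8\eps_P$ (which holds because $Q$ is a proper subdivision of $P$, so $\eps_Q=8^{q-p}\eps_P$ with $q>p$). The \emph{in particular} clause is immediate: $A_e(s)$ lies in the $v$-direction so $F^{(1)}=f^{(1)}$ for the $e$-contraction, and the membership of $U^1_G$ depends only on first coordinates, so $g(X)\subset U^1_G$ (from Lemma~\ref{Lcondensed}) transfers to any $f$ with $f_l=_1g_l$.

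For Part~2, the $e$-contraction $F(x,s)=f(x)+A_e(s)$ again satisfies $F^{(1)}=f^{(1)}$. Assume $F(x,s)\neq *$ in $\TT_{\emptyset_P}$. Since $\alpha\in Q$ sits in some $P$-piece, applying Lemma~\ref{Lcondensed_collapse0} to $F$ on pairs inside $\alpha$ bounds the extent of $\{f_i^{(1)}(x)\}_{i\in\alpha}$ above by $\rho c_\alpha+2\eps_P$. The hypothesis that $\alpha$ is a base for $f$, together with $(\alpha,\beta)\in E(G)$ and $\gamma\sim_G\alpha$, places both $\beta$ and $\gamma$ in the same $G$-component as $\alpha$, so for any $j\in\beta$, $l\in\gamma$ the values $f_j(x),f_l(x)$ lie in the convex hull of $\{f_i(x)\}_{i\in\alpha}$; hence $|f_l^{(1)}(x)-f_j^{(1)}(x)|\leq\rho c_\alpha+2\eps_P$. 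Since $\beta\cup\gamma$ lies in a common $P$-piece, Lemma~\ref{Lcondensed_collapse0} also yields $|f_l^{(1)}(x)-f_j^{(1)}(x)|\geq\rho c_{jl}-2\eps_P$; choosing $l=\max(\gamma)$ (or the symmetric choice when $\gamma<\beta$) forces $c_{jl}\geq c_{\max(\gamma)}/2$, and the super-increasing condition $100(\eps/\rho+\sum_{r<i}c_r)<c_i$ of Definition~\ref{DPK}(1), together with $\max(\gamma)>\max(\alpha)$, gives $c_{\max(\gamma)}>100\,c_\alpha$. Therefore $c_{jl}>50\,c_\alpha\gg c_\alpha+4\eps_P/\rho$, contradicting the upper bound.

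The main technical obstacle is Part~1 Case~(b): one must carry out a delicate bookkeeping of cumulative $u$-shifts $r_k$ and $t_k$ across several $P$- and $Q$-pieces and verify a combinatorial identity repackaging $\sum_{\alpha'<\alpha'_\beta}c_{\alpha'}$ together with the boundary corrections as $\sum_{\gamma\in Q,\alpha<\gamma<\beta}c_\gamma+(c_\alpha+c_\beta)/2$, so that the residual error stays at most $3\eps_P$ and is beaten by $\eps_Q$.
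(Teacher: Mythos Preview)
Your argument is correct and follows the same underlying strategy as the paper: derive a first-coordinate lower bound from $f(x)\in\nu_P$ and a conflicting upper bound from the $D_{\alpha\beta}$-constraint hidden in $U^1_G$, then invoke $\eps_Q\geq 8\eps_P$. The paper's execution of Part~1(a) is, however, considerably shorter: instead of averaging over all indices in $\alpha$ and $\beta$ and tracking the shifts $r_k,t_k$ through $\pi_Q$, it simply picks the two boundary indices $j=\max(\alpha)$ and $i=\min(\beta)$ and compares $f_i(x)-f_j(x)$ in the first coordinate directly. Since $i,j$ lie in the same $P$-piece, the $\nu_P$-condition gives $f_i(x)-f_j(x)>_1\rho(c_i+c_j)/2-2\eps_P$, while the $U^1_G$-condition (via an estimate in the style of Lemma~\ref{Ldiagonal_bound}) gives $f_i(x)-f_j(x)<_1\rho(c_i+c_j)/2-\eps_Q+2\eps_P$; contradiction. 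This bypasses the telescoping identity you need and sidesteps the delicate bookkeeping you flag in Case~(b). The paper then dismisses Case~(b) and Part~2 as ``similar''; your detailed treatment of those cases, using Lemma~\ref{Lcondensed_collapse0} together with the super-increasing condition on the $c_i$, is a welcome and correct elaboration of what the paper leaves implicit.
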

\begin{proof}
We shall show part 1 for the condition (a).  Let $i\in \alpha$ and $j\in \beta$. Suppose $f(x)\in \nu_P$ for some $x\in X$. By definitions of $e_P$ and $\nu_P$, we have
$f_{j}(x)-f_{i}(x)>_1 \rho c_{i\to j}/2-2\eps_P$. Since $f(x)\in U^1_G\cap \nu_P$, by an argment similar to the proof of Lemma \ref{Ldiagonal_bound}, we have
\[
 f_j(x)-f_i(x)<_1\rho c_{ij}/2-\eps_Q+2\eps_P<_1 \rho c_{ij}/2-2\eps_P.
\]
 These inequalities contradict with each other so $f(x)\not\in \nu_P$. The proofs for (b) is similar. We shall show part 2. Suppose $f(x)\in \nu_P$. Since $f_l=_1g_l$, the diameter of  the convex hull of the first coordinates of $f_l(x)$ with $l\in \alpha$, taken in $\RR$, is smaller than $\rho c_{\alpha}+\eps_P$. By the assumption on $c_r$ imposed in Definition \ref{DPK}, for any elements $i\in \beta$ and $j\in \gamma$, we have
\[
f_j(x)-f_i(x)=_1g_j(x)-g_i(x)<_1\rho c_\alpha+\eps_P<\rho c_{ij}-2\eps_P.
\]
So by Lemma \ref{Lcondensed_collapse0}, we have the claim.  
\end{proof}
\begin{defi}\label{Di-cont}
Let $f=(f_1,\dots, f_n):X\to\RR^{dn}$ be a map.
For $\eps=+$ or $-$, we define a map $f^{i\eps}:X\times [0,\infty)\to \RR^{dn}$ called the {\em $(i,\eps)$-contraction of $f$} as follows.
\[
f^{i\eps}_k(x)=
\left\{
\begin{array}{cl}
f_i(x)+\eps s u &(k=i), \\
f_{i+1}(x)-\eps su &(k=i+1), \\
f_k(x) & (\text{otherwise}).
\end{array}\right.
\]
On the right hand side,  $\eps=\pm$ are regarded as $\pm 1$ respectively.

\end{defi}
While the $(i,\eps)$-contraction is not necessarily $G$-condensed, it induces a map to the Thom space.
\begin{lem}\label{Li-cont}
Let $f=(f_1,\dots, f_n):X\to \RR^{dn}$ be a $G$-condensed map for a graph $G\in\GG(P)$,  and $i$ a number such that  $i, i+1$ are contained in a single piece $\alpha$ of $P^\circ$.  Suppose either of the following conditions holds: 
\begin{itemize}

\item  There exists a base $\beta$ of the component containing $\alpha$, satisfying $\beta\not=\alpha$.
\item  $\alpha$ is discrete and there exists a vertex $\beta\not=\alpha$ such that for both of  $i'=i,i+1$ and $x\in X$,  there is a point $y_{i'}(x)$ in the convex hull of $\{f_j(x)\}_{j\in\beta}$ satisfying  $f_{i'}(x)=_1y_{i'}(x)$.  
\end{itemize}
Then, the $(i,\eps)$-contraction $f^{i\eps}$ of $f$ is proper and its image is contained in  $U_{G}\cap U^1_{G}$. In particular, it induces a map $f^{i\eps}:X^*\wedge[0,\infty]\to \TT_G$.

\end{lem}
\begin{proof}
Put  $f^{i\eps}=f'=(f'_1,\dots, f'_n)$. Similarly to the proof of Lemma \ref{Lcondensed}, we can see that $f'$ is proper. (The condition in the case of discrete $\alpha$ is used here.)  We shall show $f'(X\times [0,\infty))\subset U_{G}$ in the case of non-discrete $\alpha$. In the following proof, we denote by $j_\gamma\in \gamma$ the minimum of a piece $\gamma$.  Suppose $f'(\tilde x)\in\nu_{P}$ for $\tilde x=(x,s)\in X\times [0,\infty)$. Put $\pi_P(f'(\tilde x))=(y_\gamma)_\gamma$. Let   $\gamma\in P$ be a piece such that there is an edge incident with $\alpha$ and $\gamma$.  By definition, we have
\[
|y_\alpha-\frac{1}{2}(f'_i(\tilde x)+f'_{i+1}(\tilde x))|<\frac{\rho (c_\alpha-c_{j_\alpha})}{2}+\eps_P.
\]
 The first coordinate of $\frac{1}{2} (f_i(x)+f_{i+1}(x))$ is in the image of projection of the convex hull of $\{f_j(x)\}_{j\in \beta}$ to the first coordinate,  so 
\[
|y_{\beta 1}-\frac{1}{2}(f_{i1}(x)+f_{i+1,1}(x))|<\frac{\rho (c_\beta-c_{j_\beta})}{2}+\eps_P,
\]
where the subscripts $1$ indicate the first coordinates and we also use similar notations below. Clearly, $\frac{1}{2} (f'_i(\tilde x)+f'_{i+1}(\tilde x))=\frac{1}{2} (f_i(x)+f_{i+1}(x))$, and
 we have $|y_{\alpha 1}-y_{\beta 1}|<\rho (c_\alpha+c_\beta-c_{j_\alpha}-c_{j_\beta})/2+2\eps_P$. Similarly, we have $|y_{\gamma 1}-y_{\beta 1}|<\rho (c_\gamma+c_\beta-c_{j_\gamma}-c_{j_\beta})/2+2\eps_P$. Since $\beta$ is smaller than ( one of ) $\alpha, \gamma$,
we have
$|y_{\alpha 1}-y_{\gamma 1}|<d_{\alpha\gamma}(P)-\rho c_{j_\beta}$ by the assumption on $c_r$ in Definition \ref{DPK}.  We also have $f'_{j2}(x)=f_{j 2}(x)$ for any $1\leq j\leq n$ by definition, where the extra subscripts $2$ mean the $n-1$-tuples from  the second to $n$-th coordinate. Since $f'(\tilde x)\in \nu_{P}$ and the map $e_P$ arranges the points in a common piece along the direction of $u$, for any pair $j,j'\in \beta$ we have $|f_{j2}(\tilde x)-f_{j',2}(\tilde x)|<2\eps_{P}$. This observation implies  $|y_{\alpha 2}-y_{\gamma 2}|<4\eps_{P}$. Thus, we have 
\[
|y_\alpha-y_\gamma|\leq |y_{\alpha1}-y_{\gamma1}|+|y_{\alpha2}-y_{\gamma2}|<d_{\alpha\gamma}(P)-\rho c_{j_\beta}+4\eps_{P}<d_{\alpha\gamma}(P).
\]
For other components, we see the analogous inequality as in Lemma \ref{Lcontraction}, and we have proved $f'(X\times [0,\infty))\subset U_{G}$. The inclusion to $U^1_G$ is completely similar. 
\end{proof}
\begin{exa}\label{Ei-cont}
Put $G=(1,4)(2,4)(3,5)\in \GG([6])$ and $e=(1,4)$. ($G$ is the same as $G_2$ in Figure \ref{Fgraphs_ch3}.) Let $F$ be the $e$-contraction of $f_G$. The $(1,\pm)$-contraction of $F$ is given by $(x,y,s_1,s_2)\mapsto (x+s_1v\pm s_2u, x-s_1v\mp s_2u, y, x-s_1v, y)$ and its image is contained in $U_{\delta_1\partial_eG}\cap U^1_{\delta_1\partial_eG}$ by Lemma \ref{Li-cont}.
\end{exa}
Other examples of application of  Lemma \ref{Li-cont} is given in Definitions \ref{Dchain_cycle} and \ref{D2nd_chain_2graphs}. The following lemma is also used in Definition \ref{D2nd_chain_2graphs}.
\begin{lem}\label{Li-cont2}
Let $f=(f_1,\dots, f_n):X\to \RR^{dn}$ be a $G$-condensed map for a graph $G\in\GG(P)$,  $i$ a number such that the set $\{i, i+1, i+2\}$ is a base for $(G,f)$. 
\begin{enumerate}
\item Suppose there exists a point $y(x)$ on the segment between $f_{i+1}(x)$ and $f_{i+2}(x)$ satisfying $f_i(x)=_1y(x)$ for each $x\in X$.
The image of $(i,+)$-contraction $f^{i+}$ of $f$ is contained in $\RR^{dn}-\nu_P$ so it induces the constant map to $*$ on the pointed spaces.
 The $(i,-)$-contraction $f^{i-}$ of $f$ is proper and its image is contained in  $U_{G}\cap U^1_{G}$. In particular, it induces a map $f^{i-}:X^*\wedge[0,\infty]\to \TT_{G}$.
\item Suppose there exists a point $y(x)$ on the segment between $f_{i}(x)$ and $f_{i+1}(x)$ satisfying $f_{i+2}(x)=_1y(x)$ for each $x\in X$.
 The image of $(i+1,+)$-contraction $f^{i+1,+}$ of $f$ is contained in $\RR^{dn}-\nu_P$ so it induces the constant map to $*$ on the pointed spaces.
 The $(i+1,-)$-contraction $f^{i+1,-}$ of $f$ is proper and its image is contained in  $U_{G}\cap U^1_{G}$. In particular, it induces a map $f^{i+1, -}:X^*\wedge[0,\infty]\to \TT_{G}$.

\end{enumerate}

\end{lem}
\begin{proof} We shall prove part 1. We put $f'=f^{i\eps}$ and let $\tilde x=(x,s)\in X\times [0,\infty)$. First set $\eps=+$. If $f_{i+1}(x)\leq_1f_{i+2}(x)$, we have $f'_{i+1}(\tilde x)=f_{i+1}(x)-su\leq _1f_i(x)+su=f'_i(\tilde x)$. By Lemma \ref{Lcondensed_collapse0}, we have $f'(\tilde x)\not\in \nu_P$. If $f_{i+1}(x)>_1f_{i+2}(x)$,  we have $f'_{i+2}(\tilde x)\leq _1f'_i(\tilde x)$ by the assumption, and $f'(\tilde x)\not\in \nu_P$. Next, we set $\eps=-$.
Suppose $f'(\tilde x)\in (D^d_R)^n$. We have $f_{i+2}(x)=f'_{i+2}(\tilde x)\in D^d_R$ and $(f_{i}(x)+f_{i+1}(x))/2=(f'_i(\tilde x)+f'_{i+1}(\tilde x))/2\in D^d_R$. By these relations, we have $f_{i+1}(x)\in D^d_{3R}$. With this relation, an argument similar to the proof of Lemma \ref{Lcontraction} shows $f'$ is proper. Suppose $f'(\tilde x)\in \nu_{P}$ and put $\pi_P(f'(\tilde x))=(y_\gamma)_\gamma$. 
In this case, we have $f_{i+1}(x)<_1f_{i+2}(x)$ and $s<\rho(c_i+c_{i+1})/4+\eps_{P}$. So we have
\[
\begin{split}
\rho(c_{i+1}+c_{i+2})/2-2\eps_{P}\leq_1 f_{i+2}(x)-f_{i+1}(x)&\leq_1\rho(c_{i+1}+c_{i+2})/2+2\eps_{P}+\rho (c_i+c_{i+1})/4+\eps_{P}\\
&=\rho(c_i+3c_{i+1}+2c_{i+2})/4+3\eps_{P}.
\end{split}
\]
Put $\alpha=\{i,i+1,i+2\}$. $y_\alpha$ is approximately $f_{i+2}(x)-\rho c_{i \to i+2} u/2$ with an error of norm $<\eps_{P}$, and we have 
\[
\rho(c_i+3c_{i+1}+2c_{i+2})/4+3\eps_{P}-\rho c_{i\to i+2}/2<\rho c_{i,i+2}/2.
\]
So the larger of distances between $y_\alpha$ and $f_{i'}(x)$ for $i'=i+1,i+2$ is smaller than $\rho c_{i,i+2}/2+\eps_P$. Also, the distance between $y_\alpha$ and the segment between $f_{i+1}(x)$ and $f_{i+2}(x)$ is smaller than $\eps_P$.
Let $(\beta, \bar \beta)$ be an edge of $G$ with  $\beta\sim_{G}\alpha$. By an argument similar to the proof of Lemma \ref{Li-cont}, together with the above argment, we see  
 $|y_\alpha-y_\beta|, |y_\alpha-y_{\bar \beta}|<\rho c_{i,i+2}/2+4\eps_P$. This implies $|y_\beta-y_{\bar\beta}|<d_{\beta,\bar\beta}(P)$.
\end{proof}
Part 1 of the following lemma is the reason why we need both signs in $(i,\pm)$-contractions. This lemma is used in Definitions \ref{D2nd_chain_2contractions} and \ref{D3rd_chain_2graphs}.

\begin{lem}\label{Li-cont_homotopy}
Let $f=(f_1,\dots, f_n):X\to \RR^{dn}$ be a $G$-condensed map for a graph $G\in \GG(P)$, and  $i, j$ two numbers such that $i<j$ and $\{i,i+1\}$ and $\{j,j+1\}$ are included in some pieces $\alpha$ and $\beta$ of $P^\circ$, respectively. Let $f^{i\eps}$ and $f^{j\eps}$ denote the $(i,\eps)$- and $(j,\eps)$-contractions of $f$ for $\eps=\pm$.
\begin{enumerate}
\item Suppose that $\alpha\not=\beta$, $\beta$ is a base,  and if $\alpha$ is not discrete, $\alpha$ and $\beta$ belong to the same component.  Furthermore, suppose  either of the following conditions holds.
\begin{enumerate}
\item $f_{i}=_1f_{j}$ and $f_{i+1}=_1f_{j+1}$, or 
\item $f_{i}=_1f_{j+1}$ and $f_{i+1}=_1f_{j} $ 
\end{enumerate}
We put $\eps'=-\eps$ (resp. $\eps$) in the case (a) (resp. (b)). The straight homotopy $h$ from  $f^{i\eps}$ to $f^{j\eps'}$  is proper and its image is contained in  $U_{G}\cap U^1_{G}$. So, $h$ induces a map $h:X^*\wedge[0,\infty]\wedge(I_+) \to \TT_{G}$. Furthermore, if $\eps=-$ (resp. $+$)  in the case (a) (resp. (b)), the induced map is $*$.

\item Suppose that $i+1=j$ (so $\alpha=\beta$), and there is a base $\gamma$ which does not contain any of $i,i+1,i+2$ and satisfy either of the following two conditions.
\begin{itemize}
\item  $\alpha\sim_G\gamma$.
\item  $\alpha$ is discrete and there exists a point $y_{i'}(x)$ in the convex hull of $\{f_k(x)\}_{k\in \gamma}$ satisfying $y_{i'}(x)=_1f_{i'}(x)$ for any $i'=i,i+1,i+2$ and $x\in X$.
\end{itemize}
For any pair $\eps, \eps'=\pm$, the straight homotopy $h$ from  $f^{i\eps}$ to $f^{j\eps'}$  is proper and its image is contained in $U_{G}\cap U^1_{G}$ . So, $h$ induces a map $h:X^*\wedge[0,\infty]\wedge(I_+) \to \TT_{G}$.
\end{enumerate}
\end{lem}
\begin{proof}
The proof is similar to that of Lemma \ref{Li-cont}. The choice of $\eps'$ in part 1 ensures $h$ is proper. 
\end{proof}

\section{Computation of a differential in characteristic 2}\label{Sch2}
In this section, we will prove part 1 of Theorem \ref{Tmain}. Throughout this section, we set $n=4$ and $d=2$ and assume that the base field $\kk$ is  of characteristic $2$. 
 By a straightforward computation, we see that the element 
\begin{equation}
g_{14}g_{23}+g_{13}g_{24}+g_{12}g_{34} \label{EQch2}
\end{equation} in $\EE^{-4,2}_1$ is a cycle for the $d_1$-differential  in characteristic $2$. We also see that the  element in $\CEE^{-4,2}_1$ given by the same formula (\ref{EQch2}) is also a $d_1$-cycle  (see the  paragraphs after Lemma \ref{Lspec_seq2}). Actually it is enough to compute the corresponding differential of the projection of the element  to the truncated  sequence $tr_1\CEE$. The computation is based on  the description of the differential given in Lemma \ref{Lbasic_spec}. We will apply this lemma to the double complex $(\TTH, \delta, d+(-1)^*\partial )$. We have $d+(-1)^*\partial =d+\partial$ since the characteristic is $2$.
\\ 
\indent We define three graphs in $\GG([5])$ as follows:
\[
G_1=(1,4)(2,3),\qquad G_2=(1,3)(2,4),\qquad G_3=(1,2)(3,4).
\]
See Figure  \ref{Fgraphs_ch2}. Throughout this section, $G_i$ denotes one of these graphs (not those in sections \ref{Snon-triviality} and \ref{Sch3}).

\begin{defi}\label{Dchain_ch2}
For $G=G_1,G_2,$ and $G_3$, put $f=f_G$ and $E(G)=\{e_1<e_2\}$ (see Definition \ref{Dcondensed}). For $j=1, 2$, let $f_j$ be the $e_k$-contraction of $f$ for $G$. 
Set
\[
c(G)=f(w_0)+f_{1}(w_1)+f_{2}(w_1)\quad \in \bar C_4(\TT_{G})\oplus \bar C_5(\TT_{\partial_1G})\oplus \bar C_5(\TT_{\partial_2G})\quad \subset tr_1(\tot_{10}\TTH).
\]
Here, by our convention, $f_j(w_1)$ denotes the pushforward of $w_1$ by the induced map $f_j:S^4\wedge[0,\infty]\to \TT_{\partial_jG}$ and $f(w_0)$ is a similar abbreviation. For well-definedness, see Lemma \ref{L1st_cyc_ch2} below.
\end{defi}

\begin{figure}
\begin{center}
{\unitlength 0.1in%
\begin{picture}(22.0000,1.9000)(0.3000,-2.3000)%
%
\special{pn 8}%
\special{pa 30 190}%
\special{pa 630 190}%
\special{fp}%
%
\special{pn 8}%
\special{pa 80 190}%
\special{pa 107 165}%
\special{pa 135 141}%
\special{pa 162 119}%
\special{pa 190 98}%
\special{pa 217 79}%
\special{pa 245 63}%
\special{pa 272 51}%
\special{pa 300 43}%
\special{pa 327 40}%
\special{pa 354 42}%
\special{pa 382 49}%
\special{pa 409 60}%
\special{pa 437 75}%
\special{pa 464 93}%
\special{pa 492 114}%
\special{pa 519 136}%
\special{pa 546 160}%
\special{pa 574 185}%
\special{pa 580 190}%
\special{fp}%
%
\special{pn 8}%
\special{pa 230 190}%
\special{pa 257 166}%
\special{pa 285 146}%
\special{pa 312 133}%
\special{pa 340 131}%
\special{pa 367 141}%
\special{pa 395 160}%
\special{pa 422 183}%
\special{pa 430 190}%
\special{fp}%
%
\special{pn 8}%
\special{pa 830 190}%
\special{pa 1430 190}%
\special{fp}%
%
\special{pn 8}%
\special{pn 8}%
\special{pa 880 190}%
\special{pa 886 185}%
\special{fp}%
\special{pa 913 160}%
\special{pa 919 154}%
\special{fp}%
\special{pa 948 130}%
\special{pa 954 125}%
\special{fp}%
\special{pa 984 103}%
\special{pa 990 98}%
\special{fp}%
\special{pa 1021 77}%
\special{pa 1028 73}%
\special{fp}%
\special{pa 1061 56}%
\special{pa 1068 53}%
\special{fp}%
\special{pa 1104 43}%
\special{pa 1112 42}%
\special{fp}%
\special{pa 1149 42}%
\special{pa 1156 43}%
\special{fp}%
\special{pa 1192 53}%
\special{pa 1199 56}%
\special{fp}%
\special{pa 1233 73}%
\special{pa 1240 77}%
\special{fp}%
\special{pa 1270 98}%
\special{pa 1277 102}%
\special{fp}%
\special{pa 1306 125}%
\special{pa 1312 130}%
\special{fp}%
\special{pa 1340 155}%
\special{pa 1346 160}%
\special{fp}%
\special{pa 1374 185}%
\special{pa 1380 190}%
\special{fp}%
%
\special{pn 8}%
\special{pa 1030 190}%
\special{pa 1057 166}%
\special{pa 1085 146}%
\special{pa 1112 133}%
\special{pa 1140 131}%
\special{pa 1167 141}%
\special{pa 1195 160}%
\special{pa 1222 183}%
\special{pa 1230 190}%
\special{fp}%
%
\special{pn 8}%
\special{pa 1630 190}%
\special{pa 2230 190}%
\special{fp}%
%
\special{pn 8}%
\special{pa 1680 190}%
\special{pa 1707 165}%
\special{pa 1735 141}%
\special{pa 1762 119}%
\special{pa 1790 98}%
\special{pa 1817 79}%
\special{pa 1845 63}%
\special{pa 1872 51}%
\special{pa 1900 43}%
\special{pa 1927 40}%
\special{pa 1954 42}%
\special{pa 1982 49}%
\special{pa 2009 60}%
\special{pa 2037 75}%
\special{pa 2064 93}%
\special{pa 2092 114}%
\special{pa 2119 136}%
\special{pa 2146 160}%
\special{pa 2174 185}%
\special{pa 2180 190}%
\special{fp}%
%
\special{pn 8}%
\special{pn 8}%
\special{pa 1830 190}%
\special{pa 1836 185}%
\special{fp}%
\special{pa 1865 160}%
\special{pa 1872 156}%
\special{fp}%
\special{pa 1904 137}%
\special{pa 1911 133}%
\special{fp}%
\special{pa 1948 134}%
\special{pa 1956 137}%
\special{fp}%
\special{pa 1988 156}%
\special{pa 1995 160}%
\special{fp}%
\special{pa 2024 185}%
\special{pa 2030 190}%
\special{fp}%
\put(8.9000,-2.9000){\makebox(0,0){$+$}}%
\put(13.8000,-2.9000){\makebox(0,0){$-$}}%
\put(18.3500,-2.9500){\makebox(0,0){$+$}}%
\put(20.3000,-2.9500){\makebox(0,0){$-$}}%
\put(7.3000,-1.4500){\makebox(0,0){$+$}}%
\put(15.3000,-1.4000){\makebox(0,0){$+$}}%
\end{picture}}%
\end{center}
\caption{$c(G_1)$ : We use the notations in Figures \ref{Fgraphs_ch2} and \ref{Fpsi}. }\label{Fc(G_1)}
\end{figure}
\begin{exa}
Let $G=G_1$. Under the notations of Definition \ref{Dchain_ch2}, see Example.\ref{Ef_G} for the concrete formulas of $f$ and $f_1$. The map $f_2$ is given by $(x,y,s)\mapsto (x,y+sv,y-sv,x)$. See Figure \ref{Fc(G_1)} for a graphical expression of $c(G_1)$.
\end{exa}

Set $D=d+\partial$. 
\begin{lem}\label{L1st_cyc_ch2}
For $G=G_1,G_2,$ and $G_3$, $c(G)$ is a cycle in $(tr_1(\tot\TTH), D)=(tr_1\CEE_0,d_0)$.
\end{lem}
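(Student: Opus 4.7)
The plan is to verify $\tilde{D}c(G)=0$ directly, computing each surviving term and using characteristic $2$ to force cancellation. Two preliminary simplifications cut the work down substantially. First, since $[5]$ is the finest partition of $\{0,\dots,5\}$, it admits no proper subdivision, so the cosimplicial differential $\delta$ vanishes on every summand of $\TTH$ supported on $P=[5]$; hence on this summand $\tilde{D}$ reduces to $d+(-1)^{q}\partial$. Second, each $\partial_j G$ has exactly one edge, so in $tr_1\TTH$ the \v{C}ech differential annihilates both $f_1(w_1)$ and $f_2(w_1)$ by definition of the truncation.

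Before carrying out the computation, I would verify the factorizations needed for $c(G)$ to be a well-defined chain in the first place. By Definition \ref{Dcondensed}(5), the map $f=f_G:\RR^{4}\to\RR^{8}$ is $G$-condensed, so Lemma \ref{Lcondensed} supplies the pointed factorization $f:S^{4}\to\TT_{G}$. For each $e_j$-contraction $f_j$, I would check the hypotheses of Lemma \ref{Lcontraction}(1): each chord of $G_i$ is a bridge since the two chords sit on disjoint pairs of vertices, so each is the unique edge between its endpoints; moreover, the smaller endpoint of $e_j$ qualifies as a base of its connected component of $G_i$ and lies in the non-discrete component of $\partial_{e_j}G_i$. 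Lemma \ref{Lcontraction}(1) then yields the desired factorization $f_j:S^{4}\wedge[0,\infty]\to\TT_{\partial_j G}$.

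With the factorizations in place, I would compute the three surviving terms of $\tilde{D}c(G)$. We have $df(w_0)=f(dw_0)=0$ since $w_0=w_{S^2}^{\wedge 2}$ is a cycle of even degree. By the Leibniz rule combined with $dw_\infty=\{0\}$, one obtains $dw_1=w_0\wedge\{0\}$, whence $df_j(w_1)=(\iota_j)_{*}f(w_0)$, where $\iota_j:\TT_{G}\hookrightarrow\TT_{\partial_j G}$ denotes the inclusion (using $f_j|_{s=0}=f$). Finally, the \v{C}ech differential on a graph with two edges gives $\partial f(w_0)=(\iota_1)_{*}f(w_0)-(\iota_2)_{*}f(w_0)$. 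Summing,
\[
\tilde{D}c(G)=(\iota_1)_{*}f(w_0)-(\iota_2)_{*}f(w_0)+(\iota_1)_{*}f(w_0)+(\iota_2)_{*}f(w_0)=2(\iota_1)_{*}f(w_0),
\]
which vanishes in characteristic $2$.

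The main obstacle is not arithmetic but the combinatorial verification that each contraction defines a well-behaved pointed map into $\TT_{\partial_j G}$, i.e.\ checking the bridge and base hypotheses of Lemma \ref{Lcontraction} for the specific graphs at hand. For $G_1,G_2,G_3$ these reduce to short graph-theoretic inspections, but they are precisely what makes the clean mod $2$ cancellation rigorous.
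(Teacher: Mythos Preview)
Your core computation—that $df(w_0)=0$, $df_j(w_1)=\partial_j f(w_0)$, and $\partial f_j(w_1)=0$ in the truncation, so everything cancels in characteristic $2$—is correct and is exactly the paper's argument. The one point that needs correction is your treatment of $\delta$.

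You claim $\delta$ vanishes on chains supported at $P=[5]$ because $[5]$ admits no proper subdivision. This is backwards: by Definition~\ref{Dtriple} the map $\delta_i$ sends $\bar C_*(\TT_G)$ for $G\in\GG(P)$ to $\bar C_*(\TT_{\delta_iG})$, where $\delta_iP$ is the \emph{coarser} partition obtained by merging two adjacent pieces. So $\delta$ on $[5]$ lands in chains over the $\delta_i[5]$ and is typically nonzero—indeed the very next lemma is about bounding $\delta(\sum_i c(G_i))$. Fortunately this does not damage your argument: the differential $D$ in the statement is \emph{defined} immediately before the lemma as $D=d+\partial$, which is the $p$-preserving part of $\tilde D$, i.e.\ $d_0$. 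The map $\delta$ shifts $p$ and does not enter $d_0$ at all. You should drop the discussion of $\tilde D$ and subdivisions and simply compute with $D=d+\partial$ from the outset.

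One minor slip in verifying Lemma~\ref{Lcontraction}(1): after removing $e_j$, the non-discrete component of $\partial_{e_j}G$ is spanned by the \emph{other} edge, so the base you need lies there, not at the smaller endpoint of $e_j$. For instance in $G_1$ with $e_1=(1,4)$, the non-discrete component of $\partial_{e_1}G_1$ is $\{\{2\},\{3\}\}$, and it is $\{2\}$ (a base of its $G_1$-component) that witnesses the hypothesis. The hypothesis is still satisfied and the conclusion stands.
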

\begin{proof}
Under the notations of Definition \ref{Dchain_ch2},  $f_j$ is $\partial_jG$-condensed by Lemma \ref{Lcontraction}. So by Lemma \ref{Lcondensed}, each pushfoward in the definition of $c(G)$ is well-defined. Clearly, we have $df(w_0)=0$ and $\partial_jf(w_0)=df_j(w_1)$. We also have $\partial_kf_j(w_1)=0$ in the truncated complex. These equalities imply the claim. 
\end{proof}
By Lemma \ref{Lbasic_spec}, we easily see that  $c(G_i)$ represents the projection of the $i$-th term of the element (\ref{EQch2}) (but we do not use this fact below).  Let us compute the differential of the element represented by $\sum_{1\leq i\leq 3}c(G_i)$.

\begin{defi}\label{Dchainhomotopy}
For $(G,H,i)=(G_1,G_2,3), (G_1,G_2,1)$ and $(G_2,G_3,2)$, we shall define a bounding chain of $\delta_i(c(G)+c(H))$. Set $f=f_G$ and $f'=f_H$. If the $i$-th components of $f$ and $f'$ are identical, $\psi$ denotes the straight homotopy from $f$ to $f'$. Otherwise, $\psi$ is the straight homotopy from $f$ to $f'\circ T$, where $T:\RR^4\to \RR^4$ is the transposition $T(x,y)=(y,x)$.
Put $E(G)=\{e_1<e_2\}$ and $E(H)=\{e'_1<e'_2\}$. We regard $E(\delta_iG)=E(G)$ and $E(\delta_iH)=E(H)$ by the standard bijection (see Definition \ref{Dgraph}). Clearly, we have $\delta_iG=\delta_iH$ (as elements of $\GG(\delta_i[5])$). Let 
\begin{enumerate}
\item $\lambda_j$ be the straight homotopy from the $e_j$-contraction of $f$ for $G$ to the  $e_j$-contraction of $f$ for $\delta_iG$,
\item $\lambda'_j$  the straight homotopy from the $e'_j$-contraction of $f'$ for $H$ to the  $e'_j$-contraction of $f'$ for $\delta_iH$,
\item $\psi_j$ be the $e_j$-contraction of $\psi$ for $\delta_iG$.
\end{enumerate} 
$\psi$, $\psi_j$, $\lambda_j$ and $\lambda'_j$ induce the maps $\psi:S^4\wedge(I_+)\to \TT_{\delta_iG}$, $\psi_j:S^4\wedge(I_+)\wedge[0,\infty]\to \TT_{\delta_i\partial_jG}$, $\lambda_j:S^4\wedge[0,\infty]\wedge (I_+)\to \TT_{\delta_i\partial_jG}$, and $\lambda'_j:S^4\wedge[0,\infty]\wedge (I_+)\to \TT_{\delta_i\partial_{j'}G}$, respectively, where $j'$ is a unique number with $\delta_i\partial_{j'}G=\delta_i\partial_{j}H$ (see the proof of Lemma \ref{Lh_i_welldef} below). 
Set
\[
\begin{split}
c(G,H,i)=\psi(w_{01})+\sum_{j=1,2}&(\lambda_j+\lambda'_j+\psi_j)(w_{11})\\
&\in \bar C_5(\TT_{\delta_iG})\oplus \bar C_6(\TT_{\delta_i\partial_1G})\oplus \bar C_6(\TT_{\delta_i\partial_2G})\quad \subset tr_1(\tot_{10}\TTH).
\end{split}
\]
Here, by the convention in subsection \ref{SSNT}, $(\lambda_j+\lambda'_j+\psi_j)(w_{11})$ denotes $\lambda_j(w_{11})+\lambda'_j(w_{11})+\psi_j(w_{11})$, and we compose $\psi_j$ with the transposition of $[0,\infty]\wedge (I_+)$ implicitly since definition of $\psi_j$ puts $[0,\infty]$ at the rightmost component. See Figure \ref{Fpsi} for $(G_1,G_2,3)$.

\end{defi}
\begin{exa}\label{Emaps_ch2}
For $(G, H, i)=(G_1,G_2,3)$, we have
\[
\begin{split}
\lambda_1(x,y,s,t)&=(x+sv,y-stv, y-stv,x-sv)\\
\lambda_2(x,y,s,t)&=(x-stv,y+sv,y-sv,x-stv),\\
\lambda'_1(x,y,s,t)&=(x+sv,y-stv,x-sv,y-stv),\\
\lambda'_2(x,y,s,t)&=(x-stv,y+sv,x-stv,y-sv),\\
\psi(x,y,t)&=((1-t)x+ty, tx+(1-t)y, y, x), \\
\psi_1(x,y,t,s)&=((1-t)x+ty+sv, tx+(1-t)y-sv, y-sv, x-sv), \\
\psi_2(x,y,t,s)&=((1-t)x+ty-sv, tx+(1-t)y+sv, y-sv, x-sv),
\end{split}
\]
\end{exa}

\begin{lem}\label{Lh_i_welldef}
For $(G,H,i)=(G_1,G_2,3), (G_1,G_2,1)$ and $(G_2,G_3,2)$, the chain $c(G,H,i)$ is well-defined and satisfies 
\[
Dc(G,H,i)=\delta_i(c(G)+c(H)).
\] If we set $C=c(G_1,G_2,3)+c(G_1,G_2,1)+c(G_2,G_3,2)$, we have
\[
D(C)=\delta(c(G_1)+c(G_2)+c(G_3)).
\]
(In particular, $c(G_1)+c(G_2)+c(G_3)$ represents a cycle in $(tr_1\CEE^{-4,2}_1,d_1)$.)
\end{lem}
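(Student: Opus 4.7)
I will verify $Dc(G,H,i)=\delta_i(c(G)+c(H))$ case by case, and then sum the three identities. For well-definedness of the pieces of $c(G,H,i)$, observe that $\delta_iG=\delta_iH$ in each case (by comparing edge sets), and that each $\delta_iG$ has a unique non-discrete component with a common base: $\{3,4\}$ for $i=3$, $\{1,2\}$ for $i=1$, and $\{2,3\}$ for $i=2$. The transposition in Definition \ref{Dchainhomotopy} is triggered precisely for $(G_1,G_2,3)$ and $(G_2,G_3,2)$ (where the $i$-th components of $f_G$ and $f_H$ differ); then $f_H\circ T$ agrees with $f_G$ on the base piece. With these identifications, Lemma \ref{Lstraight_homotopy}(1) makes $\psi$ condensed, Lemma \ref{Lcontraction}(1) makes each $\psi_j$ condensed (every edge of $\delta_iG$ is a bridge), and Lemma \ref{Lstraight_homotopy}(2) handles $\lambda_j$ and $\lambda'_j$.

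Next I compute $Dc(G,H,i)=(d+\partial)c(G,H,i)$ in $tr_1\TTH$, noting that $\partial$ vanishes on any summand carried by a graph with a single edge. From $dw_{01}=w_0\wedge(\{1\}-\{0\})$ and $dw_{11}=w_0\wedge\{0\}\wedge w_I-w_1\wedge\{1\}+w_1\wedge\{0\}$, the pushforwards give: $\psi(dw_{01})=f'(w_0)-f(w_0)$ in $\bar C_*(\TT_{\delta_iG})$ (using $T_*w_0=w_0$); $\lambda_j(dw_{11})$ equals the $e_j$-contraction of $f$ for $G$ on $w_1$ minus the same for $\delta_iG$ (the $s=0$ face contributes nothing because $\lambda_j|_{s=0}=f$ factors through $X^*$, killing $w_I$); similarly for $\lambda'_j$; and $\psi_j(dw_{11})$ equals $\psi(w_{01})$ viewed in $\bar C_*(\TT_{\delta_i\partial_jG})$ via the inclusion, plus the $e_j$-contraction of $f$ for $\delta_iG$ on $w_1$ at $t=0$, minus the $e_j$-contraction of $f'\circ T$ for $\delta_iG$ on $w_1$ at $t=1$, the latter being identified with the $e'_j$-contraction of $f'$ for $\delta_iH$ via $T_*w_1=w_1$. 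The \v{C}ech differential adds $-\partial_1\psi(w_{01})+\partial_2\psi(w_{01})$, where each $\partial_j\psi(w_{01})$ is the inclusion into $\bar C_*(\TT_{\delta_i\partial_jG})$.

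In characteristic $2$ the $e_j$-contraction of $f$ for $\delta_iG$ contributed by $\lambda_j$ cancels that from $\psi_j$; the $e'_j$-contraction of $f'$ for $\delta_iH$ contributed by $\lambda'_j$ cancels that from $\psi_j$; and in each $\bar C_*(\TT_{\delta_i\partial_jG})$ the $\psi(w_{01})$ coming from $\psi_j$ cancels $\pm\partial_j\psi(w_{01})$ (the surviving $2\psi(w_{01})$ in $\bar C_*(\TT_{\delta_i\partial_2G})$ is zero mod $2$). What remains is $\delta_if(w_0)+\delta_if'(w_0)+\sum_{j=1,2}[\delta_if_j(w_1)+\delta_if'_j(w_1)]=\delta_i(c(G)+c(H))$, giving the first identity.

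For the aggregate, enumerate the twelve terms $\delta_ic(G_k)$ with $0\leq i\leq 3$ and $k\in\{1,2,3\}$. The four $\delta_0$ terms vanish because the new minimum $\{0,1\}$ becomes incident with an edge; the terms $\delta_2c(G_1)$, $\delta_1c(G_3)$, and $\delta_3c(G_3)$ vanish because the edges $(2,3)$, $(1,2)$, $(3,4)$ respectively collapse into loops. The remaining six terms assemble as $\delta_1(c(G_1)+c(G_2))+\delta_2(c(G_2)+c(G_3))+\delta_3(c(G_1)+c(G_2))$, matching $DC$ by the three case identities. The main obstacle is the case-by-case verification that the bases and bridge conditions required by Lemmas \ref{Lstraight_homotopy} and \ref{Lcontraction} genuinely hold, combined with the chain-level identity $T_*w_k=w_k$ that makes the transposition convention in Definition \ref{Dchainhomotopy} harmless.
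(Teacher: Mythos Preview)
Your approach to the first identity $Dc(G,H,i)=\delta_i(c(G)+c(H))$ follows the paper's line: the well-definedness via Lemmas \ref{Lcontraction} and \ref{Lstraight_homotopy}, the observation that $\lambda_j|_{s=0}$ is constant in $t$ (hence vanishes in the normalized complex), and the cancellation of the remaining boundary terms are exactly what the paper does. Your explicit bookkeeping of $dw_{01}$, $dw_{11}$ is a fine expansion of the paper's phrase ``the rest of boundaries cancel with each other.''

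There is, however, a genuine gap in your treatment of the aggregate identity. When you assert that $\delta_2c(G_1)$, $\delta_1c(G_3)$, $\delta_3c(G_3)$ (and the $\delta_0$ terms) vanish ``because the edges \dots\ collapse into loops,'' this reasoning only kills the summands of $c(G_k)$ whose labelling graph still contains the offending edge. Recall that $\delta_i$ on the triple complex is defined summand by summand: it is set to zero on $\bar C_*(\TT_G)$ only when $\delta_iG$ itself has a loop or touches the extremal piece. Take $\delta_2c(G_1)$: the term $f_2(w_1)$ lies in $\bar C_*(\TT_{\partial_2G_1})$ with $\partial_2G_1=(1,4)$, and $\delta_2\partial_2G_1$ has the single edge $(\{1\},\{4\})$---no loop, no extremal incidence---so $\delta_2$ is \emph{not} formally zero on that summand. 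What makes $\delta_2 f_2(w_1)$ vanish is Lemma \ref{Lcondensed_collapse}(1)(a): $f_2$ is the $(2,3)$-contraction of a $G_1$-condensed map, hence has image in $U^1_{G_1}$, and since $\{2\}\cup\{3\}$ lies in a single piece of $\delta_2[5]$ the induced map to $\TT_{\emptyset_{\delta_2[5]}}$ is the constant map $*$. The same applies to the analogous surviving contraction terms in $\delta_1c(G_3)$, $\delta_3c(G_3)$, and in each $\delta_0c(G_k)$. The paper invokes Lemma \ref{Lcondensed_collapse}(1) at exactly this point.

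Two small bookkeeping slips: the face operators run over $0\le i\le 4$ (since $p=4$ for $[5]$), so there are fifteen terms $\delta_ic(G_k)$, not twelve; the three $\delta_4$ terms vanish by the same mechanism (the new maximum $\{45\}$ becomes incident with an edge). And there are three $\delta_0$ terms, not four.
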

\begin{proof}
We use the notations in Definition \ref{Dchainhomotopy}.  The equation $\delta_iG=\delta_iH$, together with Lemmas \ref{Lcontraction} (3) and   \ref{Lstraight_homotopy} implies that $\psi$ is $\delta_iG$-condensed. The maps $\psi_j$ and  $\lambda_j$ are $\delta_i\partial_jG$-condensed and  $\lambda'_j$ is $\delta_i\partial_jH$-condensed similarly. (The admissibility of $i$ and the edges are easily confirmed.) By Lemma \ref{Lcondensed}, each pushforward in the definition of $c(G,H,i)$ is well-defined. 
Roughly speaking, the first equation in the claim holds since concatenation of $\psi_j$, $\lambda_j$, and $\lambda'_{j'}$ gives a homotopy between the $e_j$-contraction of $f$ for $G$ and $e'_{j'}$-contraction of $f'$ or $f'\circ T$ for $H$, where $j'=j$ if the composition of standard bijections $E(G)\cong E(\delta_iG)=E(\delta_iH)\cong E(H)$  preserve the order of edges, and $j'=3-j$ otherwise. Let us look at the case $(G,H,i)=(G_1,G_2,1)$ more closely. We have
\[
\begin{split}
d\psi(w_{01})&=\delta_i'\circ f(w_0)+\delta_i'\circ f'(w_0),\\
d\psi_j(w_{11})&=\psi(w_{01})+\psi_j|_{t=0}(w_1)+\psi_j|_{t=1}(w_1), \\
d\lambda_j(w_{11})&=\lambda_j|_{s=0}(w_{01})+\lambda_j|_{t=0}(w_1)+\lambda_j|_{t=1}(w_1),
\end{split}
\]
and similar equation for  $\lambda'_{j'}$, where $s\in [0,\infty)$ and $t\in I$. The map $\delta_i'\circ f$ is the composition of  $f:S^4\to \TT_G$ with the natural collapsing map $\delta_i':\TT_G\to\TT_{\delta_iG}$, see Definition \ref{Dtriple}. (Since $f$ is also $\delta_iG$-condensed, $f$ itself can represent the map to $\TT_{\delta_iG}$, but we use this notation to clarify, and use similar notations in the rest of the paper.) The maps $\psi_1|_{t=0}$ and $\psi_1|_{t=1}$ are equal to $\lambda_1|_{t=1}$ and $\lambda'_2|_{t=1}$, respectively, as in the following figure.
\begin{figure}[H]
\begin{center}
{\unitlength 0.1in%
\begin{picture}(25.1500,3.2800)(-1.1300,-3.3800)%
%
\special{pn 8}%
\special{pa 261 231}%
\special{pa 1093 231}%
\special{fp}%
%
\special{pn 8}%
\special{pn 8}%
\special{pa 323 238}%
\special{pa 329 232}%
\special{fp}%
\special{pa 356 206}%
\special{pa 362 200}%
\special{fp}%
\special{pa 390 174}%
\special{pa 396 168}%
\special{fp}%
\special{pa 424 142}%
\special{pa 430 137}%
\special{fp}%
\special{pa 459 112}%
\special{pa 466 107}%
\special{fp}%
\special{pa 496 83}%
\special{pa 503 78}%
\special{fp}%
\special{pa 534 57}%
\special{pa 542 53}%
\special{fp}%
\special{pa 575 35}%
\special{pa 583 31}%
\special{fp}%
\special{pa 618 18}%
\special{pa 626 16}%
\special{fp}%
\special{pa 664 11}%
\special{pa 672 10}%
\special{fp}%
\special{pa 710 14}%
\special{pa 718 16}%
\special{fp}%
\special{pa 755 27}%
\special{pa 762 30}%
\special{fp}%
\special{pa 797 47}%
\special{pa 804 51}%
\special{fp}%
\special{pa 836 72}%
\special{pa 842 77}%
\special{fp}%
\special{pa 873 100}%
\special{pa 879 105}%
\special{fp}%
\special{pa 908 130}%
\special{pa 915 135}%
\special{fp}%
\special{pa 943 161}%
\special{pa 949 167}%
\special{fp}%
\special{pa 977 193}%
\special{pa 983 199}%
\special{fp}%
\special{pa 1010 225}%
\special{pa 1016 231}%
\special{fp}%
%
\special{pn 8}%
\special{pa 538 238}%
\special{pa 564 210}%
\special{pa 590 183}%
\special{pa 615 161}%
\special{pa 641 144}%
\special{pa 667 136}%
\special{pa 692 137}%
\special{pa 717 148}%
\special{pa 742 166}%
\special{pa 767 190}%
\special{pa 792 218}%
\special{pa 815 245}%
\special{fp}%
%
\special{pn 8}%
\special{pa 1563 227}%
\special{pa 2402 227}%
\special{fp}%
%
\special{pn 8}%
\special{pa 1654 227}%
\special{pa 1680 199}%
\special{pa 1705 172}%
\special{pa 1731 147}%
\special{pa 1757 123}%
\special{pa 1782 102}%
\special{pa 1808 85}%
\special{pa 1834 71}%
\special{pa 1860 63}%
\special{pa 1886 60}%
\special{pa 1913 63}%
\special{pa 1939 71}%
\special{pa 1966 84}%
\special{pa 1992 100}%
\special{pa 2019 120}%
\special{pa 2045 143}%
\special{pa 2072 168}%
\special{pa 2099 194}%
\special{pa 2126 222}%
\special{pa 2131 227}%
\special{fp}%
%
\special{pn 8}%
\special{pn 8}%
\special{pa 1854 227}%
\special{pa 1859 221}%
\special{fp}%
\special{pa 1885 194}%
\special{pa 1890 188}%
\special{fp}%
\special{pa 1916 161}%
\special{pa 1922 155}%
\special{fp}%
\special{pa 1949 130}%
\special{pa 1955 125}%
\special{fp}%
\special{pa 1985 102}%
\special{pa 1991 97}%
\special{fp}%
\special{pa 2023 78}%
\special{pa 2031 74}%
\special{fp}%
\special{pa 2066 62}%
\special{pa 2074 62}%
\special{fp}%
\special{pa 2111 63}%
\special{pa 2119 64}%
\special{fp}%
\special{pa 2154 77}%
\special{pa 2161 80}%
\special{fp}%
\special{pa 2193 100}%
\special{pa 2199 105}%
\special{fp}%
\special{pa 2229 127}%
\special{pa 2235 133}%
\special{fp}%
\special{pa 2263 158}%
\special{pa 2269 163}%
\special{fp}%
\special{pa 2296 189}%
\special{pa 2301 194}%
\special{fp}%
\special{pa 2327 221}%
\special{pa 2333 227}%
\special{fp}%
\put(13.3200,-1.5800){\makebox(0,0){$\simeq$}}%
%
\special{pn 8}%
\special{pa 305 175}%
\special{pa 575 175}%
\special{pa 575 283}%
\special{pa 305 283}%
\special{pa 305 175}%
\special{pa 575 175}%
\special{fp}%
%
\special{pn 8}%
\special{pa 1630 175}%
\special{pa 1899 175}%
\special{pa 1899 283}%
\special{pa 1630 283}%
\special{pa 1630 175}%
\special{pa 1899 175}%
\special{fp}%
\put(1.2200,-1.6400){\makebox(0,0){$\psi_1:$}}%
\put(3.4000,-3.9600){\makebox(0,0){$+$}}%
\put(10.1000,-3.9500){\makebox(0,0){$-$}}%
\put(8.0000,-3.9500){\makebox(0,0){$+$}}%
\put(5.4500,-3.9500){\makebox(0,0){$+$}}%
\put(16.6000,-4.0300){\makebox(0,0){$+$}}%
\put(23.3000,-4.0200){\makebox(0,0){$-$}}%
\put(21.2000,-4.0200){\makebox(0,0){$+$}}%
\put(18.6500,-4.0200){\makebox(0,0){$+$}}%
\end{picture}}
\end{center}
\vspace{4mm}
\caption{$\psi_1$ for $(G_1,G_2,1)$}\label{Fpsi1edge}
\vspace{-7mm}
\end{figure}
Similarly, the maps $\psi_2|_{t=0}$ and $\psi_2|_{t=1}$ are equal to $\lambda_2|_{t=1}$ and $\lambda'_1|_{t=1}$, respectively.
By definition, $\lambda_j|_{t=0}=\delta_i'\circ f_j$ in the notation of Definition \ref{Dchain_ch2}.  The map $\lambda_j|_{s=0}$ is constant for the variable $t$ since $t$ is only multiplied on $A_e(s)$ in Definition \ref{Dcondensed2}.  So in the normalized singular complex, we have $\lambda_j|_{s=0}(w_{01})=0$. The term $\psi(w_{01})$ in the equation for $d\psi_j(w_{11})$ cancels with $\partial_j \psi(w_{01})$. The transposition $T$ does not matter since $T(w_0)=w_0$. By these observation, we have $Dc(G,H,i)=\delta_i(c(G)+c(H))$. 
We now turn to the proof of  the equation for $D(C)$ in the claim.  We claim that the chains $\delta_jc(G_k)$ which are not included in $Dc(G,H,i)$ are zero by Lemma \ref{Lcondensed_collapse} (1). For any $k$,  we have $\delta_0c(G_k)=\delta_4c(G_k)=0$ as follows. For example, we consider the case of $G_1$. Put $f_j=(f_{j,1},\dots, f_{j,4})$ in the notation of Definition \ref{Dchain_ch2} for $G=G_1$. While the vertex $\{1\}$ of $\partial_1G_1$ is discrete, we have $f_{1,l}=_1f_{2,l}$ and the vertex $\{1\}$ of  $\partial_2G_1$ is not discrete. So by Lemmas \ref{Lcondensed} and \ref{Lcondensed_collapse} (1)(b), we have $\delta_0f_1(w_1)=0$.  Similarly,   by Lemmas \ref{Lcondensed} and \ref{Lcondensed_collapse} (1)(a), we have $\delta_2c(G_1)=\delta_1c(G_3)=\delta_3c(G_3)=0$ as $\delta_kG_j$ has a loop in these cases.  The equation for $D(C)$ follows from the  equation for $Dc(G, H,i)$ and these observations.  
\end{proof}

\begin{lem}\label{Lfundcyc} Let $C$ be the chain defined in Lemma \ref{Lh_i_welldef}, and    $\psi_j$ the map given in Definition \ref{Dchainhomotopy} for $(G,H,i)=(G_1, G_2,3)$.  Put $G_0=\delta_{13}\partial_1G_1$, see Definition \ref{Dgraph}. $G_0$ is the graph with only one edge $(\{12\},\{34\})$. 
\begin{enumerate}
\item Each pushforward which appears as  the terms of $c(G,H,i)$ are annihilated by $\delta_k$ unless $(G,H,i,k)=(G_1,G_2,3,1)$. Moreover, 
the terms of $c(G_1,G_2,3)$, except for $\psi_j(w_{11})$, are annihilated by $\delta_1$.

\item  The space $\TT_{G_0}$ contains $S^6$ as a deformation retract. For a retraction $\tilde r:\TT_{G_0}\to S^6$,\\
$\tilde r(\delta C)=\tilde r(\delta_1(\psi_1(w_{11})+\psi_2(w_{11})))$ is a fundamental cycle of $S^6$.
\end{enumerate}
\end{lem}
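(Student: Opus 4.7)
The plan is a systematic case-by-case application of Lemma~\ref{Lcondensed_collapse0}, followed by a geometric identification of the resulting chain as a generator of top reduced homology.

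For part 1, I enumerate the terms of $c(G,H,i)$---namely $\psi(w_{01})$ together with $\lambda_j(w_{11})$, $\lambda'_j(w_{11})$ and $\psi_j(w_{11})$ for $j=1,2$---and apply each $\delta_k$. In several cases the graph $\delta_k\delta_iG$ acquires a loop or double edge, so the term vanishes by the convention of Definition~\ref{Dtriple}; for instance both edges of $\delta_3G_1$ collapse to $(\{12\},\{34\})$ under $\delta_1$, killing $\delta_1\psi(w_{01})$ in $c(G_1,G_2,3)$. In the remaining cases the key tool is the inequality
\[
\rho c_{ij}-2\eps_P <_1 f_j-f_i <_1 \rho c_{ij}+2\eps_P
\]
required by Lemma~\ref{Lcondensed_collapse0} for every pair $i<j$ inside a common piece of the enlarged partition $P=\delta_k\delta_iG$, whenever the image at a point is not the basepoint. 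The maps $\lambda_j$ and $\lambda'_j$ in $c(G_1,G_2,3)$ have first-coordinate patterns of the form $(x_1,y_1,y_1,x_1)$ or $(x_1,y_1,x_1,y_1)$, so that $f_2-f_1=\pm(f_4-f_3)$; imposing the inequality on both pieces $\{12\}$ and $\{34\}$ in $P_0$ then requires $|c_{12}-c_{34}|<4\eps_{P_0}/\rho$, contradicting the hierarchy $c_3+c_4\gg c_1+c_2+\eps/\rho$ imposed by Definition~\ref{DPK}(1). The same hierarchy disposes of every term of $c(G_1,G_2,1)$ and $c(G_2,G_3,2)$ under each $\delta_k$, and of the remaining $\delta_k$ with $k\ne 1$ on $c(G_1,G_2,3)$; this is precisely the geometry sketched in Figure~\ref{Fhomotopy_collapse}. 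The surviving chains $\psi_1(w_{11}),\psi_2(w_{11})$ under $\delta_1$ for $(G_1,G_2,3)$ escape this contradiction because the alternation in $\psi$ gives, for $\psi_1$, $f_2-f_1=(2t-1)(x_1-y_1)$ while $f_4-f_3=x_1-y_1$, and these are simultaneously compatible with the inequalities in the open region $t>1/2$, $x_1-y_1\approx \rho c_{34}$.

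For part 2 the goal is to identify $\delta_1(\psi_1(w_{11})+\psi_2(w_{11}))$ as a generator of the top non-trivial reduced homology of $\TT_{G_0}$. I plan to use coordinates adapted to the fat diagonal on $\nu_{P_0}$ restricted to $D_{\{12\},\{34\}}\cap (\RR^{dp}-E_{P_0})$: the center $u=(x_{\{12\}}+x_{\{34\}})/2$, the relative displacement $v=x_{\{34\}}-x_{\{12\}}$ constrained by $|v|\le d_{\{12\},\{34\}}(P_0)$, and the transverse tubular fibre coordinate $w\in B^{d(n-p)}_{\eps_{P_0}}$. In these coordinates the image of $\delta'_1\circ\psi_j$ fills a $6$-cell on which the Jacobian is non-degenerate; $\psi_1$ and $\psi_2$, which differ by the orientation of the vector $sv$ introduced in the first two components, sweep the two halves of the top cell of $\TT_{G_0}$. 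Their pushforwards would cancel in characteristic zero but add modulo~$2$, so their sum represents the generator.

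The main obstacle I anticipate is making the degree computation rigorous, i.e., checking explicitly that the two half-cells fit together without overlap once collapsed to $\TT_{G_0}$ and that the Jacobians do not degenerate on the locus with non-trivial image. A more conceptual route uses the cofibre sequence $\TT_{G_0}\hookrightarrow \TT_{\emptyset_{P_0}}\twoheadrightarrow \TT(P_0)$ together with the Spanier--Whitehead duality of Proposition~\ref{Phom_dual} applied to $\PK(P_0)\simeq \KK_2(3)$ in order to pin down the top homology of $\TT_{G_0}$, and then recognises our chain as the image under the connecting homomorphism of a natural generator lifted through the quotient; this trades the Jacobian check for an explicit chain-level lift.
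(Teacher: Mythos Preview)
Your approach is correct and close to the paper's, with two points worth flagging.

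For part~1, the paper leans on the packaged Lemma~\ref{Lcondensed_collapse} (parts (1) and (2)) rather than invoking Lemma~\ref{Lcondensed_collapse0} directly; for instance, to kill $\delta'_1\circ\lambda_2$ in the case $(G_1,G_2,1)$ it observes that the relevant edge of $\delta_1\partial_2G_1$ has both endpoints in a single piece of the target partition and applies Lemma~\ref{Lcondensed_collapse}(1), and to kill $\delta'_2\circ\psi_j$ it applies part~(2) of that lemma. Your direct appeal to the inequalities of Lemma~\ref{Lcondensed_collapse0} and the hierarchy of the $c_i$ is equally valid and arguably more transparent; it is in fact the same mechanism that underlies the proof of Lemma~\ref{Lcondensed_collapse}.

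For part~2, the paper takes exactly your first route and carries the degree computation through in full. It retracts $\TT_{G_0}$ onto $Th(\nu_{P_0}|_\Delta)\cong S^6$ via the bundle map $\tilde r$ induced by orthogonal projection $N\to\Delta$, then sets $F_j=\tilde r\circ\psi_j$ restricted to the preimage of $\nu$. The heart of the argument is an explicit coordinate calculation: writing $e_P$, $\pi_P$ and $r$ out in components, one finds that a point in the fibre over $w\in\Delta$ is parametrised by $(p,-p,q,-q)$ with $|(p,q)|\le\eps_P$, and the equations $p=p_j(x,y,s,t)$, $q=q(x,y)$ are solved uniquely for $(x,y,s,t)$ with $s\ge0$ for exactly one of $j=1,2$ (or $s=0$ for both). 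This shows each $F_j$ is injective, $\mathrm{Im}(F_1)\cup\mathrm{Im}(F_2)=\nu|_\Delta$, and the overlap is the $s=0$ locus---precisely the ``two half-cells fit without overlap'' check you anticipated. The formula for $t$ turns out to be linear-fractional in the fibre coordinates, and the constraints on the $c_i$ guarantee $0<t<1$. Your alternative cofibre-sequence route is not used; the paper stays with the direct computation. Your remark that the two pushforwards ``would cancel in characteristic zero but add modulo~$2$'' is a slight misreading: the two images together form a single degree-one cover of the sphere, so their sum is the fundamental class in any characteristic; the characteristic-$2$ hypothesis enters elsewhere (in part~1 and in Lemma~\ref{Lh_i_welldef}) to make the element a $d_1$-cycle.
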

\begin{proof}
For part 1, we consider the case of $(G,H,i)=(G_1,G_2,1)$. So for a while, $f, \psi, \psi_j$ and $\lambda_j$ denote the maps given in Definition \ref{Dchainhomotopy} for this triple. We have $\delta_1'\circ \psi=*$ by Lemma \ref{Lcondensed_collapse}(1)(a) as $\delta_1(\delta_1G)$ has a loop. Since the first coordinate of each component of $\psi_j(x,y,s,t)$ is equal to that of the corresponding component of $\psi(x,y,t)$, by the same lemma, we have $\delta_1'\circ \psi_j=*$. We also see  $\delta'_1\circ\lambda_{2}=*$  as the first coordinate of each component of $\lambda_2(x,y,s,t)$ is equal to that  of $f(x,y)$. We see $\delta'_2\circ \lambda_j=*$ by Lemma \ref{Lcondensed_collapse1}. We also have $\delta'_2\circ\psi_j=*$ by Lemma \ref{Lcondensed_collapse} (2). The other terms vanish similarly.\\
\indent We shall show part 2. In the rest of proof, $\psi_j$ denotes the map for $(G,H,i)=(G_1,G_2,3)$.
 Using  the notations of Definition \ref{Dpartition}, put $N=D_{\alpha\beta}\cap (\overline{\RR^4-E_P})$, where $\alpha=\{12\}, \beta=\{34\}$ and $P=\{\{0\},\alpha, \beta,\{5\}\}$, and the overline indicates the closure (taken in $\RR^4$). We regard the space $\nu_P|_N:=\nu_P\cap\pi_P^{-1}(N)$ as a disk bundle over $N$ with the projection $\pi_P$.  By definition, $\TT_{G_0}$ is naturally identified with 
the Thom space associated to $\nu_P|_N$ defined  by collapsing the boundary of each fiber and the preimage of $N\cap \partial E_P\subset \partial N$ (and not collapsing the preimage of the rest of $\partial N$). Since $\overline{\RR^4-E_P}$ is  a product of two disks  whose neighborhoods of  north and south poles  are cut off,  and $N$ is a tubular neighborhood of the diagonal $\Delta=\{(a,a)\in \overline{\RR^4-E_P}\  \}$, 
$\TT_{G_0}$ is homeomorphic to $S^4\wedge S^2\wedge (D^2_+)$, where $D^2_+$ is the disk  $D^2$ with the disjoint basepoint, and  $S^4$ and $S^2\times\{0\}\subset S^2\wedge (D^2_+)$ correspond to a fiber and $\Delta$,  respectively. \\
\indent We will consider a retract to $S^4\wedge (S^2\times \{0\})\cong Th(\nu_P|_\Delta)$ and show that the composition of $\psi_j$ with this retract is of degree one. Here,  $Th(\nu_P|_\Delta)\subset \TT_{G_0}$ is the subspace  of points represented by points in $\nu_P|_\Delta=\nu_P\cap\pi_P^{-1}(\Delta)$.  Write $\nu=\nu_P|_N$. Let  $\tilde r: \nu\to \nu|_{\Delta}$ be the bundle map which covers the orthogonal projection $r:N\to \Delta$ and  restricts to the parallel transport in $\RR^8$ taking the  center to the center on each fiber. This map induces a map $\tilde r:\TT_{G_0}\to Th(\nu|_{\Delta})$.  We consider $\nu$ and $\nu|_\Delta$ as  subspaces of $\RR^8$. Put 
\[
F_j:=\tilde r\circ \psi_j:(\psi_j)^{-1}(\nu)\to \nu|_{\Delta}\qquad \text{for}\qquad j=1,2.
\]
We shall write down $F_j$ concretely. By definition, $\nu$ is the tubular neighborhood of the map
\[
e_P:(a,b)\mapsto \left(a-\frac{\rho}{2}c_2u,\ a+\frac{\rho}{2}c_1u,\ b-\frac{\rho}{2}c_4u,\ b+\frac{\rho}{2}c_3u\right).
\]
The projection $\pi_P$  sends $(c,d,e,f)\in \RR^8$ to the point $(a,b)$ which minimize the distance $|(c,d,e,f)-e_P(a,b)|$. By elementary calculation, this point is given by
\[
(a,b)=\left(\frac{c+d}{2}+\frac{\rho}{4}(c_2-c_1)u,\ \frac{e+f}{2}+\frac{\rho}{4}(c_4-c_3)u\right).
\]
Similarly, we see that $r:N\to \Delta$ is given by $r(a,b)=(a+b)/2,$ regarding $\Delta\subset \RR^2$ by the first component. 
 Since $\psi_1(x,y,s,t)=((1-t)x+ty+sv, tx+(1-t)y-sv, x-sv, y-sv)$, we have 
\[
r\circ \pi_P\circ \psi_1(x,y,s,t)=\frac{x+y}{2}-\frac{sv}{2}+\frac{\rho}{8}(c_2+c_4-c_1-c_3)u\ (=:w).
\] 
We denote the right hand side by $w$. For simplicity, we move the fiber of $\nu$ over $\pi_P(\psi_1(x,y,s,t))$ by the parallel transport which sends its center to $0$. By this move, $\psi_1(x,y,s,t)$ is sent to 
\[
\psi_1(x,y,s,t)-e_P(\pi_P(\psi_1(x,y,s,t))=(p^1,-p^1,q, -q)
\] 
where 

\begin{equation}
p^1=(t-1/2)(y-x)+sv+\frac{\rho}{4}(c_1+c_2)u,\ \ \ q=\frac{-1}{2}(y-x)+\frac{\rho}{4}(c_3+c_4)u.
\label{EQp^1_ch2}
\end{equation} Similarly, we see that $r\circ\pi_P\circ \psi_2(x,y,s,t)=w$ and $\psi_2(x,y,s,t)$ is sent to $(p^2,-p^2, q, -q)$ where 
\begin{equation}
p^2=(t-1/2)(y-x)-sv+\frac{\rho}{4}(c_1+c_2)u, \label{EQp^2_ch2}
\end{equation} by the similar parallel transport. Thus, $F_j$ is given by $(x,y,s,t)\mapsto (w, p^j,q)$. The fiber of $\pi_P|_\nu$ is a disk of radius $\epsilon_P$. To prove the lemma, it is enough to show  that there exists a unique $(x,y,s,t)\in \RR^4\times[0,\infty)\times I$ such that $\bar w=w$, $\bar q=q$ and ($\bar p=p^1$ or $\bar p=p^2$) for a given point $(\bar w,\bar p,\bar q)$ with $|(\bar p,\bar q)|\leq \epsilon_P$, and  $p^1=p^2$ holds if and only if $s=0$. Suppose that $(\bar p, \bar q)=(p^j,q)$. By the equations (\ref{EQp^1_ch2}) and (\ref{EQp^2_ch2}), for both of $j=1,2$ we have
\begin{equation}
t=\frac{{\bar p}_1-{\bar q}_1+\rho(c_3+c_4-c_1-c_2)/4}{-2{\bar q}_1+\rho(c_3+c_4)/2},
\label{EQt_ch2}
\end{equation}
where ${\bar p}_1$ and ${\bar q}_1$ denote the first coordinates of $\bar p$ and $\bar q$, respectively. The assumptions on $c_r$ in Definition \ref{DPK} and  on $|(\bar p,\bar q)|$ ensure $0<t<1$. By the equations (\ref{EQp^1_ch2}), (\ref{EQp^2_ch2}) and (\ref{EQt_ch2}), the values of $\bar p_1$ and $\bar q$ determine and are determined by the values of $y-x$ and $t$. Since  $p^1$ and $p^2$  are different only in the coefficients of $v$, any value of the second coordinate of $\bar p$ is realized by a unique value of $s$ in the equation (\ref{EQp^1_ch2}) or (\ref{EQp^2_ch2})  and  $p^1$ and $p^2$ take the same value only if $s=0$ (when $q$ is fixed).  Since  $x+y$ still can be set freely, $\bar w$ determines the values of $x,y$.
Thus we have proved the claim, which implies part 2.
\end{proof}
\begin{thm}\label{Tdifferential} In dimension $d=2$ and over a field of characteristic $2$, there exists an element  $g\in \EE^{-4,2}_2$ satisfying $d_2(g)\not=0$. 
\end{thm}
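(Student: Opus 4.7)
The plan is to combine the three preceding lemmas with a non-triviality check on a specific Thom-space summand, and then to transfer the result to Sinha's spectral sequence. I take $g := [c(G_1)+c(G_2)+c(G_3)]$ as a candidate class in $tr_1\CEE_2^{-4,2}$, with the chains sitting at filtration $p=4$ since the fine partition $[5]$ has six pieces. Lemma~\ref{L1st_cyc_ch2} ensures $c(G_1)+c(G_2)+c(G_3)$ is a $d_0=(d+\partial)$-cycle, and Lemma~\ref{Lh_i_welldef} supplies the chain $C$ at filtration $p=3$ with $DC=\delta(c(G_1)+c(G_2)+c(G_3))$, so $g$ is a $d_1$-cycle defining a class in $\CEE_2^{-4,2}$.

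Next, I invoke the standard formula $d_2[x]=[\delta C]$ (valid whenever $DC=\delta x$) for the spectral sequence of a filtered complex. Together with Lemma~\ref{Lfundcyc}, this identifies $d_2 g$ with the class of the fundamental cycle of the Thom space $\TT_{G_0}$, where $G_0$ is the one-chord graph $(\{12\},\{34\})$ over the four-piece partition $P_0=\{\{0\},\{12\},\{34\},\{5\}\}$. The same lemma shows $\TT_{G_0}$ is stably $S^6$, so $\delta C$ represents a generator of the one-dimensional summand $\bar H_6(\TT_{G_0})\cong\kk$ at $E_1$; no graph on $P_0$ with more edges exists (its only non-extremal pieces are $\{12\}$ and $\{34\}$), so the \v{C}ech differential within $tr_1\CEE$ does not enter this summand.

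Finally, I verify that this class survives on the $E_2$-page. A $d_1$-boundary landing in the $(P_0,G_0)$-summand must come from $\delta_*\colon\bar H_6(\TT_{G'})\to\bar H_6(\TT_{G_0})$ for a single-edge graph $G'$ over one of the two five-piece refinements of $P_0$, giving the four candidates $G'\in\{(\{1\},\{34\}),(\{2\},\{34\}),(\{12\},\{3\}),(\{12\},\{4\})\}$. A Thom-space analysis analogous to the proof of Lemma~\ref{Lfundcyc} shows $\bar H_6(\TT_{G'})=0$ in each case, so no $d_1$-boundary hits the summand and $d_2 g\neq 0$ in $tr_1\CEE$. The non-vanishing transfers to $\EE_2$ via Lemma~\ref{Lspeqseq}: part~2 gives a monomorphism on the one-edge summand at $E_1$, which propagates to $E_2$ since $d_1=\delta$ preserves the edge count, and part~1 then identifies $\CEE_r$ with $\EE_r$ for $r\geq 1$.

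The main obstacle is the dimension analysis of the four Thom spaces $\TT_{G'}$ in step three: the extra non-edge piece imposes a ball-removal constraint on the diagonal factor, so rather than being contractible as for $G_0$ the diagonal becomes stably equivalent to $S^1$, which forces $\bar H_6(\TT_{G'})=0$. Modulo that check, the result is a formal assembly of Lemmas~\ref{L1st_cyc_ch2}, \ref{Lh_i_welldef}, and \ref{Lfundcyc}.
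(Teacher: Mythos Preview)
Your overall approach mirrors the paper's: assemble Lemmas~\ref{L1st_cyc_ch2}, \ref{Lh_i_welldef}, \ref{Lfundcyc} to compute $d_2$ in $tr_1\CEE$, then transfer to $\EE$ via Lemma~\ref{Lspeqseq}. The paper's own proof is extremely terse and takes the survival of $[\delta C]$ to the $E_2$-page for granted; you attempt to fill this in explicitly, which is commendable, but your argument for it has a genuine gap.

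The problem is your sentence ``A $d_1$-boundary landing in the $(P_0,G_0)$-summand must come from $\delta_*\colon\bar H_6(\TT_{G'})\to\bar H_6(\TT_{G_0})$.'' This is not correct. A $D$-cycle $y$ in $tr_1\TTH_{3,*,*}$ may have components $y_1+y_2+y_3$ at edge counts $1,2,3$, related by $dy_1=\pm\partial y_2$ and $dy_2=\pm\partial y_3$. The $(P_0,G_0)$-component of $\delta y$ is indeed $\sum\pm\delta_i(y_1|_{G'_k})$, but $y_1|_{G'_k}$ is \emph{not} a $d$-cycle in general, so this is not the induced map on $\bar H_6$. (Concretely: a chain map out of a complex with vanishing $H_6$ can still produce nontrivial cycles in the target from non-cycle inputs, as the quotient map $C_*(I)\to C_*(S^1)$ already shows.) Checking $\bar H_6(\TT_{G'})=0$ alone therefore does not rule out $d_1$-boundaries.

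Two clean fixes are available. First, you can extend your Thom-space check: for each five-piece refinement $P'$ of $P_0$, the two-edge and three-edge graphs $G'',G'''$ all have $D_{G''},D_{G'''}$ equal to a tubular neighbourhood of the \emph{full} diagonal in $(D^2)^3$, whence $\TT_{G''}\simeq\TT_{G'''}\simeq S^8$ and $\bar H_5(\TT_{G''})=\bar H_4(\TT_{G'''})=0$. Together with your $\bar H_6(\TT_{G'})=0$ this forces $tr_1\CEE_1^{-3,1}|_{P'}=0$, so no $d_1$-boundary can reach the $P_0$-summand. Second, and more in the spirit of the paper's one-line proof, you can bypass $tr_1\CEE$ for this step entirely: via Lemma~\ref{Lspeqseq}(1) one has $\CEE_r\cong\bar\EE_r$, and a direct computation with the coface maps on $H^1(\Conf_p(\RR^2))$ shows $d_1\colon\bar\EE_1^{-3,1}\to\bar\EE_1^{-2,1}$ is zero, so $\bar\EE_2^{-2,1}=\kk$; then $d_2g$ is the generator because its image in $tr_1\CEE$ is the fundamental class of $\TT_{G_0}$, and the map of spectral sequences carries $d_2g\mapsto d_2g'$.
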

\begin{proof}
 By Lemma \ref{Lspec_seq2}, we see that the class $g'=[\sum_{1\leq i\leq 3}c(G_i)]\in tr_1\CEE^{-4,2}_1$ is lifted to a class $g''\in \bar \EE^{-4,2}_1$. Let $g$  denote the image in $\EE^{-4,2}_1$ of $g''$ by the map given in Lemma \ref{Lspecseq}.  By Lemmas \ref{Lh_i_welldef} and \ref{Lfundcyc}, $d_1g'=0$ and $d_2g'\not=0$.  So we have $d_1g=0$ and $d_2g\not=0$ (see the paragraph below Lemma \ref{Lspec_seq2}).
\end{proof}
\section{Computation of a differential in characteristic $\not=2$}\label{Snon-triviality}
In this section, we prove part 2 of Theorem \ref{Tmain}.  Here, we set $d=2$ and $n=4$ and assume that $\kk$ is a field of characteristic $\not=2$. The module $\EE^{-4,2}_2$ is generated by the class represented by the $d_1$-cycle $g_{13}g_{24}$. Unlike previous case, this monomial is not a $d_1$-cycle in $\CEE$. A $d_1$-cycle in $\CEE$ corresponding to the element  is given by
\begin{equation}
-g_{13}g_{24}+g_{13}g_{14}+g_{14}g_{24}\label{EQ3term}
\end{equation}
since $g_{13}g_{14}$ and $g_{14}g_{24}$ are zero in the normalized $E_1$-page of $\EE$.  We will show that the $d_2$-differential of this element is zero. Actually we will compute the corresponding differential of the projection of the element  to the truncated  sequence $tr_1\CEE$.  The notations in this section are independent of those in previous section. For example, the chain $c(G)$ in this section is different from the chain of the same notation in previous section, see Definition \ref{Dchain_3term}. \\
\indent Throughout this section, $G_i$ $(1\leq i\leq 3)$ denotes one of the following graphs which correspond to the terms of the element (\ref{EQ3term}):
\[
G_1=(1,3)(2,4),\qquad G_2=(1,3)(1,4),\qquad G_3=(1,4)(2,4)
\]
(see Figure \ref{Fgraphs_3term}). The computation is similar to the one in section \ref{Sch2}. The main difference is that we need to deal with the 3-term relation since we use it in the computation showing $d_1(g_{13}g_{24})=0$. To make the computation easier, we modify the definition of chains.

\begin{defi}\label{D(e,-)-cont}
Let $f:X\to \RR^8$ be a $G$-condensed map and $e=(\alpha,\beta)$ be a bridge of $G$.
In this section, we call the $e$-contraction given in Definition \ref{Dcondensed2} the {\em $(e,+)$-contraction}. The {\em $(e,-)$-contraction} of $f$ for $G$ is the version of $e$-contraction whose contracting direction is reversed. We add $-sv$ (resp. $sv$) to the $i$-th component if $i\sim_{\partial_eG} \alpha$ (resp. $i\sim_{\partial_eG}\beta$).  
\end{defi} 

\begin{exa}\label{E(e,-)-cont}
Put $f=f_{G_1}, e=(1,3)$. The $(e,-)$-contraction of $f$ (for $G_1$) is given by  
$(x,y,s)\mapsto (x-sv,y,x+sv,y)$.
\end{exa}
\begin{figure}
\begin{center}
{\unitlength 0.1in%
\begin{picture}(44.0000,3.3500)(0.1000,-3.4500)%
%
\special{pn 8}%
\special{pa 110 210}%
\special{pa 136 181}%
\special{pa 161 154}%
\special{pa 187 128}%
\special{pa 212 105}%
\special{pa 238 86}%
\special{pa 264 71}%
\special{pa 289 62}%
\special{pa 315 60}%
\special{pa 340 65}%
\special{pa 366 76}%
\special{pa 392 92}%
\special{pa 417 113}%
\special{pa 443 137}%
\special{pa 468 164}%
\special{pa 494 192}%
\special{pa 510 210}%
\special{fp}%
%
\special{pn 8}%
\special{pa 310 210}%
\special{pa 336 182}%
\special{pa 362 155}%
\special{pa 388 130}%
\special{pa 414 107}%
\special{pa 440 89}%
\special{pa 465 75}%
\special{pa 491 67}%
\special{pa 517 65}%
\special{pa 543 71}%
\special{pa 569 82}%
\special{pa 595 99}%
\special{pa 621 120}%
\special{pa 647 144}%
\special{pa 673 170}%
\special{pa 699 198}%
\special{pa 710 210}%
\special{fp}%
%
\special{pn 8}%
\special{pa 1210 210}%
\special{pa 2010 210}%
\special{fp}%
%
\special{pn 8}%
\special{pa 10 210}%
\special{pa 810 210}%
\special{fp}%
%
\special{pn 8}%
\special{pa 1310 210}%
\special{pa 1363 157}%
\special{pa 1390 132}%
\special{pa 1417 108}%
\special{pa 1443 86}%
\special{pa 1470 65}%
\special{pa 1496 48}%
\special{pa 1523 33}%
\special{pa 1550 21}%
\special{pa 1576 14}%
\special{pa 1603 10}%
\special{pa 1630 11}%
\special{pa 1656 17}%
\special{pa 1683 26}%
\special{pa 1709 39}%
\special{pa 1736 56}%
\special{pa 1763 74}%
\special{pa 1789 96}%
\special{pa 1816 119}%
\special{pa 1843 144}%
\special{pa 1869 169}%
\special{pa 1910 210}%
\special{fp}%
%
\special{pn 8}%
\special{pa 1310 210}%
\special{pa 1368 170}%
\special{pa 1397 152}%
\special{pa 1426 137}%
\special{pa 1455 125}%
\special{pa 1483 117}%
\special{pa 1512 115}%
\special{pa 1541 118}%
\special{pa 1570 127}%
\special{pa 1599 139}%
\special{pa 1628 155}%
\special{pa 1657 173}%
\special{pa 1686 193}%
\special{pa 1710 210}%
\special{fp}%
%
\special{pn 8}%
\special{pa 3215 210}%
\special{pa 2415 210}%
\special{fp}%
%
\special{pn 8}%
\special{pa 3115 210}%
\special{pa 3062 157}%
\special{pa 3035 132}%
\special{pa 3008 108}%
\special{pa 2982 86}%
\special{pa 2955 65}%
\special{pa 2929 48}%
\special{pa 2902 33}%
\special{pa 2875 21}%
\special{pa 2849 14}%
\special{pa 2822 10}%
\special{pa 2795 11}%
\special{pa 2769 17}%
\special{pa 2742 26}%
\special{pa 2716 39}%
\special{pa 2689 56}%
\special{pa 2662 74}%
\special{pa 2636 96}%
\special{pa 2609 119}%
\special{pa 2582 144}%
\special{pa 2556 169}%
\special{pa 2515 210}%
\special{fp}%
%
\special{pn 8}%
\special{pa 3115 210}%
\special{pa 3057 170}%
\special{pa 3028 152}%
\special{pa 2999 137}%
\special{pa 2970 125}%
\special{pa 2942 117}%
\special{pa 2913 115}%
\special{pa 2884 118}%
\special{pa 2855 127}%
\special{pa 2826 139}%
\special{pa 2797 155}%
\special{pa 2768 173}%
\special{pa 2739 193}%
\special{pa 2715 210}%
\special{fp}%
%
\special{pn 8}%
\special{pa 3610 210}%
\special{pa 4410 210}%
\special{fp}%
%
\special{pn 8}%
\special{pa 3710 210}%
\special{pa 3763 157}%
\special{pa 3790 132}%
\special{pa 3817 108}%
\special{pa 3843 86}%
\special{pa 3870 65}%
\special{pa 3896 48}%
\special{pa 3923 33}%
\special{pa 3950 21}%
\special{pa 3976 14}%
\special{pa 4003 10}%
\special{pa 4030 11}%
\special{pa 4056 17}%
\special{pa 4083 26}%
\special{pa 4109 39}%
\special{pa 4136 56}%
\special{pa 4163 74}%
\special{pa 4189 96}%
\special{pa 4216 119}%
\special{pa 4243 144}%
\special{pa 4269 169}%
\special{pa 4310 210}%
\special{fp}%
%
\special{pn 4}%
\special{sh 1}%
\special{ar 1510 210 16 16 0 6.2831853}%
\special{sh 1}%
\special{ar 1510 210 16 16 0 6.2831853}%
%
\special{pn 4}%
\special{sh 1}%
\special{ar 2915 210 16 16 0 6.2831853}%
\special{sh 1}%
\special{ar 2915 210 16 16 0 6.2831853}%
%
\special{pn 4}%
\special{sh 1}%
\special{ar 3915 210 16 16 0 6.2831853}%
\special{sh 1}%
\special{ar 4115 210 16 16 0 6.2831853}%
\special{sh 1}%
\special{ar 4115 210 16 16 0 6.2831853}%
%
\special{pn 8}%
\special{pa 3890 180}%
\special{pa 4135 180}%
\special{pa 4135 240}%
\special{pa 3890 240}%
\special{pa 3890 180}%
\special{pa 4135 180}%
\special{fp}%
%
\special{pn 8}%
\special{pa 3715 210}%
\special{pa 3741 185}%
\special{pa 3768 162}%
\special{pa 3795 142}%
\special{pa 3822 127}%
\special{pa 3850 117}%
\special{pa 3878 115}%
\special{pa 3907 121}%
\special{pa 3936 132}%
\special{pa 3966 148}%
\special{pa 3995 167}%
\special{pa 4015 180}%
\special{fp}%
%
\special{pn 8}%
\special{pa 4315 210}%
\special{pa 4289 185}%
\special{pa 4262 162}%
\special{pa 4235 142}%
\special{pa 4208 127}%
\special{pa 4180 117}%
\special{pa 4152 115}%
\special{pa 4123 121}%
\special{pa 4094 132}%
\special{pa 4064 148}%
\special{pa 4035 167}%
\special{pa 4015 180}%
\special{fp}%
\put(4.1000,-4.1000){\makebox(0,0){$G_1$}}%
\put(16.1000,-4.1000){\makebox(0,0){$G_2$}}%
\put(28.1000,-4.1000){\makebox(0,0){$G_3$}}%
\put(40.1000,-4.1000){\makebox(0,0){$G_4$}}%
\end{picture}}%
\end{center}
\caption{graphs in section \ref{Snon-triviality}: The dots in $G_2$ and $G_3$ denote the discrete vertices and the rectangle including two dots in $G_4$ does the unified vertices. } \label{Fgraphs_3term}
\end{figure}
\begin{defi}\label{Dchain_3term}
For $G=G_1,G_2,$ and $G_3$, set $f=f_G$ and $\{e_1<e_2\}=E(G)$.  Let $f^{\eps}_j$ be the $(e_j,\eps)$-contraction of $f$ for $\eps=\pm$. Set
\[
c(G)=f(w_0)-f^{\pm}_1(w_1)+f^\pm_2(w_1)\quad \in tr_1\tot_{10}(\TTH).
\]
Here, $f^{\pm}_j(w)$ denotes the average $(f^+_j(w_1)+f^-_j(w_1))/2$ as in subsection \ref{SSNT}. We consider the chain
\[
C=-c(G_1)+c(G_2)+c(G_3).
\]
\end{defi}
We set $D=d+(-1)^*\partial$ where $*$ is the singular degree. We have $D(c(G_i))=0$ as in Lemma \ref{L1st_cyc_ch2} for $1\leq i\leq 3$. We shall construct a bounding chain of $\delta C$. 
\begin{defi}\label{Dchain_2graphs_3term}
Let $(G, H,i)$ be either of $
(G_1,G_2,1)$ or  $(G_1,G_3,3)$.
Set $f=f_G$, $f'=f_H$, and $E(G)=\{e_1<e_2\}$. We identify  the edge sets through the composition of the standard bijections $E(G)\cong E(\delta_iG)= E(\delta_iH)\cong E(H)$. (In this case, the bijection preserves the order of edges.) 
 If the $i$-th components of $f$ and $f'$ are identical, $\psi$ denotes the straight homotopy from $f$ to $f'$. Otherwise, $\psi$ is the straight homotopy from $f$ to $f'\circ T$, where $T:\RR^4\to \RR^4$ is the transposition $T(x,y)=(y,x)$. For $\eps=\pm$, let
\begin{enumerate}
\item $\psi_j^\eps$  be the $(e_j,\eps)$-contraction of $\psi$ for $\delta_iG$,
\item $\lambda_j^\eps$  the straight homotopy from the $(e_j,\eps)$-contraction  of $f$ for $G$ to the $(e_j,\eps)$-contraction  of $f$ for $\delta_iG$, and
\item ${\lambda'_j}^\eps$ the straight homotopy from the $(e_j,\eps)$-contraction of $f'$ for $H$ to the $(e_j,\eps)$-contraction of $f'$ for $\delta_iH$.
\end{enumerate}
Then, we set 
\[
c(G,H,i):=\psi(w_{01})+\sum_{ j=1,2}(-1)^{j+1}(\psi^{\pm}_j+\lambda^{\pm}_j-{\lambda'_j}^\pm)(w_{11}).
\]Similarly to Definition \ref{Dchainhomotopy}, the maps $\psi^\eps_j$'s are implicitly composed with the  transposition of $(I_+)\wedge [0,\infty]$.
\end{defi}
We have 
\[
Dc(G_1,G_2,1)=\delta_1c(G_2)-\delta_1c(G_1), \qquad Dc(G_1,G_3,3)=\delta_3c(G_3)-\delta_3c(G_1)
\]
as in Lemma \ref{Lh_i_welldef}.
We shall construct a bounding chain of  
$\delta_2C$. 
\begin{defi}\label{Dbound_3term}
Put $f_i=f_{G_i}$ for $1\leq i\leq 3$. Below, we denote the two edges of all the involved graphs  by the same notation $e_1<e_2$. Since $\delta_k$ which appears here does not permute edges, this does not cause confusion.  Let $\lambda_{ij}^\eps$ be the straight homotopy from the $(e_j,\eps)$-contraction of $f_i$ for $G_i$ to the $(e_j,\eps)$-contraction of $f_i$ for $\delta_2G_i$, 
$\psi$ the straight homotopy  from $f_1$ to  $f_2$, 
$\phi$ the straight homotopy  from  $f_1$ to  $f_3$, $\psi_j^{\eps}$ the $(e_j,\eps)$-contraction of $\psi$ for $\delta_2G_2$, $\phi_j^\eps$ the $(e_j,\eps)$-contraction of $\phi$ for $\delta_2G_3$. Let $G_4\in\GG(\delta_2[5])$ be the graph with $E(G_4)=\{(1,\{23\}),(1,4),(\{23\},4)\}$, see Figure \ref{Fgraphs_3term}. We set
\[
c(G_1,G_2,G_3,2)=f_1(w_0)+(\psi+\phi)(w_{01})+\sum_{j=1,2}(-1)^{j+1}(\psi^\pm_j+\phi^\pm_j+\lambda^\pm_{1j}-\lambda^\pm_{2j}-\lambda^\pm_{3j})(w_{11}),
\]
where the pushforward by $f_1$, (resp. $\psi$, $\phi$, $\psi^\eps_j$, $\phi^\eps_j$) belongs to $\bar C_*(\TT_{G_4})$, (resp. $\bar C_*(\TT_{\delta_2G_2})$, $\bar C_*(\TT_{\delta_2G_3})$, $\bar C_*(\TT_{\delta_2\partial_jG_2})$, $\bar C_*(\TT_{\delta_2\partial_jG_3})$). The pushforwad by $\lambda_{ij}^\pm$ belongs to $\bar C_*(\TT_{\delta_2\partial_jG_i})$. The maps $\psi^\eps_j,\ \phi^\eps_j$ are implicitly composed with the  transposition on the parameter space. This is well-defined as $f_1$ is also $\delta_2G_2$- and $\delta_2G_3$-condensed. 
\end{defi}
\noindent{\bf Terminology}:\ In the rest of the paper, if a chain  $C$ contains a  term given as the pushforward by some map $g$, we sometimes call the term simply {\em the term of $g$ in $C$}.
\begin{lem}\label{L3term}
We have
$Dc(G_1,G_2,G_3,2)=\delta_2C$. 
If we set $\Gamma=c(G_1,G_2,1)+c(G_1,G_3,3)-c(G_1,G_2,G_3,2)$, we have $D\Gamma=-\delta C$.
\end{lem}
\begin{proof}
In this proof, we use the notations in Definition \ref{Dbound_3term}. We shall show the equation for $Dc(G_1,G_2,G_3,2)$ in the claim. 
 We have 
\[
\begin{split}
Df_1(w_0)&=f_1(w_0) -f_1(w_0) +f_1(w_0)\ \in \bar C_*(\TT_{\delta_2G_3})\oplus\bar C_*(\TT_{\delta_2G_1})\oplus\bar C_*(\TT_{\delta_2G_2}), \\
D\psi(w_{01})&=(-f_1(w_0)+f_2(w_0))-\psi(w_{01})+\psi(w_{01})\ \in \bar C_*(\TT_{\delta_2G_2})\oplus\bar C_*(\TT_{\delta_2\partial _1G_2})\oplus\bar C_*(\TT_{\delta_2\partial_2 G_2}), \\
D\phi(w_{01})&=(-f_1(w_0)+f_3(w_0))-\phi(w_{01})+\phi(w_{01})\ \in \bar C_*(\TT_{\delta_2G_3})\oplus\bar C_*(\TT_{\delta_2\partial _1G_3})\oplus\bar C_*(\TT_{\delta_2\partial_2 G_3}), \\
D\psi^\eps_j(w_{11})&=\psi (w_{11})+\psi^\eps_j|_{t=0}(w_1)-\psi^\eps_j|_{t=1}(w_1) \ \in\bar C_*(\TT_{\delta_2\partial_jG_2}), \\
D\phi^\eps_j(w_{11})&=\phi (w_{11})+\phi^\eps_j|_{t=0}(w_1)-\phi^\eps_j|_{t=1}(w_1)\ \in\bar C_*(\TT_{\delta_2\partial_jG_3}), \\
\end{split}
\]
where $t\in I$ is the variable for straight homotopy. 
Two of the three copies of $f_1(w_0)$ in $Df_1(w_0)$ cancel with the terms in $D\psi(w_{01})$ and $D\phi(w_{01})$, and the other one cancels with the term  $f_1(w_0)$ in $\delta_2C$.  The maps $\psi_j^+$ and $\phi_j^+$ are expressed as follows
\begin{center}
\input{graphs3termcont.TEX}\vspace{2mm}
\end{center}
and $\psi_j^-$ and $\phi_j^-$ are given by reversing the signs. Here, $\underline{x}$ (resp. $\underline{y}$) represents the homotopy from $x$ to $y$ (resp. $y$ to $x$), and the signs are those of the terms $sv$ added to the components.
As in this picture, the terms of $\psi^\eps_1$ and $\phi^\eps_2$ belong to the same graph,  and we see that $\psi^\eps_1|_{t=0}=\phi^{\eps'}_2|_{t=0}$ for $\eps\not=\eps'$. (This twist of sign is the reason why we introduce the $(e,-)$-contraction.) So the terms of  $\psi^\pm_1|_{t=0}$ and $\phi^\mp_2|_{t=0}$ cancel with each other. We denote by $f_{ij}^\eps$ the $(e_j,\eps)$-contraction of $f_i$ for $G_i$ ($i=1,2,3$). The terms of  $\psi^\eps_1|_{t=1}$ and $\phi^\eps_2|_{t=1}$ cancel with the terms of $f^\eps_{21}$ and $f^\eps_{32}$ in $\delta_2C$ (through $\lambda^\eps_{21}$ and $\lambda^\eps_{32}$) respectively. The terms of $\psi^\eps_2|_{t=0}$ and $\phi^\eps_1|_{t=0}$ cancel with  $f^\eps_{12}$ and $f^\eps_{11}$ respectively. The terms of $\psi^\eps_2|_{t=1}$ and $\phi^\eps_1|_{t=1}$ cancel with $f^\eps_{22}$ and $f^\eps_{31}$. respectively. We easily see that the remaining terms cancel with one another. The equation for $D\Gamma$ easily follows from the equation for $Dc(G_1,G_2,G_3,2)$ and the equations for $Dc(G,H,i)$ before Definition \ref{Dbound_3term}.
\end{proof}

\begin{prop}\label{Pd2_ch3}
Suppose  the characteristic of the field $\kk$ is not two. Let $\Gamma$ be the chain defined in Lemma \ref{L3term}. $\delta\Gamma$ is  null-homologous in the chain complex $(tr_1\tot\mathbb{T}, D)=(tr_1\CEE_0,d_0)$. In particular, $d_2(g_{13}g_{24})=0$ in $\EE_2$. 
\end{prop}
\begin{proof}
Among the terms of $\delta\Gamma$,  we see that only  $\delta_1(c(G_1,G_3,3))$ contains non-zero terms by Lemmas \ref{Lcondensed_collapse0} and \ref{Lcondensed_collapse} as in the proof of Lemmas \ref{Lh_i_welldef} and \ref{Lfundcyc}. The terms of  $(e,\eps)$-contractions  forms a cycle for each of $\eps=+,-$. We claim that the  classes of these cycles  cancel with each other. The proof  is similar to that of Lemma \ref{Lfundcyc}. Let $\nu, \Delta, r$ and $\tilde r$ denote the spaces and maps given in the proof, and $\psi^\eps_j$ denote the map given in Definition \ref{Dchain_2graphs_3term} for $(G_1,G_3,3)$. Put
\[
F^\eps_j=\tilde r\circ \psi_j^\eps: (\psi_j^\eps)^{-1}\nu\to \nu|_\Delta
\]
for $j=1,2$ and $\eps=\pm$. By computation similar to the proof, we have
\[
r\circ\pi_P\circ \psi^\eps_j(x,y,s,t)=\frac{1}{4}((2-t)x+(2+t)y)-\frac{1}{2}\eps sv+\frac{\rho}{8}(c_2+c_4-c_1-c_3)u\quad  (=:w^\eps),
\]
where $\eps$ in the right hand side denotes $\pm 1$ for $\eps=\pm$ respectively. The fiberwise parallel transport taking each center to $0$ sends $F_j^\eps(x,y,s,t)$ to $(p_j^\eps,-p_j^\eps,q,-q)$ where
\[
\begin{split}
p_j^\eps&=\frac{1}{2}(1-t)(x-y)+(-1)^{j+1}\eps sv+\frac{\rho}{4}(c_1+c_2)u, \\
q&=\frac{1}{2}(x-y)+\frac{\rho}{4}(c_3+c_4)u.
\end{split}
\] 
We define a homotopy $H_j$ from $F_j^+$ to $F_{3-j}^-$ by  $(x,y,s,t,\tau)\mapsto ((1-\tau)w^++\tau w^-,p_j^+,q)$ with the transported coordinate. Since $p^+_j=p^-_{3-j}$ and the value of  $(p_j^+,q)$ bounds the values of $|x-y|$ and $s$,  $H_j$ induces a well-defined homotopy between the pointed spaces. Furthermore, 
since $H_j$ sends the elements with $t=0,1$ to the outside of $\nu|_\Delta$ and $H_j|_{s=0}$ is independent of $\tau$, a signed sum of $H_1$ and $H_2$ gives a bounding chain 
of 
\[
\sum_{j=1,2}(-1)^{j+1}(\psi^+_j-\psi^-_{3-j})(w_{11})=\sum_{j=1,2}(-1)^{j+1}(\psi^+_j+\psi^-_{j})(w_{11})=2\delta_1c(G_1,G_3,3).
\] The latter statement easily follows from the former  and Lemmas \ref{Lspecseq} and \ref{Lspec_seq2}.
\end{proof}
\begin{rem}\label{R3term_ch2}
The monomial $g_{13}g_{24}$ is also a $d_1$-cycle in  characteristic 2 and Proposition \ref{Pd2_ch3} implies  $d_2(g_{13}g_{24})=0$ even in this case since the elements in  source and target come from non-torsion elements. We can also prove this  by modifying the definitions of chains so as not to include $1/2$. 
\end{rem}

\section{Computation of a differential in characteristic 3}\label{Sch3}
In this section, we prove part 3 of Theorem \ref{Tmain}. 
Throughout this section, we set $n=5$ and $d=2$ and assume that $\kk$ is a field of characteristic $3$. 
 By a straightforward computation, we see that the element 
\begin{equation}
-g_{13}g_{23}g_{45}+g_{14}g_{24}g_{35}+g_{14}g_{25}g_{34}+g_{15}g_{24}g_{34}\label{EQch3}
\end{equation} in $\EE^{-5,3}_1$ is a cycle for the $d_1$-differential  in characteristic $3$. We  see that the  element in $\CEE^{-5,3}_1$ given by the same formula (\ref{EQch3}) is also a $d_1$-cycle (see the  paragraphs after Lemma \ref{Lspec_seq2}). We will show that the $d_2$-differential of this element is zero and the $d_3$-differential is non-zero. Actually we compute the corresponding differentials of the projection of the element  to the truncated  sequence $tr_1\CEE$.   The computation is similar to the one in section \ref{Sch2}. The notations in this section are independent of those in previous two sections. For example, the chain $c(G)$ in this section is different from the chain of the same notation in previous two sections.  We define four graphs in $\GG([6])$ as follows:
\[
G_1=(1,3)(2,3)(4,5),\quad G_2=(1,4)(2,4)(3,5),\quad G_3=(1,4)(2,5)(3,4),\quad G_4=(1,5)(2,4)(3,4),
\]
see Figure \ref{Fgraphs_ch3}.
Throughout this section, $G_i$ ($1\leq i\leq 4$) denotes one of these graphs.
\begin{figure}
\begin{center}
{\unitlength 0.1in%
\begin{picture}(46.0000,3.7500)(0.3000,-4.0500)%
%
\special{pn 8}%
\special{pa 1235 306}%
\special{pa 2235 306}%
\special{fp}%
%
\special{pn 8}%
\special{pa 30 306}%
\special{pa 1030 306}%
\special{fp}%
%
\special{pn 8}%
\special{pa 130 306}%
\special{pa 155 276}%
\special{pa 180 247}%
\special{pa 205 220}%
\special{pa 230 196}%
\special{pa 255 175}%
\special{pa 280 159}%
\special{pa 305 149}%
\special{pa 330 146}%
\special{pa 355 150}%
\special{pa 380 160}%
\special{pa 404 177}%
\special{pa 429 198}%
\special{pa 454 222}%
\special{pa 478 250}%
\special{pa 503 279}%
\special{pa 525 306}%
\special{fp}%
%
\special{pn 8}%
\special{pa 730 306}%
\special{pa 755 277}%
\special{pa 780 251}%
\special{pa 805 233}%
\special{pa 830 226}%
\special{pa 855 233}%
\special{pa 880 251}%
\special{pa 905 277}%
\special{pa 930 306}%
\special{fp}%
%
\special{pn 8}%
\special{pa 2435 306}%
\special{pa 3435 306}%
\special{fp}%
%
\special{pn 8}%
\special{pa 3630 306}%
\special{pa 4630 306}%
\special{fp}%
%
\special{pn 8}%
\special{pa 1335 306}%
\special{pa 1361 278}%
\special{pa 1387 251}%
\special{pa 1413 225}%
\special{pa 1440 200}%
\special{pa 1466 176}%
\special{pa 1492 155}%
\special{pa 1518 136}%
\special{pa 1544 120}%
\special{pa 1570 108}%
\special{pa 1596 99}%
\special{pa 1622 95}%
\special{pa 1649 95}%
\special{pa 1675 99}%
\special{pa 1701 108}%
\special{pa 1727 121}%
\special{pa 1753 137}%
\special{pa 1779 156}%
\special{pa 1805 177}%
\special{pa 1832 201}%
\special{pa 1858 226}%
\special{pa 1884 252}%
\special{pa 1910 279}%
\special{pa 1935 306}%
\special{fp}%
%
\special{pn 8}%
\special{pa 1535 306}%
\special{pa 1563 284}%
\special{pa 1592 263}%
\special{pa 1620 243}%
\special{pa 1649 227}%
\special{pa 1677 214}%
\special{pa 1705 205}%
\special{pa 1734 202}%
\special{pa 1762 205}%
\special{pa 1790 214}%
\special{pa 1819 227}%
\special{pa 1847 244}%
\special{pa 1875 263}%
\special{pa 1903 285}%
\special{pa 1930 306}%
\special{fp}%
%
\special{pn 8}%
\special{pa 1735 306}%
\special{pa 1762 280}%
\special{pa 1788 255}%
\special{pa 1815 231}%
\special{pa 1842 211}%
\special{pa 1868 194}%
\special{pa 1895 182}%
\special{pa 1922 175}%
\special{pa 1949 175}%
\special{pa 1976 181}%
\special{pa 2002 193}%
\special{pa 2029 209}%
\special{pa 2056 229}%
\special{pa 2083 252}%
\special{pa 2110 277}%
\special{pa 2137 303}%
\special{pa 2140 306}%
\special{fp}%
%
\special{pn 8}%
\special{pa 2535 306}%
\special{pa 2561 278}%
\special{pa 2587 251}%
\special{pa 2613 225}%
\special{pa 2640 200}%
\special{pa 2666 176}%
\special{pa 2692 155}%
\special{pa 2718 136}%
\special{pa 2744 120}%
\special{pa 2770 108}%
\special{pa 2796 99}%
\special{pa 2822 95}%
\special{pa 2849 95}%
\special{pa 2875 99}%
\special{pa 2901 108}%
\special{pa 2927 121}%
\special{pa 2953 137}%
\special{pa 2979 156}%
\special{pa 3005 177}%
\special{pa 3032 201}%
\special{pa 3058 226}%
\special{pa 3084 252}%
\special{pa 3110 279}%
\special{pa 3135 306}%
\special{fp}%
%
\special{pn 8}%
\special{pa 2735 306}%
\special{pa 2761 279}%
\special{pa 2788 252}%
\special{pa 2814 225}%
\special{pa 2840 200}%
\special{pa 2867 177}%
\special{pa 2893 156}%
\special{pa 2919 137}%
\special{pa 2945 121}%
\special{pa 2972 108}%
\special{pa 2998 100}%
\special{pa 3024 95}%
\special{pa 3050 94}%
\special{pa 3076 99}%
\special{pa 3103 107}%
\special{pa 3129 119}%
\special{pa 3155 135}%
\special{pa 3181 154}%
\special{pa 3207 175}%
\special{pa 3233 198}%
\special{pa 3259 223}%
\special{pa 3286 249}%
\special{pa 3312 276}%
\special{pa 3338 304}%
\special{pa 3340 306}%
\special{fp}%
%
\special{pn 8}%
\special{pa 3730 306}%
\special{pa 3756 279}%
\special{pa 3783 252}%
\special{pa 3809 225}%
\special{pa 3836 200}%
\special{pa 3862 175}%
\special{pa 3889 151}%
\special{pa 3915 129}%
\special{pa 3942 109}%
\special{pa 3968 90}%
\special{pa 3995 73}%
\special{pa 4021 59}%
\special{pa 4048 47}%
\special{pa 4074 38}%
\special{pa 4100 33}%
\special{pa 4127 30}%
\special{pa 4153 31}%
\special{pa 4179 35}%
\special{pa 4206 43}%
\special{pa 4232 54}%
\special{pa 4258 67}%
\special{pa 4284 82}%
\special{pa 4311 100}%
\special{pa 4337 120}%
\special{pa 4363 142}%
\special{pa 4389 165}%
\special{pa 4415 190}%
\special{pa 4442 215}%
\special{pa 4520 296}%
\special{pa 4530 306}%
\special{fp}%
\put(5.3000,-4.7000){\makebox(0,0){$G_1$}}%
\put(17.3000,-4.7000){\makebox(0,0){$G_2$}}%
\put(29.3000,-4.6600){\makebox(0,0){$G_3$}}%
\put(41.3000,-4.6600){\makebox(0,0){$G_4$}}%
%
\special{pn 8}%
\special{pa 3935 306}%
\special{pa 3959 275}%
\special{pa 3984 246}%
\special{pa 4008 218}%
\special{pa 4033 193}%
\special{pa 4057 172}%
\special{pa 4082 156}%
\special{pa 4106 146}%
\special{pa 4131 142}%
\special{pa 4156 146}%
\special{pa 4180 156}%
\special{pa 4205 172}%
\special{pa 4230 193}%
\special{pa 4254 217}%
\special{pa 4279 245}%
\special{pa 4304 274}%
\special{pa 4329 305}%
\special{pa 4330 306}%
\special{fp}%
%
\special{pn 8}%
\special{pa 4130 306}%
\special{pa 4158 284}%
\special{pa 4187 266}%
\special{pa 4215 256}%
\special{pa 4244 255}%
\special{pa 4272 266}%
\special{pa 4300 284}%
\special{pa 4329 305}%
\special{pa 4330 306}%
\special{fp}%
%
\special{pn 8}%
\special{pa 2930 306}%
\special{pa 2958 284}%
\special{pa 2987 266}%
\special{pa 3015 256}%
\special{pa 3044 255}%
\special{pa 3072 266}%
\special{pa 3100 284}%
\special{pa 3129 305}%
\special{pa 3130 306}%
\special{fp}%
%
\special{pn 8}%
\special{pa 325 306}%
\special{pa 353 284}%
\special{pa 382 266}%
\special{pa 410 256}%
\special{pa 439 255}%
\special{pa 467 266}%
\special{pa 495 284}%
\special{pa 524 305}%
\special{pa 525 306}%
\special{fp}%
\end{picture}}%
\end{center}
\caption{graphs in section \ref{Sch3}}\label{Fgraphs_ch3}
\end{figure}
\begin{defi}\label{Dchain_ch3}
Let $G$ be one of $G_1,\dots,G_4$ . Put $E(G)=\{e_1<e_2<e_3\}$ and $f=f_G:\RR^4\to \RR^{10}$, see Definition \ref{Dcondensed}. Let $f_k$ be the $e_k$-contraction of $f$ and $f_{kl}$ the $(e_k,e_l)$-contraction of $f$ (both for $G$). We define a chain $c(G)$ by
\[
\begin{split}
c(G)=f(w_0)+\sum_{k=1}^3(-1)^kf_k(w_1) &+\sum_{1\leq k<l\leq 3}(-1)^{k+l+1}f_{kl}(w_2). \\
&\in\quad \bar C_4(\TT_G)\oplus \bigoplus_{1\leq k\leq 3}\bar C_5(\TT_{\partial_kG})\oplus\bigoplus_{1\leq k<l\leq 3}\bar C_6(\TT_{\partial_{kl}G})\quad \subset tr_1\tot_{12}(\TTH).
\end{split}
\]

\end{defi}
Set $D=d+(-1)^*\partial$, where $*$ is the singular degree. 
\begin{lem}\label{Lcycle_ch3}
$c(G)$ is a cycle in $(tr_1\tot\TTH, D)=(tr_1\CEE_0,d_0)$.
\end{lem}
\begin{proof}
This is similar to Lemma \ref{L1st_cyc_ch2}. For the new terms $f_{kl}(w_2)$ with $k<l$, we have
\[
df_{kl}(w_2)=f_{kl}|_{s_1=0}(w_1)-f_{kl}|_{s_2=0}(w_1)=f_l(w_1)-f_k(w_1),
\]
where $s_1$ and $s_2$ are the variables for the contractions in removing the $k$-th and $l$-th edges respectively. These two terms cancel with $\partial_kf_l(w_1)$ and $\partial_{l-1}f_k(w_1)$ since by definition $D=d+(-1)^5\partial$ on $f_k(w_1)$ and $f_l(w_1)$, and $\partial=\sum_i(-1)^{i-1}\partial_i$.
\end{proof}
\subsection{First bounding chain}\label{SS1st_ch3}
\begin{defi}\label{Dchain_2graphs}
Let $(G, H,i)$ be one of $
(G_1,G_2,3), (G_2,G_3,2),(G_2,G_3,4),(G_3,G_4,1),$ and $(G_4,G_3,4)$.
For these triples, we see $\delta_iG=\delta_iH$. 
Set $f=f_G$, $f'=f_H$, and $E(G)=\{e_1<e_2<e_3\}$. We identify  the edge sets through the standard bijections $E(G)\cong E(\delta_iG) = E(\delta_iH)\cong E(H)$. (By this convention, $e_k$ does not necessarily represent the $k$-th edge of $E(H)$  so may not  agree with the notation in Definition \ref{Dchain_ch3} for $H$.) 
 If the $i$-th components of $f$ and $f'$ are identical, $\psi$ denotes the straight homotopy from $f$ to $f'$. Otherwise, $\psi$ is the straight homotopy from $f$ to $f'\circ T$, where $T:\RR^4\to \RR^4$ is the transposition $T(x,y)=(y,x)$. Let
\begin{enumerate}
\item $\psi_k$ (resp. $\psi_{kl}$) be the $e_k$-contraction (resp. $(e_k,e_l)$-contraction) of $\psi$ for $\delta_iG$,
\item $\lambda_k$ (resp. $\lambda_{kl}$)  the straight homotopy from the $e_k$-contraction (resp. $(e_k,e_l)$-contraction) of $f$ for $G$ to the $e_k$-contraction (resp. $(e_k,e_l)$-contraction) of $f$ for $\delta_iG$, and
\item $\lambda'_k$ (resp. $\lambda'_{kl}$) the straight homotopy from the $e_k$-contraction  (resp. $(e_k,e_l)$-contraction) of $f'$ for $H$ to the $e_k$-contraction (resp. $(e_k,e_l)$-contraction) of $f'$ for $\delta_iH$.
\end{enumerate}
Then, we set 
\[
c(G,H,i):=\psi(w_{01})+\sum_{1\leq k\leq 3}(-1)^{k+1}(\psi_k+\lambda_k-\lambda'_k)(w_{11})+\sum_{1\leq k<l\leq 3}(-1)^{k+l+1}(\psi_{kl}+\lambda_{kl}-\lambda'_{kl})(w_{21}).
\]
Here, we compose $\psi_k$  with the transposition of $[0,\infty]\wedge (I_+)$ implicitly since the definition of $\psi_k$ puts $[0,\infty]$ at the rightmost component. $\psi_{kl}$ is also composed with the transposition $ [0,\infty]^{\wedge 2}\wedge (I_+)\cong (I_+)\wedge[0,\infty]^{\wedge 2} ,\ (s_1,s_2,t)\mapsto (t,s_1,s_2)$. 
\end{defi}

We use the terminology given after Definition \ref{Dbound_3term} in the rest of this section.
\begin{lem}\label{Lbound_ch3}
We have 
\[
\begin{split}
Dc(G_1,G_2,3)& =-\delta_3c(G_1)+\delta_3c(G_2), \\
Dc(G_2,G_3,2)&=-\delta_2c(G_2)-\delta_2c(G_3), \\
Dc(G_2,G_3,4)&=-\delta_4c(G_2)+\delta_4c(G_3), \\
Dc(G_3,G_4,1)&=-\delta_1c(G_3)-\delta_1c(G_4), \\
Dc(G_4,G_3,4)&=-\delta_4c(G_4)+\delta_4c(G_3).
\end{split}
\]

\end{lem}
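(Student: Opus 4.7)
The plan is to follow the template of Lemma \ref{Lh_i_welldef} and apply it to each of the five triples, taking care that we are now in characteristic $3$ so every sign must be tracked. Throughout, fix a triple $(G,H,i)$, put $f=f_G$ and $f'=f_H$, and decompose
\[
Dc(G,H,i)=dc(G,H,i)+(-1)^{*}\partial c(G,H,i)
\]
along the constituent maps: the straight homotopy $\psi$, its bridge contractions $\psi_j$ and $\psi_{jk}$, and the auxiliary straight homotopies $\lambda_j,\lambda_{jk},\lambda'_j,\lambda'_{jk}$ connecting bridge contractions for $G$ (resp. $H$) with those for $\delta_iG=\delta_iH$.

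The first step is well-definedness of each pushforward. For each of the five triples one checks that $\delta_iG=\delta_iH$ as elements of $\GG(\delta_iP)$, and that the two pieces amalgamated by $\delta_i$ are both bases for $f$ (and for $f'$, possibly after precomposing with the transposition $T:\RR^{4}\to\RR^{4}$). Lemma \ref{Lcontraction}(3) then makes $f$ a $\delta_iG$-condensed map, and Lemma \ref{Lstraight_homotopy}(1) makes $\psi$ a $\delta_iG$-condensed map. The remaining auxiliary maps are condensed for the appropriate subgraphs by repeated application of Lemmas \ref{Lcontraction}(1)(2) and \ref{Lstraight_homotopy}(2)(3). Lemma \ref{Lcondensed} then guarantees every pushforward lands in the stated Thom space.

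The core of the computation identifies three sources of cancellation. First, the singular boundary of $\psi(w_{01})$ produces $f'(w_0)\pm f(w_0)$ (the sign depending on whether $T$ was needed and using that $T_{*}w_0=(-1)^{|w_{S^2}|^{2}}w_0=w_0$), which furnishes the leading terms of $\pm\delta_ic(G)\pm\delta_ic(H)$. Analogous endpoint contributions at $t=0,1$ of $\lambda_j,\lambda_{jk},\lambda'_j,\lambda'_{jk}$, combined with the endpoint contributions of $\psi_j,\psi_{jk}$, furnish the lower-degree terms of $\delta_ic(G)$ and $\delta_ic(H)$. Second, the Čech boundaries $\partial\psi_j(w_{11})$ and $\partial\psi_{jk}(w_{21})$, together with the $s$-direction singular boundaries of $\psi_{jk}$ and of the $\lambda_{jk}$, pair in a square-zigzag pattern to cancel, mirroring the cancellation in Lemma \ref{Lh_i_welldef}; the signs $(-1)^{j+1}$ and $(-1)^{j+k+1}$ built into $c(G,H,i)$ are precisely what is needed to make these pairings add to zero. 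Third, the restrictions $\lambda_j|_{s=0}$, $\lambda'_j|_{s=0}$, $\lambda_{jk}|_{s_1=0}$, $\lambda_{jk}|_{s_2=0}$ together with their $\lambda'$-analogues are constant in $t$, so their pushforwards of $w_{11}$ or $w_{21}$ vanish in the normalized chain complex.

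The final check is that every piece of $\delta_ic(G)$ and $\delta_ic(H)$ not produced by the preceding cancellations collapses to the basepoint, which follows from Lemma \ref{Lcondensed_collapse}: whenever $\delta_iG$ acquires a loop or a double edge at the unified vertex, or an edge incident with the minimum or maximum of $\delta_iP$, the corresponding pushforward vanishes. The main obstacle is not any single calculation but the combinatorial bookkeeping: each of the five triples has its own configuration of bridges and bases, and the signs $\pm\delta_ic(G)\pm\delta_ic(H)$ must be verified case by case. The cases $(G_2,G_3,2)$ and $(G_3,G_4,1)$ look most delicate, since there the sign conventions for the Čech differential together with a possible precomposition by $T$ may contribute simultaneously and are what determine the double-negative pattern in two of the five claimed identities.
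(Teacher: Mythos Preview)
Your approach is essentially the paper's: reduce to the template of Lemma~\ref{Lh_i_welldef}, check well-definedness via Lemmas~\ref{Lcondensed}, \ref{Lcontraction}, \ref{Lstraight_homotopy}, and obtain the boundary terms from the endpoint contributions of the homotopies $\psi,\psi_j,\psi_{jk},\lambda_j,\lambda_{jk},\lambda'_j,\lambda'_{jk}$, while the internal cancellations and the $s=0$ degeneracies work exactly as you describe.

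The one place where your explanation is imprecise is the origin of the minus sign on the $H$-term in the second and fourth equations. It is not the transposition $T$ (here $T_*w_k=w_k$ since $|w_{S^2}|$ is even), nor the \v Cech differential $\partial$. The paper pinpoints the mechanism as follows: in the cases $(G_2,G_3,2)$ and $(G_3,G_4,1)$ the canonical bijection $E(G)\cong E(\delta_iG)=E(\delta_iH)\cong E(H)$ swaps the second and third edges of $H$. Hence the concatenated homotopy $(\lambda_j,\psi_j,\lambda'_{j'})$ ends at $f'_{j'}$ with $j'\neq j$, so the signs $(-1)^{j+1}$, $(-1)^{j+k+1}$ in $c(G,H,i)$ reproduce $c(H)$ with the wrong sign on every term except those involving the first edge (and for $f'_{23}$ the swap of the two $[0,\infty)$-factors produces a further sign). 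These discrepancies are exactly absorbed by the edge-permutation sign $\mathrm{sgn}(\sigma_{H,i})$ built into the definition of $\delta_i$ in Definition~\ref{Dtriple}: one has $\mathrm{sgn}(\sigma_{H,i})=\mathrm{sgn}(\sigma_{\partial_1H,i})=-1$ and $\mathrm{sgn}(\sigma_{H',i})=+1$ for the remaining subgraphs $H'$. That is the ``double-negative'' you are looking for; once you replace your vague attribution by this bookkeeping, the five identities follow by direct computation as in the order-preserving cases $(G_1,G_2,3)$, $(G_2,G_3,4)$, $(G_4,G_3,4)$.
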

\begin{proof}
The proof is similar to that of Lemma \ref{Lh_i_welldef} with some care about signs. We use the notations of Definition \ref{Dchain_2graphs}. For the triples $(G,H,i)$ in this definition,  the standard bijection $E(G)\cong E(\delta_iG)$ preserves the order of edges so the restriction of each map to $0\in [0,\infty)$ cancels with $\check{\text{C}}$ech differential of another term. Let $f_k$ and $f'_k$(resp.  $f_{kl}$ and $f'_{kl}$) denotes the $e_k$-(resp. $(e_k,e_l)$-) contractions of $f$ and $f'$ for $G$ and $H$, respectively. The concatenation of $\psi_k$, $\lambda_k$, and $\lambda'_k$ defines a homotopy from $f_k$ to $f'_k$ possibly composed with the transposition. We have a similar homotopy for $\psi_{kl}, \lambda_{kl}$, and $\lambda'_{kl}$. We also see that the terms of $\lambda_{kl}|_{s=0}$ and $\lambda_{kl}|_{s=0}$ are zero as in Lemma \ref{Lh_i_welldef}, where $s\in [0,\infty)$ is the variable for the contraction in removing an edge. For $(G,H,i)=(G_1,G_2,3)$, the bijection $E(G)\cong E(H)$ in the definition  preserves the order of edges.  Straightforward computation shows the first equation. The proofs of the  third and fifth equations are similar. For the second equation, the case of $(G,H,i)=(G_2,G_3,2)$, the bijection swaps the second and third edges i.e. $e_2$ and $e_3$ represent the third and second edges  of $H=G_3$, respectively as follows.
\begin{figure}[H]
\begin{center}
\phantom{aaaaaaaaaaaaaa}\input{AlternateEdges.TEX}
\end{center}
\vspace{-7.2cm}\caption{}\label{Falternate}
\end{figure}\vspace{-0.4cm}The restrictions of the homotopies $\lambda'_k$ and $\lambda'_{kl}$ to $t=0\in I$ are equal to  $f'_{k}$ and $f'_{kl}$, respectively,  so both of $Dc(G,H,i)$ and $\delta_ic(H)$ have the terms of $f'_k$ and $f'_{kl}$. Since the signs on the terms of the chains  depend on the order of edges, the alternation of edges gives the  signs on the terms in $Dc(G,H,i)$ which are opposite to the signs on the corresponding terms of $c(H)$, except for the terms of $f'$ and $f'_1$. For $f'_{23}$, the signs given by the order of edges are the same on both sides, but the concatenated homotopy  swaps the components of $[0,\infty]^{\wedge 2}$, that is,  we have $\lambda'_{23}|_{t=0}=f'_{23}\circ (id_{S^4}\wedge T\wedge id_I)$ where $T$ is the transposition on $[0,\infty]^{\wedge 2}$, which produces a sign.   The exceptions $f'$ and $f'_1$ are complemented by the extra signs on $\delta_i$ in permuting  edges on $c(H)$  since we have 
$sgn(\sigma_{H,i})=sgn(\sigma_{\partial_1H,i})=-1$ and $sgn(\sigma_{H',i})=1$ for the other subgraphs $H'$ of $H=G_3$ (see Definition \ref{Dtriple}).  The proof of the fourth equation is similar.
\end{proof}

\begin{defi}\label{Dchain_cycle}
Let $(G,i)$ be one of $(G_1,1), (G_2,1)$ and $(G_4,2)$. Set $f=f_G$ and $E(G)=\{e_1<e_2<e_3\}$. Note that $\delta_iG$ has double edges and $e_2$ corresponds to one of them under the standard bijection.
Let 
$f_k$ be the $e_k$-contraction of $f$, and $f_{kl}$ the $(e_k, e_l)$-contraction of $f$ (both for $G$). Let $f_k^{\eps}$ (resp.$f_{kl}^{\eps}$) be the $(i,\eps)$-contraction of $f_k$ (resp. $f_{kl}$) for $\eps=\pm$  (see Definition \ref{Di-cont}).
We set
\[
\begin{split}
c(G,i)=\sum_{1\leq k\leq 3}&(-1)^{k+1}f^\pm_k(w_2)+\sum_{1\leq k<l\leq 3}(-1)^{k+l+1}f^\pm_{kl}(w_3)\\
&\in \bar C_*(\TT_{\delta_i\partial_{2}G})\oplus\bar C_*(\TT_{\delta_i\partial_{12}G})\oplus \bar C_*(\TT_{\delta_i\partial_{23}G}).
\end{split}
\]
Here,  we use the notation given in subsection \ref{SSNT}. Also, one of  $\delta_i\partial_1G$ and $\delta_i\partial_3G$ has double edges and the other is equal to $\delta_i\partial_2G$. The term $f^\pm_k(w_2)$ corresponding to the graph with double edges is regarded as zero and the other term is regarded as a chain in $\bar C_*(\TT_{\delta_i\partial_2G})$.  The graph  $\delta_i\partial_{13}G$ is equal to exactly  one of $\delta_i\partial_{12}G$ and $\delta_i\partial_{23}G$ and the term of $f^{\pm}_{13}$ is regarded as an element of the chain complex of the one equal to $\delta_i\partial_{13}G$ .  For example, for $(G,i)=(G_1,1)$,  the term of $f^{\pm}_3$ is regarded as zero, and those of $f^{\pm}_1$ and $f^{\pm}_2$ belongs to $\bar C_*(\TT_{\delta_1\partial_2G_1})$ since $\delta_1\partial_1G_1=\delta_1\partial_2G_1$. The term of $f^{\pm}_{13}$ belongs to  $ \bar C_*(\TT_{\delta_1\partial_{23}G_1})$. The chains are well-defined by Lemma \ref{Li-cont}.
\end{defi}
\begin{lem}\label{L1st_bound_ch3}
Let $(G,i)$ be as in Definition \ref{Dchain_cycle}. We have $D(c(G,i))=0$. Set
\[
C_1=c(G_1,G_2,3)+c(G_2,G_3,2)+c(G_2,G_3,4)-c(G_3,G_4,1)+c(G_4,G_3,4)-c(G_1,1)+c(G_2,1)-c(G_4,2).
\]
We have $D(C_1)=-\delta(-c(G_1)+c(G_2)+c(G_3)+c(G_4))$.
\end{lem}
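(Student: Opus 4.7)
My plan is to establish the two claims of the lemma in sequence. For the vanishing $Dc(G,i)=0$, I will compute $D=d+(-1)^*\partial$ termwise on $c(G,i)$, keeping careful track of which summand $\bar C_*(\TT_{\delta_i\partial_jG})$ or $\bar C_*(\TT_{\delta_i\partial_{jk}G})$ each resulting chain lands in. A preliminary step in each of the three cases $(G_1,1),(G_2,1),(G_4,2)$ is to enumerate which graphs $\delta_i\partial_jG$ and $\delta_i\partial_{jk}G$ are zero (loops or double edges) and which distinct indices produce identical graphs. For $f^\pm_{jk}(w_3)$ the Cech part of $D$ vanishes in $tr_1$ since only one edge remains; expanding
\[
dw_3=w_0\wedge\bigl(\{0\}\wedge w_\infty^{\wedge 2}-w_\infty\wedge\{0\}\wedge w_\infty+w_\infty^{\wedge 2}\wedge\{0\}\bigr),
\]
the three restrictions at $s_l=0$ produce $f^\pm_m(w_2)$ for the remaining edge-index $m\in\{j,k\}$ (when $l=1,2$) and $f_{jk}(w_2)$ (when $l=3$), each viewed in $\bar C_*(\TT_{\delta_i\partial_{jk}G})$.

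The cancellation then proceeds in three stages. First, at $s_1=0$ in $df^\pm_j(w_2)$ the restriction is the pure $(i,\pm)$-contraction $f^{i\pm}$, independent of $j$, so the signed sum over the surviving $j$'s with signs $(-1)^{j+1}$ vanishes: in each of our three cases exactly two $j$'s survive with opposite signs. Second, the $f^\pm_m(w_2)$ pieces coming from $df^\pm_{jk}(w_3)$ cancel against the Cech boundaries $\partial f^\pm_j(w_2)$ and $\partial f^\pm_k(w_2)$, since both equal the same underlying map after one further edge-collapse. Third, the remaining $s_2=0$ and $s_3=0$ boundaries produce pushforwards of $f_j$ and $f_{jk}$ into a Thom space in which the unification of pieces $\{i+1\}$ and $\{i+2\}$ forces the first coordinates of the $i$-th and $(i+1)$-th components to coincide; Lemma \ref{Lcondensed_collapse0} then collapses these chains to the basepoint. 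The main obstacle is this last step: in each case one must verify by inspection of $f_G$ that after the $e_j$- or $(e_j,e_k)$-contraction the first coordinates of the relevant pair of components indeed agree.

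For the identity $D(C_1)=-\delta(-c(G_1)+c(G_2)+c(G_3)+c(G_4))$, I will apply Lemma \ref{Lbound_ch3} to each of the five $c(G_a,G_b,i)$ summands of $C_1$ and use the vanishing just established to discard the $Dc(G,i)$ contributions, writing $DC_1$ as an explicit signed sum of $\delta_ic(G_j)$. In parallel, I will expand $-\delta(-c(G_1)+c(G_2)+c(G_3)+c(G_4))$ via $\delta=\sum_{i=0}^{5}(-1)^i\delta_i$, together with the observation that $\delta_0c(G_j)=\delta_5c(G_j)=0$ (each $G_j$ has an edge touching $\{1\}$ or $\{5\}$) and the vanishings of various $\delta_ic(G_j)$ for $1\le i\le 4$ due to loops or double edges. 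All terms match except that $DC_1$ carries $2\delta_4c(G_3)$ (contributed once from $c(G_2,G_3,4)$ and once from $c(G_4,G_3,4)$) while the target carries $-\delta_4c(G_3)$; the discrepancy $3\delta_4c(G_3)$ vanishes precisely because $\kk$ has characteristic $3$, which is exactly where the characteristic hypothesis enters.
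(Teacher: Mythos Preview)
Your proposal is correct and follows essentially the same route as the paper. The paper's proof of $D(c(G,i))=0$ is a terse version of your three-stage cancellation: it notes $\delta_i\partial_1G=\delta_i\partial_2G$ so that the $s_1=0$ restrictions cancel (your stage~1), and invokes Lemma~\ref{Lcondensed_collapse0} for the vanishing at the $(i,\eps)$-contraction variable $=0$ (your stage~3). For $D(C_1)$ the paper simply cites Lemma~\ref{Lbound_ch3} together with the argument of Lemma~\ref{Lh_i_welldef}; your explicit bookkeeping, including the characteristic-$3$ cancellation $3\delta_4c(G_3)=0$, makes precise what the paper leaves implicit.

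One small point to tighten: when you say the unneeded $\delta_ic(G_j)$ vanish ``due to loops or double edges,'' that only kills the $\bar C_*(\TT_{G_j})$-summand of $c(G_j)$. The chain $c(G_j)$ also has components in $\bar C_*(\TT_{\partial_kG_j})$ and $\bar C_*(\TT_{\partial_{kl}G_j})$, and for some of these $\delta_i\partial_kG_j$ has no loop or double edge even though $\delta_iG_j$ does (e.g.\ $\delta_2\partial_2G_1$). These components collapse for a different reason: the maps are $e$-contractions, and Lemma~\ref{Lcondensed_collapse}(1)(a) applies because the edge $e$ responsible for the loop in $\delta_iG_j$ has both endpoints in a single piece of $\delta_i[6]$. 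This is exactly the mechanism the paper cites in the analogous step of Lemma~\ref{Lh_i_welldef}.
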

\begin{proof}
We shall show the  equation for $c(G,i)$ in the case $(G,i)=(G_1,1)$. Let $s$ and $s'$ be the variable for $e_k$-contraction and $(i,\eps)$-contraction, respectively. Since $\delta_i\partial_1G_1=\delta_i\partial_2G_1$ and  the restrictions of $f_1^\eps$ and $f_2^\eps$ to $s=0$ are the same, the corresponding terms in $df_1^\eps(w_2)$ and $df_2^\eps(w_2)$ cancel with each other. Put $f^\eps_k=({f^\eps_{k}}^1,\dots,{f^\eps_{k}}^5)$. We have ${f^\eps_k}^1|_{s'=0}=_1{f^\eps_{k}}^2|_{s'=0}$. By Lemma \ref{Lcondensed_collapse0}, the induced map is $*$. Thus, $d(\sum_k(-1)^{k+1}f_k^\pm(w_2))=0$. Similarly, the chain $df^\eps_{kl}(w_3)$  consists of the two terms corresponding to the restriction of each of the two variables in removing edges to $0$. The terms $\partial_1f^\eps_1(w_2)$ and $\partial_1f_2^\eps(w_2)$ cancel with the two terms of $df^\eps_{12}(w_3)$. Each of  $\partial_2f^\eps_1(w_2)$ and $\partial_2f_2^\eps(w_2)$ cancels with one of the terms  of each of $df^\eps_{13}(w_3)$ and $df^\eps_{23}(w_3)$, respectively. The rest of  terms of $df^\eps_{13}(w_3)$ and $df^\eps_{23}(w_3)$ cancel with each other.   Thus, $c(G_1,1)$ is a cycle. The other cases are similar. The equation for $D(C_1)$ follows from Lemma \ref{Lbound_ch3} with an argument similar to the proof of Lemma \ref{Lh_i_welldef}.
\end{proof}
Of course, we do not need $c(G_k,i)$ in $C_1$ to make the equation for $D(C_1)$ in the lemma hold but  it makes later constructions easier.
\subsection{Second bounding chain}\label{SS2nd_ch3}
\begin{defi}\label{D2nd_chain_2graphs}

 Let $(G,H,i,j)$ be one of the followng quartets
\[
\begin{split}
(G_2, G_3,2,4),\ & 
(G_2, G_3,4,2),\  
(G_4, G_3,4,2),\  
(G_2, G_3,4,1),\
(G_4, G_3,4,1),\
(G_3, G_4,1,4),\\
(G_1,G_2,3,1),\ &
(G_2, G_3,2,1),\
(G_3, G_4,1,2).\
\end{split}
\]
For the triples $(G,H,i)$ of these quartets, we use notations in Definition \ref{Dchain_2graphs}. 
 So $\psi_k, \psi_{kl}, \lambda_k, \lambda_{kl}, \lambda'_k,$ and $\lambda'_{kl}$ denotes the maps given there. The graphs $G, H$ satisfy  $\delta_iG=\delta_iH$ and $\delta_{ij}G$ has double edges one of which corresponds to $e_2\in E(G)$ (see Definition \ref{Dgraph}). In the following, the superscript $\eps$ on these maps represents $(j,\eps)$-contraction of the map for $\eps=\pm$. For example, $\psi_k^\eps$ denotes the $(j,\eps)$-contraction of $\psi_k$.
We set
\[
\begin{split}
c(G,H,i,j)=
\sum_{1\leq k\leq 3}& (-1)^{k}sgn(\sigma_{\partial_kG, j})(\psi_k^{\pm}+\lambda_k^\pm -{\lambda'_k}^\pm)(w_{21})+\sum_{1\leq k<l\leq 3}(-1)^{k+l+1}(\psi_{kl}^\pm+\lambda_{kl}^{\pm}-{\lambda'}^\pm_{kl})(w_{31})\\
&\in \bar C_*(\TT_{\delta_{ij}\partial_{2} G})\oplus \bar C_*(\TT_{\delta_{ij}\partial_{12}G})\oplus \bar C_*(\TT_{\delta_{ij}\partial_{23}G}).
\end{split}
\]

Here, we use the same convention as   Definitions \ref{Dchain_2graphs} and  \ref{Dchain_cycle}. Namely, the maps $\psi_{k}^\pm$  are implicitly composed with the transposition $[0,\infty)^2\times I\to I\times [0,\infty)^2, (s_1,s_2,t) \mapsto (t,s_1,s_2)$ and the maps $\psi_{kl}^\pm$ are also composed with a similar cyclic transposition.  The terms of a graph with double edges are regarded as zero, and the terms with the subscript {13} belong to the complex of the graph which is equal to $\delta_{ij}\partial_{13}G$.   See Definition  \ref{Dtriple} for $sgn(\sigma_{\partial_kG,j})$. Actually, for the terms without double edges, $sgn(\sigma_{\partial_kG,j})=1$ with only one exception $(G,H,i,j,k)=(G_3,G_4,1,2,1)$. The chains are well-defined by Lemma \ref{Li-cont} for the case $|i-j|\geq 2$ and by Lemma \ref{Li-cont2} for the case $|i-j|=1$.

\end{defi}

\begin{defi}\label{D2nd_chain_2contractions}

 Let $(G,i,j)$ be one of the following triples
\[
(G_2,2,4),\ 
(G_3,2,4),\ 
(G_4,4,1),\
(G_3,4,1).
\]
Put $E(G)=\{e_1<e_2<e_3\}$. For these triples, 
 $\delta_{ij}G$ has double edges one of which corresponds to $e_2$ via the standard bijection.
Let $f_k$ (resp. $f_{kl}$) be the $e_k$-contraction (resp. the $(e_k,e_l)$-contraction) of $f_G$ for $G$.
Let $\mu^{\eps}_k$ (resp. $\mu^{\eps}_{kl}$) be the straight homotopy  from the $(i,\eps)$-contraction of $f_k$ (resp. $f_{kl}$) to $(j,\eps)$-contraction of $f_k$ (resp. $f_{kl}$). 
We set
\[
\begin{split}
c(G,i,j)=\sum_{1\leq k\leq 3}&(-1)^{k}\mu^\pm_k(w_{21})+\sum_{1\leq l<k\leq 3}(-1)^{k+l+1}\mu^\pm_{kl}(w_{31}) \\
&\in \bar C_*(\TT_{\delta_{ij}\partial_{2} G})\oplus \bar C_*(\TT_{\delta_{ij}\partial_{12}G})\oplus\bar C_*(\TT_{\delta_{ij}\partial_{23}G}).
\end{split}
\]
Here we use the same convention as Definition \ref{Dchain_cycle}. The chains are well-defined by Lemma \ref{Li-cont_homotopy} (1).
\end{defi}
In what follows, we construct a bounding chain of the cycle $\delta (C_1)$.  We split the construction into the construction of a  bounding chain of  each part of the cycle belonging to a single partition in $\PP_5$.   
\begin{lem}\label{L2nd_bound_2,4}
On $\delta_{24}[6]=\{\{0\},\{1\},\{23\},\{45\},\{6\}\}$, we have
\[
\begin{split}
Dc(G_2, G_3,2,4)& =\delta_3c(G_2,G_3,2)+c'(G_2,4)+c'(G_3,4), \\
Dc(G_2, G_3,4,2)& =\delta_2c(G_2,G_3,4)+c'(G_2,2)-c'(G_3,2), \\
Dc(G_4, G_3,4,2)& =\delta_2c(G_4,G_3,4)+\delta_3c(G_4,2)-c'(G_3,2), \\
Dc(G_2,2,4)&=c'(G_2,2)-c'(G_2,4),\\
Dc(G_3,2,4)&=c'(G_3,2)-c'(G_3,4).
\end{split}
\]
Here, $c'(G_k,i)$ is the chain defined by the same formula as $c(G,i)$ for $G=G_k$.
If we set
\[
C_{21}=c(G_2,G_3,2,4)-c(G_2,G_3,4,2)-c(G_4,G_3,4,2)+c(G_2,2,4)+c(G_3,2,4),
\]
we have 
\[
DC_{21}=\delta_3(c(G_2,G_3,2)-c(G_4,2))-\delta_2(c(G_2,G_3,4)+c(G_4,G_3,4)).
\]
\hfill 
\end{lem}
\begin{proof}
The proof is similar to Lemma \ref{Lbound_ch3} and we omit the details. The following figures will provide  intuitive explanation. 
The differential  of $c(G_4,G_3,4,2)$ is illustrated as follows.
\begin{figure}[H]
\begin{center}
\input{cG4G342.TEX}\vspace{4mm}
\end{center}
\caption{}\label{Fc(G4,G3,4,2)}\vspace{-0.7cm}
\end{figure}
Here,  we actually consider subgraphs of the presented ones made by removing  edge(s), and omit $\lambda_{kl}^\pm$'s.  The two-sided arrow $\leftrightarrow$ means addition of $\pm su$ on the corresponding components in the definition of $(i,\pm)$-contraction. The terms of $Dc(G_4,G_3,4,2)$ absent from this figure cancel with one another as in the proof of Lemma \ref{L1st_bound_ch3}.
Cancellation of some parts of boundaries (i.e. singular differentials) is illustrated as follows. 
\begin{figure}[H]
\begin{center}
\phantom{aaaaaa}\input{cancel.TEX}\vspace{-3cm}
\end{center}
\caption{}\label{Fcancel}
\vspace{-0.5cm}
\end{figure}
 The two-sided arrows labeled by `b' mean cancellation between parts of boundaries. The trident means cancellation of three parts by $3=0$. Let $s\in [0,\infty)$ be the variable for contraction in removing an edge. The terms of $\mu_k^\pm|_{s=0}$ and $\mu_{kl}^\pm|_{s=0}$ cancel with one another or with the $\CECHC$ech differential of other terms as in the proof of Lemma \ref{L1st_bound_ch3}. For the variable $s'$ of $(i,\pm)$-contraction, the terms of $\mu_k^\pm|_{s'=0}$ and $\mu_{kl}^\pm|_{s'=0}$   are zero since these maps are independent of $t\in I$.  The sum of the rest of the non-zero boundaries is equal to the right hand side of the equation for $D(C_{21})$ in the claim.
\end{proof} 

\begin{lem}\label{L2nd_bound_1,4}
On $\delta_{14}[6]$, we have 
\[
\begin{split}
Dc(G_2, G_3,4,1)& =\delta_1c(G_2,G_3,4)+\delta_3c(G_2,1)-c'(G_3,1), \\
Dc(G_4, G_3,4,1)& =\delta_1c(G_4,G_3,4)+c'(G_4,1)-c'(G_3,1), \\
Dc(G_3, G_4,1,4)& =\delta_3c(G_3,G_4,1)+c'(G_3,4)+c'(G_4,4), \\
Dc(G_4,4,1)&=c'(G_4,4)-c'(G_4,1),\\
Dc(G_3,4,1)&=c'(G_3,4)-c'(G_3,1).
\end{split}
\]
Here, $c'(G_k,i)$'s are defined completely similarly as in Lemma  \ref{L2nd_bound_2,4}.  
If we set
\[
C_{22}=c(G_2,G_3,4,1)-c(G_3,G_4,1,4)+c(G_4,G_3,4,1)+c(G_3,4,1)+c(G_4,4,1),
\]
we have
\[
DC_{22}=\delta_1(c(G_2,G_3,4)+c(G_4,G_3,4))+\delta_3(c(G_2,1)-c(G_3,G_4,1)).
\]
\end{lem}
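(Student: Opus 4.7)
The plan is to mimic the proof strategy of Lemma \ref{Lbound_ch3} (which itself serves as the template for all five boundary equations), combined with the refined handling of $(j,\pm)$-contractions used in Lemma \ref{L1st_bound_ch3}. I would first fix $(G,H,i,j)$ among the admissible tuples, unpack $c(G,H,i,j)$ and $c(G,i,j)$ into their summands $\psi_k^\pm, \lambda_k^\pm, {\lambda'_k}^\pm, \psi_{kl}^\pm, \lambda_{kl}^\pm, {\lambda'_{kl}}^\pm$ and $\mu_k^\pm, \mu_{kl}^\pm$, and then evaluate the three pieces of $D=\delta+(-1)^p d+(-1)^{p+q}\partial$ on each summand.

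For the vertical boundary $d$, I would use that $dw_{k,1}=(-1)^k w_k \wedge (\{1\}-\{0\})$, so $d$ applied to the pushforward of a straight homotopy $h$ produces $h|_{t=1}-h|_{t=0}$ (up to sign and transposition). For $\psi_k^\pm$ this yields, at $t=0$, the $e_k$-contraction of $f_G$ composed with $(j,\pm)$-contraction, and at $t=1$ the analogous map for $f_H$ (possibly after transposition, which introduces the edge-swap sign handled exactly as in the second equation of Lemma \ref{Lbound_ch3}). Concatenating with the $d$-boundaries of $\lambda_k^\pm$ and ${\lambda'_k}^\pm$ telescopes the endpoint contributions so that what survives is the difference between the $(j,\pm)$-contractions of $e_k$-contractions for $G$ and for $H$, which is precisely $\delta_3 c(G_2,1)$, $c'(G_4,1)$, etc. For the Čech boundary $\partial$ I would note that collapsing either of the two $[0,\infty]$-directions of $\psi_{kl}^\pm$ recovers $\psi_k^\pm$ or $\psi_l^\pm$ with the correct alternating sign from $(-1)^{k+l+1}$; the resulting Čech terms either cancel among the $(j,\pm)$-contractions or assemble into $\delta_1 c(G_*,G_*,4)$. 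For $\delta$, the summation over pieces $\delta_m$ annihilates almost every term by Lemma \ref{Lcondensed_collapse} and by the vanishing of maps whose graph acquires a loop or double edge, leaving precisely one surviving $\delta_m$ for the factor which contributes $\delta_i$ to give the stated right-hand side.

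The term $c(G_3,G_4,1,4)$ requires special care because, as with $(G_3,G_4,1)$ in Lemma \ref{Lbound_ch3}, the bijection of edge-sets between $G_3$ and $G_4$ is not order-preserving, so the permutation signs $sgn(\sigma_{\cdot,1})$ enter. I would track these signs vertex by vertex exactly as in the proof of the fourth equation of Lemma \ref{Lbound_ch3} and verify that the extra sign combines correctly with the transposition that intervenes when $\psi$ alternates the roles of $x$ and $y$. The five individual equations then follow by direct accounting. The auxiliary chains $c(G_3,4,1)$ and $c(G_4,4,1)$ are handled by the argument of Lemma \ref{L1st_bound_ch3}: the $s_1=0$ restriction of $\mu_k^\pm$ cancels between $\partial_1$- and $\partial_2$-terms after we quotient the double edges, and the $s_2=0$ restriction is killed by Lemma \ref{Lcondensed_collapse0} since the two colliding components have the same first coordinate.

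The main obstacle, and the real content of the lemma, is the final summation yielding $DC_{22}$. Summing the five equations with signs $+,-,+,+,+$ one gets all the desired $\delta_i c(\cdot,\cdot,\cdot)$ terms, and the $c'$-contributions accumulate as
\begin{equation*}
-c'(G_3,1)-c'(G_3,1)-c'(G_3,1),\quad c'(G_3,4)-c'(G_3,4),\quad c'(G_4,1)-c'(G_4,1),\quad -c'(G_4,4)+c'(G_4,4),
\end{equation*}
the last three of which vanish on the nose while the first collapses because the coefficient $-3$ is zero in $\kk$ of characteristic $3$. This is exactly the point where the prime $3$ plays its role, in parallel to the characteristic-$2$ cancellation driving Lemma \ref{Lh_i_welldef}, and it is the delicate bookkeeping of these five scalar coefficients (together with the sign produced by the non-order-preserving edge bijection for $(G_3,G_4)$) that forms the only genuinely subtle step; everything else is routine once the notational machinery is set up.
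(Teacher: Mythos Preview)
Your approach is essentially correct and matches the paper's, which omits the proof entirely with the remark that it is ``similar to that of Lemma~\ref{Lbound_ch3}''. Your identification of the characteristic-$3$ cancellation $-3\,c'(G_3,1)=0$ in the final summation is exactly the point, and your sign bookkeeping for the five equations is accurate.

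One notational correction: in this section $D=d+(-1)^*\partial$ only (see the line before Lemma~\ref{Lcycle_ch3}); the map $\delta$ is \emph{not} part of $D$. The terms $\delta_1 c(G_2,G_3,4)$, $\delta_3 c(G_2,1)$, etc.\ on the right-hand sides do not arise from applying the face differential $\delta$ to $c(G,H,i,j)$, but rather from the $d$-boundary at the $(j,\pm)$-contraction variable $s=0$: restricting there recovers the underlying maps $\psi_k,\lambda_k,\lambda'_k,\psi_{kl},\ldots$, which are precisely the summands of $c(G,H,i)$ (or $c(G,i)$) viewed at partition level $\delta_{ij}[6]$, i.e.\ their $\delta_j$-image. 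Likewise, what you describe under the \v Cech differential (``collapsing either of the two $[0,\infty]$-directions of $\psi_{kl}^\pm$'') is actually the $d$-boundary at the $e_k$- or $e_l$-contraction variable, which then cancels against the genuine \v Cech inclusion $\partial$ of the one-edge-fewer terms. Once this relabelling is made, your telescoping argument is exactly the intended one.
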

\begin{proof}
The cancellation of chains in this lemma is similar to previous lemma, so we omit details.
\end{proof}
\begin{lem}\label{L2nd_bound_1,3}
On $\delta_{13}[6]$, we have
\[
Dc(G_1,G_2,3,1)=\delta_1c(G_1,G_2,3)+\delta_2c(G_1,1)-\delta_2c(G_2,1).
\]
\hfill \qedsymbol
\end{lem}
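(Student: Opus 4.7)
The plan is to compute $Dc(G_1,G_2,3,1)$ term by term, mimicking the proof of Lemma \ref{Lbound_ch3} and the asserted-similar proofs of Lemmas \ref{L2nd_bound_2,4} and \ref{L2nd_bound_1,4}. Unpacking Definition \ref{D2nd_chain_2graphs}, the chain $c(G_1,G_2,3,1)$ is a signed sum of $(1,\pm)$-contractions of the maps $\psi_k,\psi_{kl},\lambda_k,\lambda_{kl},\lambda'_k,\lambda'_{kl}$ associated to the triple $(G_1,G_2,3)$, paired with $w_{21}$ or $w_{31}$. Writing $D=d+(-1)^*\partial$, each summand produces four kinds of faces: the $s=0$ face of the outer $(1,\pm)$-parameter, the $s=\infty$ face, the internal singular boundary of the underlying homotopy, and the $\partial$-boundary across subgraphs.

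First I would isolate the $s=0$ faces. Setting $s=0$ collapses each $(1,\pm)$-contraction to the identity, so the face equals the original map paired with $w_{11}$ or $w_{21}$ and post-composed with the collapse $\delta'_1:\TT_{\delta_3\partial_\bullet G}\to\TT_{\delta_{13}\partial_\bullet G}$. Summed with the signs prescribed in Definition \ref{Dchain_2graphs}, these faces assemble into $\delta_1 c(G_1,G_2,3)$. The $s=\infty$ faces all vanish because $(1,\pm)$-contractions are proper (Lemma \ref{Li-cont}) and hence map a neighborhood of infinity to the basepoint of the Thom space.

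Second, the inherited internal singular and $\partial$-boundaries of $\lambda_k,\psi_k,\lambda'_k$ (and the double versions) cancel in telescoping pairs exactly as in the proof of Lemma \ref{Lbound_ch3}: the concatenation $\lambda_k+\psi_k-\lambda'_k$ interpolates from the $e_k$-contraction of $f_{G_1}$ for $G_1$ to that of $f_{G_2}$ for $G_2$, and its interior faces pair with the $s_k=0$- and $\partial$-faces of $\lambda_{kl},\psi_{kl},\lambda'_{kl}$ once further $(1,\pm)$-contracted. Since $(1,\pm)$-contraction commutes with restriction of the $s_k$, $t$ and $\partial$ parameters, the cancellations are preserved. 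The only $t$-endpoint faces of $\psi$ that do not cancel internally are the restrictions $\psi|_{t=0}=f_{G_1}$ and $\psi|_{t=1}=f_{G_2}\circ T$, with the transposition $T$ entering as specified in Definition \ref{Dchain_2graphs}.

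Third, after $(1,\pm)$-contraction and post-composition with $\delta'_1$, these surviving $t$-endpoint contributions are precisely $\delta_2 c(G_1,1)$ and $-\delta_2 c(G_2,1)$ by direct comparison with Definition \ref{Dchain_cycle}; summands whose graph acquires a double edge after $\delta_{13}$ vanish on both sides, by the convention in Definition \ref{Dchain_cycle} and by Lemma \ref{Lcondensed_collapse} respectively. The main obstacle is the sign- and vanishing-bookkeeping: tracking the signs $(-1)^k,(-1)^{k+l+1}$ attached to each summand together with the permutation signs $sgn(\sigma_{\bullet,1})$ appearing in $\delta_1$, and verifying that every exceptional boundary term (one which would fall outside the template above because of a loop, a double edge, or a degenerate parameter) really vanishes, either because the resulting map factors through the basepoint by Lemma \ref{Lcondensed_collapse} or because the relevant face of the normalized chain is degenerate.
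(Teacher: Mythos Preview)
Your approach is correct and is exactly what the paper intends: it omits the proof entirely with the remark that Lemmas \ref{L2nd_bound_2,4}--\ref{L2nd_bound_1,3} are all proved ``similar to that of Lemma \ref{Lbound_ch3}'', i.e., by the same face-by-face bookkeeping you describe. Two small points of precision: (i) the $s=0$ faces reproduce all of $\delta_1 c(G_1,G_2,3)$ \emph{except} the leading term $\delta_1\psi(w_{01})$, but this term is already zero in the triple complex because $\delta_{13}G_1$ has a double edge (Definition \ref{Dtriple}); (ii) in your third step the phrase ``post-composition with $\delta'_1$'' is not quite right---the surviving $t=0$ endpoints of $\lambda_k^\pm,\lambda'^\pm_k$ are the $(1,\pm)$-contractions of $f_k,f'_k$ for $G_1,G_2$, which already land in $\TT_{\delta_{13}\partial_kG_1}$ and therefore agree on the nose with the terms of $\delta_2 c(G_1,1)$ and $\delta_2 c(G_2,1)$ (the collapse involved is $\delta'_2$ acting on $\TT_{\delta_1\partial_kG_1}$, reflecting the index shift $\delta_{13}=\delta_2\circ\delta_1$).
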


\begin{lem}\label{L2nd_bound_1,2}
On $\delta_{12}[6]$, we have
\[
\begin{split}
Dc(G_2, G_3,2,1)& =\delta_1c(G_2,G_3,2)+\delta_1c(G_2,1)+c'(G_3,1), \\
Dc(G_3, G_4,1,2)& =\delta_1c(G_3,G_4,1)+c'(G_3,2)+\delta_1c(G_4,2), \\
Dc'(G_3,1,2)&=c'(G_3,1)-c'(G_3,2).
\end{split}
\]
Here, $c'(G_3,i)$'s are defined  similarly as in Lemma  \ref{L2nd_bound_2,4} except for the extra sign $sgn(\sigma_{H,1})=-1$ on the term of $f_1^{\pm}$, where $H=\delta_1(\partial_1G_3)$.  $c'(G_3,1,2)$ is also similar to $c(G,i,j)$ but has the same extra sign on the term $\mu_1^{\pm}$.
If we set 
\[
C_{23}=c(G_2,G_3,2,1)-c(G_3, G_4,1,2)-c'(G_3,1,2),
\]
we have
\[
DC_{23}=\delta_1(c(G_2,G_3,2)-c(G_3,G_4,1)+c(G_2,1)-c(G_4,2)).
\]

\end{lem}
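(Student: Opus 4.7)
The plan is to proceed in the same spirit as the proofs of Lemmas \ref{L2nd_bound_2,4} and \ref{L2nd_bound_1,4}, handling each of the three boundary identities individually and then summing them to obtain the formula for $DC_{23}$. First I would verify that the chains $c(G_2,G_3,2,1)$, $c(G_3,G_4,1,2)$, and $c'(G_3,1,2)$ are all well defined in the relevant Thom spaces. For each constituent map, Lemma \ref{Lcontraction} (or Lemma \ref{Lstraight_homotopy}) guarantees that the $e_k$- or $(e_k,e_l)$-contraction is $\partial_k G$- or $\partial_{kl}G$-condensed, and Lemmas \ref{Li-cont}, \ref{Li-cont_homotopy} then ensure that the subsequent $(1,\eps)$- or $(2,\eps)$-contraction, respectively the straight homotopy $\mu^\eps$, lands in $\TT_{\delta_{ij}\partial_k G}$ or $\TT_{\delta_{ij}\partial_{kl}G}$. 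Summands whose graph acquires a loop or a double edge are set to zero, and those which collapse to the basepoint by Lemma \ref{Lcondensed_collapse} are discarded.

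Next I would compute $D = d + (-1)^*\partial$ on each chain. The differential $d$ turns the parameter $w_{21}$ or $w_{31}$ into its endpoint restrictions, and $\partial$ drops edges one at a time with the standard signs. By the averaging convention, the contribution at $s=0$ of any $(i,\eps)$-contraction vanishes in the normalized chain complex because $f_k^+|_{s=0}+f_k^-|_{s=0}$ cancels in pairs; the nonzero endpoint contributions are precisely the terms of $\delta_1 c(G,H,i)$, $\delta_1 c(G,i)$, or $c'(G,i)$ appearing in the right-hand sides. Concatenation of $\psi_k$, $\lambda_k$, $\lambda'_k$ (and their $(j,\eps)$-contractions) furnishes homotopies between the corresponding contractions of $f_G$ and $f_H$, making the intermediate faces cancel exactly as in the proof of Lemma \ref{Lbound_ch3}.

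The delicate bookkeeping is the sign attached to terms supported on graphs of $\delta_1 \partial_1 G_3$. For $G_3 = (1,4)(2,5)(3,4)$ the restriction of the edge reindexing $\sigma_{G_3,1}$ to $E(\partial_1 G_3)$ is the transposition that swaps $(2,5)$ and $(3,4)$, hence $\mathrm{sgn}(\sigma_{\partial_1 G_3,1}) = -1$, whereas the analogous signs for $G_2$ and $G_4$ are $+1$. This is precisely the extra sign built into the definition of $c'(G_3,1)$, $c'(G_3,2)$, and $c'(G_3,1,2)$. I would then check term by term that, with this convention, the face of $c(G_2,G_3,2,1)$ supported on $\delta_{12}\partial_1 G_3$ equals $+c'(G_3,1)$, the corresponding face of $c(G_3,G_4,1,2)$ equals $+c'(G_3,2)$, and the two endpoints of $c'(G_3,1,2)$ give $c'(G_3,1) - c'(G_3,2)$, yielding the three displayed equations. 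The main obstacle I anticipate is exactly this sign audit, since the $\delta_1$ factor on $c(G_3,G_4,1)$ and the $\delta$ on the various $\partial_k G_3$ produce sign changes that must match those carried by the $\mu_k^\pm$ and $\psi_k^\pm$ pieces under the edge bijection.

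Finally I would sum the three formulas with the signs from $C_{23} = c(G_2,G_3,2,1) - c(G_3,G_4,1,2) - c'(G_3,1,2)$. The $c'(G_3,1)$ term from $Dc(G_2,G_3,2,1)$ cancels with the one from $-Dc'(G_3,1,2)$, and the $c'(G_3,2)$ term from $-Dc(G_3,G_4,1,2)$ cancels with the one from $-Dc'(G_3,1,2)$, leaving exactly
\[
DC_{23} = \delta_1\bigl(c(G_2,G_3,2) - c(G_3,G_4,1) + c(G_2,1) - c(G_4,2)\bigr),
\]
as claimed. Once the sign conventions are settled, the rest of the argument is routine and parallel to the preceding lemmas.
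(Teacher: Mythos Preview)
Your approach is essentially the same as the paper's: verify well-definedness via Lemmas \ref{Lcontraction}--\ref{Li-cont_homotopy}, compute $D$ term by term with cancellations as in Lemma \ref{Lbound_ch3}, and sum. The paper likewise isolates the sign audit as the only nontrivial point.

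One concrete slip: your claim that $\sigma_{\partial_1 G_3,1}$ swaps $(2,5)$ and $(3,4)$ is wrong. Applying $\delta_1$ (unifying $\{1\}$ and $\{2\}$) to $\partial_1 G_3=\{(2,5),(3,4)\}$ gives $\{(\{12\},5),(3,4)\}$, and the lexicographic order is preserved, so $\mathrm{sgn}(\sigma_{\partial_1 G_3,1})=+1$. The $-1$ actually arises in two different places: (i) the sign $\mathrm{sgn}(\sigma_{\partial_1 G_3,2})$ built into Definition \ref{D2nd_chain_2graphs}(2) for $c(G_3,G_4,1,2)$ (here $\delta_2$ sends $(2,5),(3,4)$ to $(\{23\},5),(\{23\},4)$, which swaps their order), and (ii) the sign $\mathrm{sgn}(\sigma_{H,1})$ with $H=\delta_1\partial_1 G_3$ that defines the extra sign in $c'(G_3,i)$ (applying $\delta_1$ on $\delta_1[6]$ sends $(\{12\},5),(3,4)$ to $(\{123\},5),(\{123\},4)$, again a swap). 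The paper's proof tracks these together with the edge alternation caused by the straight homotopies $\psi$ (second and third edges for $G_2\leftrightarrow G_3$, first and second for $G_3\leftrightarrow G_4$), which you only invoke implicitly by deferring to Lemma \ref{Lbound_ch3}. Once you correct the permutation you are computing and keep these two sources of signs separate, your argument goes through and matches the paper's.
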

\begin{proof}
One can see that $c'(G_3,1,2)$ is well-defined as in the proof of  Lemma \ref{Li-cont_homotopy}. In the first equation, the only point which we need to care about is the sign of the third term of the right hand side. The composition of standard bijections $E(G_2)\cong E(\delta_2G_2)= E(\delta_2G_3)\cong E(G_3)$ swaps the second and third edges. So the corresponding straight homotopy negates the terms except for the one corresponding to the removal of the first edge. This exception is complemented by the extra sign in $c'(G_3,1)$. In the second equation, the extra sign on $\delta_1$ of $c(G_3,G_4,1)$ in permuting edges is negative only for the term corresponding to $\delta_1\partial_1(G_3)$. This is complemented by the  sign $sgn(\sigma_{\partial_1G_3,2})$ in the definition of $c(G_3,G_4,1,2)$. The bijection $E(G_3)\cong E(\delta_1G_3)= E(\delta_1G_4)\cong E(G_4)$ swaps the first and second edges. This negates the terms except for the removal of the  third edge. This is complemented by the extra sign on $\delta_1$ of $c(G_4,2)$.
\end{proof}

\begin{lem}\label{L2nd_bound_all}
Let $C_1, C_{21}, C_{22}$, $C_{23}$ be as in Lemmas \ref{L1st_bound_ch3}, \ref{L2nd_bound_2,4}, \ref{L2nd_bound_1,4}, and \ref{L2nd_bound_1,2}. If we set 
\[
C_2=C_{21}+C_{22}+C_{23}+c(G_1,G_2,3,1),
\]
we have
$
D(C_2)=-\delta(C_1).
$ 
\end{lem}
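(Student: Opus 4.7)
The plan is a direct term-by-term verification of the identity.

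On the left, expand
\[
D(C_2)=DC_{21}+DC_{22}+DC_{23}+Dc(G_1,G_2,3,1)
\]
and substitute the formulas from Lemmas \ref{L2nd_bound_2,4}, \ref{L2nd_bound_1,4}, \ref{L2nd_bound_1,2} and \ref{L2nd_bound_1,3}. These already incorporate the cancellation of the auxiliary chains $c'(G_j,i)$ introduced in Definitions \ref{D2nd_chain_2graphs} and \ref{D2nd_chain_2contractions}: in $DC_{21}$ the chain $c'(G_3,2)$ appears with multiplicity $3$ and so vanishes because $\mathrm{char}(\kk)=3$; in $DC_{22}$ the chain $c'(G_3,1)$ appears with multiplicity $-3$; and in $DC_{23}$ the $c'$ terms cancel in pairs. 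After substitution, $D(C_2)$ is an integral combination of the chains $\delta_i c(G,H,j)$ and $\delta_i c(G,j)$ that appear on the right-hand sides of the four cited lemmas.

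On the right, expand $-\delta C_1=\sum_{i=0}^{4}(-1)^{i+1}\delta_i C_1$ summand by summand. For each $i$ and each summand, the individual parts of the chain vanish for one of three reasons: the $\delta_i$-merger produces a loop or a double edge, and the chain dies by Definition \ref{Dtriple}; the ``In particular'' clause of Lemma \ref{Lcondensed_collapse}(1) applies, because each $e$- or $(e,e')$-contraction in the chain preserves first coordinates of the underlying $G$-condensed map and $G$ contains an edge $e=(\alpha,\beta)$ with $\alpha\cup\beta$ inside a single piece of the merged partition or with $\alpha$ or $\beta$ inside an extreme piece; or Lemma \ref{Lcondensed_collapse0} applies, because an $(i,\eps)$-contraction forces the first coordinates of two consecutive components of the chain to coincide after the merger. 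In particular all contributions from $i=0$ and $i=4$ vanish by one of the latter two mechanisms (the minimum and maximum of every supporting partition are still singletons $\{0\}$ and $\{6\}$), so effectively $-\delta=\delta_1-\delta_2+\delta_3$.

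A term-by-term comparison now identifies the two sides. For example the $c(G_1,G_2,3)$ summand of $C_1$ survives only under $\delta_1$, producing $\delta_1 c(G_1,G_2,3)$, which matches the corresponding term of $Dc(G_1,G_2,3,1)$ from Lemma \ref{L2nd_bound_1,3}; the $c(G_1,1)$ summand survives only under $\delta_2$ (the $\delta_1$ part collapses by Lemma \ref{Lcondensed_collapse0} applied to the $(1,\pm)$-contractions, and the $\delta_3$ part by loops together with Lemma \ref{Lcondensed_collapse}), producing $\delta_2 c(G_1,1)$, again matching a term of $Dc(G_1,G_2,3,1)$. Analogous case checks for the remaining six summands of $C_1$ match their surviving $\delta_i$ against explicit terms of $DC_{21}$, $DC_{22}$, or $DC_{23}$.

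The main obstacle is sign-tracking. Beyond the alternating signs in $-\delta$ and in the definitions of the chains $c(G,H,i)$, $c(G,i)$, $c(G,H,i,j)$, and $c(G,i,j)$, one must keep track of the edge-permutation signs $\mathrm{sgn}(\sigma_{G,i})$ from Definition \ref{Dtriple} on each subgraph. These are most delicate for the summands involving the straight homotopy from $G_3$ to $G_4$, namely $c(G_3,G_4,1)$, $c(G_3,G_4,1,2)$, and $c'(G_3,1,2)$, because that homotopy transposes the first two edges of the underlying graphs, as already observed in the proofs of Lemmas \ref{Lbound_ch3} and \ref{L2nd_bound_1,2}. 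The cleanest organization of the verification is a table indexed by (summand of $C_1)\times\{0,1,2,3,4\}$, recording in each cell either the signed image chain or the explicit obstruction to non-vanishing, and then matching the non-vanishing entries against the expansion of $D(C_2)$.
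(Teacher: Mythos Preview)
Your approach is essentially the same as the paper's: a term-by-term verification, substituting the formulas for $DC_{21}$, $DC_{22}$, $DC_{23}$, and $Dc(G_1,G_2,3,1)$ from Lemmas \ref{L2nd_bound_2,4}--\ref{L2nd_bound_1,2}, and then checking that the unwanted $\delta_i$-images of the summands of $C_1$ vanish by exactly the mechanisms you list (loops/double edges in Definition \ref{Dtriple}, Lemma \ref{Lcondensed_collapse}, and Lemma \ref{Lcondensed_collapse0}). Your explicit bookkeeping of the $c'(G_j,i)$ cancellations---in particular that $c'(G_3,2)$ and $c'(G_3,1)$ occur with multiplicity $\pm 3$ in $DC_{21}$ and $DC_{22}$ and hence vanish in characteristic $3$---is correct and makes transparent something the paper leaves implicit in the stated formulas for the $DC_{2k}$.

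The one ingredient the paper's proof singles out that you do not mention is the observation that $\mu^\eps_k|_{s=0}$ and $\mu^\eps_{kl}|_{s=0}$ (with $s$ the variable for the $(i,\eps)$-contraction) are constant in the homotopy variable $t\in I$, so their pushforwards vanish in the normalized chain complex. This is really a detail needed to justify the formulas $Dc(G,i,j)=c'(G,i)-c'(G,j)$ in Lemmas \ref{L2nd_bound_2,4} and \ref{L2nd_bound_1,4}, whose proofs were omitted; since you take those formulas as given, you do not need it separately here. It does no harm to record it alongside your other vanishing mechanisms.
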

\begin{proof}
Arguments similar to the proof of Lemmas \ref{Lh_i_welldef},  \ref{Lfundcyc}, together with Lemmas  \ref{L2nd_bound_2,4}, \ref{L2nd_bound_1,4},  \ref{L2nd_bound_1,3}
and \ref{L2nd_bound_1,2}  imply the claim.
\end{proof}
\subsection{Non-triviality of the $d_3$-differential}\label{SS3rd}
In this section, we simplify the cycle representing the $d_3$-differential and prove its non-triviality.
\begin{lem}\label{L3rd_zero}
Let $C_2$ be the chain given in Lemma \ref{L2nd_bound_all}. 
Any pushforward  which appears as a term of $\delta(C_2)$, other than the following four terms, is zero:
\[
\begin{split}
\delta_1\psi^-_{kl}(w_{31}) &\text{ of } \delta_1c(G_4,G_3,4,2), \ \delta_1\mu_{kl}^+(w_{31}) \text{ of } \delta_1c(G_2,2,4), \\
\delta_1\mu_{kl}^-(w_{31})& \text{ of } \delta_1c(G_3,2,4), \text{ and }\  \delta_1\psi^+_{kl}(w_{31}) \text{ of } \delta_1c(G_2,G_3,4,1). 
\end{split}
\]
\end{lem}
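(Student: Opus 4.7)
The plan is to expand
\[
-\delta(C_2)=-\delta(C_{21})-\delta(C_{22})-\delta(C_{23})-\delta(c(G_1,G_2,3,1))
\]
and for each summand compute $\delta=\sum_{r}(-1)^r\delta_r$ term-by-term against each of the pushforwards $\psi_k^\pm, \lambda_k^\pm, {\lambda'_k}^\pm, \psi_{kl}^\pm, \lambda_{kl}^\pm, {\lambda'_{kl}}^\pm, \mu_k^\pm, \mu_{kl}^\pm$ appearing in Definitions \ref{D2nd_chain_2graphs} and \ref{D2nd_chain_2contractions}. Each such chain already lives in a Thom space for a partition where two adjacent pieces have been merged (at positions $i$ and $j$), so the only potentially non-zero $\delta_r$ are those with $r$ distinct from the indices already used, and among these most must be ruled out by collapse arguments.

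First I would record the systematic vanishing mechanisms. The $\delta_r$-image is zero whenever: (i) $\delta_r$ of the underlying graph produces a loop or double edge (by the convention in Definition \ref{Dtriple}); (ii) the hypotheses of Lemma \ref{Lcondensed_collapse}(1) apply, typically when $r$ merges two vertices already joined by an edge in the graph being contracted, or when the collapsing pinches the image of an $(i,\pm)$-contraction or straight homotopy into a ``forbidden'' corridor of $\nu_P$; or (iii) a term factors through a constant in one of the homotopy parameters (as happened in the proof of Lemma \ref{Lh_i_welldef}). Running through the chains in $C_{21},C_{22},C_{23}, c(G_1,G_2,3,1)$ under (i)--(iii) eliminates the overwhelming majority of the $3\cdot 6+3\cdot 6+3\cdot 6+6$ potential summands.

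Second, the surviving terms must be matched against the right-hand side. The construction of $C_{21}$ (resp.\ $C_{22}, C_{23}$) as a bounding chain for a combination of $\delta_2, \delta_4$-images (resp.\ $\delta_1, \delta_3$-images; resp.\ $\delta_1$-images) was arranged precisely so that further applying $\delta_i$ for $i$ in the same index set produces either zero (again by (i)--(iii)) or terms that cancel pairwise between the $c(G,H,i,j)$'s and the $c(G,i,j)$'s, thanks to the shared boundary maps and the sign conventions built into Definition \ref{D2nd_chain_2graphs}(2) and $c'(G_3,1,2)$. Isolating the non-cancelling directions, one finds that only $\delta_1$ can contribute non-trivially on $c(G_4,G_3,4,2), c(G_2,2,4), c(G_3,2,4)$, and $c(G_2,G_3,4,1)$, and within each of these chains only the top-degree pieces $\psi_{kl}^{\pm}(w_{31})$ or $\mu_{kl}^{\pm}(w_{31})$ survive: the lower terms $\psi_k^{\pm}(w_{21}), \mu_k^{\pm}(w_{21})$ all hit constants in a homotopy variable or land in a collapsed locus.

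The main obstacle will be the book-keeping rather than any geometric subtlety: one must track the signs $sgn(\sigma_{\partial_k G, j})$ in $c(G_3,G_4,1,2)$, the extra sign on $c'(G_3,1,2)$, the permutation sign arising when $\delta_1$ moves a differential past double edges, and the sign $(-1)^*$ in $D$ simultaneously, so that the pairwise cancellations promised above actually happen with the right signs. Once that bookkeeping is set up, the non-vanishing analysis in Step~1 followed by the cancellation analysis in Step~2 yields the stated equality and identifies the four named pushforwards as the only non-zero summands.
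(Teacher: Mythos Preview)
Your outline captures the overall shape of the argument, but it has two genuine gaps and one misconception that would prevent the proof from going through.

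First, the misconception: there are no cancellations. The lemma asserts that every pushforward except the four named ones is \emph{individually} the constant map $*$; the first displayed equation then follows immediately. Your Step~2, which organizes the argument around ``terms that cancel pairwise between the $c(G,H,i,j)$'s and the $c(G,i,j)$'s,'' is not the mechanism and would not by itself establish the second part of the statement.

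Second, your list of vanishing mechanisms is incomplete. Mechanism (i)--(ii) (loops/double edges, Lemma~\ref{Lcondensed_collapse}) are correct but do not cover everything. A large number of the remaining terms die instead by Lemma~\ref{Lcondensed_collapse0}: one writes out the first coordinates of the components of the map (ignoring the $\pm sv$ parts, which do not affect first coordinates) and observes that two components lying in the same piece of the collapsed partition are forced into the wrong order, so the image misses $\nu_P$. The paper records this with the shorthand notation $\{\underline{x}\ x\ y\}\{\overrightarrow{\underline{x}}\ \overleftarrow{\underline{y}}\}$ and a case split on $x<_1 y$ versus $x>_1 y$. Your mechanism (iii) about constancy in a homotopy parameter does not appear here and would not substitute for this.

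Third, even after applying the loop argument and Lemma~\ref{Lcondensed_collapse0}, four further terms survive the coarse analysis (for instance $\delta'_1\circ\psi^+$ in $\delta_1 c(G_2,G_3,4,2)$). These require a finer quantitative estimate: one compares the first-coordinate spread of the components in one piece to that in the other and derives an inequality like $c_1+2c_2+c_3>(c_4+c_5)/2-4\eps_P$, which contradicts the growth condition on the $c_i$ fixed in Definition~\ref{DPK}(1). Without this additional step the argument does not close.
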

\begin{proof}
 Firstly, since the vertices $\{1\}$ and $\{5\}$ of $G_k$ are not discrete for each $1\leq k\leq 4$, $\delta_0$ and $\delta_3$ of all the chains are zero by an argument similar to  the proof of Lemma \ref{Lh_i_welldef} with the aid of   Lemmas \ref{Li-cont},  \ref{Li-cont2}, and \ref{Li-cont_homotopy}.  Secondary, if $\delta_{abc}G_k$ has a loop, the maps corresponding to its subgraphs are $*$. This also follows from an argument similar to Lemma \ref{Lh_i_welldef}. By this observation, we see that all the maps in the following chains are $*$:
\[
\begin{split}
\delta_1 \text{ of }& c(G_2,G_3,2,1), c(G_3,G_4,1,2), c'(G_3,1,2), c(G_1,G_2,3,1),\\
\delta_2 \text{ of }& c(G_2,G_3,4,2), c(G_4,G_3,4,2), c(G_2,2,4), c(G_3,2,4), c(G_2,G_3,2,4),\\ &c(G_2,G_3,4,1), c(G_4,G_3,4,1), c(G_3,G_4,1,4), c(G_3,4,1), \text{ and } c(G_4,4,1). 
\end{split}
\]Thirdly, some of the other terms are zero by the latter part of Lemma \ref{Lcondensed_collapse0}. For example, we consider the term $\delta_1\psi^\eps_{kl}(w_{31})$ of  $\delta_1c(G_2,G_3,2,4)$. The map $\psi^+_{kl}$ swaps $x$ and $y$ by the homotopy in the first, fourth, and fifth components and adds $-su$ and $su$ to the fourth and fifth components respectively, so it may be expressed as follows:
\[
\{\underline{x}\   x\ y\}\{\overrightarrow{\underline{x}}\  \overleftarrow{\underline{y}}\}.
\]
Here, the arrows $\rightarrow, \leftarrow$ indicate the addition of $su, -su$, respectively,  $\underline{x}$ (resp. $\underline{y}$) represents the homotopy from $x$ to $y$ (resp. $y$ to $x$), and $\{\ \}$ specifies a vertex (the vertex set of the graph corresponding to the map $\delta_1\circ \psi^+_{kl}$  is $\{\{0\},\{123\},\{45\},\{6\}\}$). We ignore $\pm sv$ as it does not affect the first coordinate. Write $\psi^+_{kl}=(\psi_1,\dots,\psi_5)$ for simplicity. If $x\leq_1y$, $\psi_1=\underline{x}\geq_1x=\psi_2$, and if $x>_1y$, $\psi_2=x>_1y=\psi_3$ since $\underline{x}$ and $\underline{y}$ always lie between $x$ and $y$. So we have $\delta_1'\circ \psi^+_{kl}=*$ by the latter part of Lemma \ref{Lcondensed_collapse0}. $\delta_1'\circ \psi^-_{kl}$ is expressed as $\{\underline{x}\   x\ y\}\{\overleftarrow{\underline{x}}\  \overrightarrow{\underline{y}}\}$ and we have $\delta_1'\circ\psi^-_{kl}=*$ similarly. As another example, we consider the term $\delta_1\mu^+_{kl}(w_{31})$ of $\delta_1c(G_4,4,1)$. $\delta_1'\circ \mu^+_{kl}$ is expressed as 
\[
\{\overrightarrow{x}\   \overleftarrow{y}\ y\}\{\overrightarrow{y}\  \overleftarrow{x}\}
\]
Write $\mu^+_{kl}=(\mu_1,\dots,\mu_5)$. We see that $\mu_4=\overrightarrow{y}>_1\overleftarrow{x}=\mu_5$ if $x<_1y$, and $\mu_1>\mu_2$ otherwise, which implies $\delta_1'\circ\mu^+_{kl}=*$.
The terms which cannot be seen to be zero by the above three kinds of observations are the following five terms, where we omit subscripts.
\[
\begin{split}
\delta'_1\circ\psi^\pm&\text{ of }\delta_1c(G_2,G_3,4,2),
\ \delta'_1\circ\mu^-\text{ of }\delta_1 c(G_2,2,4),\ \delta'_1\circ\psi^-\text{ of }\delta_1 c(G_2,G_3,4,1),\\
\delta_2'\circ\psi^{-}& \text{ of } \delta_2c(G_3,G_4,1,2), \text{ and }\delta_2'\circ\psi^\pm \text{ of }\delta_2c(G_2,G_3,2,1).
\end{split}
\]

We consider the first term. The map $\delta'_1\circ\psi^+$ is expressed as 
\[
\{x\   \overrightarrow{\underline{x}}\ \overleftarrow{\underline{y}}\}\{x\  y\}
\]
Put $P=\{\{0\},\{123\},\{45\},\{6\}\}$. Suppose $\psi^+(\tilde x)\in \nu_P$ for $\tilde x=(x,y,\dots)\in \RR^4\times [0,\infty)^3\times I$. By fourth and fifth components, we have $x<_1y$. Put $\psi^+=(\psi_1,\dots,\psi_5)$. Since the average of $\psi_2(\tilde x)=\overrightarrow{\underline{x}}$ and $\psi_3(\tilde x)=\overleftarrow{\underline{y}}$ is equal to $(x+y)/2$, we see
\[
\psi_3(\tilde x)-\psi_1(\tilde x)>_1\frac{1}{2}(\psi_5(\tilde x)-\psi_4(\tilde x)).
\]
This implies
$
c_1+2c_2+c_3>(c_4+c_5)/2-4\eps_P
$
by the assumption of $\tilde x$ and an argument similar to the proof of Lemma \ref{Ldiagonal_incl}.
This is impossible when $c_r/c_{r-1}$ is sufficiently large as assumed in Definition \ref{DPK}, which implies $\delta'_1\circ \psi^+=*$. We shall consider the fifth term. The map $\delta'_2\circ\psi^-$ for $(G_2,G_3,2,1)$ is expressed as 
\[
\{ \overleftarrow{\underline{x}}\  \overrightarrow{x}\ y\}\{\underline{x}\ \underline{y}\}.
\]
Put $\psi^-=(\psi'_1,\dots, \psi'_5)$. If $x\leq_1y$, we easily see 
\[
\psi'_3-\psi'_1=y-\overleftarrow{\underline{x}}>_1\underline{y}-\underline{x}=\psi'_5-\psi'_4,
\] which contradicts the assumption on $c_r$. If $x>_1y$, clearly $\psi'_3<_1\psi'_2$ which implies $\delta'_1\circ \psi^+=*$ by Lemma \ref{Lcondensed_collapse0}. The other maps are shown to collapse similarly.\end{proof}
We shall prove three of the four terms in Lemma \ref{L3rd_zero} cancel with one another.
\begin{defi}\label{D3rd_chain_2graphs}
Put $G_0=\delta_{124}\partial_{12}G_2$. 
This graph has only one edge $(\{123\},\{45\})$.
\begin{enumerate}
\item  Let $\tilde \psi^{1+}_{kl}$ (resp. $\tilde\psi^{2-}_{kl}$, $\hat\psi^{2-}_{kl}$, $\hat\psi^{1+}_{kl}$) denote $\psi^+_{kl}$ (resp. $\psi^-_{kl}$, $\psi^{-}_{kl}$, $\psi^+_{kl}$) in Definition  \ref{D2nd_chain_2graphs} for $(G,H,i,j)=(G_2,G_3,4,1)$ (resp. $(G_2,G_3,4,2)$, $(G_4,G_3,4,2)$, $(G_4,G_3,4,1)$). Let $\tilde \eta_{kl}$ (resp. $\hat \eta_{kl}$) be the straight homotopy from $\tilde \psi^{1+}_{kl}$ to $\tilde \psi^{2-}_{kl}$ (resp. from $\hat\psi^{2-}_{kl}$ to $\hat\psi^{1+}_{kl}$). Set
\[
B_1 =\frac{1}{2}\sum_{k<l}(-1)^{k+l+1}\tilde \eta_{kl}(w_{32}),\qquad 
B_2 =\frac{1}{2}\sum_{k<l}(-1)^{k+l+1}\hat \eta_{kl}(w_{32})\ \in \bar C_*(\TT_{G_0}).
\]

\item Put $G=G_3$, $E(G)=\{e_1<e_2<e_3\}$,  and $f=f_{G}$. Let $\lambda_{kl}$ be the straight homotopy from the $(e_k,e_l)$-contraction of $f$ for $G$ to the $(e_k,e_l)$-contraction of $f$ for $\delta_4G$ and  $\lambda^{i-}_{kl}$ the $(i,-)$-contraction of $\lambda_{kl}$ for $i=2,4$. Let $\tilde \lambda_{kl}$ be the straight homotopy from $\lambda^{2-}_{kl}$ and $\lambda^{4-}_{kl}$. Set 
\[
B_3=\frac{1}{2}\sum_{k<l}(-1)^{k+l+1}\tilde \lambda_{kl}(w_{32})\ \in \bar C_*(\TT_{G_0}).
\]
For $h=\tilde \eta_{kl},\ \hat \eta_{kl},$ and $\tilde \lambda_{kl}$,  let $t\in I$ denote the variable for the `horizontal' straight homotopy (see Figure \ref{Fhomotopy_ch3}, or the variable in the first component of $I^2$). For $h=\tilde \eta_{kl}$, $t$ is the variable appearing in the definition of  $\tilde \psi^{4+}_{kl}$ (and $\tilde \psi^{2-}_{kl}$) and defined similarly for $h=\hat\eta_{kl}$. For $h=\tilde \lambda_{kl}$, $t$ is the one for $\lambda^{2-}_{kl}$ (and $\lambda^{4-}_{kl}$). For these $h$, we denote by $h'$ by the restriction of $h$ to the subspace defined by $t=1$. 
Let $\theta_{kl}$ be the straight homotopy from $\tilde  \lambda_{kl}'$ to $\hat \eta_{kl}'$. Set 
\[
B_4=\frac{1}{2}\sum_{k<l}(-1)^{k+l+1} \theta_{kl}(w_{32})\ \in \bar C_*(\TT_{G_0}).
\]
\end{enumerate}
\end{defi}
The maps in this definition are drawn in Figure \ref{Fhomotopy_ch3}, where we omit the removal of edges in $\tilde \eta_{kl}$, $\hat \eta_{kl}$ and $\theta_{kl}$. For the equations and $\mu_{312}^-$ near the edges of squares, see the proof of Lemma \ref{L3rd_simplify}.
\begin{figure}
\begin{center}
\hspace{-5mm}\input{homotopych3.TEX}
\end{center}
\caption{homotopies in Definition \ref{D3rd_chain_2graphs}} \label{Fhomotopy_ch3}
\end{figure}
The maps $\tilde\eta_{kl}$, $\hat \eta_{kl}$ and $\tilde \lambda_{kl}$ induce maps $S^4\wedge [0,\infty]^{\wedge 3}\wedge (I_+)^2\to \TT_{G_0}$ by Lemma \ref{Li-cont_homotopy} so $B_1, B_2$ and $B_3$ are well-defined. One can verify that $\theta_{kl}$ induces a map between the pointed spaces as in the proof of the lemma so $B_4$ is also well-defined. 
\begin{lem}\label{L3rd_simplify}
The cycle $-\delta C_2$ is homologous to $\delta_1\left(\frac{1}{2}\sum_{k<l}(-1)^{k+l+1}\mu_{2kl}^+(w_{31})\right)$ in $(tr_1\CEE_0,d_0)$. Here, $\mu_{2kl}^+$ is the map $\mu_{kl}^+$ in Definition \ref{D2nd_chain_2contractions} for $(G,i,j)=(G_2,2,4)$. 
\end{lem}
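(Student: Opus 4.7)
The plan is to produce an explicit bounding chain in $\bar C_*(\TT_{G_0})$ witnessing the claimed homology in $(tr_1\CEE_0, d_0)$. By Lemma \ref{L3rd_zero}, only the pushforwards $\delta_1\tilde\psi^{1+}_{kl}$, $-\delta_1\hat\psi^{2-}_{kl}$, $\delta_1\mu^-_{3kl}$, and $\delta_1\mu^+_{2kl}$ survive in $-\delta C_2$ (each scaled by $\tfrac{1}{2}(-1)^{k+l+1}$, coming from the $\pm$-averaging in $\psi^\pm_{kl}$ and $\mu^\pm_{kl}$). The last of these is already the target, so what remains is to exhibit
\[
\tfrac{1}{2}\delta_1\sum_{k<l}(-1)^{k+l+1}\bigl(\tilde\psi^{1+}_{kl}-\hat\psi^{2-}_{kl}+\mu^-_{3kl}\bigr)(w_{31})
\]
as a $D$-boundary in $\bar C_*(\TT_{G_0})$.

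The bounding chain is assembled from $B_1, B_2, B_3, B_4$ of Definition \ref{D3rd_chain_2graphs}. The chains $B_1$ and $B_2$ dispatch the first two terms: the boundary of $B_1$ along its new $I$-factor contributes $\pm(\tilde\psi^{1+}_{kl}-\tilde\psi^{2-}_{kl})(w_{31})$, and the $\tilde\psi^{2-}_{kl}$ piece collapses after the $\delta_1$-projection to $\TT_{G_0}$ because the resulting graph acquires double edges; the same happens to the $\hat\psi^{1+}_{kl}$ endpoint of $DB_2$. Hence $B_1+B_2$ produces $\tilde\psi^{1+}_{kl}$ and $-\hat\psi^{2-}_{kl}$ on the nose. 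To cancel $\mu^-_{3kl}$, $B_3$ and $B_4$ are brought in: $B_3$ interpolates between the $(2,+)$- and $(4,+)$-contractions of the $\lambda_{kl}$-chain (itself a homotopy between the $(e_k,e_l)$-contraction of $f_{G_3}$ for $G_3$ and for $\delta_4 G_3$), and its restriction at $t_1=1$ is by construction $\tilde\mu^-_{3kl}$; $B_4$ then links $\tilde\mu^-_{3kl}$ to $\hat\eta_{kl}|_{t_1=1}$, closing the circuit back into the $\hat\psi$-side handled by $B_2$. With the correct sign and coefficient bookkeeping, the sum $B_1+B_2+B_3+B_4$ yields precisely the required three-term combination.

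The main technical obstacle is verifying that all other boundary contributions to $D(B_1+B_2+B_3+B_4)$ vanish in $\bar C_*(\TT_{G_0})$. These arise from the vertical components of $dw_{32}$: the differentials $dw_\infty=\{0\}$ on each of the three contraction-parameter factors of $w_3$, and the specializations of the inner $\psi$- or $\lambda$-parameter to $0$ or $1$ (apart from the explicit endpoint evaluations accounted for above). As in the proof of Lemma \ref{L3rd_zero}, one argues case by case: the relevant pushforward either (a) lands in a Thom space for a graph with a loop or double edge (hence is automatically the basepoint), or (b) violates the first-coordinate inequalities of Lemma \ref{Lcondensed_collapse0} by virtue of the size constraints imposed on $c_0,\dots,c_{n+1}$ in Definition \ref{DPK}. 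Tracking these cases for each pair $(k,l)$, each $B_i$, and each vertical face, while respecting the sign permutations $\sigma_{G,i}$ that $\delta_1$ absorbs, constitutes the bulk of the work.
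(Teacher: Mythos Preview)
Your overall strategy---using $B_1,B_2,B_3,B_4$ as a bounding chain---matches the paper's, but there is a genuine gap in the accounting of boundary terms that hides the one place where characteristic~$3$ is actually used.

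You assert that ``$B_1+B_2$ produces $\tilde\psi^{1+}_{kl}$ and $-\hat\psi^{2-}_{kl}$ on the nose'' and that the remaining boundary faces of the $B_i$'s either collapse (by the Lemma~\ref{L3rd_zero} mechanism) or close up the circuit. This is not correct. The face $\tilde\eta_{kl}|_{t_1=1}$ of $B_1$---the restriction of the $\eta$-homotopy to the $f_{G_3}$ end of the underlying $\psi$---does \emph{not} collapse in $\TT_{G_0}$: none of the first-coordinate arguments or double-edge arguments of Lemma~\ref{L3rd_zero} apply there. Denoting this contribution by $\tilde K$ and the analogous $\hat\eta_{kl}|_{t_1=1}$ contribution (which you do notice, coming from both $B_2$ and $B_4$) by $\hat K$, the paper finds
\[
-\delta C_2 + D(B_1-B_2-B_3+B_4) \;=\; \tilde K - 2\hat K + \tfrac{1}{2}\sum_{k<l}(-1)^{k+l+1}\mu^+_{2kl}(w_{31}).
\]
The $\hat K$-contributions from $B_2$ and from $B_4$ do \emph{not} cancel on the nose (they differ by a factor of two), and $\tilde K$ has no partner at all among the $D(B_i)$. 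What saves the argument is a further observation you are missing: $\hat K$ is homologous to $-\tilde K$ (the homotopies differ only by reversal of the $(i,\pm)$-contraction parameter), so $[\tilde K - 2\hat K]=[3\tilde K]=0$ precisely because $\operatorname{char}\kk=3$. Your proposal never invokes the characteristic, so as written it cannot close; you should identify the residual chain $\tilde K - 2\hat K$ explicitly and then give the parameter-reversal homotopy that kills it modulo~$3$. (Incidentally, the reason $\tilde\psi^{2-}_{kl}$ collapses is not that the graph acquires double edges; it is one of the first-coordinate inequalities treated at the end of the proof of Lemma~\ref{L3rd_zero}.)
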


\begin{proof}
Let $\mu_{3kl}^-$  denote  the map $\mu_{kl}^-$ in Definition \ref{D2nd_chain_2contractions} for $(G,i,j)=(G_3,2,4)$. The map for $(k,l)=(1,2)$ fits into the left edge of the left bottom square in Figure \ref{Fhomotopy_ch3} (and similar for other $(k,l)$'s).  The symbols `$*$' and `$=*$' near an edge of  a square in this figure mean that the restriction of the map to the edge induces the constant map $*$ to $\TT_{G_0}$.    For example,  the collapse of the left edge of $\tilde \eta_{kl}$ follows from Lemma \ref{Lcondensed_collapse0} and that of the bottom edge of the  map is included in the proof of Lemma \ref{L3rd_zero} as this restriction is a term of $c(G_2,G_3,4,2)$. The collapse of the bottom edges of $\hat\eta_{kl}$ and $\theta_{kl}$ is also included in the same proof. Collapse of the other edges follows from Lemma \ref{Lcondensed_collapse0}.
Thus, we have
\[
\begin{split}
D(B_1) &=\frac{1}{2}\sum_{k<l}(-1)^{k+l}(\tilde \psi_{kl}^{1+}(w_{31})+\tilde \eta_{kl}'(w_{31}))
=-\delta_1c(G_2,G_3,4,1)+\tilde K,\\
D(B_2) &=\frac{1}{2}\sum_{k<l}(-1)^{k+l}(\hat \psi_{kl}^{2-}(w_{31})+\hat \eta_{kl}'(w_{31}))
=-\delta_1c(G_4,G_3,4,2)+\hat K,\\
D(B_3)& =\frac{1}{2}\sum_{k<l}(-1)^{k+l}(\tilde \lambda_{kl}'-\mu_{3kl}^-)(w_{31}),\\
D(B_4) &=\frac{1}{2}\sum_{k<l}(-1)^{k+l+1}(\hat \eta'_{kl}-\tilde \lambda_{kl}')(w_{31})
  =-\hat K+\frac{1}{2}\sum_{k<l}(-1)^{k+l}\tilde \lambda_{kl}'(w_{31}).
\end{split}
\]Here,  we set 
\[
\tilde K=\frac{1}{2}\sum_{k<l}(-1)^{k+l}\tilde \eta_{kl}'(w_{31}),\qquad \hat K=\frac{1}{2}\sum_{k<l}(-1)^{k+l}\hat \eta_{kl}'(w_{31}).
\]  Putting these equations and Lemma \ref{L3rd_zero} into together, we have
\[
-\delta C_2+D(B_1-B_2-B_3+B_4)=\tilde K-2\hat K+\frac{1}{2}\sum_{k<l}(-1)^{k+l+1}\mu_{2kl}^+(w_{31}).
\]
$\hat K$ is homologous to $-\tilde K$ since the directions of the variable for the vertical  homotopy  are opposite to each other (see Figure \ref{Fhomotopy_ch3}). We have $[\tilde K-2\hat K]=[-3\hat K]=0$. Thus, we have obtained the claim.
\end{proof}
We shall compute the remaining term.
\begin{lem}\label{Lfundcyc_ch3}
The chain $\delta_1\left(\frac{1}{2}\sum_{k<l}(-1)^{k+l+1}\mu_{2kl}^+(w_{31})\right)$  in Lemma \ref{L3rd_simplify} is a fundamental cycle of $\TT_{G_0}\simeq S^8$.
\end{lem}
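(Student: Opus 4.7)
The plan is to follow closely the strategy of Lemma~\ref{Lfundcyc}, which handled the analogous fundamental cycle in the characteristic $2$ setting. For the partition $P=\{\{0\},\{123\},\{45\},\{6\}\}$ we have $p=\#P-2=2$, so $e_P\colon\RR^4\to\RR^{10}$ and the fiber of $\nu_P$ has dimension $6$. The Thom space $\TT_{G_0}$ is built from the restriction of $\nu_P$ to the tube $D_{\{123\}\{45\}}$ around the diagonal $\Delta\subset\RR^4$, so up to collapsed boundary it is homeomorphic to $S^6\wedge S^2\wedge(\mathrm{Int}\,D^2_+)$, where $S^6$ corresponds to a fiber of $\nu_P$, $S^2$ to the normal disk fiber along $\Delta$, and $\mathrm{Int}\,D^2$ to the open diagonal. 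In particular $H_8(\TT_{G_0})\cong\kk$ and $w_{31}$ has the correct degree $4+3+1=8$.

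Next, I would construct the retract $\tilde r\colon\TT_{G_0}\to Th(\nu_P|_\Delta)$ exactly as in Lemma~\ref{Lfundcyc}: orthogonally project the base to $\Delta$ and lift by a bundle map which parallel-transports each fiber of $\nu_P$ onto the corresponding fiber over $\Delta$, sending centers to centers. Using the explicit formula $f_{G_2}(x,y)=(x,x,y,x,y)$, the three bridge contractions $f_{e_k}$ and the double contractions $f_{e_ke_l}$ have completely explicit expressions; the straight homotopy $\mu_{2kl}^+$ between the $(2,+)$- and $(4,-)$-contractions of $f_{kl}$ is then affine in its parameters $(x,y,s_1,s_2,s,t)$. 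I would compute the composition $F_{kl}:=\tilde r\circ \delta'_1\circ\mu_{2kl}^+$ restricted to the preimage of $\nu_P$, and show, by solving the resulting linear system for $(x,y,s_1,s_2,s,t)$ given a target point in $\nu_P|_\Delta$, that $F_{kl}$ is a monomorphism.

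Finally, I would check that the three signed images glue to a degree-$\pm 1$ covering of $\nu_P|_\Delta$. Here the role played in Lemma~\ref{Lfundcyc} by the single parameter $s$, which split the fiber into two half-disks meeting along $\{s=0\}$, is now played by the three choices of $(k,l)$: the corresponding three contraction-patterns partition the normal directions to $\Delta$ into three sheets, meeting along the loci where one of the bridge parameters $s_1, s_2$ or the contraction parameter $s$ vanishes. The signs $(-1)^{k+l+1}$ and the averaging of $+$/$-$ contractions (encoded in the $\pm$ superscript, which doubles each sheet with opposite orientations — hence the compensating factor $1/2$, equivalently $2$ in characteristic $3$) should combine to produce a generator of $H_8(\TT_{G_0})$. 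The main obstacle is the bookkeeping of this tiling: in contrast to the char 2 case with two $F_j$'s glued along $s=0$, we now have three sheets meeting along a more complicated stratified subset coming from $\{s_i=0\}$, and one must verify both that the three images exhaust $\nu_P|_\Delta$ and that the overlaps occur only on codimension-one boundary loci on which the integrand vanishes, so that the signed sum realizes a degree one map to $Th(\nu_P|_\Delta)$.
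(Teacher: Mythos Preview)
Your plan is essentially the paper's own strategy: build the retract $\tilde r:\TT_{G_0}\to Th(\nu_P|_\Delta)$, set $F_{kl}=\tilde r\circ\mu_{2kl}^+$, show each $F_{kl}$ is a monomorphism, and verify that the three images tile $\nu_P|_\Delta$ with overlaps only on codimension-one boundary strata. The paper carries out exactly this computation, solving explicitly for $(t,s_3)$ in terms of the first coordinates of the fiber point and then showing that the $v$-coefficients of the fiber part yield three domains $D_{kl}\subset\RR^2$ (one for each pair $k<l$) whose union is all of $\RR^2$ and whose pairwise intersections lie in the boundary.

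Two points in your outline need correcting. First, the chain in the statement involves only $\mu_{2kl}^+$, not the average $\mu_{2kl}^\pm$; the $\tfrac12$ is the residual coefficient from the averaging in $c(G_2,2,4)$ after the $\mu_{2kl}^-$ part was shown to collapse to the basepoint in Lemma~\ref{L3rd_zero}. So there is no doubling of sheets with opposite orientation here; the three $F_{kl}$'s give a single degree-one cover, and the $\tfrac12$ simply multiplies the resulting generator by $2\in\kk^\times$. Second, the overlap loci are $\{s_1=0\}\cup\{s_2=0\}$, where $s_1,s_2$ are the two $(e_k,e_l)$-contraction parameters; the $(i,\eps)$-contraction parameter (your ``$s$'', the paper's $s_3$) is shown to be strictly positive throughout the relevant domain, as is $0<t<1$. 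Once you carry out the linear algebra you will see this: the first-coordinate equations determine $s_3$ and $t$ uniquely with $s_3>0$, while the second-coordinate equations in the $v$-direction determine $(s_1,s_2,k,l)$ and produce the three-sheet tiling.
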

\begin{proof}
The proof is similar to Lemma \ref{Lfundcyc}. By definition, $\TT_{G_0}$ is a Thom space associated to the disk bundle $\nu_P|_N$, where  $N=D_{\alpha\beta}\cap (\,\overline{\RR^4-E_P}\,)$, where $\alpha=\{1,2,3\}, \beta=\{4,5\}$ and $P=\{\{0\},\alpha,\beta,\{6\}\}$ in the notation of Definition  \ref{Dpartition}.  Write $\nu=\nu_P|_N$. Let $\Delta$ be the intersection of diagonal in $(\RR^2)^2$ and $N$, and  $\tilde r: \nu\to \nu|_{\Delta}$ the bundle map which covers the orthogonal projection $r: N\to \Delta$ and  restricts to the parallel transport taking center to center on each fiber. This map induces a map $\tilde r:\TT_{G_0}\to Th(\nu|_{\Delta})\cong S^8$.  We consider $\nu$ and $(\nu|_\Delta)$ as  subspaces of $\RR^{10}$. Put 
\[
F_{kl}:=\tilde r\circ \mu^{+}_{2kl}:(\mu^+_{2kl})^{-1}(\nu)\to \nu|_{\Delta}\qquad \text{for}\qquad 1\leq k<l\leq 3.
\]
We shall write down $F_{kl}$ concretely. By definition, $\nu$ is the tubular neighborhood of the map
\[
e_P:(a,b)\mapsto \left(a-\frac{\rho}{2}(c_2+c_3)u,\ a+\frac{\rho}{2}(c_1-c_3)u,\ a+\frac{\rho}{2}(c_1+c_2)u,\ b-\frac{\rho}{2}c_5u,\ b+\frac{\rho}{2}c_4u\right).
\]
By elementary calculation, the point $\pi_P(c,d,e,f,g)$ is given by
\[
\pi_P(c,d,e,f,g)=\left(\frac{1}{3}(c+d+e+\rho(c_3-c_1)u),\ \frac{1}{2}(f+g+\frac{\rho}{2}(c_5-c_4)u)\right).
\]
Similarly, the map $r$ is given by $r(a,b)=(a+b)/2$.
For a while, we omit the subscript $kl$  and $A_{e_k,e_l}(s_1,s_2)$ in Definition \ref{Dcondensed2} is also abbreviated as $A=(A^1,\dots, A^5)v$, and  $\mu^+_{2kl}$ is abbreviated as $\mu$.  For $\tilde x=(x,y,s_1,s_2,s_3,t)$, we have 
\[
\mu(\tilde x)=(x+A^1v,x+(1-t)s_3u+A^2v, y-(1-t)s_3u+A^3v,x-ts_3u+A^4v, y+ts_3u+A^5v). 
\] Straightforward computation shows 
\[
r\circ \pi_P\circ \mu(\tilde x)=\frac{1}{12}\left(7x+5y+\rho\bigl(2c_3-2c_1+\frac{3}{2}c_5-\frac{3}{2}c_4\bigr)u-(s_1+s_2)v\right)\quad (=:w)
\] 
for any $k,l$.
We denote the right hand side of this equation by $w$. For simplicity, we move the fiber of $\nu$ over $\pi_P(\mu(\tilde x))$ by the parallel transport which sends its center to $0$. By this move, $\mu(\tilde x)$ is sent to 
\[
\mu(\tilde x)-e_P(\mu(\tilde x))=(p,q,-p-q, m,-m)
\] 
where 
\[
\begin{split}
p &=\frac{1}{3}(x-y)+\frac{\rho}{6}(2c_1+3c_2+c_3)u+\frac{1}{3}(2A^1-A^2-A^3)v\\
q &=\frac{1}{3}(x-y)+\frac{\rho}{6}(c_3-c_1)u+(1-t)s_3u+\frac{1}{3}(-A^1+2A^2-A^3)v\\
m&=\frac{1}{2}(x-y)+\frac{\rho}{4}(c_4+c_5)u-ts_3u+\frac{1}{2}(A^4-A^5)v\\
\end{split}
\] The map $F_{kl}$ is given by $\tilde x\mapsto (w,p,q,m)$. The fiber of $\pi_P|_\nu$ is a disk of radius $\epsilon_P$. To prove the lemma, it is enough to show that there exists a point $\tilde x$ such that $\bar w=w$, $\bar p=p$, $ \bar q=q$, $\bar m=m$  for a given point $(\bar w,\bar p,\bar q,\bar m)$ with $|(\bar p,\bar q,\bar m)|\leq \epsilon_P$, and the combination of such a point and numbers $k,l$ is unique unless $s_1$ or $s_2=0$. We fix $\bar w, \bar p, \bar q,\bar m$ and suppose $(\bar w, \bar p,\bar q,\bar m)=(w, p,q,m)$.  By using the formula for $m$, we eliminate $x-y$ in the formulas of $p,q$. We have
\[
\begin{split}
\bar p&=\frac{1}{3}(2\bar m+(2ts_3+\frac{\rho}{2}(2c_1+3c_2+c_3-c_4-c_5))u+(2A^1-A^2-A^3-A^4+A^5)v),\\
\bar q&=\frac{1}{3}(2\bar m+((3-t)s_3+\frac{\rho}{2}(c_3-c_1-c_4-c_5))u+(-A^1+2A^2-A^3-A^4+A^5)v).
\end{split}
\]
By these formulas, we have
\[
\begin{split}
s_3&=\frac{1}{4}(2\bar p_1+2\bar q_1-4\bar m_1+\rho(c_4+c_5-c_2-c)),\\
t&=\frac{6\bar p_1-4\bar m_1+\rho(c_4+c_5-2c_1-3c_2-3c_3)}{2\bar p_1+4\bar q_1-4\bar m_1+\rho (c_4+c_5-c_2-c_3),}
\end{split}
\]
where the subscript $1$ means the first coordinate. The conditions on $c_i$ and $(\bar p,\bar q,\bar m)$ ensure $s_3>0$ and $0<t<1$. We consider the  last terms of the above formulas for $\bar p,\bar q$. For $(k,l)=(1,2), (1,3),$ and $(2,3)$, we have
\[
\begin{split}
(2A^1-A^2&-A^3-A^4+A^5, -A^1+2A^2-A^3-A^4+A^5)\\
&=(4s_1-2s_2,-2s_1+4s_2), \quad 
 (4s_1-2s_2,-2s_1-2s_2), \ \text{ and }\ (-2s_1-2s_2, 4s_1-2s_2),
\end{split}
\]
respectively. When $s_1, s_2\geq 0$ vary, this point runs through a domain $D_{kl}\subset \RR^2$ for each $(k,l)$. It is easy to see that $\cup_{kl}D_{kl}=\RR^2$, $\partial D_{kl}$ corresponds to $s_1$ or $s_2=0$, and $D_{kl}\cap D_{k',l'}\subset \partial D_{kl} \cap \partial D_{k',l'} $ if $(k,l)\not=(k',l')$. The values of $\bar p_1, \bar q_1, \bar m$ determine (and are determined by)  $s_3, t, m$. Since $w$ is given by the same formula for any $(k,l)$ and the formulas of  $\bar p$ and $\bar q$ for different $(k,l)$'s are only different in the coefficients of $v$,  the pair of second coordinates $(\bar p_2,\bar q_2)$ determines a unique pair $(k,l)$ for which the image of $F_{kl}$ contains the point $(\bar w,\dots, \bar m)$ unless $(\bar p_2,\bar q_2)$ corresponds to a point in  $\cup_{kl}\partial D_{kl}$. The values of $(\bar p,\bar q,\bar m)$ determine $x-y$  but $7x+5y$ can take any value, so we can set $w$ freely. 
\end{proof}
In view of Lemma \ref{Lfundcyc_ch3} and Proposition  \ref{Pd2_ch3}, the proof of the following theorem is completely similar to Theorem \ref{Tdifferential}.
\begin{thm}\label{Tch3}
In dimension $d=2$ and over a field of  characteristic $3$,  the  element $[-c(G_1)+c(G_2)+c(G_3)+c(G_4)]\in tr_1\CEE^{-5,3}_1$ lifts to an element $ g\in \EE^{-5,3}_1$ which persists up to $\EE^{-5,3}_3$ and satisfies $d_3(g)\not=0$.
\hfill \qedsymbol
\end{thm}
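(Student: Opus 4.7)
The plan is to assemble the bounding chains $C_1, C_2$ constructed in Subsections \ref{SS1st_ch3}--\ref{SS2nd_ch3} with the fundamental-cycle identification in Subsection \ref{SS3rd} and then transport the resulting computation from $tr_1\CEE_r$ to the Sinha spectral sequence $\EE_r$ via Lemma \ref{Lspeqseq}. The structure is a direct analogue of the proof of Theorem \ref{Tdifferential}.

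First I would reduce to $\bar\EE_r$ by a degree argument (since $n = 5$ bounds the cosimplicial degrees in play), and then lift the class $g' = [-c(G_1)+c(G_2)+c(G_3)+c(G_4)]$ from $tr_1\CEE_1$ to $\bar\EE_1$ using the isomorphism part of Lemma \ref{Lspeqseq}(2), applicable since the chains $c(G_i)$ are supported on summands indexed by three-edge graphs, where the truncation map is an isomorphism. Call the lift $g \in \bar\EE_1$.

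Next I would show $g$ persists to the $E_3$-page. Lemma \ref{L1st_bound_ch3} directly yields $d_1 g' = 0$ in $tr_1\CEE_1$, which transfers to $d_1 g = 0$ in $\bar\EE_1$ via the monomorphism part of Lemma \ref{Lspeqseq}(2). Lemma \ref{L2nd_bound_all} likewise gives $d_2 g' = 0$ in $tr_1\CEE_2$. The main subtlety in lifting to $\bar\EE_2$ is that the summands indexed by single-edge graphs (which the truncation embeds only as a monomorphism) could in principle receive extra contributions; these correspond to the 3-term relation classes such as $g_{13}g_{24}$, and Proposition \ref{Pd2_ch3} precisely rules these out by showing $d_2(g_{13}g_{24}) = 0$ in $\EE_2$. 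Hence $d_2 g = 0$ in $\bar\EE_2$.

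Finally I would invoke Lemmas \ref{L3rd_simplify} and \ref{Lfundcyc_ch3}: the former reduces the $d_3$-representative of $g'$ to the chain $\delta_1\bigl(\tfrac12\sum_{k<l}(-1)^{k+l+1}\mu_{2kl}^+(w_{31})\bigr)$, and the latter identifies this as a fundamental cycle of the Thom space $\TT_{G_0}$, hence non-trivial in $\bar C_*(\TT_{G_0})$. So $d_3 g' \neq 0$ in $tr_1\CEE_3$, and Lemma \ref{Lspeqseq}(2) once more gives $d_3 g \neq 0$ in $\bar\EE_3$. The most delicate step is verifying that the bookkeeping transfers correctly between $tr_1\CEE$ and the full $\bar\EE$ at the $d_2$-level; this is where Proposition \ref{Pd2_ch3}, handling the 3-term relation in characteristic $3$, plays an indispensable auxiliary role with no analogue in the characteristic-$2$ computation of Theorem \ref{Tdifferential}.
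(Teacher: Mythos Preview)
Your outline follows the paper's strategy, but you have misplaced the role of Proposition \ref{Pd2_ch3}. That proposition concerns the differential $d_2:\EE_2^{-4,2}\to\EE_2^{-2,1}$, whereas $d_2g$ lands in $\EE_2^{-3,2}$; these are different bidegrees, and showing $d_2(g_{13}g_{24})=0$ says nothing about $d_2g$. The vanishing $d_2g=0$ transfers directly from $tr_1\CEE$ via Lemma \ref{Lspeqseq}(2) with no extra input: the $E_1$-groups at bidegrees $(-3,2)$, $(-4,2)$, $(-2,2)$ are all supported on graphs with at least two edges, so the comparison map is an isomorphism there, and hence also an isomorphism on $E_2^{-3,2}$.

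Where Proposition \ref{Pd2_ch3} is actually needed is in your final step. Lemma \ref{Lfundcyc_ch3} only shows that the chain representing $d_3g'$ is a fundamental cycle of $\TT_{G_0}$, hence nonzero in $tr_1\CEE_1^{-2,1}$. To conclude $d_3g\neq 0$ in $\EE_3$ you must know that this class is still nonzero in $\EE_3^{-2,1}$, i.e.\ that the generator of $\EE_2^{-2,1}$ is not killed by the incoming $d_2$ from $\EE_2^{-4,2}$. Since $\EE_2^{-4,2}$ is generated by $g_{13}g_{24}$, this is exactly the content of Proposition \ref{Pd2_ch3}; the paper states this purpose explicitly at the start of Subsection \ref{SSnon-triviality}. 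With Proposition \ref{Pd2_ch3} moved to its correct place, your argument matches the paper's.
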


\begin{proof}[Proof of Corollary \ref{Cnon-formality_ch3}]
Here, we denote by $\ASS$ both of the algebraic and topological (discrete) associative operads. If the map $\ASS\to C_*(\KK_2)$ induced by the topological map $\ASS\to \KK_2$ in Definition \ref{DKontsevich} is multiplicatively formal, i.e. connected with the  map $\ASS\to H_*(\KK_2)$ induced on homology by a zigzag of quasi-isomorphisms fixing $\ASS$, the $d_r$-differential of Sinha's sequence is zero for any $r\geq 2$ (see \cite{moriya}). By part 3 of Theorem \ref{Tmain}, the map $\ASS\to C_*(\KK_2)$ is not multiplicatively formal over charcteristic 3. By the argument of the proof of Theorem 1.3 in \cite{moriya}, the same map is also not formal in the sense in the Introduction. It is well-known that the map $\ASS\to C_*(\KK_2)$ and the map $C_*(E_1)\to C_*(E_2)$ in the claim are connected by a zigzag of quasi-isomorphisms. We have proved the claim.  
\end{proof}
\begin{rem}\label{Rdiscussion}
In this remark, we informally explain the author's present understanding on Sinha's spectral sequence and what direction we can proceed to. The proofs of the claims in this remark will be given  elsewhere. Our method can be used to prove some general claims. For example, we can prove all the $d_2$-differentials are zero on $\EE_2$ if the characteristic of $ \kk$ is not $2$ (or $\kk$ is  a ring having $1/2$) and $d=2$ as in the computations given in this and previous sections. For $d=3$, the case drawing much attention, the  $d_2$ and $d_4$-differentials are zero for degree reasons.  In this case, the version of $e$-contraction using the third direction is available. Using this, we can describe the $d_3$-differential in terms of the maps given in section \ref{Scondensed}. If the base ring has $1/2$, we can prove $d_3=0$ and describe $d_5$. If the ring also has $1/3$, we can prove $d_5=0$. On the other hand, if the ring does not have $1/2$ or $1/3$, the author can not prove the vanishing of $d_3$ or $d_5$, respectively from the description for a general cycle. This situation agrees with a result in \cite{BH} which includes the claim that  the first possibly non-trivial differential (after $d_1$) is $d_{1+(d-1)(p-1)}$ over $p$-adic integers. So the author considers it  better to examine concrete examples. What elements are worth computing and  suit  our methods ? To find a cycle itself is a non-trivial problem. In low degree, computer calculations are given in \cite{turchin}. (Similar calculations for the diagonal of Vassiliev's sequence are given in \cite{bar-natan}.) From the calculations and the rational collapse in \cite{LTV, moriya, tsopmene}, we see that the elements whose number of edges is $\leq 4$  have trivial higher differentials for $d\geq 3$. The cycles given in section \ref{Sch2} and  this section are examples of divided products introduced in \cite{turchin}. The divided product $\langle H_1, H_2\rangle$ of two graphs $H_1, H_2$ is a signed sum of the graphs made by permuting vertices of $H_1$ and $H_2$ by shuffles having the leftmost vertex of  $H_1$ on the left from the leftmost vertex of $H_2$. Let $Z_k$ be the graph  with exactly $k+1$-vertices and $k$-edges such that  the leftmost vertex has valence $k$ and the other vertices have valence 1. The cycles given in section \ref{Sch2} and this section  are a reversed version of $\langle Z_1,Z_1\rangle$, $\langle Z_1, Z_2\rangle$, respectively. The elements produced from $Z_k$'s via the divided product have trivial higher differentials if $d\geq 4$ since they correspond to the elements of $H^*(\Omega^2 S^{d-1},\kk)$, and in $d=3$ they may have non-trivial higher differentials but they have nothing to do with the differentials of Vassiliev's sequence for the original space of long knots (without modulo immersions).  So we need to consider different cycles. While only the products of $Z_k$'s are considered in \cite{turchin}, the  product gives cycles for some of other elements. For example, for a cycle $C$,  the product $\langle C, C\rangle$ and a multiple version $C^{\langle m\rangle}$ are cycles at least in characteristic 2. As we saw in previous (sub)sections, even if $C$ has trivial higher differentials, those of  its products are not necessarily  trivial. The products include many terms, but they are well-arranged. It will be an interesting problem to  compute the higher differentials of the (multiple) divided products of cycles in relatively low degrees.
\end{rem}

\section{Absolute non-formality in characteristic 2}\label{Sabsolute}
In this section, we prove Corollary \ref{Cnon-formality_ch2}, and we assume that $\kk$ is a field of characteristic $2$ and $d=2$. Let $\ASS_\infty$ denote the cellular chain operad of Stasheff's associahedral operad. Precisely speaking,  
$\ASS_\infty$ is generated by the set $\{\, \mu_k\in \ASS_\infty(k) \, \}_{k\geq 2}$ (\ $|\mu_k|=k-2$\ )  with partial compositions, freely as a planar graded operad. The differential is given by the following formula:
\[
d\mu _k=\sum_{
l,p,q}\,\mu_l\circ_{p+1}\mu_q.
\]
where $l,p,q$ run through the range $l, q\geq 2, 0\leq p\leq l-1,$ and $l+q=k+1$ .
\begin{defi}\label{Dhochschild}
For a vector space or complex  $U$ over $\kk$, we denote by $U^\vee$  its linear dual (with the induced differential). Let $f:\ASS_\infty\to \oper$ be a map of (planar) chain operads. Let $\mu'_l\in \oper(l)$ be the image of $\mu_l\in\ASS_\infty(l)$ by $f$. We define a linear map $(-\circ_i\mu_l):\oper(m)^\vee\to \oper(m-l+1)^\vee$ for integers $m\geq l$ and $1\leq i\leq l$ as the following composition
\[
\xymatrix{\oper(m)^\vee\ar[r]^{(-\circ_i-)^\vee\quad \qquad}& (\oper(l)\otimes\oper(m-l+1))^\vee\ar[r]& \oper(m-l+1)^\vee }
\] 
where the right arrow is the evaluation of the first factor on  $\mu'_l$. We also define $(\mu_l\circ_i-): \oper(m)^\vee\to \oper(m-l+1)^\vee$ for integers $m\geq l$ and $1\leq i\leq m-l+1$ similarly using the evaluation of the second factor. 
We define a chain complex  $(\Hoch\oper, \tilde d)$ called \textit{Hochschild complex of $\oper$},  as follows.
Set $\Hoch^{-p,q}\oper=(\oper (p)_q)^\vee$.  The  differential $\tilde d$ is given as a map
\[
\tilde d =d+\delta : \underset{q-p=k}{\bigoplus}\Hoch^{-p,q} \oper \longrightarrow \underset{q-p=k+1}{\bigoplus}\Hoch^{-p,q} \oper.
\] 
Here $d$ is the internal (original) differential on $\oper(p)^\vee$ and $\delta$ is given by the  formula 
\[
\delta (x)=\sum_{ 2\leq l\leq p}\mu_l* x,\qquad \text{ where }\qquad \mu_l*x=x\circ_1\mu_l+x\circ_l\mu_l+\sum_{i=1}^{p-l+1}\mu_l\circ_ix
\]
for $x\in \oper (p)^\vee$. We define a  spectral sequence $E^{-p,q}_r(\oper)$ by filtering $(\Hoch\oper, \tilde d)$ by the arity $p$.
\end{defi}
We call a map $f:\oper\to \aoper$ of chain operads  a {\em quasi-isomorphism} if it induces a quasi-isomorphism $\oper(p)\to \aoper(p)$ for each $p$. 
The following lemma is clear.
\begin{lem}\label{Lspec_natural}
\begin{enumerate}
\item Let $\ASS_\infty\to \oper$ and $\ASS_\infty\to \oper'$ be two maps of operads and $f:\oper\to \oper'$ a quasi-isomorphism compatible with the maps from $\ASS_\infty$. Then $f$ induces an isomorphism $E_r(\oper)\cong E_r(\oper')$ compatible with the differentials for $r\geq 1$.
\item Let $\ASS_\infty\to C_*(\KK_2)$ be the composition $\ASS_\infty\to \ASS\to C_*(\KK_2)$ of the fixed maps (see Definition \ref{DKontsevich}, and $\ASS$ denotes the algebraic associative operad here). The spectral sequence $E_r(C_*(\KK_2))$ is isomorphic to $\EE_r$ (see Definition \ref{Dspecseq}). \hfill\qedsymbol
\end{enumerate}
\end{lem}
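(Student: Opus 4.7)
The plan is to prove the two parts separately, both by routine spectral-sequence arguments. I expect the whole thing to fall out of bookkeeping once the right identifications are in place, and the only genuinely delicate point is matching McClure--Smith cofaces with the $\mu_2$-operations in Part (2).

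For Part (1), I would argue directly via the filtration by arity. By construction, the $E_0$-page of $E_r(\oper)$ at filtration level $-p$ is the dual complex $(\oper(p)^\vee, d)$, and similarly for $\oper'$. A weak equivalence $f \colon \oper \to \oper'$ is by definition a quasi-isomorphism at each arity; since we are working over a field $\kk$, the induced dual map $\oper'(p)^\vee \to \oper(p)^\vee$ is also a quasi-isomorphism. So $f$ induces an isomorphism on $E_1$. For compatibility with $d_1$ (and hence all higher differentials) I would verify that the operations $x \mapsto \mu_l * x$ commute with $f^\vee$; this is automatic from the assumed compatibility of $f$ with the $\ASS_\infty$-structures, since $(-\circ_i \mu_l)$ and $(\mu_l \circ_i -)$ are natural in maps of $\ASS_\infty$-algebras under operads. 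The comparison theorem then yields an isomorphism $E_r(\oper') \cong E_r(\oper)$ for every $r \geq 1$.

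For Part (2), the key observation is that the composition $\ASS_\infty \to \ASS \to C_*(\KK_2)$ factors through $\ASS$, and for degree reasons the map $\ASS_\infty \to \ASS$ sends $\mu_l$ to zero whenever $l \geq 3$ (since $|\mu_l| = l-2 > 0$ but $\ASS(l)$ is concentrated in degree $0$). Consequently, in $(\Hoch C_*(\KK_2), \tilde d)$ the operator $\delta$ collapses to $x \mapsto \mu_2 * x$, that is, precisely the signed sum $\sum_{i=0}^{p+1}(-1)^i d^i$ where the $d^i$ are the cofaces of the McClure--Smith cosimplicial structure on $\KK_2^\bullet$: for $1 \leq i \leq p$ these are the partial compositions with $\mu_2$ in the $i$-th slot, and for $i=0,p+1$ they are insertions of $\mu_2$ on the outside, matching exactly the definition of $\mu_2 * x$.

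Thus I would identify $(\Hoch C_*(\KK_2), \tilde d)$, as a filtered complex, with the unnormalized total cochain complex of $\KK_2^\bullet$ filtered by cosimplicial degree, and conclude that $E_r(C_*(\KK_2)) \cong \EE_r$ for all $r \geq 1$. The main technical point to check carefully is the sign compatibility between the McClure--Smith coface formulas and the expression for $\mu_2 * x$; this is standard and was verified by McClure--Smith, and in any case would not obstruct the isomorphism of spectral sequences. I do not anticipate any real obstacle, which is consistent with the author's remark that the lemma is clear.
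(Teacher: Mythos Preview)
The paper gives no proof of this lemma at all: it simply writes ``The following lemma is clear'' and places a \qedsymbol\ at the end of the statement. Your proposal is correct and supplies exactly the standard argument the author had in mind --- filtration by arity, $E_1$-isomorphism via dualizing a levelwise quasi-isomorphism over a field, and for Part~(2) the observation that the composite $\ASS_\infty\to\ASS$ kills $\mu_l$ for $l\ge 3$, reducing $\delta$ to the alternating sum of McClure--Smith cofaces.

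One small remark on Part~(2): as you note, the only point requiring care is matching the Hochschild differential with the cosimplicial one. Unwinding Definition~\ref{Dhochschild}, for $x\in\oper(p)^\vee$ the term $x\circ_i\mu_2$ is dual to $b\mapsto \mu'_2\circ_i b$ (for $i=1,2$) and $\mu_2\circ_i x$ is dual to $a\mapsto a\circ_i\mu'_2$; these are precisely the extreme and inner cofaces of the McClure--Smith cosimplicial object, so $\mu_2*x$ is the (unsigned, since $\mathrm{char}\,\kk=2$) sum of the duals of all $p+1$ cofaces. The identification with $\EE_r$ then holds from $E_1$ onward (and any normalized-versus-unnormalized discrepancy is irrelevant past $E_1$), which suffices for the application in the proof of Corollary~\ref{Cnon-formality_ch2}.
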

The following lemma is easily obtained by unwinding the definition of the spectral sequence $E_r(\oper)$.
\begin{lem}\label{Lspec_diff_description}
Let $\ASS_\infty\to \oper$ be a map of operads. 
\begin{enumerate}
\item $d_1([x])=[\mu_2*x]$ for an element $[x]\in E^{-p,q}_1(\oper)$ represented by $x\in (\oper(p)_q)^\vee$.
\item For an element $[x]\in E^{-p,q}_2(\oper)$ represented by $x\in (\oper(p)_q)^\vee$, we  can take an element \\$y\in (\oper(p-1)_{q-1})^\vee$ with $dy=\mu_2*x$. We have $d_2[x]=[\mu_2*y+\mu_3*x]$.\hfill \qedsymbol
\end{enumerate}
\end{lem}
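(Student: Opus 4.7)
The plan is to read off $d_1$ and $d_2$ directly from the spectral sequence of the filtered cochain complex $(\Hoch\oper, \tilde d)$, using the filtration $F^s = \bigoplus_{-p \geq s} \Hoch^{-p,*}\oper$ that defines $E_r(\oper)$. First I would verify that this really is a decreasing filtration by subcomplexes. The internal differential $d$ preserves arity, so preserves each $F^s$. For the operadic piece, every summand $\mu_l*(-)$ of $\delta$ is built from the maps $(-\circ_i\mu_l)$ and $(\mu_l\circ_i-)$, which by construction send $\oper(p)^\vee$ into $\oper(p-l+1)^\vee$; hence $\mu_l*(-)$ raises the filtration index $-p$ by $l-1 \geq 1$, and $\delta$ preserves (in fact strictly raises) the filtration.

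With the filtration confirmed, the early pages can be read off immediately. One has $E_0^{-p,q} = \oper(p)^\vee_q$ with $d_0 = d$, so $E_1^{-p,q} = H^q(\oper(p)^\vee, d)$, and $d_1$ is induced by the filtration-degree-$1$ part of $\delta$, which is precisely the $l=2$ contribution $x \mapsto \mu_2*x$. This is part (1).

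For part (2), I would apply the standard zigzag description of $d_2$. A class $[x] \in E_2^{-p,q}$ is represented by $x$ whose image $\mu_2*x$ is internal-exact, so $\mu_2*x = dy$ for some $y$. A quick degree count fixes the location of $y$: since $|\mu_2|=0$, we have $\mu_2*x \in \oper(p-1)^\vee_q$, and $d$ raises singular degree by $1$ on the linear dual, forcing $y \in \oper(p-1)^\vee_{q-1}$ as asserted. Computing $\tilde d(x+y)$ modulo filtration level $-p+3$, the two terms $\mu_2*x$ and $dy$ at level $-p+1$ cancel (signs are immaterial in characteristic $2$), and the only remaining contributions at level exactly $-p+2$ are $\mu_3*x$ (from $\delta x$, $l=3$) and $\mu_2*y$ (from $\delta y$, $l=2$); all further terms lie strictly deeper. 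Passing to $E_2^{-p+2,q-1}$ gives $d_2[x] = [\mu_2*y + \mu_3*x]$.

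I do not anticipate any genuine obstacle in carrying this out: it is a direct bookkeeping exercise in the spectral sequence of a filtered complex, and the proof is essentially self-written once the conventions are fixed. The only point requiring real care is aligning those conventions—the direction of the filtration, the fact that $\delta$ \emph{lowers} arity (and so raises $-p$), and the behavior of singular degree on linear duals—so that the filtration indices and the singular degree of $y$ in the statement match exactly what the construction produces.
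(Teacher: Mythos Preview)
Your proposal is correct and is exactly what the paper has in mind: the paper gives no proof beyond the sentence ``easily obtained by unwinding the definition of the spectral sequence $E_r(\oper)$'' and places a \qedsymbol\ after the statement. Your filtration bookkeeping (with the characteristic~$2$ assumption in force throughout the section, so that signs can be ignored) carries this out faithfully.
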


\begin{proof}[Proof of Corollary \ref{Cnon-formality_ch2}]
Let $\ASS_\infty\to C_*(\KK_2)$ be the map given in Lemma \ref{Lspec_natural}. We use the projective model structure on the category of planar chain operads (see e.g.\cite{moriya,tsopmene}). We take a cofibrant replacement $\ASS_\infty\rightarrowtail \oper\stackrel{\sim}{\twoheadrightarrow} C_*(\KK_2)$. 
Suppose  $C_*(\KK_2)$ is formal. By this assumption, we  can take a quasi-isomorphism of operads $\oper\to H_*(\KK_2)$. By considering the composition $f:\ASS_\infty\to \oper\to H_*(\KK_2)$, we obtain an isomorphism of spectral sequences $E_r(\oper)\cong E_r(H_*(\KK_2))$. Let $g:\ASS\to H_*(\KK_2)$ be the map induced by the map $\ASS\to \KK_2$ in Definition \ref{DKontsevich}. $E_r(H_*(\KK_2))$ might not be isomorphic to the sequence induced by the map $\ASS_\infty\to \ASS\stackrel{g}{\to} H_*(\KK_2)$.  Let $\mu'_l\in H_{l-2}(\KK_2(l))$ be the image of $\mu_l$ by $f$. By definition of $\ASS_\infty$, we see 
\[
\mu'_2\circ_1\mu'_3+\mu'_2\circ_2\mu'_3+\mu'_3\circ_1\mu'_2+\mu'_3\circ_2\mu'_2+\mu'_3\circ_3\mu'_2=d\mu'_4=0.
\]
Since $\mu_2'$ is, up to scalar multiple,  the image of the generator $\mu_2$ by $g$, 
this equation means that $\mu'_3$ is a cycle for the differential of the  chain complex of  the cosimplicial vector space associated to the map $g: \ASS\to H_*(\KK_2)$. By easy (and well-known) computation, this differential which is given by the sum of coface maps is a monomorphism on $H_1(\KK_2(3))$, so we have $\mu'_3=0$. This observation and Lemma \ref{Lspec_diff_description} imply $d_2=0$ for $E_2(H_*(\KK_2))$. Since $E_r(\oper)$ is isomorphic to $E_r(C_*(\KK))\cong \EE_r$, this vanishing of differential contradicts to Theorem \ref{Tdifferential}.
\end{proof}
\begin{rem}
To deduce the absolute non-formality of the little 2-disks in characteristic 3 from the non-vanishing of the $d_3$-differential, we need $\mu'_4=0$ in the notation of the proof of Corollary \ref{Cnon-formality_ch2}, but this does not follows from an argument similar to $\mu'_3=0$. It might be still possible to prove the non-formality, but it will require a considerable amount of algebraic calculation. This problem will be considered elsewhere.
\end{rem}

\end{document}